\definecolor{todocolor}{HTML}{D7E1E5}
\numberwithin{equation}{section}
\let\@wraptoccontribs\wraptoccontribs
\theoremstyle{plain}
\newtheorem{Thm}[equation]{Theorem}
\newtheorem{Cor}[equation]{Corollary}
\newtheorem{Lem}[equation]{Lemma}
\newtheorem{Prop}[equation]{Proposition}
\newtheorem{MainTh}{Main Theorem}
\newtheorem*{Thm-non}{Theorem}
\theoremstyle{definition}
\theoremstyle{remark}
\newtheorem{Rem}[equation]{Remark}
\newcommand{\osc}[1]{\mathbf{#1}}
\newcommand{\oa}{\osc{a}}
\newcommand{\ob}{\osc{b}}
\newcommand{\oad}{\osc{\bar{a}}}
\newcommand{\obdag}{\ob^\dagger}
\renewcommand{\wp}{\bar{\mathbf{w}}}
\newcommand{\wm}{\mathbf{w}}
\newcommand{\ap}{\bar{\mathbf{A}}}
\newcommand{\am}{{\mathbf{A}}}
\newcommand{\idb}{\JD }
\newcommand{\id}{{\ID }}
\newcommand{\ol}{\overline}
\newcommand{\iso}{{\stackrel{\sim}{\longrightarrow}}}
\DeclareMathOperator{\diag}{diag}
\DeclareMathOperator{\ch}{ch}
\newcommand*\pFq[6][8]{%
    \begingroup
    \pFqmuskip=#1mu\relax
    \mathcode`\,=\string"8000
    \begingroup\lccode`\~=`\,
    \lowercase{\endgroup\let~}\pFqcomma
    {}_{#2}F_{#3}{\left(\genfrac..{0pt}{}{#4}{#5};#6\right)}%
    \endgroup}
\newcommand{\pFqcomma}{\mskip\pFqmuskip}
\def\be{\begin{eqnarray}}
\def\ee{\end{eqnarray}}
\DeclareMathOperator{\tr}{tr}
\newcommand{\sfrac}[2]{{\textstyle\frac{#1}{#2}}}
\newcommand{\ID}{\mathrm{I}}
\newcommand{\JD}{\mathrm{J}}
\newcommand{\Gop}{\mathrm{G}}
\newcommand{\rtt}{\mathrm{rtt}}
\newcommand{\End}{\mathrm{End}}
\newcommand{\Lie}{\mathrm{Lie}}
\newcommand{\coker}{\mathrm{coker}}
\newcommand{\codim}{\mathrm{codim}}
\newcommand{\Fock}{\mathsf{F}}
\newcommand{\opp}{\mathrm{op}}
\newcommand{\bN}{\mathbf{N}}
\newcommand{\fg}{\mathfrak{g}}
\newcommand{\fh}{\mathfrak{h}}
\newcommand{\fl}{\mathfrak{l}}
\newcommand{\fp}{\mathfrak{p}}
\newcommand{\fu}{\mathfrak{u}}
\newcommand{\fb}{\mathfrak{b}}
\newcommand{\gl}{\mathfrak{gl}}
\newcommand{\ssl}{\mathfrak{sl}}
\newcommand{\ssp}{\mathfrak{sp}}
\newcommand{\sso}{\mathfrak{so}}
\newcommand{\CA}{\mathcal{A}}
\newcommand{\CE}{\mathcal{E}}
\newcommand{\CD}{\mathcal{D}}
\newcommand{\CF}{\mathcal{F}}
\newcommand{\CL}{\mathcal{L}}
\newcommand{\CK}{\mathcal{K}}
\newcommand{\CO}{\mathcal{O}}
\newcommand{\CR}{\mathcal{R}}
\newcommand{\CS}{\mathcal{S}}
\newcommand{\CT}{\mathcal{T}}
\newcommand{\fL}{\mathfrak{L}}
\newcommand{\bfS}{\mathbf{S}}
\newcommand{\BZ}{\mathbb{Z}}
\newcommand{\BC}{\mathbb{C}}
\newcommand{\BN}{\mathbb{N}}
\newcommand{\st}{t}
\newcommand{\NK}{\mathsf{K}}
\newcommand{\sQ}{\mathsf{Q}}
\newcommand{\sT}{\mathsf{T}}
\newcommand{\wtr}{\widehat\tr}
\newcommand{\bL}{\bar{L}}
\g@addto@macro\bfseries{\boldmath}
\m@th\displaystyle{##}$}{$\m@th\displaystyle{##}$\hfil}{\lbrace}{.}
\begin{document}
\title[Transfer matrices of rational spin chains via novel BGG-type resolutions]
      {Transfer matrices of rational spin chains \\ via novel BGG-type resolutions}

\author{Rouven Frassek}
 \address{R.F.: University of Modena and Reggio Emilia, FIM, Via G.~Campi~213/b, 41125 Modena, Italy}
 \email{rouven.frassek@unimore.it}

\author{Ivan Karpov}
 \address{I.K.: MIT, Department of Mathematics, Cambridge, MA 02139, USA;
 on leave from National Research University Higher School of Economics, Department of Mathematics}
 \email{karpov57@mit.edu}
% \address{I.K.: HSE – Skoltech International Laboratory of Representation Theory and Mathematical Physics,
%          HSE University, Moscow, Russia}
% \email{karpovivan57@yandex.ru}

\author{Alexander Tsymbaliuk}
 \address{A.T.:  Purdue University, Department of Mathematics, West Lafayette, IN 47907, USA}
 \email{sashikts@gmail.com}

\begin{abstract}
We obtain BGG-type formulas for transfer matrices of irreducible finite-dimensional representations of the classical
Lie algebras $\fg$, whose highest weight is a multiple of a fundamental one and which can be lifted to the representations
over the Yangian $Y(\fg)$. These transfer matrices are expressed in terms of transfer matrices of certain
infinite-dimensional highest weight representations (such as parabolic Verma modules and their generalizations)
in the auxiliary space. We further factorise the corresponding infinite-dimensional transfer matrices into
the products of two Baxter $Q$-operators, arising from our previous study~\cite{fpt,Frassek:2021ogy} of the
degenerate Lax matrices. Our approach is crucially based on the new BGG-type resolutions of the finite-dimensional
$\fg$-modules, which naturally arise geometrically as the restricted duals of the Cousin complexes of relative
local cohomology groups of ample line bundles on the partial flag variety $G/P$ stratified by $B_{-}$-orbits.
\end{abstract}

\maketitle
\tableofcontents

   %%%%%%%%%%%%%%%%%%%%%%%%%%%%%%%%%%%%%%%%%%%%%%%%%%%%%%%%%%%%%%%%%%%%%%%%%%%%%%%%%%%%%%%%%%%%%
   %%%%%%%%%%%%%%%%%%%%%%%%%%%%%%%%%%%%%%%% SECTION 1 %%%%%%%%%%%%%%%%%%%%%%%%%%%%%%%%%%%%%%%%%%
   %%%%%%%%%%%%%%%%%%%%%%%%%%%%%%%%%%%%%%%%%%%%%%%%%%%%%%%%%%%%%%%%%%%%%%%%%%%%%%%%%%%%%%%%%%%%%

\medskip

\section{Introduction}

   %%%%%%%%%%%%%%%%%%%%%%%%%%%%%%%%%%%%%%%%%%%%%%%%%%%%%%%%%%%%%%%%%%%%%%%%%%%%%%%%%%%%%%%%%%%%%
   %%%%%%%%%%%%%%%%%%%%%%%%%%%%%%%%%%%%%%% SUBSECTION %%%%%%%%%%%%%%%%%%%%%%%%%%%%%%%%%%%%%%%%%%
   %%%%%%%%%%%%%%%%%%%%%%%%%%%%%%%%%%%%%%%%%%%%%%%%%%%%%%%%%%%%%%%%%%%%%%%%%%%%%%%%%%%%%%%%%%%%%

\subsection{Summary}
$\ $

The main results of the present paper are:
\begin{enumerate}

\item[$\bullet$]
The construction of new BGG-type resolutions of finite-dimensional $\fg$-modules by infinite-dimensional
highest weight modules such as parabolic Verma and their ``$W$-translations''. These resolutions
admit an elegant geometric interpretation via the Cousin complexes of relative local cohomology, similar
to the parabolic BGG resolutions of~\cite{l}. The crucial difference from the latter setup, however, is
that our modules are also defined when the ``dominant integral'' condition is lifted, and become generically
irreducible in this setting.

\medskip

\item[$\bullet$]
The explicit expression of the finite-dimensional transfer matrices $T_{i,\st}(x)$  (with $i$ as
in~\eqref{eq:KR-classification} and $\st\in \BN$) via the infinite-dimensional transfer matrices.
This allows to analytically continue $T_{i,\st}(x)$ from $\st\in \BN$ to the entire complex plane
$\st\in \BC$ and study its $\st$-symmetries.

\medskip

\item[$\bullet$]
The extra symmetry of the rational $ABCD$-type $R$-matrices in the first fundamental representations gives
rise to explicit realizations of the aforementioned infinite-dimensional irreducible highest weight
$\fg$-representations in the Fock spaces of oscillator algebras. This also immediately extends the
action of $\fg$ on these modules to that of the Yangian $Y(\fg)$.

\medskip

\item[$\bullet$]
The factorisation of the above infinite-dimensional transfer matrices into two commuting Baxter
$Q$-operators, which arise by realizing the corresponding non-degenerate Lax matrices as fusion of two
\emph{degenerate} Lax matrices. While the latter can be recovered as \emph{renormalized limits}
of the former, they also can be viewed as ``$W$-translations'' of the single one arising from the
antidominantly shifted Yangians $Y_{-\omega_i}(\fg)$ following our earlier work~\cite{fpt,Frassek:2021ogy}.

\end{enumerate}

\medskip

   %%%%%%%%%%%%%%%%%%%%%%%%%%%%%%%%%%%%%%%%%%%%%%%%%%%%%%%%%%%%%%%%%%%%%%%%%%%%%%%%%%%%%%%%%%%%%
   %%%%%%%%%%%%%%%%%%%%%%%%%%%%%%%%%%%%%%% SUBSECTION %%%%%%%%%%%%%%%%%%%%%%%%%%%%%%%%%%%%%%%%%%
   %%%%%%%%%%%%%%%%%%%%%%%%%%%%%%%%%%%%%%%%%%%%%%%%%%%%%%%%%%%%%%%%%%%%%%%%%%%%%%%%%%%%%%%%%%%%%

\subsection{Overview of type A}\label{ssec A-overview}
$\ $

For $\gl_n$-type rational spin chains, both $T$- and $Q$-operators can be built within the framework of
the quantum inverse scattering method~\cite{fad} from appropriate solutions of the \emph{RTT relation}
\begin{equation}\label{eq:RTT}
  R_{12}(z-w)L_1(z)L_2(w)=L_2(w)L_1(z)R_{12}(z-w)
\end{equation}
involving the rational $R$-matrix
\begin{equation}\label{eq:R-matrix-A}
  R(z)=z\id_n+{\rm P} \,, \qquad
  \id_n=\sum_{i,j=1}^n\, e_{ii}\otimes e_{jj} \,, \qquad {\rm P}=\sum_{i,j=1}^n\, e_{ij}\otimes e_{ji} \,,
\end{equation}
where $\left(e_{ij}\right)_{k\ell}=\delta_{i}^{k}\delta_{j}^{\ell}$ is the standard basis of $\gl_n$,
satisfying the \emph{quantum Yang-Baxter equation}:
\begin{equation}\label{eq:YBe}
  R_{12}(z-w)R_{13}(z)R_{23}(w) = R_{23}(w)R_{13}(z)R_{12}(z-w) \,.
\end{equation}
More precisely, the operators $\sT(z),\sQ(z)\in \End(\BC^n)^{\otimes N}$ (with $N$ being the length of the spin chain)
are then defined as traces in the auxiliary space of the \emph{monodromy matrices} (followed by a twist)
\begin{equation}\label{eq:monodromy}
  M(z)=\underbrace{L(z)\otimes \cdots \otimes L(z)}_{N}
\end{equation}
built from the \emph{Lax matrices} $L(z)$, i.e.\ solutions of~\eqref{eq:RTT}, with the product $\otimes$ denoting
the tensor product in the quantum space and the usual multiplication in the auxiliary space. The key difference
between $\sT(z)$ and $\sQ(z)$, however, is that the former correspond to \emph{non-degenerate} Lax matrices
(with a non-degenerate coefficient of the leading $z$-power) while the latter ones are built from the
\emph{degenerate} Lax matrices.

\medskip
\noindent
Using the obvious \emph{invariance} of the $R$-matrix~\eqref{eq:R-matrix-A} under the action of $GL_n$:
\begin{equation}\label{eq:R-inv}
  [R(z),G\otimes G]=0 \qquad \forall\ G\in GL_n \,,
\end{equation}
one can multiply the non-degenerate Lax matrix $L(z)$ by a $z$-independent $G\in GL_n$ to bring the coefficient of
its leading $z$-power to the identity matrix $\id_n$. Such Lax matrices are governed by the \emph{RTT Yangian} $Y^\rtt(\gl_n)$
(the explicit identification of which with the Drinfeld Yangian $Y(\gl_n)$ was carried out in~\cite{bk1}).
Here, $Y^\rtt(\gl_n)$ is the associative algebra generated by $\{t_{ij}^{(r)}\}_{1\leq i,j\leq n}^{r\geq 1}$ subject to
the defining relation~\eqref{eq:RTT}, where $L(z)$ has been replaced by $T(z)\in \End(\BC^n)\otimes Y^\rtt(\gl_n)[[z^{-1}]]$:
\begin{equation}\label{eq:A-Yangian T-matrix}
  T(z)=\sum_{i,j=1}^n e_{ij}\otimes t_{ij}(z) \qquad \mathrm{with} \qquad
  t_{ij}(z)=\delta_i^j+\sum_{r\geq 1} t^{(r)}_{ij} z^{-r} \,.
\end{equation}
In particular, any $Y^\rtt(\gl_n)$-representation $V$ gives rise to the $\gl_n$-type $\End(V)$-valued Lax matrix.

\medskip
\noindent
The special feature of type $A$ is the presence of the \emph{evaluation} homomorphism to the universal enveloping
algebra of $\gl_n$:
\begin{equation}\label{eq:A-evaluation}
  \mathrm{ev}\colon Y^\rtt(\gl_n)\to U(\gl_n)
  \quad \mathrm{given\ by} \quad t^{(r)}_{ij}\mapsto \delta_r^1 E_{ji} \,.
\end{equation}
In particular, given any $\gl_n$-module $\sigma\colon \gl_n\to \End(V)$, the matrix
$L(z)=z\id_n+\sum_{i,j=1}^n e_{ij}\sigma(E_{ji})$, with $E_{ji}$ denoting the generators of $\gl_n$,
is an $\End(V)$-valued Lax matrix, i.e.\ satisfies~\eqref{eq:RTT}. Conversely any such Lax matrix
endows $V$ with a $\gl_n$-action, since $\mathrm{ev}$ of~\eqref{eq:A-evaluation} admits a right inverse:
\begin{equation}\label{eq:embedding}
  \iota\colon U(\gl_n)\hookrightarrow Y^\rtt(\gl_n) \quad \mathrm{given\ by} \quad  E_{ij}\mapsto t^{(1)}_{ji} \,.
\end{equation}

\medskip
\noindent
In contrast, the $Q$-operators are known to arise from degenerate Lax matrices. As has been recently
realized in~\cite{fpt} (cf.~\cite{cgy} for an interpretation via the $4d$ Chern-Simons theory), under certain
\emph{antidominance} condition, the corresponding Lax matrices are governed by the \emph{antidominantly shifted RTT Yangians}
$Y^\rtt_{-\mu}(\gl_n)$. The identification of the latter with the shifted Drinfeld Yangian $Y_{-\mu}(\gl_n)$
defined as in~\cite[Appendix B]{bfnb} goes through the Gauss decomposition of the generating matrix $T(z)$ as
in the non-shifted case~\cite{bk1}, see~\cite[Theorem 2.54]{fpt}. Using the $GL_n$-invariance~\eqref{eq:R-inv}
one can further obtain other degenerate Lax matrices which do not admit a Gauss decomposition
(and thus are no longer directly related to the \emph{shifted Drinfeld Yangians}).

\medskip
\noindent
According to~\cite{Bazhanov:2010jq}, see also \cite{Tsuboi:2009ud,klt}, there is a whole family of $2^n$ $Q$-operators $\sQ_I(z)$, labelled by all subsets
$I\subseteq \{1,2,\ldots,n\}$. However, all of those can be expressed through $n$ fundamental ones $\{\sQ_i(z)\}_{i=1}^n$,
due to so-called \emph{QQ-relations}:
\begin{equation}\label{eq:A-Hirota}
  \Delta_{j,i}\cdot \sQ_{I\sqcup i\sqcup j}(z) \sQ_I(z) =
  \sQ_{I\sqcup i}(z-\tfrac{1}{2})\sQ_{I\sqcup j}(z+\tfrac{1}{2}) -
  \sQ_{I\sqcup j}(z-\tfrac{1}{2})\sQ_{I\sqcup i}(z+\tfrac{1}{2}) \,,
\end{equation}
where the scalar factor $\Delta_{j,i}=\frac{\tau_j-\tau_i}{\sqrt{\tau_i\tau_j}}$ depends only on the twist parameters
and the brackets have been suppressed for the one element sets. The two key components in the proof of~\eqref{eq:A-Hirota} are:
the fusion of the corresponding Lax matrices and the \emph{Desnanot-Jacobi-Dodgson-Sylvester theorem}
from linear algebra. In particular, fusing all $n$ fundamental $Q$-operators, one obtains the transfer matrix
$\sT^+_\lambda(z)$ associated with the non-degenerate linear Lax matrix corresponding to a $\gl_n$ Verma~module:
\begin{equation}\label{eq:Verma-through-Q}
  \Delta_{\{1,\ldots,n\}}\cdot \sT^+_{\lambda}(z) =
  \sQ_1(z+\lambda'_1) \sQ_2(z+\lambda'_2) \cdots \sQ_n(z+\lambda'_n) \,,
\end{equation}
where $\lambda\in \BC^n$ is the highest weight of the Verma module,
$\lambda'_i$ are the components of the shifted weight $\lambda'=(\lambda'_1,\ldots,\lambda'_n)=\lambda+\rho$ with
$\rho=\left(\frac{n-1}{2},\frac{n-3}{2},\ldots,\frac{1-n}{2}\right)$, and
$\Delta_{\{1,\ldots,n\}}=\prod_{1\leq i<j\leq n} \Delta_{i,j}$.

\medskip
\noindent
Finally, for a dominant integral $\gl_n$-weight $\lambda$, the finite-dimensional transfer matrix $\sT_\lambda(z)$
can be expressed as an alternating sum of the infinite-dimensional $\sT^+_\mu(z)$, due to so-called \emph{BGG-relations}:
\begin{equation}\label{eq:transfer-via-BGG}
  \sT_\lambda(z)=\sum_{\sigma\in S_n} (-1)^{l(\sigma)} \sT^+_{\sigma(\lambda+\rho)-\rho}(z) \,.
\end{equation}
Combining~\eqref{eq:transfer-via-BGG} with~\eqref{eq:Verma-through-Q}, one finally gets the
\emph{determinant formula} for type $A$ transfer matrices:
\begin{equation}\label{eq:det-intro}
  \Delta_{\{1,\ldots,n\}}\cdot \sT_\lambda(z) =
  \det \left\Vert \sQ_i(z+\lambda_j') \right\Vert_{1\leq i,j\leq n} \,.
\end{equation}

\medskip

   %%%%%%%%%%%%%%%%%%%%%%%%%%%%%%%%%%%%%%%%%%%%%%%%%%%%%%%%%%%%%%%%%%%%%%%%%%%%%%%%%%%%%%%%%%%%%
   %%%%%%%%%%%%%%%%%%%%%%%%%%%%%%%%%%%%%%% SUBSECTION %%%%%%%%%%%%%%%%%%%%%%%%%%%%%%%%%%%%%%%%%%
   %%%%%%%%%%%%%%%%%%%%%%%%%%%%%%%%%%%%%%%%%%%%%%%%%%%%%%%%%%%%%%%%%%%%%%%%%%%%%%%%%%%%%%%%%%%%%

\subsection{BGG resolutions}\label{ssec BGG resol}
$\ $

Formula~\eqref{eq:transfer-via-BGG} follows directly from the famous
\emph{Bernstein-Gelfand-Gelfand resolution}~\cite{bgg} of the finite-dimensional
$\fg$-module $L_\lambda$ by means of the infinite-dimensional Verma modules $M_{\mu}$:
\begin{equation}\label{eq:general-BGG}
  0\to M_{w_0\, \cdot \lambda}\to \cdots\, \to \bigoplus_{w\in W}^{l(w)=2} M_{w\, \cdot \lambda}\ \to
  \bigoplus_{w\in W}^{l(w)=1} M_{w\, \cdot \lambda}\to M_{\lambda}\to L_{\lambda}\to 0 \,.
\end{equation}
Here, $W$ is the Weyl group of $\fg$, equipped with the length function $l\colon W\to \BZ_{\geq 0}$ as an abstract
Coxeter group. Furthermore, we use the \textbf{dot action} of $W$ on the space of weights, defined via:
\begin{equation}\label{eq:dot action}
  w\, \cdot \lambda = w(\lambda+\rho)-\rho \qquad \forall\, w\in W \,,\, \lambda\in\fh^* \,,
\end{equation}
with the weight $\rho\in \fh^*$ defined in the standard way:
\begin{equation}\label{eq:rho-weight}
  \rho=\sfrac{1}{2}\sum_{\alpha\in \Delta^+} \alpha \,,
\end{equation}
where $\Delta^+$ denotes the set of positive roots of $\fg$.
The resolution~\eqref{eq:general-BGG} involves the total of $|W|$ Verma modules and has a length equal to $|\Delta^+|$,
with the leftmost nontrivial term corresponding~to
\begin{equation}\label{eq:longest element}
  w_0 =  \mathrm{the\ longest\ element\ of}\ W \,.
\end{equation}

\medskip
\noindent
The BGG resolution~\eqref{eq:general-BGG} can be thought of as a \emph{categorification} of the
\emph{Weyl character formula} (expressing the character of the finite-dimensional $\fg$-module $L_\lambda$
via those of Verma modules $M_{\mu}$):
\begin{equation}\label{eq:Weyl character}
  \ch_{L_\lambda} = \sum_{w\in W} (-1)^{l(w)}
  \frac{e^{w(\lambda+\rho)-\rho}}{\prod_{\alpha\in \Delta^+} (1-e^{-\alpha})} =
  \sum_{w\in W} (-1)^{l(w)} \ch_{M_{w\, \cdot \lambda}} \,,
\end{equation}
the \emph{character limit} of~\eqref{eq:general-BGG}.
For $\lambda=0$, the formula~\eqref{eq:Weyl character} recovers the \emph{Weyl denominator formula}:
\begin{equation}\label{eq:Weyl denominator}
  \sum_{w\in W} (-1)^{l(w)} e^{w(\rho)-\rho}\ = \prod_{\alpha\in \Delta^+} (1-e^{-\alpha}) \,.
\end{equation}

\medskip

   %%%%%%%%%%%%%%%%%%%%%%%%%%%%%%%%%%%%%%%%%%%%%%%%%%%%%%%%%%%%%%%%%%%%%%%%%%%%%%%%%%%%%%%%%%%%%
   %%%%%%%%%%%%%%%%%%%%%%%%%%%%%%%%%%%%%%% SUBSECTION %%%%%%%%%%%%%%%%%%%%%%%%%%%%%%%%%%%%%%%%%%
   %%%%%%%%%%%%%%%%%%%%%%%%%%%%%%%%%%%%%%%%%%%%%%%%%%%%%%%%%%%%%%%%%%%%%%%%%%%%%%%%%%%%%%%%%%%%%

\subsection{Generalization to other classical types}\label{ssec BCD-overview}
$\ $

Let us now consider the rational spin chains of types $B_r,C_r,D_r$. The corresponding rational $R$-matrices $R(z)$
were first discovered in~\cite{zz} and take the following form (we use the conventions in mathematics literature,
related to the original formulas of~\cite{zz} via similarity transformations):
\begin{equation}\label{eq:BCD-Rmatrix}
  R(z)=z(z+\kappa)\id_{\NK} + (z+\kappa){\rm P} - z{\rm Q}
\end{equation}
with $\kappa=r+1$ for $\ssp_{2r}$, $\kappa=r-1=\sfrac{\NK}{2}-1$ for $\sso_{2r}$, $\kappa=r-\sfrac{1}{2}=\sfrac{\NK}{2}-1$ for $\sso_{2r+1}$,
where $\NK=2r$ for $\ssp_{2r}$ and $\sso_{2r}$ while $\NK=2r+1$ for $\sso_{2r+1}$,
and the linear operators ${\rm P}, {\rm Q}\in \End(\BC^{\NK})$ defined by:
\begin{equation}\label{eq:PQ}
  {\rm P}=\sum_{i,j=1}^{\NK} e_{ij}\otimes e_{ji} \,, \qquad
  {\rm Q}=\sum_{i,j=1}^{\NK} \epsilon_i\epsilon_j \, e_{ij}\otimes e_{i'j'} \,,
\end{equation}
where
\begin{equation}\label{eq:prime-index}
  i'=\NK+1-i  \qquad \mathrm{for} \qquad 1\leq i\leq \NK
\end{equation}
and
$\epsilon_1=\cdots=\epsilon_\NK=1$ for $\sso_{\NK}$ while
$\epsilon_1=\cdots=\epsilon_r=1,\ \epsilon_{r+1}=\cdots=\epsilon_{2r}=-1$ for $\ssp_{2r}$.

\medskip
\noindent
Similarly to type $A$, the non-degenerate Lax matrices $L(z)$ of types $BCD$ with the coefficient of the leading $z$-power
equal to $\id_{\NK}$ are governed by the corresponding \emph{extended RTT Yangians} $X^\rtt(\fg)$ (whose explicit relation
to the Drinfeld Yangian $Y(\fg)$ was obtained quite recently in~\cite{jlm, grw}).

\medskip
\noindent
However, the key difference from type $A$ is in the absence of the evaluation homomorphism~\eqref{eq:A-evaluation}.
In particular, the action of $\fg$ on $L_\lambda$ in general cannot be extended to that of $Y(\fg)$,
cf.~\eqref{eq:embedding}. Nonetheless, extending the earlier works~\cite{sw,r}, a certain family of linear and quadratic
oscillator-type non-degenerate Lax matrices $\CL(z)$ is known~\cite{f,Frassek:2021ogy,Karakhanyan:2020jtq}.
These $\CL(z)$ depend on a parameter $\st\in \BC$ and give rise to an action of $X^\rtt(\fg)$ on the parabolic Verma
$\fg$-modules.
That way we also obtain an action of $Y(\fg)$ on $L_{t\omega_i}$ for the fundamental weights $\omega_i$ classified
in~\eqref{eq:KR-classification} and all $t\in \BN$.

\medskip
\noindent
Given a simple Lie algebra $\fg$ of rank $r$, one may ask for which indices $i\in \{1,\ldots,r\}$ do the finite-dimensional
irreducible $\fg$-modules $L_{t\omega_i}$ (where $\omega_i$ denotes the $i$-th fundamental weight of $\fg$) admit a compatible
(through the embedding $U(\fg)\hookrightarrow Y(\fg)$, cf.~\eqref{eq:embedding}) action of $Y(\fg)$ for all $t\in \BN$.
In the classical $ABCD$ types, the answer to this question has been provided long ago in~\cite[\S2]{kr}:
\begin{equation}\label{eq:KR-classification}
  \Big\{ 1\leq i\leq r \,\Big|\, \fg \curvearrowright L_{\st\omega_i} \ \mathrm{extends\ to}\
         Y(\fg) \curvearrowright L_{\st\omega_i}\ \forall\, \st\in \BN \Big\} =
  \begin{cases}
     i=1,\ldots,r & \text{for } A_r \\
     i=1  & \text{for } B_r \\
     i=r  & \text{for } C_r \\
     i=1,r-1,r & \text{for } D_r
  \end{cases} \,.
\end{equation}
Let us emphasize that the above Lax matrix approach provides a \underline{constructive existence proof} for all these cases.
Furthermore, we note that the corresponding values of the index $i$ can be characterized by either of the following two equivalent
conditions:
\begin{enumerate}

\item [$\bullet$]
  the label of the vertex $i$ in the Dynkin diagram of $\fg$ is equal to $1$

\item [$\bullet$]
  the $i$-th fundamental coweight $\omega^\vee_i$ is minuscule (minuscule weight of the Langlands dual~$\fg^L$)

\end{enumerate}
It has been recently shown in~\cite[Appendix B]{cgy} by cohomological arguments that these conditions
indeed guarantee the positive answer to the above question in all types. This also adds Lax matrices in
the exceptional types $E_6, E_7$, but we presently do not have explicit formulas for those.

\medskip
\noindent
Yet another obstacle to generalize the results of Section~\ref{ssec A-overview} to other types is
that even if $L_\lambda$ extends to a module over $Y(\fg)$, it may not be the case for the Verma modules
$M_{w\, \cdot\lambda}$ featuring in the BGG resolution~\eqref{eq:general-BGG}.
However, as explained above, the Lax matrix approach does provide a natural $Y(\fg)$-action on
the corresponding parabolic Verma modules. We shall now discuss \underline{new} BGG-type resolutions of
$L_\lambda$ which involve only those parabolic Verma modules and their ``$W$-translations''.

\medskip

   %%%%%%%%%%%%%%%%%%%%%%%%%%%%%%%%%%%%%%%%%%%%%%%%%%%%%%%%%%%%%%%%%%%%%%%%%%%%%%%%%%%%%%%%%%%%%
   %%%%%%%%%%%%%%%%%%%%%%%%%%%%%%%%%%%%%%% SUBSECTION %%%%%%%%%%%%%%%%%%%%%%%%%%%%%%%%%%%%%%%%%%
   %%%%%%%%%%%%%%%%%%%%%%%%%%%%%%%%%%%%%%%%%%%%%%%%%%%%%%%%%%%%%%%%%%%%%%%%%%%%%%%%%%%%%%%%%%%%%

\subsection{New BGG-type resolutions}\label{ssec truncated-BGG}
$\ $

To state the main results of this subsection, let us first introduce some more notation.
Consider the root decomposition $\fg=\fh\oplus \bigoplus_{\alpha\in \Delta} \BC e_{\alpha}$.
Let  $\{\alpha_i\}_{i=1}^r\subset \Delta^+$ be the positive simple roots of $\fg$.
Given a subset $S\subseteq \{1,\ldots,r\}$, one defines the standard parabolic Lie algebra $\fp_S\subseteq \fg$ via:
\begin{equation}\label{eq:parabolic subalg}
  \fp_S = \fh\oplus \bigoplus_{\alpha\in \Delta^+} \BC e_{\alpha} \oplus
  \bigoplus_{\alpha\in \Delta^+_{S}} \BC e_{-\alpha} \qquad \mathrm{with} \qquad
  \Delta^+_S=\Delta^+\cap \bigoplus_{i\in S}\, \BZ\alpha_i \,.
\end{equation}
Let us note that $\fp_{\{1,\ldots,r\}}=\fg$, while $\fp_{\emptyset}$ coincides with the Borel subalgebra
$\fb=\fh\oplus \bigoplus_{\alpha\in \Delta^+} \BC e_{\alpha}$. It is well-known that all parabolic subalgebras
$\fp$ satisfying $\fb\subseteq \fp\subseteq \fg$ are necessarily of that form. Such $\fp_S$ further decomposes
into the semidirect product:
\begin{equation}\label{eq:semidirect}
  \fp_S=\fl\ltimes \fu  \qquad \mathrm{with} \qquad
  \fl=\fh \oplus \bigoplus_{\alpha\in \Delta^+_S} \BC e_\alpha \oplus \bigoplus_{\alpha\in \Delta^+_S} \BC e_{-\alpha}
  \qquad \mathrm{and} \qquad
  \fu\, =\bigoplus_{\alpha\in \Delta^+ \setminus \Delta^+_S} \BC e_\alpha \,,
\end{equation}
of the semisimple part $\fl$ (the \emph{Levi subalgebra}) and the nilpotent part $\fu$ (the \emph{nilpotent radical}).
Then
\begin{equation}\label{eq:Levi-roots}
  \Delta_\fl=\Delta^+_\fl\sqcup (-\Delta^+_\fl) \quad \mathrm{with} \quad \Delta^+_{\fl}=\Delta^+_S
\end{equation}
is the root system of $\fl$, and
the subgroup $W_{\fl}$ of $W$ generated by the simple reflections $\{s_{\alpha_j}\}_{j\in S}$ is the Weyl group of $\fl$.
In analogy with~\eqref{eq:rho-weight}, we also define
\begin{equation}\label{eq:rho-Levi}
  \rho_\fl=\sfrac{1}{2}\sum_{\alpha\in \Delta^+_\fl} \alpha \,.
\end{equation}

\medskip
\noindent
The finite-dimensional irreducible $\fl$-modules are indexed by the dominant integral weights of $\fl$:
\begin{equation}\label{eq:Levi dominant}
  P^+_\fl=\left\{\lambda\in \fh^* \, |\, \lambda(h_{\alpha_j})\in \BN\ \ \forall j\in S \right\} \,.
\end{equation}
For $\lambda\in P^+_{\fl}$, let $L^\fl_\lambda$ be the corresponding $\fl$-module.
One makes it into a $\fp_S$-module by letting the nilpotent radical $\fu\subset \fp_S$ act trivially.
Then, the \textbf{parabolic Verma module} $M^{\fp_S}_\lambda$ is defined via:
\begin{equation}\label{eq:parabolic def}
  M^{\fp_S}_{\lambda}=\mathrm{Ind}^{\fg}_{\fp_S}(L^{\fl}_\lambda) = U(\fg)\otimes_{U(\fp_S)} L^{\fl}_\lambda \,.
\end{equation}
Let $P^+_{\fg}$ be the set of dominant integral weights of $\fg$ (apply~\eqref{eq:Levi dominant} to $S=\{1,\ldots,r\}$),
and define
\begin{equation}\label{eq:Levi-trivial}
   P^+_{\fg/\fl} = \left\{\lambda\in P^+_{\fg} \, |\, \lambda(h_{\alpha_j})=0\ \ \forall j\in S \right\} \,,
\end{equation}
that is, $P^+_{\fg/\fl}$ is the set of dominant integral weights $\lambda$ of $\fg$ such that $\dim(L^{\fl}_\lambda)=1$.

\medskip
\noindent
Let us now state our \underline{key mathematical construction}: for any parabolic $\fp\subset\fg$ and
$\lambda\in P^+_{\fg/\fl}$~\eqref{eq:Levi-trivial}, the irreducible finite-dimensional $\fg$-module $L_{\lambda}$
admits the following \emph{truncated BGG resolution}:
\begin{equation}\label{eq:conjectured resolution 1}
  0\to M'_{{}^{\fl,0}w\, \cdot \lambda}\to
  \cdots\, \to \bigoplus_{w\in {}^{\fl}W}^{l(w)=2} M'_{w\, \cdot \lambda}\ \to
  \bigoplus_{w\in {}^{\fl}W}^{l(w)=1} M'_{w\, \cdot \lambda}\to
  M'_{\lambda}\to L_{\lambda}\to 0
\end{equation}
with each term admitting further a resolution by the usual Verma modules:
\begin{equation}\label{eq:conjectured resolution 2}
  0\to M_{ww_{\fl,0}\, \cdot \lambda} \to
  \cdots\, \to \bigoplus_{v\in W_{\fl}}^{l(v)=2} M_{wv\, \cdot \lambda}\ \to
  \bigoplus_{v\in W_{\fl}}^{l(v)=1} M_{wv\, \cdot \lambda}\to M_{w\, \cdot \lambda}\to
  M'_{w\, \cdot \lambda}\to 0 \,.
\end{equation}
Here, ${}^{\fl}W$ is defined via:
\begin{equation}\label{eq:shortest left}
  {}^{\fl}W=\{w\in W \, |\, w(\Delta^+_\fl)\subseteq \Delta^+\} \,.
\end{equation}
According to~\cite[\S5.13]{kos}, the set ${}^{\fl}W$ can be equivalently described as:
\begin{equation}\label{eq:shortest left equiv}
  {}^{\fl}W=\{ \mathrm{shortest\ representatives\ of\ the\ left\ cosets}\ W/W_{\fl} \} \,,
\end{equation}
and each element $w\in W$ admits a unique factorisation:
\begin{equation}\label{eq:left factorization}
  w={}^{\fl}w\, w_{\fl} \qquad \mathrm{with} \qquad w_{\fl}\in W_{\fl} \, ,\, {}^{\fl}w\in {}^{\fl}W \,.
\end{equation}
In particular, the longest elements ${}^{\fl,0}w\in {}^{\fl}W$ and $w_{\fl,0}\in W_{\fl}$
featuring in~(\ref{eq:conjectured resolution 1}) and~(\ref{eq:conjectured resolution 2}) arise from
the decomposition~\eqref{eq:left factorization} applied to the longest element $w_0\in W$
of~\eqref{eq:longest element}:
\begin{equation}\label{eq:longest components}
  w_0 = {}^{\fl,0}w\, w_{\fl,0 } \,.
\end{equation}

\noindent
The above modules $\{M'_{w\, \cdot \lambda}\}_{w\in {}^{\fl}W}$ are defined as explicit quotients of
the Verma modules $M_{w\, \cdot \lambda}$:
\begin{equation}\label{eq:hard modules}
   M'_{w\, \cdot \lambda} = M_{w\, \cdot \lambda}/M_{w\, \cdot \lambda}^{\mathrm{sing}}
\end{equation}
with $M_{w\, \cdot \lambda}^{\mathrm{sing}}$ being the $\fg$-submodule of $M_{w\, \cdot \lambda}$
generated by the singular vectors of weights
\begin{equation}\label{eq:critical vectors}
  s_{w(\alpha)}\left(w(\lambda+\rho)\right)-\rho   \,, \qquad \alpha\in \Delta^+_{\fl} \,.
\end{equation}
Here, $s_{w(\alpha)}$ denotes a reflection in the positive root $w(\alpha)$, see~\eqref{eq:shortest left},
while the existence (note that uniqueness is standard) of such singular vectors is guaranteed by
$\left( w(\lambda+\rho), w(\alpha) \right) = (\rho,\alpha)\in \BZ_{>0}$.

\medskip
\noindent
One can think of~(\ref{eq:conjectured resolution 1},~\ref{eq:conjectured resolution 2}) as a \emph{categorification}
of the following character equality expressing the character of the finite-dimensional $\fg$-module $L_\lambda$
via those of the modules $\{M'_{w\, \cdot \lambda}\}$, cf.~\eqref{eq:Weyl character}:
\begin{equation}\label{eq:character identity}
  \ch_{L_{\lambda}}=
  \sum_{w\in {}^{\fl}W}
    \frac{(-1)^{l(w)} e^{w(\lambda+\rho)-\rho}}
         {\prod_{\alpha\in\Delta^+\backslash w(\Delta^+_{\fl})} (1-e^{-\alpha})} \, =
  \sum_{w\in {}^{\fl}W}
    (-1)^{l(w)} \ch_{M'_{w\, \cdot \lambda}} \,.
\end{equation}

\medskip

\begin{MainTh}\label{mthm 1}
For $\lambda\in P^+_{\fg/\fl}$, the irreducible finite-dimensional $\fg$-module $L_\lambda$
has a finite length resolution~\eqref{eq:conjectured resolution 1}, with each term admitting a finite length
resolution~\eqref{eq:conjectured resolution 2} by Verma modules.
\end{MainTh}

\medskip
\noindent
The resolutions~(\ref{eq:conjectured resolution 1},~\ref{eq:conjectured resolution 2}) are reminiscent of
the well-known Lepowsky's \emph{parabolic BGG resolutions}~\cite{l} of any irreducible finite-dimensional
$\fg$-module $L_\lambda$ by parabolic Verma modules~$\{M^{\fp}_\mu\}_{\mu\in P^+_{\fl}}$:
\begin{equation}\label{eq:Lepowsky resolution 1}
  0\to \cdots\, \to \bigoplus_{w\in W^{\fl}}^{l(w)=2} M^{\fp}_{w\, \cdot \lambda}\ \to
  \bigoplus_{w\in W^{\fl}}^{l(w)=1} M^{\fp}_{w\, \cdot \lambda}\to M^{\fp}_{\lambda}\to L_{\lambda}\to 0
\end{equation}
with each term admitting further a resolution by the usual Verma modules:
\begin{equation}\label{eq:Lepowsky resolution 2}
  0\to M_{w_{\fl,0}w\,\cdot \lambda} \to \cdots\, \to \bigoplus_{v\in W_{\fl}}^{l(v)=2} M_{vw\, \cdot \lambda}\ \to
  \bigoplus_{v\in W_{\fl}}^{l(v)=1} M_{vw\, \cdot \lambda}\to M_{w\, \cdot \lambda}\to
  M^{\fp}_{w\, \cdot \lambda}\to 0 \,.
\end{equation}
Here, the indexing subset $W^{\fl}$ of $W$ is defined as:
\begin{equation}\label{eq:shortest right}
  W^{\fl} = \{w\in W \, |\, \Delta^+_\fl\subseteq w(\Delta^+)\} =
  \{ \mathrm{shortest\ representatives\ of\ the\ right\ cosets}\ W_{\fl}\backslash W \} \,.
\end{equation}
Note that $W^{\fl}=\{w^{-1} \, |\, w\in {}^{\fl}W\}$ and each element $w\in W$ admits a unique factorisation:
\begin{equation}\label{eq:right factorization}
  w=w_{\fl} w^{\fl} \qquad \mathrm{with} \qquad w_{\fl}\in W_{\fl} \,,\, w^{\fl}\in W^{\fl} \,.
\end{equation}

\medskip
\noindent
While the BGG resolution~\eqref{eq:general-BGG} and consequently the Lepowsky-BGG resolution~\eqref{eq:Lepowsky resolution 1}
were originally constructed algebraically, we are presently not aware of the algebraic construction of~\eqref{eq:conjectured resolution 1}:
the key difficulty is that non-simple reflections are involved in the definition of $M'_{w\, \cdot \lambda}$. Instead,
we shall construct~\eqref{eq:conjectured resolution 1}  by interpreting its \emph{restricted dual} as a \emph{Cousin complex}
of relative local cohomology groups of the corresponding line bundle on the partial flag variety $X=G/P$ stratified by
$B_{-}$-orbits. Here, $B\subset P$ are the Borel and parabolic subgroups of the Lie group $G$ with
$\Lie(B)=\fb$, $\Lie(P)=\fp$, $\Lie(G)=\fg$, and $B_{-}$ is the opposite Borel subgroup of $G$.
A similar geometric interpretation of~\eqref{eq:general-BGG} goes back to~\cite{k,b}, while the analogous treatment
of the parabolic
BGG resolutions~\eqref{eq:Lepowsky resolution 1} was presented in~\cite{mr}
by considering instead the complete flag variety $Y=G/B$ stratified by $P$-orbits.
However, let us point out that~\cite{mr} was not self-contained as it used some algebraic properties
established in~\cite{l} (and also contained two substantial inaccuracies which we fix in our Remark~\ref{rem:MR gaps}).

\medskip

   %%%%%%%%%%%%%%%%%%%%%%%%%%%%%%%%%%%%%%%%%%%%%%%%%%%%%%%%%%%%%%%%%%%%%%%%%%%%%%%%%%%%%%%%%%%%%
   %%%%%%%%%%%%%%%%%%%%%%%%%%%%%%%%%%%%%%% SUBSECTION %%%%%%%%%%%%%%%%%%%%%%%%%%%%%%%%%%%%%%%%%%
   %%%%%%%%%%%%%%%%%%%%%%%%%%%%%%%%%%%%%%%%%%%%%%%%%%%%%%%%%%%%%%%%%%%%%%%%%%%%%%%%%%%%%%%%%%%%%

\subsection{BGG-relations for transfer matrices of classical types}\label{ssec analytic continuation}
$\ $

Despite the aforementioned similarity between our resolution~(\ref{eq:conjectured resolution 1}) and the
Lepowsky-BGG resolution~\eqref{eq:Lepowsky resolution 1}, they have several major differences. First of all,
the latter is constructed for any choice of the parabolic subalgebra $\fp\subset \fg$ independent of
$\lambda\in P^+_{\fg}$ and consists only of the parabolic Verma modules $\{M^{\fp}_{\mu}\}_{\mu\in P^+_{\fl}}$
(though $\dim(L^{\fl}_{\mu})>1$ in general). But an even more striking  difference is
the fact that our modules $M'_{w\, \cdot \lambda}$ admit an \underline{analytic continuation} in $\lambda$
to the domain
\begin{equation}\label{eq:Levi-trivial non-positive}
  P_{\fg/\fl} = \left\{\lambda\in \fh^* \, |\, \lambda(h_{\alpha_j})=0\ \ \forall j\in S \right\} \,.
\end{equation}

\medskip
\noindent
Indeed, one may apply the construction~(\ref{eq:hard modules},~\ref{eq:critical vectors}) for any
$\lambda\in P_{\fg/\fl}$ to define $\fg$-modules $M'_{w\, \cdot \lambda}$ of the highest weight $w\, \cdot \lambda$,
see~\eqref{eq:dot action}. Moreover, the resulting highest-weight $\fg$-modules
$\{M'_{w\, \cdot \lambda}\}_{w\in {}^{\fl}W}$ are \underline{generically irreducible} for $\lambda\in P_{\fg/\fl}$
(in particular, they are irreducible for $\lambda\in -P^+_{\fg/\fl}\subset P_{\fg/\fl}$) as follows from the classical
description~\cite{j,s} of the weights of singular vectors in the Verma modules.

\medskip
\noindent
We note that the module $M'_{\lambda}=M'_{\mathrm{id}\cdot \lambda}$ coincides with the parabolic Verma
module~$M^{\fp}_{\lambda}$, and all other modules $\{M'_{w\, \cdot \lambda}\}_{w\in {}^{\fl}W}$ can be thought of as
``$W$-translations'' of $M'_{\lambda}=M^{\fp}_{\lambda}$, as follows from~\eqref{eq:character identity}.
In the particular case $\lambda=\st\omega_i$ with the label of vertex $i$ equal to $1$
(equivalent to~\eqref{eq:KR-classification} in the classical types, see Section~\ref{ssec BCD-overview}),
the corresponding parabolic Verma $\fg$-modules $M^{\fp_{\{1,\ldots,r\}\setminus \{i\}}}_{\st\omega_i},\ \st\in \BC$,
can be extended to the modules over $Y(\fg)$, cf.~\cite[Appendix B]{cgy}, and so should all other
$M'_{w\cdot \st\omega_i}$. Thus, our resolution~\eqref{eq:conjectured resolution 1} can be actually regarded
as a resolution of $Y(\fg)$-modules, giving rise to the desired \underline{\emph{BGG-relation}} expressing the
finite-dimensional transfer matrix $T_{i,\st}(z)=T_{L_{\st\omega_i}}(z)$ via:
\begin{equation}\label{eq:t-via-qq honest}
  T_{i,\st}(z)=\sum_{w\in {}^{\fl}W} (-1)^{l(w)} T_{M'_{w\cdot \st\omega_i}}(z) \,,
  \qquad \forall\, \st\in \BN \,,
\end{equation}
cf.~\eqref{eq:transfer-via-BGG}. The length $N=0$ case of~\eqref{eq:t-via-qq honest} recovers back
the character formula~\eqref{eq:character identity}.

\medskip
\noindent
To make this even more feasible in the classical types, let us recall that in each case
of~\eqref{eq:KR-classification}, the resulting action of the Yangian $Y(\fg)$ on the parabolic Verma module
$M^{\fp_{\{1,\ldots,r\}\setminus \{i\}}}_{\st\omega_i}$ is given by an explicit oscillator-type Lax matrix,
as has been emphasized in Section~\ref{ssec BCD-overview}.
Utilizing further the Weyl group symmetry of the rational $R$-matrices (\ref{eq:R-matrix-A},~\ref{eq:BCD-Rmatrix})
combined with the appropriate particle-hole automorphisms of the corresponding oscillator algebra $\CA$
(in order for our $\fg$-modules to be in the category $\CO$ of~\cite{bgg}) one obtains a family of Lax matrices
parametrized by
the same set ${}^{\fl}W$. This gives rise to $Y(\fg)$-modules $\{M^+_{w\cdot \st\omega_i}\}_{w\in {}^{\fl}W}$ explicitly
realized in the Fock representation $\Fock$ of the algebras $\CA$. As $\fg$-modules, they have the same characters as
$\{M'_{w\cdot \st\omega_i}\}_{w\in {}^{\fl}W}$ and furthermore the Fock vacuum $|0\rangle \in \Fock$ is a
highest weight vector of the highest weight $w\cdot \st\omega_i$. Thus, if $M'_{w\cdot\, \st\omega_i}$ is irreducible,
then $M^+_{w\cdot \st\omega_i}\simeq M'_{w\cdot \st\omega_i}$ and the corresponding transfer matrices
$T^+_{w,\st\omega_i}(z)=T_{M^+_{w\cdot\st\omega_i}}(z)$ and $T_{M'_{w\cdot \st\omega_i}}(z)$ coincide.
Combining this with the above observation that $M'_{w\cdot \st\omega_i}$ are generically irreducible for
$\st\in \BC$ and the fact that both transfer matrices depend continuously on the parameter $\st\in \BC$,
we obtain the uniform equality of the corresponding transfer matrices:
\begin{equation}\label{eq:T-coincide}
  T^+_{w,\st\omega_i}(z) = T_{M'_{w\cdot \st\omega_i}}(z) \qquad \forall\, \st\in \BC \,,\, w\in {}^{\fl}W  \,,
\end{equation}
even though for $w\ne \mathrm{id}$ the $\fg$-modules $M^+_{w\cdot \st\omega_i}$ and $M'_{w\cdot \st\omega_i}$
may be non-isomorphic at some $\st\in \BN$ (that are exactly the values featuring in~\eqref{eq:conjectured resolution 1}).
The equality~\eqref{eq:T-coincide} allows us to recast~\eqref{eq:t-via-qq honest}~as:

\medskip

\begin{MainTh}\label{mthm 2}
For a classical rank $r$ Lie algebra $\fg$, $1\leq i\leq r$ as in~\eqref{eq:KR-classification}, $\st\in \BN$, we have:
\begin{equation}\label{eq:T-via-T+ intro}
  T_{i,\st}(z)=\sum_{w\in {}^{\fl}W} (-1)^{l(w)} T^+_{w,\st\omega_i}(z) \,,
\end{equation}
expressing the finite-dimensional transfer matrix $T_{i,\st}(z)$ as an alternating sum of the infinite-dimensional
transfer matrices  $T^+_{w,\st\omega_i}(z)$ of the $Y(\fg)$-modules $M^+_{w\cdot \st\omega_i}$ explicitly
realized in the Fock $\CA$-module $\Fock$.
\end{MainTh}

\medskip
\noindent
We refer the reader to our Theorems~\ref{thm:Main-A},~\ref{thm:Main-C},~\ref{thm:Main-D},~\ref{thm:Main-BD}
for a case-by-case treatment presented in the way that highlights the combinatorial description
of ${}^{\fl}W$~(\ref{eq:shortest left},~\ref{eq:shortest left equiv}) in each case of~\eqref{eq:KR-classification}.

\medskip
\noindent
In type $D$, this establishes the key assumption from~\cite{ffk} (the joint work of the first-named author with
G.~Ferrando and V.~Kazakov) essential for the  study of the entire $QQ$-system in~\emph{loc.cit}.

\medskip
\noindent
Let us note that the \emph{BGG-relation}~\eqref{eq:T-via-T+ intro} allows to \underline{analytically continue} the transfer
matrices $T_{i,\st}(z)$ of the finite-dimensional representations $L_{\st\omega_i}$, $\st\in\BN$, to the entire complex
plane $\st\in \BC$. With this in mind, we establish the $\st$-symmetries of the resulting family of endomorphisms
$T_{i,\st}(z)$, see Propositions~\ref{prop:t-symmetry C},~\ref{prop:t-symmetry D},~\ref{prop:t-symmetry BD},
by crucially utilizing the $QQ$-factorisation which we discuss next.

\medskip

   %%%%%%%%%%%%%%%%%%%%%%%%%%%%%%%%%%%%%%%%%%%%%%%%%%%%%%%%%%%%%%%%%%%%%%%%%%%%%%%%%%%%%%%%%%%%%
   %%%%%%%%%%%%%%%%%%%%%%%%%%%%%%%%%%%%%%% SUBSECTION %%%%%%%%%%%%%%%%%%%%%%%%%%%%%%%%%%%%%%%%%%
   %%%%%%%%%%%%%%%%%%%%%%%%%%%%%%%%%%%%%%%%%%%%%%%%%%%%%%%%%%%%%%%%%%%%%%%%%%%%%%%%%%%%%%%%%%%%%

\subsection{Factorisations}\label{ssec factorisation-overview}
$\ $

The infinite-dimensional transfer matrices $T^+_{w,\st\omega_i}(z)$ factorise into products of $Q$-operators. This factorisation can be traced back to the factorisation of oscillator-type Lax matrices used to construct the transfer matrices. In the case of $U_q(\widehat{\ssl}_2)$, such  factorisation formula was initially proposed
in \cite{Bazhanov:1998dq}, see also \cite{kt}. Here, the Lax matrix entering the definition of the infinite-dimensional transfer matrix factorises
into the product of two degenerate Lax matrices that are used to construct Baxter $Q$-operators.
The degenerate Lax matrices employed in the factorisation are solutions
of~\eqref{eq:RTT} with degenerate coefficients of the leading term, which are no longer related to quantum groups. They have a long history, going back to~\cite{izergin,kuz}. The relation to
$Q$-operators is discussed for $U_q(\widehat{\ssl}_2)$ and $Y(\ssl_2)$ in~\cite{antonov,Bazhanov:1998dq,rossi, Korff,baz},
and for higher rank
cases in ~\cite{Bazhanov:2001xm,Bazhanov:2008yc,bgknr,Bazhanov:2010jq,Tsuboi:2019vvv}, while cases
beyond $A$-type were first found in~\cite{f,cgy,Frassek:2021ogy}.

\medskip
\noindent
By now, the role of the degenerate Lax matrices, arising from the shifted Yangians and viewed as certain
\emph{normalized limits} of the non-degenerate ones (as some of the representation labels tend to infinity),
is well understood. However the actual factorisation that relates Lax matrices of different kinds remains yet
to be understood. For a discussion of $A$-type we refer the reader to \cite{fp} (which also indicates an intriguing
relation to the cluster structures). For completeness of our exposition, we also refer the reader to the slightly
different approach \cite{Derkachov:2006fw,DerkachovFac,Derkachov:2010qe} going back to \cite{gp}.

\medskip
\noindent
Let us summarize the third main result of this paper that factorises the infinite-dimensional transfer matrices
$T^+_{w,\st\omega_i}(z)$ from the previous subsection into the product of two commuting $Q$-operators
(in type $D$, this was first observed in~\cite{f,ffk}, while an interpretation of this factorisation
in terms of the $4d$ Chern-Simons theory for general types has appeared very recently in~\cite[\S16]{cgy}):

\medskip

\begin{MainTh}\label{mthm 3}
For a classical rank $r$ Lie algebra $\fg$, an index $1\leq i\leq r$ as in~\eqref{eq:KR-classification},
a scalar $\st\in \BC$, and the element $w\in {}^{\fl}W$ as in~(\ref{eq:shortest left},~\ref{eq:shortest left equiv})
for the standard parabolic $\fp_{\{1,\ldots,r\}\setminus \{i\}}$, we have:
\begin{equation}\label{eq:T-via-QQ intro}
  T^+_{w,\st\omega_i}(z)=\ch^+_{w,\st\omega_i}\cdot\, Q^+_{w,\st\omega_i}(z) Q^-_{w,\st\omega_i}(z) \,,
\end{equation}
with the scalar factor $\ch^+_{w,\st\omega_i}$ arising as a trace of the $z$-independent twist
and the $Q$-operators $Q^\pm_{w,\st\omega_i}(z)$ arising in a similar fashion to $T^+_{w,\st\omega_i}(z)$
but rather from the degenerate Lax matrices.
\end{MainTh}

\medskip
\noindent
We refer the reader to our
formulas~(\ref{eq:T-via-QQ A-type},~\ref{eq:T-via-QQ C-type},~\ref{eq:T-via-QQ D-type},~\ref{eq:T-via-QQ BD-type})
for a case-by-case treatment presented in the way that highlights the aforementioned combinatorial description
of the set ${}^{\fl}W$ in each case.

\medskip
\noindent
Combining our Main Theorems~\ref{mthm 2} and~\ref{mthm 3}, we thus obtain expressions for the finite-dimensional
transfer matrices $T_{i,\st}(z)$ (with the index $i$ from~\eqref{eq:KR-classification} and $\st\in \BN$) in terms
of the above $Q$-operators, see Propositions~\ref{prop:Main-A-recasted},~\ref{prop:main-C factorized},
~\ref{prop:main-D factorized},~\ref{prop:main-BD factorized}.

\medskip

   %%%%%%%%%%%%%%%%%%%%%%%%%%%%%%%%%%%%%%%%%%%%%%%%%%%%%%%%%%%%%%%%%%%%%%%%%%%%%%%%%%%%%%%%%%%%%
   %%%%%%%%%%%%%%%%%%%%%%%%%%%%%%%%%%%%%%% SUBSECTION %%%%%%%%%%%%%%%%%%%%%%%%%%%%%%%%%%%%%%%%%%
   %%%%%%%%%%%%%%%%%%%%%%%%%%%%%%%%%%%%%%%%%%%%%%%%%%%%%%%%%%%%%%%%%%%%%%%%%%%%%%%%%%%%%%%%%%%%%

\subsection{Transfer matrices from the universal R-matrix}\label{ssec R-matrix}
$\ $

We conclude our Introduction with a more general, but less explicit, construction of
the transfer matrices of rational spin chains (trigonometric version of which is much better understood by now).
Let $\fg$ be a semisimple Lie algebra with a non-degenerate invariant form $(\cdot,\cdot)$ and $Y_\hbar(\fg)$ denote
the Drinfeld Yangian, which is a Hopf $\BC[\hbar]$-algebra deforming the current algebra
$Y_{\hbar=0}(\fg)\simeq U(\fg[u])$. As the specializations $Y_{\hbar=a}(\fg)\simeq Y_{\hbar=b}(\fg)$
are canonically isomorphic for $a,b\in \BC^\times$, we shall omit $\hbar$-dependence by rather considering
$Y(\fg)=Y_{\hbar=1}(\fg)$. The latter is a Hopf algebra with a coproduct
\begin{equation}\label{eq:Yangian coproduct}
  \Delta\colon Y(\fg)\to Y(\fg)\otimes Y(\fg) \,,
\end{equation}
and admits a one-parameter group of Hopf algebra automorphisms $\{\tau_a\}_{a\in \BC}$, quantizing
the shift automorphisms $\{\bar{\tau}_a\}_{a\in \BC}$ of $U(\fg[u])$ given by $Xu^k\mapsto X(u+a)^k$
for $X\in \fg$ and $k\in \BN$, which may be further viewed (upon replacing $a\in \BC$ with a formal variable $z$)
as an algebra homomorphism
\begin{equation}\label{eq:Yangian shift}
  \tau_z\colon Y(\fg)\to Y(\fg)[z] \,.
\end{equation}
Let $\Omega_\fg\in \fg\otimes \fg$ be the Casimir tensor corresponding to $(\cdot,\cdot)$,
and $\Delta^{\opp}$ be the opposite coproduct.

\medskip

\begin{Thm-non}[{\cite[Theorem 3]{d}}]
There is a unique formal series
\begin{equation}\label{eq:universal-R}
  \CR(z)=1+\sum_{k=1}^\infty \CR_k z^{-k} \in Y(\fg)\otimes Y(\fg) [[z^{-1}]]
\end{equation}
satisfying the following relations:
\begin{equation}
\begin{split}
  \mathrm{intertwining\ identity}\colon \quad
  & (\mathrm{id}\otimes \tau_z) \Delta^{\opp}(y) =
    \CR(z)^{-1} \cdot (\mathrm{id}\otimes \tau_z)\Delta(y) \cdot \CR(z)
    \quad \forall\, y\in Y(\fg) \,, \\
  \mathrm{cabling\ identity}\colon \quad
  &    (\mathrm{id}\otimes \Delta)(\CR(z))=\CR_{12}(z) \CR_{13}(z) \,.
\end{split}
\end{equation}
It also satisfies the quantum Yang-Baxter equation~\eqref{eq:YBe}
and is called \textbf{the universal $R$-matrix}.
Moreover, $\CR(z)$ also satisfies the following identities:
\begin{equation}\label{eq:R-properties}
  \CR(z)=1+ \Omega_\fg\cdot z^{-1}+O(z^{-2}) \,, \quad
  \CR(z)^{-1}=\CR_{21}(-z) \,, \quad
  (\tau_a\otimes \tau_b) \CR(z)=\CR(z+b-a) \,.
\end{equation}
\end{Thm-non}

\medskip
\noindent
For any two representations $\rho^V\colon Y(\fg)\to \End(V)$ and $\rho^W\colon Y(\fg)\to \End(W)$,
consider the evaluation of $\CR(z)$:
\begin{equation}\label{specialized-R}
  R^{VW}(z) = (\rho^V\otimes \rho^W)(\CR(z))\in \End(V)\otimes \End(W)[[z^{-1}]] \,.
\end{equation}
For $W=V$, $R^{VV}(z)$ is thus a solution of the quantum Yang-Baxter equation~\eqref{eq:YBe}.
For irreducible finite-dimensional $V$ and $W$, $R^{VW}(z)$ is actually
a rational function in $z$, up to an overall (possibly divergent) power series $f(z)$,
see~\cite[Theorem 4]{d} (cf.~\cite[Theorem 3.10]{grw} for more~details).
This way one recovers the rational $R$-matrices~\eqref{eq:R-matrix-A} and~\eqref{eq:BCD-Rmatrix}.
Indeed, for $\fg=\ssl_n$ and $V=\BC^n$, we have $R^{VV}(z)=f(z)R(z)$ with $R(z)$ as in~\eqref{eq:R-matrix-A},
according to~\cite[Example 1]{d}. Likewise, for $\fg=\sso_{\NK}, \ssp_{\NK}$ and $V=\BC^{\NK}$,
we have $R^{VV}(z)=f(z)R(z)$ with $R(z)$ as in~\eqref{eq:BCD-Rmatrix}, due to~\cite[Proposition 3.13]{grw}.

\medskip
\noindent
For any $Y(\fg)$-module $W$ and a group-like element $x$ in an appropriate completion of $Y(\fg)$, consider
\begin{equation}\label{eq:transfer-def}
  \CT_{W,x}(z) = (\mathrm{id}\otimes \tr_W)\left((1\otimes x)\CR(z)\right)
\end{equation}
whenever the latter is well-defined. The above properties of the universal $R$-matrix $\CR(z)$ imply:
\begin{equation}\label{eq:transfer-product}
  \CT_{W_1\oplus W_2,x}(z)=\CT_{W_1,x}(z) + \CT_{W_2,x}(z) \,, \qquad
  \CT_{W_1\otimes W_2,x}(z)=\CT_{W_1,x}(z) \cdot \CT_{W_2,x}(z) \,.
\end{equation}
For a $Y(\fg)$-module $W$ and $a\in \BC$, we set $W(a)=\tau_a^*(W)$.
If further $\tau_z(x)=x$, then~\eqref{eq:R-properties} implies:
\begin{equation}\label{eq:transfer-shift}
  \CT_{W(a),x}(z) = \CT_{W,x}(z+a) \qquad \forall\, a\in \BC \,.
\end{equation}
Combining~(\ref{eq:transfer-product},~\ref{eq:transfer-shift}), we get the commutativity of the resulting
\textbf{universal transfer matrices}:
\begin{equation}\label{eq:transfer-comm}
  \CT_{W_1,x}(z+a) \cdot \CT_{W_2,x}(z+b) = \CT_{W_2,x}(z+b) \cdot \CT_{W_1,x}(z+a) \qquad \forall\, a,b\in \BC\,.
\end{equation}
Thus, for every finite-dimensional representation $\rho^V\colon Y(\fg)\to \End(V)$ we obtain a
commuting family of endomorphisms of $V$, defined by extracting the coefficients of the power series
\begin{equation}\label{eq:transfer-specialize}
  T_W(z)=\rho^V(\mathcal{T}_{W,x}(z)) \in \End(V)[[z^{-1}]] \,,
\end{equation}
as we vary the auxiliary representation $W$ (we suppress $x$ in $T_W(z)$ for simplicity of notation).

\medskip
\noindent
The explicit constructions of the present paper should arise as particular examples of this general setup
with $V=(\BC^n)^{\otimes N}$ for $\fg=\gl_n$ (resp.\ $V=(\BC^{\NK})^{\otimes N}$ for $\fg=\sso_{\NK},\ssp_{\NK}$),
the $Y(\fg)$-modules $W$ being isomorphic to the modules $\{M'_{w\,\cdot \st\omega_i}\}$ as $\fg$-modules
with $i$ from~\eqref{eq:KR-classification}, and finally $x=\exp(h)$ for a general Cartan element $h\in \fh\subset \fg$
(equivalently, $x=\prod_{i=1}^{\mathrm{rk}(\fg)} \tau_i^{\epsilon^*_{i}}$ with $\epsilon^*_i$ being a basis of $\fh$).

\medskip

   %%%%%%%%%%%%%%%%%%%%%%%%%%%%%%%%%%%%%%%%%%%%%%%%%%%%%%%%%%%%%%%%%%%%%%%%%%%%%%%%%%%%%%%%%%%%%
   %%%%%%%%%%%%%%%%%%%%%%%%%%%%%%%%%%%%%%% SUBSECTION %%%%%%%%%%%%%%%%%%%%%%%%%%%%%%%%%%%%%%%%%%
   %%%%%%%%%%%%%%%%%%%%%%%%%%%%%%%%%%%%%%%%%%%%%%%%%%%%%%%%%%%%%%%%%%%%%%%%%%%%%%%%%%%%%%%%%%%%%

\subsection{Outline of the paper}
\

The structure of the present paper is as follows. In Section~\ref{sec: truncated BGG resolutions}, we construct (by using a geometric approach) the novel BGG-type resolutions~\eqref{eq:conjectured resolution 1} and~\eqref{eq:conjectured resolution 2} on which the functional relations presented in this article are based on. The reader interested only in the functional relations can start in Section~\ref{sec: A-type Verma BGG} where we recall the well studied case of $A$-type and the standard BGG resolution. In Section~\ref{sec A-rectangular}, we apply the new BGG-type resolutions to type $A$ recovering functional relations that follow from the standard BGG resolution. Section~\ref{sec C-spinor}, Section~\ref{sec:spinD}, and Section~\ref{sec:BDff} are dedicated to the functional relations obtained from the BGG-type resolutions for type $BCD$. The factorisation of the corresponding infinite-dimensional transfer matrices is then discussed in Sections~\ref{sec linear factorizations} and~\ref{sec:facquad}. Finally, we mention some generalizations (to be presented elsewhere) of our work in Section~\ref{sec:further}.

\medskip

   %%%%%%%%%%%%%%%%%%%%%%%%%%%%%%%%%%%%%%%%%%%%%%%%%%%%%%%%%%%%%%%%%%%%%%%%%%%%%%%%%%%%%%%%%%%%%
   %%%%%%%%%%%%%%%%%%%%%%%%%%%%%%%%%%%%%%% SUBSECTION %%%%%%%%%%%%%%%%%%%%%%%%%%%%%%%%%%%%%%%%%%
   %%%%%%%%%%%%%%%%%%%%%%%%%%%%%%%%%%%%%%%%%%%%%%%%%%%%%%%%%%%%%%%%%%%%%%%%%%%%%%%%%%%%%%%%%%%%%

\subsection{Acknowledgments}
\

R.F.\ and A.T.\ are indebted to Vasily Pestun for the inspiring discussions and the collaboration on~\cite{fpt},
thus bringing them together close to the subject of the present note. R.F.\ is grateful to Gwena\"{e}l Ferrando
and Volodya Kazakov for fruitful discussions. I.K.\ and A.T.\ are extremely grateful to Boris Feigin and
Michael Finkelberg  for the enlightening discussions of the BGG-type resolutions. A.T.\ is deeply grateful to
Kevin Costello for a correspondence on the inspiring physics paper~\cite{cgy}; to David Hernandez for a correspondence
on $q$-characters; to Sachin Gautam for a discussion of the Yangian's universal $R$-matrices; to IHES (Bures-sur-Yvette)
for the hospitality and great working conditions in July 2021 when the first stages of the present work took place.
We are grateful to Zengo Tsuboi and the anonymous referee for useful suggestions and comments.

\medskip
\noindent
%R.F.\ received  support of the German research foundation (Deutsche Forschungsgemeinschaft DFG) Research
%Fellowships Programme $416527151$.
R.F.\ received funding of the German research foundation (Deutsche Forschungsgemeinschaft DFG)
Research Fellowships Programme $416527151$ and support of the GNFM - INdAM.
A.T.\ would like to gratefully acknowledge the support from NSF Grant
DMS-$2037602$. 
%This project has received funding from the European Union's Horizon $2020$ research and innovation programme (QUASIFT grant agreement $677368$).
This project has received funding from the European Research Council (ERC) under the European Union's Horizon $2020$ research 
and innovation programme (QUASIFT grant agreement $677368$).

$\ $

   %%%%%%%%%%%%%%%%%%%%%%%%%%%%%%%%%%%%%%%%%%%%%%%%%%%%%%%%%%%%%%%%%%%%%%%%%%%%%%%%%%%%%%%%%%%%%
   %%%%%%%%%%%%%%%%%%%%%%%%%%%%%%%%%%%%%%%% SECTION 2 %%%%%%%%%%%%%%%%%%%%%%%%%%%%%%%%%%%%%%%%%%
   %%%%%%%%%%%%%%%%%%%%%%%%%%%%%%%%%%%%%%%%%%%%%%%%%%%%%%%%%%%%%%%%%%%%%%%%%%%%%%%%%%%%%%%%%%%%%

\section{Truncated BGG resolutions as Cousin complexes}\label{sec: truncated BGG resolutions}

In this section, we construct both resolutions~\eqref{eq:conjectured resolution 1} and~\eqref{eq:conjectured resolution 2}
by interpreting their ``restricted dual'' as Cousin complexes of relative local cohomology groups, in the spirit
of~\cite{k,b,mr,ku}.

\medskip

   %%%%%%%%%%%%%%%%%%%%%%%%%%%%%%%%%%%%%%%%%%%%%%%%%%%%%%%%%%%%%%%%%%%%%%%%%%%%%%%%%%%%%%%%%%%%%
   %%%%%%%%%%%%%%%%%%%%%%%%%%%%%%%%%%%%%%% SUBSECTION %%%%%%%%%%%%%%%%%%%%%%%%%%%%%%%%%%%%%%%%%%
   %%%%%%%%%%%%%%%%%%%%%%%%%%%%%%%%%%%%%%%%%%%%%%%%%%%%%%%%%%%%%%%%%%%%%%%%%%%%%%%%%%%%%%%%%%%%%

\subsection{Cohomology with relative support and Cousin complexes}\label{ssec relative cohomology}
$\ $

Let $X$ be a topological space and $Z\subset X$ be a closed subset. Consider the functor $\Gamma_Z$ sending a sheaf
$\CF$ of abelian groups on $X$ to the module $\ker(\Gamma(X,\CF)\to \Gamma(X-Z,\CF))$. Let $\CF\mapsto H^i_Z(X,\CF)$
denote the $i$-th right derived functor of $\Gamma_Z(X,-)$, the \textbf{$i$-th cohomology group of $\CF$ with support
on a closed subset $Z$}. This construction admits the following relative version. Suppose that $A$ and $B$ are two
closed subsets of $X$ such that $B \subset A$. Consider the functor $\Gamma_{A/B}$ which sends a sheaf $\CF$
of abelian groups on $X$ to the module $\coker(\Gamma_{B}(X,\CF)\to \Gamma_{A}(X,\CF))$.
Let $\CF\mapsto H^i_{A/B}(X,\CF)$ denote the $i$-th right derived functor\footnote{The issues with $\coker$ not being left exact
are carefully resolved by Kempf in the beginning of~\cite[\S7]{k}.} of $\Gamma_{A/B}(X,-)$, the
\textbf{$i$-th cohomology group of $\CF$ with relative support $(A, B)$}. We note that
$H^i_A(X,\CF)=H^i_{A/\emptyset}(X,\CF)$ and $H^i_X(X,\CF)=H^i(X,\CF)$.

\medskip
\noindent
\begin{Lem}[{\cite{k}}]\label{lem: basic properties}
The functor $H^i_{A/B}(X,-)$ satisfies the following properties:

\medskip
\noindent
(a) There is a long exact sequence
\begin{equation}\label{eq:LES}
  \cdots \to  H^i_{B}(X,\CF) \to H^i_{A}(X,\CF) \to H^i_{A/B}(X,\CF) \to H^{i+1}_{B}(X,\CF) \to \cdots
\end{equation}

\medskip
\noindent
(b) Every inclusion of closed subsets $C \subseteq B$ induces a natural morphism
\begin{equation}\label{eq:naturality}
  H^i_{A/C}(X,\CF) \to H^i_{A/B}(X,\CF)
\end{equation}
functorial with respect to $A$ and $B$, and a morphism of the corresponding exact sequences~\eqref{eq:LES}.

\medskip
\noindent
(c) There is an ``excision'' isomorphism:
\begin{equation}\label{eq:excision}
  H^i_{A/B}(X,\CF) \, \iso \, H^i_{A\setminus B}(X \setminus B,\CF)
\end{equation}
functorial with respect to $A$ and $B$ (here, $A\setminus B$ denotes the complement of $B$ in $A$).
\end{Lem}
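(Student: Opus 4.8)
The plan is to verify each of the three properties of the relative local cohomology functor $H^i_{A/B}(X,-)$ essentially by reduction to standard facts about cohomology with supports in a single closed subset, using the short exact sequence of functors relating $\Gamma_B$, $\Gamma_A$, and $\Gamma_{A/B}$.

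\emph{Step 1: the long exact sequence (a).} First I would record that for $B\subseteq A$ there is a natural transformation $\Gamma_B(X,-)\to \Gamma_A(X,-)$, which on sections is simply the inclusion $\ker(\Gamma(X,\CF)\to \Gamma(X-B,\CF))\hookrightarrow \ker(\Gamma(X,\CF)\to \Gamma(X-A,\CF))$ (well-defined since $X-A\subseteq X-B$); this map is \emph{injective} on sections. By definition $\Gamma_{A/B}(X,-)=\coker(\Gamma_B(X,-)\to \Gamma_A(X,-))$, so on the level of (left exact) functors we get a short exact sequence $0\to \Gamma_B(X,-)\to \Gamma_A(X,-)\to \Gamma_{A/B}(X,-)\to 0$. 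Applying this to an injective resolution $\CF\to I^\bullet$ and taking global sections with the respective supports yields a short exact sequence of complexes $0\to \Gamma_B(X,I^\bullet)\to \Gamma_A(X,I^\bullet)\to \Gamma_{A/B}(X,I^\bullet)\to 0$, provided the sequence stays exact on injectives. The associated long exact cohomology sequence is then exactly~\eqref{eq:LES}. The one genuine point to check is that the cokernel term computes the derived functors of $\Gamma_{A/B}$; this holds because $\Gamma_A$ and $\Gamma_B$ both send injectives to acyclic (indeed injective, after a standard flabbiness argument) objects, so $\Gamma_{A/B}(X,I^\bullet)$ has cohomology equal to the right derived functors $H^i_{A/B}(X,\CF)$ in the range where the quotient complex is exact. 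I expect this is the main technical obstacle: one must be careful that $\Gamma_B(X,I)\to\Gamma_A(X,I)$ is surjective for $I$ injective, which follows from flabbiness of injective sheaves together with the fact that sections supported on $A$ can be cut down to sections supported on $B$ after restricting along the open complement (this is essentially the excision input again).

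\emph{Step 2: naturality (b).} For an inclusion of closed subsets $C\subseteq B\subseteq A$, the inclusions of kernels give a commuting square of functors relating $\Gamma_C\to\Gamma_A$ and $\Gamma_B\to\Gamma_A$, hence an induced map of cokernels $\Gamma_{A/C}(X,-)\to \Gamma_{A/B}(X,-)$. Deriving this natural transformation (again via a common injective resolution) produces the morphism~\eqref{eq:naturality}, and since all the maps in Step~1 were built functorially from inclusions of supports, the resulting map commutes with the connecting homomorphisms, giving the claimed morphism of long exact sequences~\eqref{eq:LES}. Functoriality in $A$ and in $B$ is formal from the same considerations.

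\emph{Step 3: excision (c).} Here I would invoke the classical excision isomorphism for cohomology with supports, $H^i_{A\setminus B}(X\setminus B,\CF)\cong H^i_{A}(X,\CF)$ when combined appropriately, but the cleanest route is: restriction to the open set $U=X\setminus B$ is exact and sends injectives to injectives, and one has $\Gamma_{A/B}(X,\CF)\cong \Gamma_{A\setminus B}(U,\CF|_U)$ on sections because a global section supported in $A$ modulo one supported in $B$ is the same datum as a section of $\CF|_U$ supported in the closed subset $A\cap U=A\setminus B$ of $U$ (a section supported in $B$ restricts to zero on $U$, and conversely any section on $U$ supported in $A\setminus B$ extends by zero along $B$ up to something supported in $B$, because $B$ is closed). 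Deriving this isomorphism of functors — using that $-|_U$ preserves injectives — yields~\eqref{eq:excision}, and the functoriality in $A$ and $B$ is inherited from that of the section-level identification. All three parts being standard homological bookkeeping once the key surjectivity/flabbiness point in Step~1 is settled, I would cite~\cite{k} for the original treatment and only spell out the short exact sequence of functors and the excision identification in detail.
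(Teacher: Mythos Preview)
The paper does not give its own proof of this lemma: it is stated with attribution to~\cite{k} and used as a black box. So there is no in-paper argument to compare against; your sketch is a reasonable reconstruction of the standard proof and would serve as an adequate justification.

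Two small points. In Step~1 your stated ``genuine point to check'' is confused: the map $\Gamma_B(X,I)\to\Gamma_A(X,I)$ is an inclusion and is essentially never surjective (that would force $A=B$). No surjectivity is needed. Since $\Gamma_{A/B}$ is \emph{defined} as the cokernel, the sequence $0\to\Gamma_B(X,I^j)\to\Gamma_A(X,I^j)\to\Gamma_{A/B}(X,I^j)\to 0$ is exact termwise by definition, and the cohomology of the right-hand complex is $R^\bullet\Gamma_{A/B}(X,\CF)=H^\bullet_{A/B}(X,\CF)$ tautologically; the long exact sequence then drops out immediately. In Step~3, the section-level identification $\Gamma_{A/B}(X,\CF)\cong\Gamma_{A\setminus B}(X\setminus B,\CF)$ does \emph{not} hold for arbitrary $\CF$ (surjectivity requires extending a section from $X\setminus B$ to $X$); it holds for flabby, hence injective, sheaves, which is all you need since both sides are computed via injective resolutions and restriction to the open $X\setminus B$ preserves injectives. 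With those clarifications your argument is correct.
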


\medskip
\noindent
We will also need the following simple corollary of Lemma~\ref{lem: basic properties} and the above definitions:

\medskip
\noindent
\begin{Lem}\label{lem: cl-excision}
For any two disjoint closed subsets $Z_1$ and $Z_2$ of $X$ there exist isomorphisms:
\begin{equation}\label{eq:cl-excision}
  H^i_{Z_1}(X,\CF) \, \iso \, H^i_{Z_1}(X\setminus Z_2,\CF|_{X\setminus Z_2})
\end{equation}
and
\begin{equation}\label{eq:disjoint-excision}
  H^i_{Z_1\sqcup Z_2}(X,\CF) \, \iso \, H^i_{Z_1}(X,\CF) \oplus H^i_{Z_2}(X,\CF)
\end{equation}
functorial with respect to $Z_1$ and $Z_2$.
\end{Lem}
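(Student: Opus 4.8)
The plan is to deduce Lemma~\ref{lem: cl-excision} as a quick consequence of Lemma~\ref{lem: basic properties}, treating each of the two claimed isomorphisms in turn.

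For~\eqref{eq:cl-excision}, I would first apply the excision isomorphism~\eqref{eq:excision} with $A=Z_1$ and $B=Z_2$: since $Z_1\cap Z_2=\emptyset$, we have $A\setminus B = Z_1\setminus Z_2 = Z_1$, so~\eqref{eq:excision} reads $H^i_{Z_1/Z_2}(X,\CF)\iso H^i_{Z_1}(X\setminus Z_2,\CF|_{X\setminus Z_2})$. It then remains to identify the left-hand side with $H^i_{Z_1}(X,\CF)$. For this I would invoke the long exact sequence~\eqref{eq:LES} with $A=Z_1$, $B=Z_2$, namely $\cdots\to H^i_{Z_2}(X,\CF)\to H^i_{Z_1}(X,\CF)\to H^i_{Z_1/Z_2}(X,\CF)\to H^{i+1}_{Z_2}(X,\CF)\to\cdots$; one must observe that here $H^i_{Z_1}(X,\CF)$ really means $H^i_{Z_1\cup Z_2}(X,\CF)$ in the notation where $B\subseteq A$, so more carefully I take $A = Z_1\sqcup Z_2$ and $B = Z_2$ in~\eqref{eq:LES}, giving the map $H^i_{Z_1\sqcup Z_2}(X,\CF)\to H^i_{(Z_1\sqcup Z_2)/Z_2}(X,\CF)$ with the relative cohomology term again excising to $H^i_{Z_1}(X\setminus Z_2,\CF|_{X\setminus Z_2})$. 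So the cleanest route is actually to combine the two displays: it suffices to prove~\eqref{eq:disjoint-excision} first and then feed it into the long exact sequence.

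So I would reorganize as follows. First establish~\eqref{eq:disjoint-excision}: the inclusion $\Gamma_{Z_1}(X,\CF)\hookrightarrow\Gamma_{Z_1\sqcup Z_2}(X,\CF)$ and likewise for $Z_2$ assemble into a map $\Gamma_{Z_1}(X,\CF)\oplus\Gamma_{Z_2}(X,\CF)\to\Gamma_{Z_1\sqcup Z_2}(X,\CF)$, and because $Z_1$ and $Z_2$ are disjoint closed sets a section of $\CF$ supported on $Z_1\sqcup Z_2$ is uniquely the sum of its restrictions supported on each piece (locally near $Z_1$ it agrees with a section supported on $Z_1$, and similarly near $Z_2$; away from both it vanishes), so this is an isomorphism of functors $\Gamma_{Z_1}\oplus\Gamma_{Z_2}\iso\Gamma_{Z_1\sqcup Z_2}$ on sheaves of abelian groups. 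Passing to right derived functors and using that derived functors commute with finite direct sums yields~\eqref{eq:disjoint-excision}, functorially in $Z_1,Z_2$. Then, for~\eqref{eq:cl-excision}, apply~\eqref{eq:LES} with $A=Z_1\sqcup Z_2$, $B=Z_2$: the terms $H^i_{Z_2}(X,\CF)$ split off by~\eqref{eq:disjoint-excision} as direct summands of $H^i_{A}(X,\CF)$ compatibly with the long exact sequence, so the connecting maps vanish and the sequence degenerates to short exact sequences $0\to H^i_{Z_2}(X,\CF)\to H^i_{Z_1\sqcup Z_2}(X,\CF)\to H^i_{(Z_1\sqcup Z_2)/Z_2}(X,\CF)\to 0$, forcing $H^i_{(Z_1\sqcup Z_2)/Z_2}(X,\CF)\iso H^i_{Z_1}(X,\CF)$ via the same decomposition; combining with the excision isomorphism~\eqref{eq:excision}, which identifies $H^i_{(Z_1\sqcup Z_2)/Z_2}(X,\CF)$ with $H^i_{Z_1}(X\setminus Z_2,\CF|_{X\setminus Z_2})$, gives~\eqref{eq:cl-excision}. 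Functoriality in $Z_1$ and $Z_2$ is inherited at each step from parts (b) and (c) of Lemma~\ref{lem: basic properties}.

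The only genuinely delicate point — the ``main obstacle'', such as it is — is the bookkeeping around what the subscript in $H^i_{Z_1}$ means when $Z_1$ is not assumed closed in all ambient spaces in sight, and making sure the splitting~\eqref{eq:disjoint-excision} is compatible with the long exact sequence~\eqref{eq:LES} so that the connecting homomorphisms really do vanish; this is a purely formal diagram chase once one notes that the inclusion $\Gamma_{Z_2}\hookrightarrow\Gamma_{Z_1\sqcup Z_2}$ used to build~\eqref{eq:LES} is exactly the inclusion of the direct summand in~\eqref{eq:disjoint-excision}. Everything else is immediate from Lemma~\ref{lem: basic properties} and the disjointness of $Z_1$ and $Z_2$.
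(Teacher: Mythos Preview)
Your proposal is correct and matches the paper's approach: the paper simply states that Lemma~\ref{lem: cl-excision} is a ``simple corollary of Lemma~\ref{lem: basic properties} and the above definitions'' without further details, and your reorganized argument (establishing~\eqref{eq:disjoint-excision} first via the functor isomorphism $\Gamma_{Z_1}\oplus\Gamma_{Z_2}\iso\Gamma_{Z_1\sqcup Z_2}$, then feeding it into the long exact sequence~\eqref{eq:LES} with $A=Z_1\sqcup Z_2$, $B=Z_2$ and combining with excision~\eqref{eq:excision}) is exactly the intended spelling-out. The only cosmetic issue is the false start with $A=Z_1$, $B=Z_2$ (which fails the hypothesis $B\subset A$), but you catch and correct this yourself.
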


\medskip
\noindent
Let now $X$ be a smooth algebraic variety. Let $\CO_X$ be the structure sheaf and $\CD_X$ be the sheaf
of algebraic differential operators of finite order on $X$. We also define $\CD=\Gamma(X,\CD_X)$,
the \textbf{algebra of global differential operators on $X$}. The crucial observation is that
the above constructions remain true in the $D$-module theoretic setting:

\medskip
\noindent
\begin{Lem}[{\cite[\S2]{b}}]\label{lem: D-setting}
For a coherent sheaf $\CF$ of left $\CD_X$-modules, all cohomology groups $H^i_{A/B}(X,\CF)$ carry the natural $\CD$-action.
Moreover, all maps in Lemmas~\ref{lem: basic properties},~\ref{lem: cl-excision} are $\CD$-equivariant.
\end{Lem}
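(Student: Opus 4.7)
The plan is to upgrade all the constructions of Lemmas~\ref{lem: basic properties} and~\ref{lem: cl-excision} from the category of sheaves of abelian groups to the category of left $\CD_X$-modules, and then deduce the $\CD$-equivariance from functoriality.

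First, I would observe that for any sheaf $\CF$ of left $\CD_X$-modules and any open $U\subseteq X$, the module $\Gamma(U,\CF)$ carries a natural action of $\CD=\Gamma(X,\CD_X)$, induced by the $\CD_X$-action on $\CF$ composed with the restriction $\CD\to \Gamma(U,\CD_X)$. For $V\subseteq U$ open, the restriction $\Gamma(U,\CF)\to \Gamma(V,\CF)$ is $\CD$-equivariant, because differential operators commute with restriction to open subsets. Taking $V=X\setminus Z$ (respectively, forming the cokernel of the map coming from $V=X\setminus A\subseteq X\setminus B$) shows that $\Gamma_Z(X,\CF)$ and $\Gamma_{A/B}(X,\CF)$ inherit natural $\CD$-actions, so these functors take $\CD_X$-modules to $\CD$-modules.

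Next, I would compute the derived functors using a resolution that already lives in the category of $\CD_X$-modules. The Godement construction applied to a $\CD_X$-module $\CF$ produces a canonical flabby resolution $\CF\to \mathcal{G}^\bullet$ whose terms are themselves sheaves of $\CD_X$-modules (the Godement sheaf is built stalkwise from a product of skyscraper sheaves, each of which inherits the local $\CD_X$-action). Flabby sheaves are acyclic for $\Gamma(X,-)$ and $\Gamma_Z(X,-)$, and hence for $\Gamma_{A/B}(X,-)$ by the long exact sequence~\eqref{eq:LES}, so $H^i_{A/B}(X,\CF)=H^i(\Gamma_{A/B}(X,\mathcal{G}^\bullet))$. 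By the previous paragraph, $\Gamma_{A/B}(X,\mathcal{G}^\bullet)$ is a complex of $\CD$-modules with $\CD$-equivariant differentials, and its cohomology inherits the natural $\CD$-action claimed.

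Finally, every map appearing in Lemmas~\ref{lem: basic properties} and~\ref{lem: cl-excision} arises from a natural transformation of functors on $\CD_X$-modules: the long exact sequence~\eqref{eq:LES} is the connecting homomorphism of a short exact sequence of $\CD_X$-module complexes obtained from the defining sequence $0\to \Gamma_B\to \Gamma_A\to \Gamma_{A/B}\to 0$ applied to $\mathcal{G}^\bullet$; the naturality~\eqref{eq:naturality}, the excision~\eqref{eq:excision}, and the formulas~\eqref{eq:cl-excision} and~\eqref{eq:disjoint-excision} all come from restriction maps of $\CD_X$-modules to open subsets, which are $\CD$-equivariant by the first step. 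Hence the induced maps on cohomology are automatically $\CD$-equivariant. The main technical point I anticipate is the verification that the Godement construction genuinely lifts to the category of $\CD_X$-modules and that the resulting flabby terms remain acyclic for the relative functors $\Gamma_{A/B}(X,-)$; this is standard but must be checked locally, and alternatively one can simply invoke~\cite[\S2]{b} cited in the statement.
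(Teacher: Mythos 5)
Your proposal is correct, and it should be noted that the paper itself gives no proof of this lemma but simply cites Brylinski~\cite[\S2]{b}. Your argument via the Godement resolution is a standard and valid way to supply the missing details.

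The two key observations you make are exactly what is needed: (i) the Godement construction, applied to a sheaf of $\CD_X$-modules, produces a flabby resolution entirely within the category of $\CD_X$-modules, because $\prod_{x\in U}\CF_x$ inherits a $\CD_X(U)$-action via the germ map $\CD_X(U)\to\CD_{X,x}$ and all the structure maps are $\CD_X$-linear; and (ii) flabby sheaves are acyclic for $\Gamma_{A/B}(X,-)$, which you correctly deduce from the long exact sequence~\eqref{eq:LES}. Since the relative local cohomology $H^i_{A/B}(X,-)$ in the paper is defined as the derived functor in the category of abelian sheaves, computing it via the Godement resolution (which is acyclic in that category) and observing that the resulting complex of global sections consists of $\CD$-modules with $\CD$-equivariant differentials gives the claimed $\CD$-action canonically. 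The functoriality of the Godement construction then ensures the $\CD$-equivariance of all the maps in Lemmas~\ref{lem: basic properties} and~\ref{lem: cl-excision}, since each arises from restriction to opens or from connecting homomorphisms of short exact sequences of $\CD_X$-module complexes. An equally common alternative, and arguably what Brylinski has in mind, is to resolve by injectives in the abelian category of $\CD_X$-modules and use that such injectives are flabby; your route via Godement sheaves avoids having to invoke that fact and is slightly more elementary.

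One small point worth flagging: the hypothesis that $\CF$ is \emph{coherent} plays no role in your argument (nor is it needed for the bare existence of the $\CD$-action). Coherence is relevant for the finer structure theory used downstream (e.g.\ the vanishing in Theorem~\ref{thm:purity 2}), and the paper likely states it here only for uniformity with the later applications.
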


\medskip
\noindent
Suppose now that $X$ is a $G$-variety. Then, there is an evident Lie algebra homomorphism $\fg \to \CD$ (the target endowed with the commutator bracket).
It obviously induces a
$\fg$-action on $H^\bullet_{A/B}(X,\CF)$. However, whenever considering $H^\bullet_{A/B}(X,\CF)$ as a $\fg$-module,
we will be interested not literally in this $\fg$-action, but in the one
\textbf{twisted by the Chevalley involution $\phi$ of $\fg$} determined by:
\begin{equation}\label{eq:twist automorphism}
  \phi\colon \quad h\mapsto -h \,, \qquad e_\alpha \mapsto e_{-\alpha} \,, \qquad \forall\ h\in \fh \,,\ \alpha\in \Delta \,.
\end{equation}

\medskip
\noindent
Let us now recall the key tool of Cousin complexes (our exposition closely follows that of~\cite[\S3]{mr}).
Suppose that $X$ is a topological space equipped with a (\underline{not necessarily exhaustive}) filtration
\begin{equation}\label{eq:Z-filtration}
  Z_n \subseteq Z_{n-1} \subseteq \dots \subseteq Z_0 \subseteq X
\end{equation}
of closed subsets, and let $\CE$ be a sheaf of abelian groups on $X$. Picking a flabby resolution $\CE^\bullet$ of $\CE$
and considering the mapping cones $C_j=C(\Gamma_{Z_{j+1}}(X,\CE^\bullet)\to \Gamma_{Z_{j}}(X,\CE^\bullet))$, whose cohomology
are naturally isomorphic to the relative cohomology $H^\bullet_{Z_{j}/Z_{j+1}}(X,\CE)$, one can construct a double complex
$C_{\bullet,\bullet}$ with exact rows and whose $j$-th column is $C_j[j]$, the $j$-th cone $C_j$ shifted by degree $j$.
Then, on the one hand, the exactness of rows implies that the cohomology of the total complex $\mathrm{Tot}(C_{\bullet,\bullet})$
is isomorphic to the cohomology of $\Gamma_{Z_0}(X,\CE^\bullet)$, i.e.\ to $H^\bullet_{Z_0}(X,\CE)$. On the other hand,
the vertical cohomology of this double complex $C_{\bullet,\bullet}$ is $H^{k+j}_{Z_j/Z_{j+1}}(X,\CE)$, as noted above,
and the horizontal differential in the $k$-th row gives rise to the so-called \textbf{$k$-th Cousin complex}:
\begin{equation}\label{eq:Cousin}
  \mathcal{C}_k\colon \qquad
  H_{Z_{0}/Z_{1}}^{k}(X,\CE) \to H_{Z_{1}/Z_{2}}^{k+1}(X,\CE) \to H_{Z_{2}/Z_{3}}^{k+2}(X,\CE) \to \cdots
\end{equation}
Therefore, by applying the vertical spectral sequence of the double complex $C_{\bullet,\bullet}$,
we obtain:\footnote{While the Cousin complexes were introduced by Grothendieck and were first applied in the above
context~in~\cite{k},
we choose to follow the exposition of~\cite{mr} for its simplicity.}

\medskip
\noindent

\begin{Thm}[{\cite{mr}}]\label{thm:Cousin use}
If all except the $k$-th Cousin complexes are zero, then we have:
\begin{equation}\label{eq:cohomology equality}
  H^\bullet(\mathcal{C}_k) = H^\bullet_{Z_0}(X,\CE) \,.
\end{equation}
\end{Thm}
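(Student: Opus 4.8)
The plan is to run the standard spectral sequence of a double complex, specialized to the geometry set up above. First I would make precise the double complex $C_{\bullet,\bullet}$: its columns are the shifted mapping cones $C_j[j]$, so that the cohomology of the $j$-th column in degree $k$ is $H^{k+j}_{Z_j/Z_{j+1}}(X,\CE)$; the rows are exact because each row is, up to sign, the total complex of the short exact sequences $0\to \Gamma_{Z_{j+1}}(X,\CE^\bullet)\to \Gamma_{Z_{j}}(X,\CE^\bullet)\to \mathrm{quotient}\to 0$ assembled along the filtration~\eqref{eq:Z-filtration}, whose alternating telescoping collapses to $\Gamma_{Z_0}(X,\CE^\bullet)$. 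This is exactly the ``filtered-object'' picture, so I would present it as a routine but careful bookkeeping of signs and shifts rather than reprove the homological algebra.

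Next I would analyze the two spectral sequences of $\mathrm{Tot}(C_{\bullet,\bullet})$. Filtering by rows (i.e.\ taking horizontal cohomology first): since the rows are exact, this spectral sequence degenerates at $E_1$ and computes $H^\bullet(\mathrm{Tot})\cong H^\bullet_{Z_0}(X,\CE)$ — this is the content of the ``exactness of rows'' remark above. Filtering by columns (taking vertical cohomology first): the $E_1$-page is precisely the array of the Cousin terms $H^{k+j}_{Z_j/Z_{j+1}}(X,\CE)$, and the $d_1$-differential is the horizontal differential, which by construction is the differential of the Cousin complexes~\eqref{eq:Cousin}. Hence the $E_2$-page is $\bigoplus_k H^\bullet(\mathcal{C}_k)$ arranged in the appropriate bidegrees.

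The hypothesis that all Cousin complexes except the $k$-th are zero forces all rows of the $E_2$-page except one to vanish, so the column-filtration spectral sequence also degenerates at $E_2$ (there is no room for higher differentials, as they would land in or come from a zero row). Reading off the abutment and matching the total degree then gives $H^n(\mathrm{Tot})\cong H^n(\mathcal{C}_k)$ in the appropriate degree; combining this with the identification $H^\bullet(\mathrm{Tot})\cong H^\bullet_{Z_0}(X,\CE)$ from the row filtration yields the claimed equality~\eqref{eq:cohomology equality}. I would also record that all maps in sight are $\CD$-equivariant, hence $\fg$-equivariant (with the twist~\eqref{eq:twist automorphism} when $X$ is a $G$-variety), by Lemma~\ref{lem: D-setting}, so the isomorphism respects the module structure we ultimately care about.

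I expect the only genuine subtlety — and the place where~\cite{mr} was, per the paper's own Remark~\ref{rem:MR gaps}, imprecise — to be the careful handling of degree shifts and the non-exhaustiveness of the filtration~\eqref{eq:Z-filtration}: one must check that $Z_0$ (rather than $X$) is the correct abutment, that the indexing $H^{k+j}_{Z_j/Z_{j+1}}$ is the right one, and that the mapping-cone construction is compatible with the choice of flabby resolution (independence up to canonical isomorphism). Everything else is a formal consequence of the two-spectral-sequences-of-a-double-complex lemma together with Lemma~\ref{lem: basic properties}(a), which supplies the identification of the cone cohomology with relative local cohomology.
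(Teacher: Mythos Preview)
Your argument is correct and is exactly the approach the paper takes: the paper states the theorem as an immediate consequence of the (vertical) spectral sequence of the double complex $C_{\bullet,\bullet}$, and your proposal is a faithful unpacking of precisely that argument (row-exactness identifies $H^\bullet(\mathrm{Tot})$ with $H^\bullet_{Z_0}(X,\CE)$, the column $E_1$-page is the array of Cousin terms, and the single-row hypothesis forces degeneration at $E_2$). One small aside: the inaccuracies in \cite{mr} flagged in Remark~\ref{rem:MR gaps} concern the identification of the local cohomology groups with dual Verma modules and the choice of sheaf, not this spectral-sequence lemma, so that caveat is tangential here.
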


\medskip

   %%%%%%%%%%%%%%%%%%%%%%%%%%%%%%%%%%%%%%%%%%%%%%%%%%%%%%%%%%%%%%%%%%%%%%%%%%%%%%%%%%%%%%%%%%%%%
   %%%%%%%%%%%%%%%%%%%%%%%%%%%%%%%%%%%%%%% SUBSECTION %%%%%%%%%%%%%%%%%%%%%%%%%%%%%%%%%%%%%%%%%%
   %%%%%%%%%%%%%%%%%%%%%%%%%%%%%%%%%%%%%%%%%%%%%%%%%%%%%%%%%%%%%%%%%%%%%%%%%%%%%%%%%%%%%%%%%%%%%

\subsection{The geometry of partial flag varieties}
$\ $

In what follows, we shall freely use the notation of Subsection~\ref{ssec truncated-BGG}. In particular, let $G$
be a connected algebraic group with $\Lie(G)=\fg$ and $H\subset B\subset P$ be the Cartan torus, the Borel, and parabolic subgroups of $G$
with the corresponding Lie algebras $\Lie(H)=\fh$, $\Lie(B)=\fb$, $\Lie(P)=\fp$. Consider the corresponding partial flag variety
\begin{equation}\label{eq:partial flags}
  X = G/P \,.
\end{equation}
Let $B_-\subset G$ be the opposite Borel subgroup of $B$ containing $H$, and $U_{-}\subset B_{-}$ be its unipotent radical, so that
$\Lie(U_-)=\bigoplus_{\alpha\in \Delta^+} \BC e_{-\alpha}$ and $\Lie(B_-)=\fh \oplus \Lie(U_{-})$.
Then, $B_-$ naturally acts on $X$, giving rise to the stratification of $X$ by $B_-$-orbits (see e.g.~\cite[\S7.21]{ku}):
\begin{equation}\label{eq:parabolic Bruhat}
  X \ = \bigsqcup\limits_{w \in {}^{\fl}W} X_w \,, \qquad
  X_w=B_{-}wP/P=U_{-}wP/P \,, \qquad \codim_X(X_w)=l(w) \,.
\end{equation}
Here, the indexing set ${}^{\fl}W$ consists of the shortest length representatives of the left cosets $W/W_{\fl}$,
precisely as in~\eqref{eq:shortest left equiv}, and $X_w$ is an affine space of dimension equal to $l(w)$,
the length of $w\in {}^{\fl}W$.

\medskip
\noindent
Following the setup of Subsection~\ref{ssec truncated-BGG}, let $\lambda\in P^+_{\fg/\fl}$ be a dominant integral
weight of $\fg$ \underline{vanishing} on the coroot lattice of the Levi subalgebra $\fl\subset \fp$.
Let $\tilde{L}_{-\lambda}$ be the one-dimensional $P$-representation corresponding to the weight $-\lambda$, and
$\tilde{\CL}_{\lambda}$ be the corresponding $G$-equivariant line bundle on $X$:
\begin{equation}\label{eq:line bundle on X}
  \tilde{\CL}_{\lambda} = G\times_P \tilde{L}_{-\lambda} \,.
\end{equation}
For any subset $\mathsf{Y}$ of a topological space $\mathsf{X}$, we use $\ol{\mathsf{Y}}$ and $\partial(\mathsf{Y})$
to denote its closure and boundary. Since $X_w$ is locally closed (as an orbit of an algebraic group), we have:
\begin{equation}\label{eq:X-boundary}
  X_w = \ol{X}_w\setminus \partial(X_w) \,.
\end{equation}

\medskip
\noindent
All $\fg$-modules we consider in this paper do belong to the category $\CO$ of~\cite{bgg}. In particular,
every such module $V$ has the weight space decomposition with all components being finite-dimensional:
\begin{equation}\label{eq:wt decomp}
  V=\bigoplus\limits_{\nu\in \fh^*} V[\nu] \,, \qquad
  V[\nu]=\Big\{v\in V \,|\, h(v)=\nu(h)v\ \forall h\in \fh \Big\} \,.
\end{equation}
In this setup, one may define \textbf{the restricted dual module} $V^\vee\subseteq V^*$: as a vector space
\begin{equation}\label{eq:restricted-dual}
  V^\vee=\bigoplus\limits_{\nu\in \fh^*} V[\nu]^* \,,
\end{equation}
while the $\fg$-action is the restriction of the natural one on $V^*$ \textbf{twisted} by the Chevalley involution
$\phi$ of~\eqref{eq:twist automorphism}. This defines an involutive antiautoequivalence $\Phi$ of the category $\CO$:
\begin{equation}\label{eq:dual-equivalence}
  \Phi\colon V\mapsto V^\vee \,.
\end{equation}
For the finite-dimensional $\fg$-modules, we have:
\begin{equation}\label{eq:self-dual}
  L^\vee_\lambda\simeq L_\lambda \,, \qquad \forall\, \lambda\in P^+_{\fg} \,.
\end{equation}

\medskip
\noindent
\begin{Thm}\label{thm:Second Resolution}
There exists a finite length exact sequence of $\fg$-modules of the form:
\begin{equation}\label{eq:geometric resolution 2}
  0\to M_{w w_{\fl,0}\, \cdot \lambda}\to \cdots \to \bigoplus_{v\in W_{\fl}}^{l(v)=2} M_{wv\, \cdot \lambda} \to
  \bigoplus_{v\in W_{\fl}}^{l(v)=1} M_{wv\, \cdot \lambda} \to  M_{w\, \cdot \lambda} \to
  H^{l(w)}_{\ol{X}_w/\partial(X_w)}(X,\tilde{\CL}_{\lambda})^\vee \to 0 \,.
\end{equation}
\end{Thm}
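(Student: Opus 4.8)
The plan is to identify the restricted dual of the relative local cohomology group $H^{l(w)}_{\ol{X}_w/\partial(X_w)}(X,\tilde{\CL}_\lambda)$ explicitly, and then resolve it by Verma modules using a localized version of the classical BGG construction. First I would use the excision isomorphism \eqref{eq:excision} to replace $H^{l(w)}_{\ol{X}_w/\partial(X_w)}(X,\tilde{\CL}_\lambda)$ by $H^{l(w)}_{X_w}(X\setminus \partial(X_w), \tilde{\CL}_\lambda)$, exploiting \eqref{eq:X-boundary}. Since $X_w$ is a smooth locally closed affine subvariety of codimension $l(w)=\codim_X(X_w)$ in $X$ (by \eqref{eq:parabolic Bruhat}), and since $X_w$ is closed in the open set $X\setminus\partial(X_w)$, the local cohomology is concentrated in degree $l(w)$, so $\mathcal C_{l(w)}$ (in the sense of \eqref{eq:Cousin}, Theorem \ref{thm:Cousin use}) is the only nonvanishing Cousin complex for the single-step filtration $X_w\subseteq X\setminus\partial(X_w)$. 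This reduces the computation to a Kempf-type vanishing and purity statement on the affine cell.

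Next I would compute the $\fg$-module $H^{l(w)}_{X_w}(X\setminus\partial(X_w),\tilde{\CL}_\lambda)$, twisted by $\phi$ as in \eqref{eq:twist automorphism}, by transferring to the big cell. The orbit $X_w = U_- w P/P$ is an affine space; choosing coordinates coming from the root subgroups $U_{-\alpha}$ with $\alpha$ such that $w^{-1}(-\alpha)$ is a root of the unipotent radical, the local cohomology with support along this cell is, as a $\CD$-module, an induced module $\CD_{X\setminus\partial(X_w)}\otimes_{\CO} (\text{delta-type module along }X_w)$, whose fiber at the $T$-fixed point $wP$ has highest weight computable from the weight $-\lambda$ of $\tilde L_{-\lambda}$ together with the sum of the roots transverse to $X_w$ — this is exactly where the shift $w\,\cdot\lambda = w(\lambda+\rho)-\rho$ emerges (the transverse roots at $wP$ contribute $\rho - w(\rho)$ after accounting for the $P$-structure, using $\lambda\in P^+_{\fg/\fl}$ so that $\lambda$ is $W_\fl$-invariant). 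After the $\phi$-twist, $H^{l(w)}_{X_w}(\ldots,\tilde{\CL}_\lambda)^\vee$ is a highest weight module of highest weight $w\,\cdot\lambda$; a standard argument (computing the character of local cohomology supported on an affine cell, or comparing with the $U_-$-action freely translating the cell) identifies it precisely with the Verma module $M_{w\,\cdot\lambda}$ — but with its submodule generated by the singular vectors of weights \eqref{eq:critical vectors} still present, i.e.\ the dual is $M_{w\,\cdot\lambda}$ itself only after we pass through the second filtration. Concretely, it is cleanest to prove directly that $H^{l(w)}_{X_w}(X\setminus\partial(X_w),\tilde{\CL}_\lambda)^\vee \cong M_{w\,\cdot\lambda}$ as $\fg$-modules by matching universal properties: both are generated by a highest weight vector of weight $w\,\cdot\lambda$ that is free over $U(\fn^-)$ where $\fn^- = \bigoplus_{\alpha\in\Delta^+}\BC e_{-\alpha}$, the freeness coming from the fact that the full opposite unipotent $U_-$ acts on $X_w$ (pushing it around in $X\setminus\partial(X_w)$) with the cohomology class behaving like a $\delta$-function that can be freely differentiated.

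Having identified the right-hand term of \eqref{eq:geometric resolution 2} as $M_{w\,\cdot\lambda}$ — wait, this cannot be literally right since then \eqref{eq:geometric resolution 2} would read $M_{w\,\cdot\lambda}\to M_{w\,\cdot\lambda}$; rather, the careful local cohomology computation must yield the \emph{quotient} $M'_{w\,\cdot\lambda}$ of \eqref{eq:hard modules}, because the $P$-equivariance (not just $B$-equivariance) of $\tilde{\CL}_\lambda$ forces the transverse directions along the Levi part of $P$ to impose the vanishing of exactly the singular vectors \eqref{eq:critical vectors}. So I would instead show $H^{l(w)}_{X_w}(X\setminus\partial(X_w),\tilde{\CL}_\lambda)^\vee\cong M'_{w\,\cdot\lambda}$, and then \eqref{eq:geometric resolution 2} becomes precisely the statement that $M'_{w\,\cdot\lambda}$ admits a finite Verma resolution of the stated shape. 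This last step is a standard BGG-type resolution applied inside the category $\CO$: the module $M'_{w\,\cdot\lambda} = M_{w\,\cdot\lambda}/M^{\mathrm{sing}}_{w\,\cdot\lambda}$ has the same formal character as the alternating sum $\sum_{v\in W_\fl}(-1)^{l(v)}\ch_{M_{wv\,\cdot\lambda}}$ (this is the Weyl-denominator identity for $W_\fl$ applied to the weight $w(\lambda+\rho)$), and since all the weights $wv\,\cdot\lambda$, $v\in W_\fl$, are linked and the BGG reciprocity/strong linkage machinery applies verbatim to the Levi subalgebra acting on the relevant subquotient, one gets the exact complex \eqref{eq:conjectured resolution 2}. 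I expect the \textbf{main obstacle} to be the precise bookkeeping in the second paragraph: proving that the local cohomology along $X_w$ — with its $\CD$-module structure restricted through $\fg\to\CD$ and twisted by $\phi$ — is dual to $M'_{w\,\cdot\lambda}$ and not some other subquotient, i.e.\ pinning down exactly which singular vectors get killed by the $P$-equivariance of the line bundle (as opposed to mere $B$-equivariance). This requires a careful analysis of the $D$-module on the slice transverse to $X_w$, decomposing the transverse directions into those lying in the unipotent radical of $P$ versus those in the Levi, and is the technical heart of the argument.
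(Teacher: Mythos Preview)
Your approach has a genuine gap, and it misses the key geometric idea the paper uses.

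You propose to (a) identify $H^{l(w)}_{\ol{X}_w/\partial(X_w)}(X,\tilde{\CL}_\lambda)^\vee$ directly on $X=G/P$ as the quotient $M'_{w\,\cdot\lambda}$, and then (b) resolve $M'_{w\,\cdot\lambda}$ by Verma modules using ``standard BGG reciprocity/strong linkage machinery.'' Step (b) is not standard: for $w\ne\mathrm{id}$ the module $M'_{w\,\cdot\lambda}$ is obtained from $M_{w\,\cdot\lambda}$ by killing singular vectors for the reflections $s_{w(\alpha)}$, $\alpha\in\Delta^+_\fl$, which are in general \emph{non-simple} positive roots. Character matching only gives an Euler-characteristic identity, not exactness, and there is no off-the-shelf algebraic BGG resolution for such quotients --- the paper explicitly notes (end of \S\ref{ssec truncated-BGG}) that an algebraic proof is not known precisely because non-simple reflections are involved. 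Your step (a) is also circular relative to the paper's logic: the identification $H^{l(w)}_{\ol{X}_w/\partial(X_w)}(X,\tilde{\CL}_\lambda)^\vee\cong M'_{w\,\cdot\lambda}$ is Lemma~\ref{lem:geometric interpretation}, and it is \emph{deduced from} the resolution~\eqref{eq:geometric resolution 2}, not used as input to it.

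The idea you are missing is to \emph{lift to the full flag variety} $Y=G/B$ via $\pi\colon Y\to X$. On $Y$ the local cohomology along a $B_-$-cell $Y_u$ is already known (Kempf, Brylinski): $H^{l(u)}_{\ol{Y}_u/\partial(Y_u)}(Y,\CL_\lambda)\cong M_{u\,\cdot\lambda}^\vee$. The preimage $Q_w=\pi^{-1}(X_w)$ is stratified by the cells $\{Y_{wv}\}_{v\in W_\fl}$, and one runs the Cousin-complex machinery of \S\ref{ssec relative cohomology} on $U_w=Y\setminus\partial(Q_w)$ with the filtration by codimension. The terms of the resulting Cousin complex $\mathcal{C}_{l(w)}$ are then exactly $\bigoplus_{v\in W_\fl,\,l(v)=i} M_{wv\,\cdot\lambda}^\vee$, and Theorem~\ref{thm:Cousin use} identifies its cohomology with $H^\bullet_{Q_w}(U_w,\CL_\lambda)$. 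The link back to $X$ is made by $R^0\pi_*\CL_\lambda=\tilde{\CL}_\lambda$, $R^{>0}\pi_*\CL_\lambda=0$ (Griffiths--Schmid) together with the Grothendieck spectral sequence for local cohomology, yielding $H^\bullet_{Q_w}(U_w,\CL_\lambda)\cong H^\bullet_{X_w}(X\setminus\partial(X_w),\tilde{\CL}_\lambda)$; local purity then concentrates this in degree $l(w)$. In short, the resolution~\eqref{eq:geometric resolution 2} \emph{is} the Cousin complex on $G/B$, not an algebraically constructed BGG complex, and the local cohomology on $G/P$ is identified only \emph{a posteriori} as its $0$-th cohomology.
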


\begin{proof}
Using $\Phi$ of~\eqref{eq:dual-equivalence} it suffices to prove that there exists an exact sequence of $\fg$-modules of the form:
\begin{equation}\label{eq:modified geometric resolution}
  0 \to H^{l(w)}_{\ol{X}_w/\partial(X_w)}(X,\tilde{\CL}_{\lambda}) \to
  M_{w\, \cdot \lambda}^\vee \to \bigoplus_{v\in W_{\fl}}^{l(v)=1} M_{wv\, \cdot \lambda}^\vee \to
  \bigoplus_{v\in W_{\fl}}^{l(v)=2} M_{wv\, \cdot \lambda}^\vee \to \cdots \to M_{w w_{\fl,0}\, \cdot \lambda}^\vee \to 0 \,.
\end{equation}
To this end, consider the complete flag variety
\begin{equation}\label{eq:complete flags}
  Y = G/B \,,
\end{equation}
and let $\pi\colon Y\to X$ denote the natural projection:
\begin{equation}\label{eq:full-to-partial}
  \pi\colon G/B \to G/P \,.
\end{equation}
Note that $Y$ admits a natural Bruhat decomposition by $B_{-}$-orbits, cf.~\eqref{eq:parabolic Bruhat}:
\begin{equation}\label{eq:Bruhat}
  Y = \bigsqcup\limits_{u \in W} Y_u \,,\ \qquad Y_u=B_{-}uB/B=U_{-}uB/B \,, \qquad \codim_Y(Y_u)=l(u) \,.
\end{equation}
For any $w\in {}^{\fl}W$, define $Q_w\subseteq Y$ via:
\begin{equation}\label{eq:Q-variety}
  Q_w=\pi^{-1}(B_{-}wP) \,,
\end{equation}
which is naturally stratified by $B_{-}$-orbits:
\begin{equation}\label{eq:reduction}
  Q_w=\pi^{-1}(B_{-}wP)=\bigsqcup\limits_{v \in W_{\fl}} Y_{wv} \,.
\end{equation}
Let us also note the following useful equality:
\begin{equation}\label{eq:length-addition}
  l(wv)=l(w)+l(v) \qquad \mathrm{for\ any} \qquad w\in {}^{\fl}W \,,\, v\in W_{\fl} \,.
\end{equation}

\medskip
\noindent
Let $L_{-\lambda}$ be the one-dimensional $B$-representation corresponding to the weight $-\lambda$, and
${\CL}_{\lambda}$ be the corresponding $G$-equivariant line bundle on $Y$:
\begin{equation}\label{eq:line bundle on Y}
  \CL_{\lambda}=G\times_B L_{-\lambda} \,.
\end{equation}

\medskip
\noindent
Similarly to~\cite{mr}, for any $w\in {}^{\fl}W$ consider
\begin{equation}
   U_w=Y \setminus \partial(Q_w) = Y\setminus \pi^{-1}\left(\partial(X_w)\right)
\end{equation}
(the second equality is due to $\pi$ being proper), so that $Q_w$ is closed in $U_w$.
Note that $U_w$ is naturally stratified by $B_{-}$-orbits,
which gives rise to the following filtration $Z_\bullet$ of $U_w$ by closed subsets:
\begin{equation}\label{eq:Z-filtration 1}
  Z_i\, = Q_w\cap \tilde{Z}_i \qquad  \mathrm{with} \qquad
  \tilde{Z}_i\ = \bigsqcup \limits_{\substack{u\in W\\ l(u)\geq l(w)+i}} Y_u  \,.
\end{equation}
We note that $Z_0=Q_w$, and according to~(\ref{eq:reduction},~\ref{eq:length-addition}) we have:
\begin{equation}\label{eq:union}
  \tilde{Z}_i \setminus \tilde{Z}_{i+1} \, = \bigsqcup\limits_{u \in W:\, l(u)=l(w)+i} Y_{u} \,,\qquad
  Z_i \setminus Z_{i+1} \, = \bigsqcup\limits_{v \in W_{\fl}:\, l(v)=i} Y_{wv} \,.
\end{equation}

\medskip
\noindent
We shall now apply the results of Subsection~\ref{ssec relative cohomology} to $U_w$ equipped with the
filtration~\eqref{eq:Z-filtration 1} and the sheaf $\CF=\CL_\lambda|_{U_w}$. We claim that Theorem~\ref{thm:Cousin use}
applies in this setting, and furthermore the corresponding Cousin complex, which calculates $H_{Z_0}^\bullet(U_w,\CF)$,
provides the desired exact sequence~\eqref{eq:modified geometric resolution}.

\medskip
\noindent
Indeed, according to~\cite[Proposition 9.3.7]{ku}, for any $u\in W$ we have:
\begin{equation}
  H^{k}_{\ol{Y}_u/\partial(Y_u)}(G/B,\CL_{\lambda}) =
  \begin{cases}
     M_{u\, \cdot \lambda}^\vee & \text{if } k=\codim_Y(Y_u)=l(u) \\
     0 & \text{otherwise }
  \end{cases} \,.
\end{equation}
Combining this with~\eqref{eq:union} and Lemma~\ref{lem: cl-excision}, we obtain:
\begin{equation}
  H^{i+l(w)}_{Z_i/Z_{i+1}}(U_w,\CF) =
  H^{i+l(w)}_{Z_i\setminus Z_{i+1}}(U_w \setminus Z_{i+1}, \CL_{\lambda})\, \simeq
  \bigoplus_{v\in W_{\fl}}^{l(v)=i} M_{wv\, \cdot \lambda}^\vee
\end{equation}
and
\begin{equation}
  H^{k}_{Z_i/Z_{i+1}}(U_w,\CF)=0 \qquad \mathrm{for}\qquad k\ne i+l(w) \,.
\end{equation}

\medskip

\begin{Rem}\label{rem:applicability}
We note that Lemma~\ref{lem: cl-excision} does apply, due to:
\begin{enumerate}

\item [(1)]
  $Z_i \setminus Z_{i+1} = \bigsqcup\limits_{v\in W_{\mathfrak l}:\, l(v)=i} Y_{wv}$;
	
\item [(2)]
  for every cell $Y_u \subset Z_i\setminus Z_{i+1}$, we have $\partial(Y_u) \subseteq \partial(Q_w) \cup Z_{i+1}$,
  so that
  \begin{equation*}
    U_w\setminus Z_{i+1} = Y \setminus (\partial(Q_w)\cup Z_{i+1}) =
    V_u \setminus ((V_u \cap \partial(Q_w)) \cup (V_u \cap Z_{i+1})) \,,
  \end{equation*}
  where $V_u=Y \setminus \partial(Y_u)$,
  and $V_u\cap \partial(Q_w), V_u\cap Z_{i+1}$ are closed subsets of $V_u$ disjoint from $Y_u$.

\end{enumerate}
\end{Rem}

\medskip
\noindent
Thus, all Cousin complexes $\mathcal{C}_k$ of~\eqref{eq:Cousin} vanish for $k\ne l(w)$, while the terms
of $\mathcal{C}_{l(w)}$ precisely coincide with the terms of the exact sequence~\eqref{eq:modified geometric resolution}.
Applying Theorem~\ref{thm:Cousin use}, we therefore get $H^i(\mathcal{C}_{l(w)})=H^{i+l(w)}_{Z_0}(U_w,\CF)$.

\medskip
\noindent
Hence, it remains to prove that:
\begin{enumerate}

\item [(I)]
  $H^0(\mathcal{C}_{l(w)})=H^{l(w)}_{Z_0}(U_w,\CF)$ coincides with the $\fg$-module
\begin{equation}\label{eq:N-module}
  N_w(\lambda)= H^{l(w)}_{\ol{X}_w/\partial(X_w)}(X,\tilde{\CL}_{\lambda})
\end{equation}

\item [(II)]
 $H^i(\mathcal{C}_{l(w)})=H^{i+l(w)}_{Z_0}(U_w,\CF)$ vanishes for $i\ne 0$.
\end{enumerate}
Both results follow immediately from a $B$-version of Theorem~\ref{thm:purity 2} below,  the excision isomorphism
\begin{equation*}
  H^{k}_{\ol{X}_w/\partial(X_w)}(X,\tilde{\CL}_{\lambda}) \simeq
  H^{k}_{X_w}((G/P)\setminus \partial(X_w), \tilde{\CL}_{\lambda}) \,,
\end{equation*}
cf.~\eqref{eq:X-boundary}, and the following two lemmas:

\medskip

\begin{Lem}[{\cite[p.~286]{gs}}]\label{lem:grif-schm}
$R^0\pi_*(\CL_{\lambda})=\tilde{\CL}_{\lambda}$ and $R^{>0}\pi_*(\CL_{\lambda})=0$.
\end{Lem}

\medskip

\begin{Lem}[{\cite[Expos\'{e}~5, Lemme~3.2]{g}}]\label{eq:groth}
Let $f\colon X \to X'$ be an arbitrary morphism, $S \subset X'$ be a closed subset,
and $\CF$ be a sheaf of abelian groups on $X$. Then, there is a spectral~sequence:
\begin{equation*}
  H_{S}^{i}\left(X', R^{j}f_{*} \CF\right) \Rightarrow H_{f^{-1}(S)}^{i+j}(X,\CF) \,.
\end{equation*}
\end{Lem}

\medskip
\noindent
Therefore, the $l(w)$-th Cousin complex $\mathcal{C}_{l(w)}$ realizes the exact
sequence~\eqref{eq:modified geometric resolution} of $\fg$-modules, which produces the exact
sequence~\eqref{eq:geometric resolution 2} upon a further application of the antiautoequivalence $\Phi$
of~\eqref{eq:dual-equivalence}.
\end{proof}

\medskip
\noindent
Let us now recall the highest weight $\fg$-modules
  $M'_{w\, \cdot \lambda}=M_{w\, \cdot \lambda}/M^{\mathrm{sing}}_{w\, \cdot \lambda}$
for $w\in {}^{\fl}W$, defined by the formulas~(\ref{eq:hard modules},~\ref{eq:critical vectors}) in the Introduction.
They admit the following geometric interpretation:

\medskip
\noindent

\begin{Lem}\label{lem:geometric interpretation}
For any $w\in {}^{\fl}W$, we have the isomorphism of $\fg$-modules:
\begin{equation}\label{eq:algebro-geometric identifiction}
  M'_{w\, \cdot \lambda} \simeq N_w(\lambda)^\vee \,.
\end{equation}
\end{Lem}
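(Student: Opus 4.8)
\textbf{Proof plan for Lemma~\ref{lem:geometric interpretation}.}
The plan is to identify $N_w(\lambda) = H^{l(w)}_{\ol X_w/\partial(X_w)}(X,\tilde\CL_\lambda)$ explicitly as a highest weight $\fg$-module and then match it against $M'_{w\,\cdot\lambda}$ via the restricted-dual antiautoequivalence $\Phi$ of~\eqref{eq:dual-equivalence}. Since the excision isomorphism~\eqref{eq:excision} gives $N_w(\lambda)\simeq H^{l(w)}_{X_w}(X\setminus\partial(X_w),\tilde\CL_\lambda)$, and $X_w\cong \mathbb A^{l(w)}$ is a closed affine cell of codimension $l(w)$ in the smooth open set $X\setminus\partial(X_w)$, the only nonvanishing local cohomology sits in degree $l(w)$ and is computed, in the $D$-module picture of Lemma~\ref{lem: D-setting}, as an algebraic local cohomology sheaf supported on that cell. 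Concretely, in a $U_-$-chart the cell $X_w=U_-wP/P$ is a coordinate subspace, and $H^{l(w)}_{X_w}(\tilde\CL_\lambda)$ is a free rank-one module over the ring of differential operators in the transverse (normal) directions, i.e.\ a ``delta-function along $X_w$'' tensored with the restriction of $\tilde\CL_\lambda$ — this realizes, after the twist by $\phi$ of~\eqref{eq:twist automorphism}, a highest weight module. Its highest weight is read off from the torus action on the fiber of $\tilde\CL_\lambda$ at the point $wP$ together with the shift coming from the top form on the normal bundle, and a standard computation gives exactly $w(\lambda+\rho)-\rho = w\,\cdot\lambda$.

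The key structural point is to pin down \emph{which} highest weight module of highest weight $w\,\cdot\lambda$ this is. Here I would use the $B$-version of Theorem~\ref{thm:purity 2} (invoked in the proof of Theorem~\ref{thm:Second Resolution}), or equivalently Kempf's computation~\cite[Prop.~9.3.7]{ku} that $H^{l(u)}_{\ol Y_u/\partial(Y_u)}(G/B,\CL_\lambda)^\vee \simeq M_{u\,\cdot\lambda}$, combined with the spectral sequence of Lemma~\ref{eq:groth} for the projection $\pi\colon G/B\to G/P$ and the vanishing $R^{>0}\pi_*\CL_\lambda=0$, $R^0\pi_*\CL_\lambda=\tilde\CL_\lambda$ of Lemma~\ref{lem:grif-schm}. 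Running this along the fiber $\ol{Q}_w/\partial(Q_w)$ over the cell identifies $N_w(\lambda)^\vee$ as the cohomology of a complex built from $\{M_{wv\,\cdot\lambda}\}_{v\in W_\fl}$; but more efficiently, one can argue directly in the chart: the class of $\pi$-relative top forms on the fibers $\ol Y_u$ lying over $X_w$ that survive in $R^0\pi_*$ is precisely the obstruction imposing the relations that cut $M_{w\,\cdot\lambda}$ down to $M'_{w\,\cdot\lambda}$. That is, pushing forward kills exactly the cohomology classes supported on the proper subfibers $Y_{wv}$, $v\ne\mathrm{id}$, and these correspond under duality to the singular-vector submodule $M^{\mathrm{sing}}_{w\,\cdot\lambda}$ generated by the vectors of weights $s_{w(\alpha)}(w(\lambda+\rho))-\rho$, $\alpha\in\Delta^+_\fl$, of~\eqref{eq:critical vectors} — here one uses that the simple reflections generating $W_\fl$ act on $w(\lambda+\rho)$ by these reflections in the roots $w(\alpha)$.

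So the logical skeleton is: (i) excision plus purity reduce $N_w(\lambda)$ to a single local cohomology group in degree $l(w)$, with highest weight $w\,\cdot\lambda$ and with a $\fg$-module structure in category $\CO$; (ii) the dual $N_w(\lambda)^\vee$ is a highest weight module that is a quotient of $M_{w\,\cdot\lambda}$, because the natural map $M_{w\,\cdot\lambda}\twoheadrightarrow N_w(\lambda)^\vee$ dualizes an inclusion of the socle-generated piece; (iii) the precise kernel is $M^{\mathrm{sing}}_{w\,\cdot\lambda}$, obtained by comparing characters via the pushforward $\pi_*$ — the character of $N_w(\lambda)$ is, by the fibration and Lemma~\ref{lem:grif-schm}, $e^{w(\lambda+\rho)-\rho}\big/\prod_{\alpha\in\Delta^+\setminus w(\Delta^+_\fl)}(1-e^{-\alpha})$, which by~\eqref{eq:character identity} is exactly $\ch_{M'_{w\,\cdot\lambda}}$. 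A surjection of highest weight modules that is a character isomorphism is an isomorphism, giving $M'_{w\,\cdot\lambda}\simeq N_w(\lambda)^\vee$.

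The main obstacle I anticipate is step (iii): showing the kernel of $M_{w\,\cdot\lambda}\twoheadrightarrow N_w(\lambda)^\vee$ is \emph{precisely} $M^{\mathrm{sing}}_{w\,\cdot\lambda}$ and not something larger or incomparable. The clean route is to avoid identifying the kernel by generators and instead establish the character equality $\ch_{N_w(\lambda)} = e^{w\,\cdot\lambda}\prod_{\alpha\in\Delta^+\setminus w(\Delta^+_\fl)}(1-e^{-\alpha})^{-1}$ directly from the geometry — the cell $X_w$ contributes its ``delta module'' whose character involves only the normal directions to $X_w$ in $X$, and those normal directions are indexed by $\Delta^+\setminus w(\Delta^+_\fl)$ because the tangent space to $X_w$ at $wP$ is spanned by the root spaces $e_{-w(\alpha)}$ with $\alpha\in\Delta^+_\fl$. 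Once this character is in hand, and once one knows $N_w(\lambda)^\vee$ is a quotient of $M_{w\,\cdot\lambda}$ with $\ch_{M_{w\,\cdot\lambda}}\geq \ch_{M'_{w\,\cdot\lambda}} = \ch_{N_w(\lambda)^\vee}$ and that the singular vectors of~\eqref{eq:critical vectors} must die in any quotient realizing this smaller character (their weights are strictly above the support of $\ch_{M'_{w\,\cdot\lambda}}$ being absent, or rather present with the wrong multiplicity), the coincidence $M'_{w\,\cdot\lambda}\simeq N_w(\lambda)^\vee$ follows formally.
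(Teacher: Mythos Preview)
Your plan is much more elaborate than needed, and step~(iii) has a real gap. Knowing that $N_w(\lambda)^\vee$ is a quotient of $M_{w\,\cdot\lambda}$ and that its character equals $\ch M'_{w\,\cdot\lambda}$ does \emph{not} force the two quotients to coincide: different submodules of a Verma module can have the same character. Your attempted fix --- ``the singular vectors of~\eqref{eq:critical vectors} must die in any quotient realizing this smaller character'' --- is not valid, since the weight spaces at $ws_\alpha\cdot\lambda$ are nonzero in $M'_{w\,\cdot\lambda}$, and the character alone cannot tell you \emph{which} vectors of a given weight lie in the kernel. To close the gap you would need an independent reason why each singular vector maps to zero in $N_w(\lambda)^\vee$.

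The paper's argument is a one-liner and supplies exactly that reason. It simply reads off the right-exact tail of the resolution~\eqref{eq:geometric resolution 2} already established in Theorem~\ref{thm:Second Resolution}:
\[
  \bigoplus_{v\in W_\fl}^{l(v)=1} M_{wv\,\cdot\lambda}\ \longrightarrow\ M_{w\,\cdot\lambda}\ \longrightarrow\ N_w(\lambda)^\vee\ \longrightarrow\ 0\,.
\]
Since any nonzero homomorphism between Verma modules is (up to scalar) the unique embedding, the image of the left map is $\sum_{j\in S} M_{ws_{\alpha_j}\,\cdot\lambda}$, which is precisely $M^{\mathrm{sing}}_{w\,\cdot\lambda}$; hence $N_w(\lambda)^\vee\simeq M_{w\,\cdot\lambda}/M^{\mathrm{sing}}_{w\,\cdot\lambda}=M'_{w\,\cdot\lambda}$. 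You in fact allude to this route (``identifies $N_w(\lambda)^\vee$ as the cohomology of a complex built from $\{M_{wv\,\cdot\lambda}\}$''), but then abandon it for the character argument. Stick with the resolution --- it is both shorter and complete.
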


\medskip

\begin{proof}
This immediately follows from the following fragment of~\eqref{eq:geometric resolution 2} using the
notation~\eqref{eq:N-module}:
\begin{equation*}
  \bigoplus_{v \in W_{\fl}}^{l(v)=1} M_{wv\, \cdot \lambda} \to M_{w\, \cdot \lambda} \to N_w(\lambda)^\vee \to 0 \,.
\end{equation*}
\end{proof}

\medskip
\noindent
Combining Theorem~\ref{thm:Second Resolution} and Lemma~\ref{lem:geometric interpretation}, we immediately obtain:

\medskip

\begin{Cor}\label{cor:proof of second resolutions}
All $\fg$-modules $\{M'_{w\, \cdot \lambda}\}_{w\in {}^{\fl}W}$ admit
resolutions~\eqref{eq:conjectured resolution 2} by Verma modules.
\end{Cor}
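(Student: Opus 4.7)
The plan is to derive the statement as an immediate consequence of the two results already established: the geometric resolution of Theorem~\ref{thm:Second Resolution} and the identification of its rightmost nontrivial term furnished by Lemma~\ref{lem:geometric interpretation}. No new geometric or homological input is needed.

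Concretely, fix $w\in {}^{\fl}W$. The plan is first to invoke Theorem~\ref{thm:Second Resolution}, which produces a finite length exact sequence of $\fg$-modules
\begin{equation*}
  0\to M_{w w_{\fl,0}\cdot\lambda} \to \cdots \to \bigoplus_{v\in W_{\fl},\,l(v)=1} M_{wv\cdot\lambda} \to M_{w\cdot\lambda} \to N_w(\lambda)^\vee \to 0,
\end{equation*}
where $N_w(\lambda)=H^{l(w)}_{\ol{X}_w/\partial(X_w)}(X,\tilde{\CL}_{\lambda})$. All intermediate terms are already Verma modules indexed by elements of $wW_{\fl}$, with the cohomological grading matching the length grading $l(v)$ for $v\in W_{\fl}$, so the shape coincides with~\eqref{eq:conjectured resolution 2} except for the rightmost term.

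To finish, I would apply Lemma~\ref{lem:geometric interpretation}, which provides the canonical isomorphism $M'_{w\cdot\lambda}\simeq N_w(\lambda)^\vee$, and substitute it into the above sequence. This substitution turns the resolution of $N_w(\lambda)^\vee$ into a resolution of $M'_{w\cdot\lambda}$ of exactly the form~\eqref{eq:conjectured resolution 2}. Since the displayed sequence is functorial in the preceding constructions, the substitution is unambiguous.

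The real work has therefore been absorbed into the two preparatory results: Theorem~\ref{thm:Second Resolution}, whose proof required the Cousin-complex machinery, the $B$-version of the purity statement for relative local cohomology of $\CL_\lambda$ on $G/B$, the excision isomorphism, and the Grothendieck-type spectral sequence of Lemma~\ref{eq:groth} to descend from $G/B$ to $G/P=X$; and Lemma~\ref{lem:geometric interpretation}, which reads off the cokernel appearing at the end of~\eqref{eq:geometric resolution 2}. Once both are in hand, the corollary is an immediate formal consequence and its proof is essentially a one-line substitution; accordingly, I do not expect any genuine obstacle at this step.
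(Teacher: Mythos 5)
Your proposal is correct and matches the paper's argument exactly: the paper presents the corollary as an immediate consequence of Theorem~\ref{thm:Second Resolution} and Lemma~\ref{lem:geometric interpretation}, which is precisely the substitution of $M'_{w\cdot\lambda}\simeq N_w(\lambda)^\vee$ into the geometric resolution~\eqref{eq:geometric resolution 2} that you describe.
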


\medskip
\noindent
As a direct corollary of~\eqref{eq:conjectured resolution 2}, we obtain the character formula for the $\fg$-modules
$M'_{w\, \cdot \lambda}$, cf.~\eqref{eq:character identity}:

\medskip

\begin{Lem}\label{lem:M-char}
For $w\in {}^{\fl}W$, we have:
\begin{equation}\label{eq:character formula}
  \ch_{M'_{w\, \cdot \lambda}} =
  \frac{e^{w(\lambda+\rho)-\rho}}
       {\prod_{\alpha \in \Delta^{+} \setminus w(\Delta_{\fl}^{+})}\left(1-e^{-\alpha}\right)} \,.
\end{equation}
\end{Lem}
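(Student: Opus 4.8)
The plan is to extract the character formula directly from the Verma-module resolution~\eqref{eq:conjectured resolution 2} established in Corollary~\ref{cor:proof of second resolutions}, by passing to (formal) characters. Since taking characters is additive on short exact sequences, and hence on bounded exact complexes, the exactness of~\eqref{eq:conjectured resolution 2} gives the alternating-sum identity
\begin{equation*}
  \ch_{M'_{w\, \cdot \lambda}} = \sum_{v\in W_{\fl}} (-1)^{l(v)}\, \ch_{M_{wv\, \cdot \lambda}} \,.
\end{equation*}
Here I must first note that all the modules involved lie in category $\CO$ and have well-defined characters — the Verma modules $M_{wv\cdot\lambda}$ obviously do, and $M'_{w\cdot\lambda}$ does as a quotient of $M_{w\cdot\lambda}$ — so that the Euler-characteristic argument is legitimate despite the complex being infinite-dimensional in each term (it is finite in length and each term has finite-dimensional weight spaces, which is all that is needed).

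Next I would substitute the standard Verma character $\ch_{M_{\mu}} = e^{\mu}/\prod_{\alpha\in\Delta^+}(1-e^{-\alpha})$ with $\mu = wv\cdot\lambda = wv(\lambda+\rho)-\rho$, pull out the common denominator $\prod_{\alpha\in\Delta^+}(1-e^{-\alpha})$, and reduce to proving the numerator identity
\begin{equation*}
  \sum_{v\in W_{\fl}} (-1)^{l(v)}\, e^{wv(\lambda+\rho)} \;=\; e^{w(\lambda+\rho)}\!\!\prod_{\alpha\in\Delta^+_{\fl}}\!\bigl(1-e^{-w(\alpha)}\bigr) \,.
\end{equation*}
To see this, rewrite the left side as $\sum_{v\in W_{\fl}} (-1)^{l(v)} e^{w(v(\lambda+\rho))}$ and apply $w$ (viewed as acting on the group algebra of weights) to the Weyl denominator formula~\eqref{eq:Weyl denominator} for the \emph{Levi} $\fl$: since $\lambda\in P_{\fg/\fl}$ vanishes on the coroots of $\fl$, the weight $\lambda+\rho$ restricts on $\fl$ to $\rho_\fl$ up to a $W_\fl$-fixed shift, so that for $v\in W_\fl$ one has $v(\lambda+\rho) = \lambda+\rho - (\rho_\fl - v(\rho_\fl))$ and thus $\sum_{v\in W_\fl}(-1)^{l(v)} e^{v(\lambda+\rho)} = e^{\lambda+\rho}\sum_{v\in W_\fl}(-1)^{l(v)} e^{v(\rho_\fl)-\rho_\fl} = e^{\lambda+\rho}\prod_{\alpha\in\Delta^+_\fl}(1-e^{-\alpha})$, using~\eqref{eq:Weyl denominator} for $\fl$ and the fact that $l$ restricted to $W_\fl$ is its own length function. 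Applying $w$ then produces the claimed numerator, and dividing by $\prod_{\alpha\in\Delta^+}(1-e^{-\alpha})$ — noting $w$ permutes $\Delta^+\setminus\Delta^+_\fl$ into $\Delta^+\setminus w(\Delta^+_\fl)$ and $\Delta^+_\fl$ into $w(\Delta^+_\fl)\subseteq\Delta^+$ by~\eqref{eq:shortest left} — gives exactly~\eqref{eq:character formula}.

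The only genuinely delicate point is the bookkeeping with $w$ acting on roots: one needs that the denominator $\prod_{\alpha\in\Delta^+}(1-e^{-\alpha})$ and the numerator factor $\prod_{\alpha\in\Delta^+_\fl}(1-e^{-w(\alpha)})$ combine, after the cancellation, to leave precisely $\prod_{\alpha\in\Delta^+\setminus w(\Delta^+_\fl)}(1-e^{-\alpha})$ in the denominator. This is exactly the identity $\Delta^+ = \bigl(\Delta^+\setminus w(\Delta^+_\fl)\bigr)\sqcup w(\Delta^+_\fl)$, which holds because $w\in{}^{\fl}W$ forces $w(\Delta^+_\fl)\subseteq\Delta^+$. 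I expect this to be routine given the definitions already in place, so the whole proof should be short; alternatively, one could bypass the resolution entirely and observe that~\eqref{eq:character formula} is literally the summand in~\eqref{eq:character identity}, which was already recorded, but the cleanest self-contained derivation is the Euler-characteristic computation from~\eqref{eq:conjectured resolution 2} just described.
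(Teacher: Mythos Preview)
Your proof is correct and follows essentially the same approach as the paper: pass to characters in the resolution~\eqref{eq:conjectured resolution 2}, plug in the Verma character, use $v(\lambda)=\lambda$ and $v(\rho)-\rho=v(\rho_\fl)-\rho_\fl$ for $v\in W_\fl$ to reduce to the Weyl denominator formula for $\fl$, and cancel using $w(\Delta^+_\fl)\subseteq\Delta^+$. The only quibble is the parenthetical ``$w$ permutes $\Delta^+\setminus\Delta^+_\fl$ into $\Delta^+\setminus w(\Delta^+_\fl)$'', which is not quite right and not needed---but you immediately supply the correct statement $\Delta^+=(\Delta^+\setminus w(\Delta^+_\fl))\sqcup w(\Delta^+_\fl)$, which is all that is required.
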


\medskip

\begin{proof}
The existence of the resolution~\eqref{eq:conjectured resolution 2} implies the following equality of characters:
\begin{multline}\label{eq:char 1}
  \ch_{M'_{w\, \cdot \lambda}}=\sum_{v\in W_{\fl}} (-1)^{l(v)} \ch_{M_{wv\, \cdot \lambda}}=
  \frac{\sum_{v\in W_{\fl}} (-1)^{l(v)} e^{wv(\lambda+\rho)-\rho}}
       {\prod_{\alpha\in \Delta^+} (1-e^{-\alpha})} = \\
  e^{w(\lambda+\rho)-\rho} \cdot
  \frac{\sum_{v\in W_{\fl}} (-1)^{l(v)} e^{w(v(\rho)-\rho)}}
       {\prod_{\alpha\in \Delta^+} (1-e^{-\alpha})} =
  e^{w(\lambda+\rho)-\rho} \cdot
  \frac{\sum_{v\in W_{\fl}} (-1)^{l(v)} e^{w(v(\rho_{\fl})-\rho_{\fl})}}
       {\prod_{\alpha\in \Delta^+} (1-e^{-\alpha})} \,,
\end{multline}
where $\rho$ is defined in~\eqref{eq:rho-weight}, $\rho_{\fl}$ is defined in~\eqref{eq:rho-Levi},
and the last two equalities follow from:
\begin{equation}
  v(\lambda)=\lambda \,,\qquad  v(\rho)-\rho=v(\rho_{\fl})-\rho_{\fl} \,, \qquad
  \forall\ v\in W_{\fl} \,,\ \lambda\in P^+_{\fg/\fl} \,.
\end{equation}
Applying the Weyl denominator formula~\eqref{eq:Weyl denominator} for $\fl$:
\begin{equation}\label{eq:char 2}
  \sum_{v\in W_{\fl}} (-1)^{l(v)} e^{v(\rho_\fl)-\rho_{\fl}}\ =\prod_{\alpha\in \Delta^+_{\fl}} (1-e^{-\alpha})
\end{equation}
and noting $w(\Delta^+_{\fl})\subseteq \Delta^+$, due to~\eqref{eq:shortest left},
we get precisely the character formula~\eqref{eq:character formula}.
\end{proof}

\medskip

\begin{Rem}
The corresponding formula for $\ch_{N_w(\lambda)}$ goes back to~\cite[Lemma 12.8]{k}.
\end{Rem}

\medskip
\noindent
The next result is the key point of the further discussion:

\medskip

\begin{Thm}\label{thm:purity 2}
For any $w\in {}^{\fl}W$, we have:
\begin{equation}
  H^i_{\ol{X}_w/\partial(X_w)}(X,\tilde{\CL}_{\lambda})=0  \quad \mathrm{for\ any} \quad i\ne l(w) \,.
\end{equation}
\end{Thm}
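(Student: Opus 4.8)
The plan is to prove the vanishing $H^i_{\ol{X}_w/\partial(X_w)}(X,\tilde{\CL}_\lambda)=0$ for $i\ne l(w)$ by a \emph{local} computation on the affine cell $X_w\subseteq X$, reducing to a standard fact about local cohomology of a line bundle supported along a linear subspace inside affine space. First I would apply the excision isomorphism~\eqref{eq:excision} to pass from $H^i_{\ol{X}_w/\partial(X_w)}(X,\tilde{\CL}_\lambda)$ to $H^i_{X_w}\big((G/P)\setminus\partial(X_w),\tilde{\CL}_\lambda\big)$, using $X_w=\ol{X}_w\setminus\partial(X_w)$ from~\eqref{eq:X-boundary}. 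Now $X_w$ is a closed affine subvariety of the quasi-affine open $(G/P)\setminus\partial(X_w)$; I would further shrink to a $U_-$-stable affine open neighbourhood of the cell. Concretely, using the big cell $U_-P/P\subseteq G/P$ (an affine space on which $U_-$ acts by translations) and its $w$-translates, one identifies a neighbourhood of $X_w$ with an affine space $\mathbb A^M$ in which $X_w$ sits as a coordinate linear subspace of codimension $l(w)=\codim_X(X_w)$ (this is the standard description~\eqref{eq:parabolic Bruhat} of the $B_-$-orbit stratification). Since restriction to an open subset containing the support does not change cohomology with support by Lemma~\ref{lem: cl-excision} (equation~\eqref{eq:cl-excision}), it suffices to compute $H^i_{\mathbb A^{l(w)}\times\{0\}}(\mathbb A^M,\tilde{\CL}_\lambda)$.

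The key step is then the purity statement in the $D$-module / quasi-coherent setting: for a vector bundle (here a line bundle) $\CF$ on a smooth affine variety $\mathbb A^M$ and a smooth closed subvariety $Z\subseteq\mathbb A^M$ of codimension $c$, the local cohomology sheaves $\mathcal H^i_Z(\CF)$ vanish for $i\ne c$ and $H^i_Z(\mathbb A^M,\CF)=0$ for $i\ne c$ as well, because $\mathbb A^M$ is affine (so higher cohomology of the single nonzero local cohomology sheaf also vanishes). This is classical — it is exactly the input used implicitly in~\cite{ku} via~\cite[Proposition 9.3.7]{ku} at the level of the full flag variety $Y=G/B$, and the same Kostant-type computation with $P$ in place of $B$ gives the result for $X=G/P$. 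To make the line bundle irrelevant I would note that $\tilde{\CL}_\lambda$ is trivial as an $\CO$-module on the affine cell neighbourhood (any line bundle on $\mathbb A^M$ is trivial), so the computation reduces to $H^i_Z(\mathbb A^M,\CO)$, which is concentrated in degree $c=l(w)$.

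The main obstacle I expect is bookkeeping rather than a genuine difficulty: one must verify that $\ol{X}_w\setminus\partial(X_w)=X_w$ really is \emph{closed} inside a conveniently chosen affine open of $G/P$ — i.e. that the chosen affine open avoids the other strata $X_{w'}$ with $X_{w'}\not\subseteq\ol{X}_w$ that could otherwise contribute boundary pieces — and that on this open $X_w$ is cut out transversally (as a smooth linear subspace) so that the codimension equals $l(w)$ cleanly. Here the convexity/normality properties of Schubert cells for the $B_-$-stratification, together with the fact that the projection $\pi\colon G/B\to G/P$ is proper and the cells $X_w$ ($w\in{}^{\fl}W$) are precisely the $\pi$-images of the Bruhat cells $Y_w$, ensure that the situation locally looks like a product with the affine space $\fu/(\fu\cap w^{-1}\fb w)$; one can also simply invoke~\cite[\S7.21]{ku} and the properness of $\pi$ to transport the $B$-version of this theorem (which is~\cite[Proposition 9.3.7]{ku}) along $\pi$ using Lemmas~\ref{lem:grif-schm} and~\ref{eq:groth}, exactly as in the proof of Theorem~\ref{thm:Second Resolution}. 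In fact the cleanest route is this last one: the $B$-version of the present theorem was already established (it is the vanishing part of~\cite[Proposition 9.3.7]{ku}), and the relative cohomology on $X$ is identified with that on $Y$ via $R\pi_*\CL_\lambda=\tilde{\CL}_\lambda$ (Lemma~\ref{lem:grif-schm}) and Grothendieck's spectral sequence for cohomology with support (Lemma~\ref{eq:groth}), so the degrees simply match up.
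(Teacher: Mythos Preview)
Your first route---excision to $H^i_{X_w}$ on an open neighbourhood, then local purity for the smooth pair of codimension $l(w)$, then the affineness of $X_w$ to kill higher global cohomology of the one surviving local-cohomology sheaf---is correct, and this is precisely the content of the paper's proof, which is a one-line citation of the local purity theorems in \cite[Proposition~4.1]{al}. Your bookkeeping concern about locating a single affine chart in which $X_w$ sits as a linear subspace is easily bypassed by working with the local cohomology \emph{sheaves} $\mathcal{H}^i_{X_w}(\tilde{\CL}_\lambda)$: their vanishing for $i\ne l(w)$ is a local statement, and then affineness of the support $X_w$ collapses the local-to-global spectral sequence.

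Your second route, however, does not close. The identification via Lemmas~\ref{lem:grif-schm} and~\ref{eq:groth} indeed gives
\[
  H^k_{X_w}\big(X\setminus\partial(X_w),\tilde{\CL}_\lambda\big)\ \simeq\ H^k_{Q_w}(U_w,\CL_\lambda)
  \qquad\text{with}\quad Q_w=\pi^{-1}(X_w),
\]
but the ``$B$-version'' you invoke, \cite[Proposition~9.3.7]{ku}, is a statement about \emph{single Bruhat cells} $Y_u$, whereas $Q_w=\bigsqcup_{v\in W_\fl}Y_{wv}$ is a union of cells whose codimensions range from $l(w)$ to $l(w)+l(w_{\fl,0})$; so that proposition says nothing directly about $H^k_{Q_w}$. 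Nor does local purity on $Y$ applied to the smooth pair $(U_w,Q_w)$ suffice: it kills the local cohomology sheaves in degrees $\ne l(w)$, but $Q_w$ is not affine (the fibers of $\pi|_{Q_w}\colon Q_w\to X_w$ are copies of the projective variety $P/B$), so the local-to-global spectral sequence need not degenerate. In short, it is the affineness of $X_w$---not of $Q_w$---that makes the argument work, and that is exactly your first route carried out on $X$, not a transport from~$Y$.
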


\medskip

\begin{proof}
It is an immediate consequence of the local purity theorems (see~\cite[Proposition 4.1]{al}).
\end{proof}

\medskip

   %%%%%%%%%%%%%%%%%%%%%%%%%%%%%%%%%%%%%%%%%%%%%%%%%%%%%%%%%%%%%%%%%%%%%%%%%%%%%%%%%%%%%%%%%%%%%
   %%%%%%%%%%%%%%%%%%%%%%%%%%%%%%%%%%%%%%% SUBSECTION %%%%%%%%%%%%%%%%%%%%%%%%%%%%%%%%%%%%%%%%%%
   %%%%%%%%%%%%%%%%%%%%%%%%%%%%%%%%%%%%%%%%%%%%%%%%%%%%%%%%%%%%%%%%%%%%%%%%%%%%%%%%%%%%%%%%%%%%%

\subsection{Derivation of the truncated BGG resolutions}
$\ $

Let $\lambda\in P^+_{\fg/\fl}$ be a dominant integral weight of $\fg$ vanishing on the coroot lattice of $\fl$,
see~\eqref{eq:Levi-trivial}. Now we are ready to derive both
resolutions~(\ref{eq:conjectured resolution 1},~\ref{eq:conjectured resolution 2}) of the Introduction,
cf.~Main~Theorem~\ref{mthm 1}.

\medskip

\begin{Thm}\label{Thm:KEY}
For $\lambda\in P^+_{\fg/\fl}$, the irreducible finite-dimensional $\fg$-module $L_\lambda$ has a finite length
resolution~\eqref{eq:conjectured resolution 1}, with each term admitting a finite length
resolution~\eqref{eq:conjectured resolution 2} by Verma modules.
\end{Thm}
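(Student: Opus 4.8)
The plan is to assemble the two desired resolutions from the geometric input already established. The key point is that Theorem~\ref{thm:Second Resolution} (together with Lemma~\ref{lem:geometric interpretation}) provides, for each $w\in{}^{\fl}W$, a finite length resolution~\eqref{eq:conjectured resolution 2} of $M'_{w\,\cdot\lambda}$ by Verma modules; this is exactly the statement of Corollary~\ref{cor:proof of second resolutions}, so the ``each term'' half of the theorem is done. It thus remains to produce the outer resolution~\eqref{eq:conjectured resolution 1} of $L_\lambda$ by the modules $\{M'_{w\,\cdot\lambda}\}_{w\in{}^{\fl}W}$.

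For this I would run the Cousin-complex machinery of Subsection~\ref{ssec relative cohomology} a second time, now directly on the partial flag variety $X=G/P$ equipped with the (non-exhaustive) filtration by closed subsets $\tilde Z_i=\bigsqcup_{w\in{}^{\fl}W,\ l(w)\geq i}X_w$ coming from the $B_-$-orbit stratification~\eqref{eq:parabolic Bruhat}, and the sheaf $\CF=\tilde\CL_\lambda$. By Theorem~\ref{thm:purity 2} together with the excision isomorphism~\eqref{eq:excision} and the additivity~\eqref{eq:disjoint-excision} of Lemma~\ref{lem: cl-excision}, one gets
\begin{equation*}
  H^{k}_{\tilde Z_i/\tilde Z_{i+1}}(X,\tilde\CL_\lambda)\simeq
  \bigoplus_{w\in{}^{\fl}W}^{l(w)=i} N_w(\lambda)
  \quad\text{if } k=i,\qquad =0 \text{ otherwise},
\end{equation*}
using $N_w(\lambda)=H^{l(w)}_{\ol X_w/\partial(X_w)}(X,\tilde\CL_\lambda)$ as in~\eqref{eq:N-module} and the same disjointness bookkeeping as in Remark~\ref{rem:applicability}. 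Hence all Cousin complexes $\mathcal{C}_k$ of~\eqref{eq:Cousin} vanish except $\mathcal{C}_0$, whose terms are $\bigoplus_{l(w)=i} N_w(\lambda)$. Since the full flag variety $X$ has no boundary, $Z_0=\tilde Z_0=X$, so Theorem~\ref{thm:Cousin use} gives $H^i(\mathcal{C}_0)=H^i(X,\tilde\CL_\lambda)$. By the Borel--Weil theorem (in the form relevant for the $\phi$-twisted $\fg$-action on cohomology, cf.~Lemma~\ref{lem:grif-schm} applied to $\pi_*$ and the classical computation of $H^\bullet(G/B,\CL_\lambda)$ for $\lambda\in P^+_\fg$), this cohomology is concentrated in degree $0$ and equals $L_\lambda^\vee\simeq L_\lambda$ by~\eqref{eq:self-dual}. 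Applying the antiautoequivalence $\Phi$ of~\eqref{eq:dual-equivalence} and invoking Lemma~\ref{lem:geometric interpretation} to identify $N_w(\lambda)^\vee\simeq M'_{w\,\cdot\lambda}$, the complex $\Phi(\mathcal{C}_0)$ is precisely the resolution~\eqref{eq:conjectured resolution 1}.

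The main obstacle I anticipate is not the formal bookkeeping but verifying that the hypotheses of Theorem~\ref{thm:Cousin use} genuinely apply in the partial-flag setting --- that is, establishing the degree-concentration $H^{k}_{\tilde Z_i/\tilde Z_{i+1}}(X,\tilde\CL_\lambda)=0$ for $k\neq i$. This is where Theorem~\ref{thm:purity 2} (the local purity statement) does the real work, and one must check that the excision reduction to a single orbit $X_w$ is legitimate, i.e.\ that $\partial(X_w)$ and the higher skeleta are disjoint closed subsets in the appropriate open neighborhood, exactly as spelled out in Remark~\ref{rem:applicability}; a secondary point is confirming that the differentials in $\mathcal{C}_0$ are $\fg$-equivariant and nonzero, so that the complex is exact and not merely a complex with the right graded pieces, which follows from the $\CD$-equivariance in Lemma~\ref{lem: D-setting} together with the vanishing of higher cohomology forcing exactness. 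Once these are in place, the theorem follows by combining the outer resolution just constructed with Corollary~\ref{cor:proof of second resolutions} for the inner ones.
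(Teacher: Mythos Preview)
Your proposal is correct and follows essentially the same route as the paper's proof: run the Cousin machinery on $X=G/P$ with the $B_-$-orbit filtration and $\tilde{\CL}_\lambda$, invoke Theorem~\ref{thm:purity 2} plus excision/additivity to collapse all but $\mathcal{C}_0$, identify its terms via Lemma~\ref{lem:geometric interpretation} and its cohomology via Borel--Weil--Bott, then dualize with $\Phi$ and combine with Corollary~\ref{cor:proof of second resolutions}. One small remark: the exactness of $\mathcal{C}_0$ in nonzero degrees is a direct consequence of Theorem~\ref{thm:Cousin use} together with the higher vanishing from Borel--Weil--Bott, so there is no need to separately check that individual differentials are ``nonzero''.
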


\medskip

\begin{proof}
We will prove the dualized version of the desired statement, just as in our proof of Theorem~\ref{thm:Second Resolution}.
Let us consider the sheaf $\CF=\tilde{\CL}_{\lambda}$ and the following filtration of $X$ by closed subsets:
\begin{equation}\label{eq:Z-filtration 2}
  Z_i \, = \bigsqcup\limits_{w\in {}^{\fl}W:\, l(w) \geq i} X_w \,.
\end{equation}
We claim that Theorem~\ref{thm:Cousin use} applies in this setup and gives rise to the following exact sequence of
$\fg$-modules:
\begin{equation}\label{eq:geometric resolution 1}
  0\to L_\lambda^\vee \to (M'_\lambda)^\vee \to \bigoplus_{w\in {}^{\fl}W}^{l(w)=1} (M'_{w\, \cdot \lambda})^\vee \to
  \bigoplus_{w\in {}^{\fl}W}^{l(w)=2} (M'_{w\, \cdot \lambda})^\vee \to  \cdots \to (M'_{{}^{\fl,0}w\, \cdot \lambda})^\vee
  \to 0 \,.
\end{equation}

\medskip
\noindent
Indeed, all Cousin complexes $\mathcal{C}_k$ of~\eqref{eq:Cousin} vanish for $k\ne 0$,
due to Theorem~\ref{thm:purity 2} and Lemma~\ref{lem: cl-excision}:
\begin{equation}\label{eq:Cousin comp 1}
  H^{k+j}_{Z_j/Z_{j+1}}(X,\CF) = H^{k+j}_{Z_j\setminus Z_{j+1}}(X \setminus Z_{j+1},\CF) \ =
  \bigoplus_{w\in {}^{\fl}W}^{l(w)=j} H^{k+j}_{\ol{X}_w/\partial(X_w)}(X,\tilde{\CL}_{\lambda})=0 \,.
\end{equation}
Therefore, Theorem~\ref{thm:Cousin use} applies, and we get:
\begin{equation}\label{eq:Cousin cor 1}
  H^i(\mathcal{C}_0)=H^i_{X}(X,\CF)=H^i(X,\CF) \,.
\end{equation}
According to the parabolic version of the Borel-Weil-Bott theorem~\cite[Theorem 6.4]{kos},
combined with our conventions of all geometric $\fg$-actions being twisted by the Chevalley involution $\phi$
of~\eqref{eq:twist automorphism} that also enters our definition~(\ref{eq:restricted-dual},~\ref{eq:dual-equivalence})
of the restricted dual $\fg$-module, we have (cf.~\eqref{eq:self-dual}):
\begin{equation}\label{eq:Borel-Weil-Bott}
  H^{i}(X,\CF) = H^{i}(X,\tilde{\CL}_{\lambda}) \simeq
  \begin{cases}
    L_{\lambda}^\vee \simeq L_\lambda & \text{for } i=0 \\
    0  & \text{for } i\ne 0
  \end{cases} \,.
\end{equation}
On the other hand, the $j$-th term of $\mathcal{C}_0$ is computed using Lemmas~\ref{lem: cl-excision}
and~\ref{lem:geometric interpretation} similarly to~\eqref{eq:Cousin comp 1}:
\begin{equation}\label{eq:Cousin comp 2}
  H^{j}_{Z_j/Z_{j+1}}(X,\CF)\ =
  \bigoplus_{w\in{}^{\fl}W}^{l(w)=j} H^{l(w)}_{\ol{X}_w/\partial(X_w)}(X,\tilde{\CL}_{\lambda}) \ =
  \bigoplus_{w\in {}^{\fl}W}^{l(w)=j} (M'_{w\, \cdot \lambda})^\vee \,.
\end{equation}
Combining~(\ref{eq:Cousin cor 1},~\ref{eq:Borel-Weil-Bott},~\ref{eq:Cousin comp 2}), we see that the Cousin
complex $\mathcal{C}_0$ realizes the exact sequence~\eqref{eq:geometric resolution 1}.

\medskip
\noindent
Applying the antiautoequivalence $\Phi$ of~\eqref{eq:dual-equivalence} to~\eqref{eq:geometric resolution 1} produces
the resolution~\eqref{eq:conjectured resolution 1}, while the resolutions~\eqref{eq:conjectured resolution 2}
were constructed in Corollary~\ref{cor:proof of second resolutions}. This completes our proof of the theorem.
\end{proof}

\medskip
\noindent
\begin{Rem}\label{rem:MR gaps}
Our argument above has been strongly influenced by~\cite{mr}, where the Lepowsky parabolic BGG
resolution~\eqref{eq:Lepowsky resolution 1} was interpreted via the Cousin complex on the complete flag variety $Y=G/B$
stratified by $P$-orbits. Nevertheless, there are two subtle points in~\cite{mr}:
\begin{enumerate}

\item[(1)]
For $u\in W$, let $Z_u$ denote the $B$-orbit $BuB/B \subseteq Y$. It is stated in~\cite[after (3.2)]{mr} that:
\begin{equation}\label{eq:B-case}
  H^{k}_{\ol{Z}_u/\partial(Z_u)}(G/B,\CL_{\lambda}) =
  \begin{cases}
	M_{u \, \cdot \lambda}^\vee & \text{if } k=\codim_Y(Z_u) \\
	0 & \text{otherwise }
  \end{cases} \,.
\end{equation}
This (as well as $\lambda \neq 0$ case at~\cite[p.~55]{b}) is wrong, as we rather have:

\medskip

\begin{Lem}
\begin{equation}\label{eq:B-case updated}
  H^{k}_{u}(\lambda) := H^{k}_{\ol{Z}_u/\partial(Z_u)}(G/B,\CL_{\lambda}) =
  \begin{cases}
    M_{uw_0 \cdot \lambda}^\vee & \text{if } k=\codim_Y(Z_u) \\
    0 & \text{otherwise }
  \end{cases} \,.
\end{equation}
\end{Lem}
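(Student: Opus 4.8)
The plan is to compute $H^k_{\ol{Z}_u/\partial(Z_u)}(G/B,\CL_\lambda)$ directly, mimicking the proof of~\cite[Proposition 9.3.7]{ku} (which was stated there for $B_-$-orbits $Y_u=B_-uB/B$) but carefully tracking the difference between the $B$-orbit $Z_u=BuB/B$ and the $B_-$-orbit $Y_u=B_-uB/B$. First I would recall that $\codim_Y(Z_u)=l(w_0)-l(u)=l(w_0u)$, whereas $\codim_Y(Y_u)=l(u)$; in particular the $B$-orbit $Z_u$ coincides with the $B_-$-orbit $Y_{w_0 u}$ after acting by the longest element $w_0\in W$, since left translation by a representative $\dot w_0$ of $w_0$ carries $B$ to $B_-$ and hence $BuB/B$ to $B_-w_0uB/B=Y_{w_0u}$. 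Under this translation the line bundle $\CL_\lambda$ is not preserved but is sent to $\CL_{w_0\lambda}=\CL_{-\lambda^*}$ up to the automorphism $\phi$ of~\eqref{eq:twist automorphism} built into our conventions; keeping precise bookkeeping here is what produces $w_0$ in the final answer.

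Concretely, the key steps are: (i) via the isomorphism $\ol{Z}_u=\dot w_0\cdot \ol{Y}_{w_0u}$ and the excision isomorphism~\eqref{eq:excision}, reduce to the local cohomology $H^k_{\ol{Y}_{w_0u}/\partial(Y_{w_0u})}(G/B,\CL_?)$ already computed in~\cite[Proposition 9.3.7]{ku}; (ii) identify the twist of the line bundle incurred by $\dot w_0$, using that left translation by $\dot w_0$ intertwines the $G$-equivariant structures and that on the fibre over the base point the weight $-\lambda$ of the $B$-representation $L_{-\lambda}$ is sent to $-w_0\lambda$; (iii) combine with the convention that all geometric $\fg$-actions are twisted by $\phi$ — exactly as in the passage to~\eqref{eq:Borel-Weil-Bott} and~\eqref{eq:self-dual} — so that the expected Verma module index becomes $uw_0\cdot\lambda$ rather than $u\cdot\lambda$. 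The vanishing statement for $k\ne\codim_Y(Z_u)$ is inherited verbatim from~\cite[Proposition 9.3.7]{ku} (equivalently from the local purity input of Theorem~\ref{thm:purity 2}), since the translation by $\dot w_0$ is an isomorphism of varieties and hence preserves the concentration of local cohomology in a single degree.

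An alternative, and perhaps cleaner, route avoids $w_0$-translation entirely: one checks directly, as on~\cite[pp.~285--286]{ku}, that $H^k_{\ol{Z}_u/\partial(Z_u)}(G/B,\CL_\lambda)$ is the algebraic local cohomology of the affine space $Z_u\cong\BA^{l(u)}$ sitting inside the $B$-stable affine chart $\dot u U_-\dot u^{-1}\cdot \dot u B/B$, with support of codimension $l(w_0u)$. The resulting module is, up to $\phi$-twist, the restricted dual of the Verma module whose highest weight is $u\cdot\lambda$ shifted by the sum of the roots in $\Delta^+\cap u(\Delta^-)$; bookkeeping this shift against $\rho$ and the dot action collapses it to the weight $uw_0\cdot\lambda$. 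Either way, the main obstacle — and the whole point of the Remark — is the sign/weight bookkeeping: getting the precise index right (distinguishing $u\cdot\lambda$ from $uw_0\cdot\lambda$, and checking that the discrepancy at~\cite[p.~55]{b} for $\lambda\ne 0$ is of exactly this nature) requires being scrupulous about which Bruhat cell one uses ($B$- versus $B_-$-orbits) and about the $\phi$-twist in~\eqref{eq:restricted-dual}, since in the $\lambda=0$ case $M_{u\cdot 0}^\vee$ and $M_{uw_0\cdot 0}^\vee$ happen to coincide only after one already knows the correct statement — the error is invisible precisely there, which is why it propagated.
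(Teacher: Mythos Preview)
Your approach differs substantially from the paper's, and approach~(i) contains a genuine error. The line bundle $\CL_\lambda$ is $G$-equivariant, so left translation by $\dot w_0$ \emph{does} preserve it: $\ell_{\dot w_0}^*\CL_\lambda\cong\CL_\lambda$ canonically (e.g.\ because $G$ is connected, so $\ell_{\dot w_0}$ acts trivially on $\mathrm{Pic}(G/B)$). What actually changes is the $\fg$-module structure: translation by $\dot w_0$ gives a vector-space isomorphism intertwining the natural $\fg$-actions only up to the inner twist $\mathrm{Ad}(\dot w_0)$. But twisting $M_{w_0u\cdot\lambda}^\vee$ by $\mathrm{Ad}(\dot w_0)$ produces a \emph{lowest}-weight module (its character is supported in $w_0\mu+\BZ_{\ge0}\Delta^+$), hence not in category~$\CO$; since $M_{w_0u\cdot\lambda}^\vee$ does not integrate to a $G$-module, you cannot undo the twist by acting with $\dot w_0$. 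So you cannot read off the desired $M_{uw_0\cdot\lambda}^\vee$ from the $B_-$-orbit computation this way. Applying the $\phi$-twist on both sides does not help: $\phi\circ\mathrm{Ad}(\dot w_0)\circ\phi^{-1}$ is again an inner automorphism lifting $w_0$, so the same obstruction persists.

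Your approach~(ii) is closer in spirit to a direct computation, but the specific bookkeeping you sketch is off: the sum of roots in $\Delta^+\cap u(\Delta^-)$ equals $\rho-u\rho$, and adding this to $u\cdot\lambda$ gives $u\lambda$, not $uw_0\cdot\lambda$; you have the wrong set of normal directions for the $B$-orbit. The paper instead follows Brylinski's inductive descent~\cite[Proposition~7, Lemma~4]{b}: start from the open $B$-cell $Z_{w_0}$, where $H^0_{w_0}(\lambda)=M_\lambda^\vee=M_{w_0w_0\cdot\lambda}^\vee$ is known (cf.~\cite[Claim~2.4.2]{Bez}), choose a reduced expression $w_0u^{-1}=s_{\alpha_1}\cdots s_{\alpha_{N-l(u)}}$, and use the chain of connecting-map epimorphisms $H^0_{w_0}(\lambda)\twoheadrightarrow H^1_{s_{\alpha_2}\cdots s_{\alpha_{N-l(u)}}u}(\lambda)\twoheadrightarrow\cdots\twoheadrightarrow H^{c(u)}_u(\lambda)$ to identify the target as $M_{(s_{\alpha_{N-l(u)}}\cdots s_{\alpha_1})\cdot\lambda}^\vee=M_{uw_0\cdot\lambda}^\vee$. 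This sidesteps the twist problem entirely and makes the appearance of $uw_0$ transparent. The vanishing for $k\ne c(u)$ is, as you say, the $B$-case of Theorem~\ref{thm:purity 2}.
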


\medskip

\begin{proof}
When $u = w_0$, this statement is well-known (cf.~\cite[Claim 2.4.2]{Bez}).
One can show that the general case holds along the lines of \cite[Proposition 7]{b}.
Indeed, let $w_{0}u^{-1}=s_{\alpha_{1}} \cdot \ldots \cdot s_{\alpha_{N-\ell(u)}}$ be a reduced decomposition, where
$N=l(w_0)=|\Delta^+|$. Then, similarly to the argument in~\cite[Lemma~4]{b}, it is easy to see that we still have
a sequence of the following $\fg$-module epimorphisms:
\begin{equation*}
	H^0_{w_0}(\lambda) = H^0_{s_{\alpha_1}s_{\alpha_{2}} \cdots s_{\alpha_{N-l(u)}}u}(\lambda)\twoheadrightarrow
	H^1_{s_{\alpha_{2}} \cdots s_{\alpha_{N-l(u)}}u}(\lambda) \twoheadrightarrow \cdots \twoheadrightarrow
	H^{c(u)-1}_{s_{\alpha_{N-l(u)}}u}(\lambda) \twoheadrightarrow H^{c(u)}_u(\lambda) \,,
\end{equation*}
where $c(u)=\codim_Y(Z_u)=N-l(u)$. Thus, the argument of~\cite{b} still applies and we get:
\begin{equation*}
  H^{c(u)}_{u}(\lambda) = M_{(s_{\alpha_{N-l(u)}} \cdots\, s_{\alpha_1}) \, \cdot \lambda}^\vee =
  M_{uw_0 \, \cdot \lambda}^\vee \,.
\end{equation*}
On the other hand, the vanishing result in~\eqref{eq:B-case updated} is just the $B$-case of Theorem~\ref{thm:purity 2}.
\end{proof}

\item[(2)]
The use of $H^\bullet_\bullet(-, \CL_\lambda\otimes \CK)$ in~\cite{mr} is wrong.

\end{enumerate}
However, both of the above inaccuracies can be easily fixed by replacing $\CL_\lambda\otimes \CK$ with $\CL_\lambda$
and considering the stratification of $Y$ by $P_{-}$-orbits, where $P_-\subset G$ is the opposite parabolic subgroup.
\end{Rem}

\medskip

\begin{Rem}
(a) It is instructive to point out that the results of~\cite{mr} provide the answer to~\cite[Open Problem 9.3.19]{ku}.
The only difference is that~\textit{loc.cit}.\ treats the case of an arbitrary Kac-Moody algebra. Nonetheless, the results
of~\cite{mr}, as well as ours, admit natural generalizations to such infinite-dimensional setup through the usual
stratification by Schubert varieties.

\medskip
\noindent
(b) We also note that the other possible way to generalize our results is by considering an arbitrary dominant integral weight
$\lambda\in P^+_{\fg}$, not necessarily vanishing on the coroot lattice of $\fl$. In this case, one obtains (exactly as above)
the resolutions of the form~(\ref{eq:conjectured resolution 1},~\ref{eq:conjectured resolution 2}) with $\tilde{\CL}_\lambda$
being replaced by $R^0\pi_*(\CL_\lambda)$ (note that $R^{>0}\pi_*(\CL_{\lambda})=0$, according to~\cite{gs}). However, the
corresponding infinite-dimensional $\fg$-modules (realized as $H^{l(w)}_{\ol{X}_w/\partial(X_w)}(X,R^0\pi_*(\CL_\lambda))^\vee$)
are not defined for $\lambda\notin P^+_{\fg}$ in this case, in contrast to such a key feature of our modules
$M'_{w\, \cdot \lambda}$ of~\eqref{eq:hard modules} as discussed in Subsection~\ref{ssec analytic continuation}.
\end{Rem}

\medskip

   %%%%%%%%%%%%%%%%%%%%%%%%%%%%%%%%%%%%%%%%%%%%%%%%%%%%%%%%%%%%%%%%%%%%%%%%%%%%%%%%%%%%%%%%%%%%%
   %%%%%%%%%%%%%%%%%%%%%%%%%%%%%%%%%%%%%%%% SECTION 3 %%%%%%%%%%%%%%%%%%%%%%%%%%%%%%%%%%%%%%%%%%
   %%%%%%%%%%%%%%%%%%%%%%%%%%%%%%%%%%%%%%%%%%%%%%%%%%%%%%%%%%%%%%%%%%%%%%%%%%%%%%%%%%%%%%%%%%%%%

\section{Standard BGG}\label{sec: A-type Verma BGG}

In this section, we recall the standard relation between the transfer matrices of $A$-type spin chains
corresponding to finite-dimensional and infinite-dimensional (dual Verma) $\gl_n$-modules provided
via oscillator Lax matrices, as summarized in the Introduction. This exposition is mostly to motivate
the key constructions and results of the upcoming sections. We also provide an overview of
the factorisation and the determinant formulas in this setup, as mentioned in the~Introduction.

\medskip

   %%%%%%%%%%%%%%%%%%%%%%%%%%%%%%%%%%%%%%%%%%%%%%%%%%%%%%%%%%%%%%%%%%%%%%%%%%%%%%%%%%%%%%%%%%%%%
   %%%%%%%%%%%%%%%%%%%%%%%%%%%%%%%%%%%%%%%% SUBSECTION %%%%%%%%%%%%%%%%%%%%%%%%%%%%%%%%%%%%%%%%%
   %%%%%%%%%%%%%%%%%%%%%%%%%%%%%%%%%%%%%%%%%%%%%%%%%%%%%%%%%%%%%%%%%%%%%%%%%%%%%%%%%%%%%%%%%%%%%

\subsection{Oscillator realization in type A (Verma)}
$\ $

For any $n\in \BZ_{\geq 2}$, let $\CA$ denote the oscillator algebra generated by $\frac{n(n-1)}{2}$ pairs
of oscillators $\{(\oa_{j,i},\oad_{i,j})\}_{1\leq i<j\leq n}$ subject to the standard defining relations:
\begin{equation}\label{eq:oscillator relations}
   [\oa_{j,i},\oad_{k,\ell}]=\delta_{i}^{k}\delta_{j}^{\ell} \,, \qquad
   [\oa_{j,i},\oa_{\ell,k}]=0 \,, \qquad
   [\oad_{i,j},\oad_{k,\ell}]=0 \,,
\end{equation}
so that
\begin{equation}
  \CA = \BC \Big\langle \oa_{j,i} \, , \, \oad_{i,j} \Big\rangle_{1\leq i<j\leq n} \, \Big/ \,
  \eqref{eq:oscillator relations} \,.
\end{equation}
Fix $\lambda=(\lambda_1,\ldots,\lambda_n)\in \BC^n$. Following~\cite[\S2]{Derkachov:2006fw} (going back to~\cite{gn}),
let us consider the $\gl_n$-type $\CA[x]$-valued Lax matrix (i.e.\ a solution of the RTT relation~\eqref{eq:RTT} with
the $R$-matrix of~\eqref{eq:R-matrix-A}):
\begin{equation}\label{eq:classical-Lax}
  \CL_\lambda(x)=U^{-1}(x+D_\lambda)U
\end{equation}
defined through the matrices:
\begin{equation}\label{eq:UD}
  U=
  \left(\begin{array}{cccc}
     1 & -\oad_{1,2} & \cdots & -\oad_{1,n} \\
     \vdots & \ddots & \ddots & \vdots \\
     0 & \cdots & 1 & -\oad_{n-1,n} \\
     0 & \cdots & 0 & 1 \\
  \end{array} \right) \,, \qquad
  D_\lambda=
  \left(\begin{array}{cccc}
     \lambda_1 & 0 & \cdots & 0 \\
     \tilde{\oa}_{2,1} & \lambda_2-1 & \ddots & \vdots \\
     \vdots & \ddots & \ddots & 0 \\
     \tilde{\oa}_{n,1} & \cdots & \tilde{\oa}_{n,n-1} & \lambda_n-n+1 \\
  \end{array}\right) \,,
\end{equation}
with
\begin{equation}
  \tilde{\oa}_{j,i}=-\oa_{j,i}\,+\sum_{k=j+1}^{n}\oad_{j,k}\oa_{k,i} \,.
\end{equation}
Writing~\eqref{eq:classical-Lax} in the form
\begin{equation}
  \CL_\lambda(x)=x\ID_n + \sum_{i,j=1}^n e_{ij}\CE_{ji} \,,
\end{equation}
we note that the RTT relation~\eqref{eq:RTT} implies that $\{\CE_{ij}\}_{i,j=1}^n$
satisfy the $\gl_n$ commutation relations:
\begin{equation}\label{eq:glncom}
  [\CE_{ij},\CE_{k\ell}]=\delta_{j}^{k}\CE_{i\ell}-\delta_{\ell}^{i}\CE_{kj}\,.
\end{equation}

\medskip
\noindent
Let us consider the standard Fock module $\Fock$ of $\CA$, generated by the Fock vacuum $|0\rangle \in \Fock$ satisfying
\begin{equation}
  \oa_{j,i} |0\rangle = 0 \,, \qquad 1\leq i< j\leq n \,.
\end{equation}
Thus, $\Fock$ has a basis obtained by the action of the pairwise commuting \emph{creation} operators on $|0\rangle$:
\begin{equation}\label{eq:vec}
  |\vec{m}\, \rangle \, = \prod_{1\leq i<j\leq n} \oad_{i,j}^{m_{i,j}} \, |0\rangle \,, \qquad
  \forall\, \vec{m}=(m_{i,j})_{1\leq i<j\leq n}\in \BN^{\frac{n(n-1)}{2}} \,.
\end{equation}
We shall use $\langle \vec m|$ to denote the dual basis of $\Fock^*$, so that
\begin{equation}\label{eq:vec-covec}
  \langle \vec k|X|\vec m\rangle = \langle \vec k| \Big( X|\vec{m}\rangle \Big)
\end{equation}
denotes the $(|\vec k\rangle, |\vec m\rangle)$-matrix coefficient of any linear operator $X$
acting on the Fock space $\Fock$.

\medskip
\noindent
By straightforward computation, we find:
\begin{equation}\label{eq:A-diag}
  \CE_{ii}=\lambda_i + \sum_{k<i}\oad_{k,i}\oa_{i,k} - \sum_{k>i}\oad_{i,k}\oa_{k,i}  \,, \qquad 1\leq i\leq n \,,
\end{equation}
\begin{equation}\label{eq:A-overdiag}
  \CE_{ij}=-\oa_{j,i} + \sum_{k<i} \oad_{k,i}\oa_{j,k} \,, \qquad  1\leq i<j\leq n \,.
\end{equation}
Hence, the Fock vacuum $|0\rangle$ is a highest weight state of the resulting $\gl_n$-action:
\begin{equation}
  \CE_{ij}|0\rangle =0\,, \qquad 1\leq i<j\leq n\,,
\end{equation}
with the highest weight $\lambda$, that is:
\begin{equation}
  \CE_{ii} |0\rangle = \lambda_i |0\rangle \,, \qquad 1\leq i\leq n \,.
\end{equation}
We can now identify the resulting $\gl_n$-modules $\Fock$ with those featuring in the Introduction:

\medskip

\begin{Lem}\label{lem:A-Fock as Verma}
There is a $\gl_n$-module isomorphism:
\begin{equation}\label{eq:A-Fock-as-BasicVerma}
  \Fock \simeq M_\lambda^\vee \,,
\end{equation}
identifying $\Fock$ with the restricted dual~\eqref{eq:restricted-dual} of the highest weight Verma module $M_\lambda$.
\end{Lem}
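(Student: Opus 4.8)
The plan is to construct an explicit $\gl_n$-module homomorphism and verify it is an isomorphism by a weight-space dimension count. First I would recall that the Verma module $M_\lambda$ is the induced module $U(\fg)\otimes_{U(\fb)}\BC_\lambda$, so by the Poincaré--Birkhoff--Witt theorem it has a basis obtained by acting on its highest weight vector $v_\lambda$ with ordered monomials $\prod_{1\leq i<j\leq n}\CE_{ij}^{m_{ij}}$ (using the negative root vectors, which in the conventions of~\eqref{eq:classical-Lax} are the upper-triangular $\CE_{ij}$ with $i<j$, in view of the twist~\eqref{eq:twist automorphism}). Hence the restricted dual $M_\lambda^\vee$ of~\eqref{eq:restricted-dual} has, in each weight space, dimension equal to the number of such monomials of the given weight. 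On the other side, by~\eqref{eq:vec} the Fock space $\Fock$ has the basis $|\vec m\,\rangle=\prod_{1\leq i<j\leq n}\oad_{i,j}^{m_{ij}}|0\rangle$, indexed by exactly the same set of exponent tuples $\vec m=(m_{ij})$.

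The key step is to pin down the map. Since $|0\rangle\in\Fock$ is a highest weight vector of weight $\lambda$ (as recorded just above the statement, from~\eqref{eq:A-diag}--\eqref{eq:A-overdiag}), the universal property of the Verma module gives a $\gl_n$-module homomorphism $M_\lambda\to \Fock$ sending $v_\lambda\mapsto|0\rangle$; dualizing (and using the involutivity of $\Phi$ together with $\Fock^\vee\simeq\Fock$, which holds because $\Fock$ is a highest weight module in category $\CO$ with the pairing~\eqref{eq:vec-covec} intertwining the $\phi$-twist) one obtains a homomorphism $\Fock\to M_\lambda^\vee$, or equivalently one checks directly that $\Fock$ is a highest weight module of highest weight $\lambda$ and invokes the universal property of $M_\lambda$ for the adjoint. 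Either way, the map is nonzero on the highest weight line.

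Next I would establish surjectivity. Because $\Fock$ is generated as a $\gl_n$-module by $|0\rangle$ (every basis vector $|\vec m\,\rangle$ is obtained by applying raising operators $\CE_{ij}$, $i<j$, to $|0\rangle$: indeed from~\eqref{eq:A-overdiag} one sees $\CE_{ij}|0\rangle=-\oad_{j,i}$-type action up to lower-order terms, so an induction on the height of the weight shows the $\oad$-monomials lie in $U(\gl_n)|0\rangle$), the image of the map $M_\lambda\to\Fock$ is all of $\Fock$; equivalently the dual map $\Fock\to M_\lambda^\vee$ is injective. It then remains to compare graded dimensions: the character of $\Fock$ is $e^\lambda\prod_{1\leq i<j\leq n}(1-e^{\alpha_i-\alpha_j})^{-1}$ by the $\oad$-basis, which is exactly $\ch_{M_\lambda}=\ch_{M_\lambda^\vee}$ since the roots $\alpha_i-\alpha_j$, $i<j$, are precisely the positive roots of $\gl_n$ that get flipped by $\phi$. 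Matching finite-dimensional weight spaces upgrades the injection to an isomorphism.

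I expect the main obstacle to be bookkeeping with the twist automorphism $\phi$ of~\eqref{eq:twist automorphism}: one must be careful that the $\gl_n$-action coming from the Lax matrix~\eqref{eq:classical-Lax} is the one for which $\{\CE_{ij}\}_{i<j}$ are lowering operators after the $\phi$-twist built into $M_\lambda^\vee$, so that ``highest weight $\lambda$'' on the Fock side is compatible with ``highest weight $\lambda$'' for $M_\lambda$ rather than its lowest weight. Once the conventions are aligned, the argument is the standard ``explicit basis plus universal property plus character count''; no deep input is needed beyond PBW for $M_\lambda$ and the elementary fact that $\Fock$ is cyclic on the vacuum.
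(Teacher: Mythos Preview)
Your approach has a genuine gap: the claim that $\Fock$ is generated as a $\gl_n$-module by $|0\rangle$ is false in general. Indeed, if $\Fock$ were cyclic on the highest weight vector $|0\rangle$, then the map $M_\lambda\to\Fock$ from the universal property would be surjective, and by your character count an isomorphism, yielding $\Fock\simeq M_\lambda$ rather than $\Fock\simeq M_\lambda^\vee$. For dominant integral $\lambda$ these two modules are not isomorphic (the Verma module is reducible), and in fact the subsequent Corollary~\ref{cor:t-special}(b) records that in that case $U(\gl_n)\,|0\rangle=L_\lambda\subsetneq\Fock$. You have also misread formula~\eqref{eq:A-overdiag}: for $i<j$ one has $\CE_{ij}=-\oa_{j,i}+\sum_{k<i}\oad_{k,i}\oa_{j,k}$, whose leading term is the \emph{annihilation} operator $-\oa_{j,i}$, so $\CE_{ij}|0\rangle=0$; there is no ``$-\oad_{j,i}$-type action'' on the vacuum. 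The related assertion $\Fock^\vee\simeq\Fock$ is likewise unjustified and fails for exactly the same $\lambda$.

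The paper's fix is to work on the dual side: one shows that $\Fock^\vee$ (not $\Fock$) is generated by its highest weight vector $\langle 0|$. The triangular structure of~\eqref{eq:A-overdiag} implies that each dual basis vector $\langle\vec m|$ is a nonzero multiple of an ordered product of the dual operators $\CE^*_{ji}$ applied to $\langle 0|$. Hence $\Fock^\vee$ is a highest weight module of highest weight $\lambda$, the universal property gives a surjection $M_\lambda\twoheadrightarrow\Fock^\vee$, and the character match upgrades it to an isomorphism; applying the duality $\Phi$ then yields $\Fock\simeq M_\lambda^\vee$. Your outline becomes correct once you replace ``$\Fock$ is cyclic on $|0\rangle$'' by ``$\Fock^\vee$ is cyclic on $\langle 0|$'' and justify that cyclicity via the triangularity of~\eqref{eq:A-overdiag} on the dual basis.
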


\medskip

\begin{proof}
Since the restricted dual $\Fock^\vee$ has the same highest weight and the character as $M_\lambda$, it suffices
to prove that it is also a highest weight $\gl_n$-module, i.e.\ generated by its highest weight vector $\langle 0|$.
To this end, we note that the formula~\eqref{eq:A-overdiag} implies that $\langle \vec{m}|$ is in fact
a non-zero multiple of
\begin{equation*}
  (\CE^*_{12})^{m_{1,2}} \cdots (\CE^*_{1n})^{m_{1,n}}
  (\CE^*_{23})^{m_{2,3}} \cdots (\CE^*_{2n})^{m_{2,n}} \cdots (\CE^*_{n-1,n})^{m_{n-1,n}} \langle 0|
\end{equation*}
for any $\vec{m}=(m_{i,j})_{1\leq i<j\leq n}\in \BN^{\frac{n(n-1)}{2}}$, where $\CE^*_{ij}\in \End(\Fock^*)$
is the dual of the $\CE_{ij}$-action on $\Fock$.
\end{proof}

\medskip
\noindent
Combining this with the determinant formula of~\cite{j} and the isomorphism~\eqref{eq:self-dual}, we obtain:

\medskip

\begin{Cor}\label{cor:t-special}
(a) The Fock space $\Fock$ is irreducible as a $\gl_n$-module if and only if
\begin{equation}\label{eq:gl-generic}
  \lambda_i-\lambda_j\notin i-j+\BZ_{>0} \,, \qquad \forall\, 1\leq i<j\leq n \,.
\end{equation}

\medskip
\noindent
(b) The Fock vacuum $|0\rangle$ generates an irreducible finite-dimensional $\gl_n$-module $L_{\lambda}$
if and only if
\begin{equation}\label{eq:gl-dominant}
  \lambda\in P^+ = \Big\{ \mu\in \BC^n \, |\, \mu_i-\mu_{i+1}\in \BN \ \ \forall\, 1\leq i<n \Big\} \,.
\end{equation}
\end{Cor}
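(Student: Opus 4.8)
The plan is to reduce both parts to standard facts about Verma modules via the identification $\Fock\simeq M_\lambda^\vee$ of Lemma~\ref{lem:A-Fock as Verma}. For part~(a): since the restricted-dual functor $\Phi$ of~\eqref{eq:dual-equivalence} is an involutive contravariant self-equivalence of category $\CO$, it turns a proper nonzero submodule of one module into a proper nonzero quotient of its dual, hence preserves irreducibility; thus $\Fock$ is irreducible iff $M_\lambda$ is. I would then invoke the Shapovalov/Jantzen determinant formula~\cite{j}: $M_\lambda$ fails to be irreducible exactly when there are $\alpha\in\Delta^+$ and $m\in\BZ_{>0}$ with $(\lambda+\rho,\alpha^\vee)=m$. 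For $\gl_n$ one has $\Delta^+=\{\epsilon_i-\epsilon_j\mid 1\le i<j\le n\}$, the weight of $|0\rangle$ read off from~\eqref{eq:A-diag} is $\lambda=(\lambda_1,\dots,\lambda_n)$, and with $\rho$ normalized so that $\rho_i-\rho_{i+1}=1$ (consistent with the diagonal of $D_\lambda$ in~\eqref{eq:UD}) one computes $(\lambda+\rho,(\epsilon_i-\epsilon_j)^\vee)=\lambda_i-\lambda_j+j-i$. The reducibility condition therefore becomes $\lambda_i-\lambda_j\in i-j+\BZ_{>0}$ for some $i<j$, whose negation is precisely~\eqref{eq:gl-generic}.

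For part~(b), the plan is to show that $U(\gl_n)\cdot|0\rangle\simeq L_\lambda$ for every $\lambda\in\BC^n$, after which the claim follows from the classical criterion that $L_\lambda$ is finite-dimensional iff $\lambda_i-\lambda_{i+1}\in\BN$ for all $1\le i<n$, i.e.\ $\lambda\in P^+$. Under Lemma~\ref{lem:A-Fock as Verma}, $|0\rangle$ corresponds to a nonzero vector of the one-dimensional weight space $M_\lambda^\vee[\lambda]$, i.e.\ to a highest weight vector of $M_\lambda^\vee$. Applying $\Phi$ to the canonical surjection $M_\lambda\twoheadrightarrow L_\lambda$ exhibits $L_\lambda$ as the unique simple submodule (socle) of $M_\lambda^\vee$; since $L_\lambda\subseteq M_\lambda^\vee$ and both $L_\lambda[\lambda]$ and $M_\lambda^\vee[\lambda]$ are one-dimensional, they coincide, so $|0\rangle$ already lies in $L_\lambda$. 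As $L_\lambda$ is a highest weight module with one-dimensional highest weight space, $|0\rangle$ generates all of it, giving $U(\gl_n)\cdot|0\rangle=L_\lambda$ as desired.

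The work here is almost entirely bookkeeping rather than hard mathematics: the only genuine care needed is (i) fixing the normalization of $\rho$ actually in force, so that the abstract condition $(\lambda+\rho,\alpha^\vee)\in\BZ_{>0}$ translates into the stated inequalities with the correct signs and offsets, and (ii) making sure the weight-space argument in part~(b) uses only one-dimensionality and does not tacitly assume $\lambda$ integral or dominant. All the substantive input --- the determinant formula for $M_\lambda$, the exactness properties of $\Phi$, and the identification $\Fock\simeq M_\lambda^\vee$ --- is either cited or already established in the excerpt, so no further obstacle is anticipated.
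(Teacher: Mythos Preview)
Your proposal is correct and follows exactly the paper's approach: the paper's proof is the single line ``Combining this with the determinant formula of~\cite{j} and the isomorphism~\eqref{eq:self-dual}, we obtain,'' and you have simply unpacked what that combination means---using $\Phi$ to transfer irreducibility between $M_\lambda$ and $M_\lambda^\vee\simeq\Fock$, invoking the Shapovalov/Jantzen criterion for part~(a), and identifying $U(\gl_n)\!\cdot\!|0\rangle$ with the socle $L_\lambda\simeq L_\lambda^\vee$ of $M_\lambda^\vee$ for part~(b). Your added observation that $U(\gl_n)\!\cdot\!|0\rangle=L_\lambda$ holds for \emph{all} $\lambda$ (not just dominant integral ones) is a mild sharpening, but otherwise there is no divergence.
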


\medskip

   %%%%%%%%%%%%%%%%%%%%%%%%%%%%%%%%%%%%%%%%%%%%%%%%%%%%%%%%%%%%%%%%%%%%%%%%%%%%%%%%%%%%%%%%%%%%%
   %%%%%%%%%%%%%%%%%%%%%%%%%%%%%%%%%%%%%%%% SUBSECTION %%%%%%%%%%%%%%%%%%%%%%%%%%%%%%%%%%%%%%%%%
   %%%%%%%%%%%%%%%%%%%%%%%%%%%%%%%%%%%%%%%%%%%%%%%%%%%%%%%%%%%%%%%%%%%%%%%%%%%%%%%%%%%%%%%%%%%%%

\subsection{Transfer matrices}
$\ $

Recall the notion of transfer matrices $\{T_W(x)\}_{W\in \mathrm{Rep}\, Y(\gl_n)}$,
as discussed in Subsection~\ref{ssec R-matrix}.
In particular, we shall consider the following explicit infinite-dimensional transfer matrices:
\begin{equation}\label{eq:Verma-via-Q}
  T^+_{\lambda}(x)=
  \tr\, \prod_{i=1}^{n} \tau_i^{\CE_{ii}}\, \underbrace{\CL_{\lambda}(x) \otimes \dots \otimes \CL_{\lambda}(x)}_{N} \,.
\end{equation}
Here, we use the $N$-fold tensor product and the trace is taken over the entire Fock space~(\ref{eq:vec},~\ref{eq:vec-covec}):
\begin{equation}\label{eq:Fock-trace}
  \tr(X) = \sum_{\vec{m}} \langle \vec m|X|\vec m\rangle \,.
\end{equation}

\medskip

\begin{Rem}
The twist parameters $\tau_i \in \BC$ lift the degeneracies in the spectrum, i.e.\ break
the $\ssl_n$ invariance of the transfer matrix, and regularize the infinite-dimensional trace.
\end{Rem}

\medskip
\noindent
For a dominant integral $\lambda\in P^+$, see~\eqref{eq:gl-dominant}, we also consider the finite-dimensional
transfer matrices $T_{\lambda}(x)$ corresponding to the modules $L_{\lambda}$ in the auxiliary space:
those are defined similarly to~\eqref{eq:Verma-via-Q}, but with the trace taken over the finite-dimensional
submodule $L_{\lambda}$ of $\Fock$, see Corollary~\ref{cor:t-special}(b).

\medskip
\noindent
Recall the dot action~\eqref{eq:dot action} of $S_n$ on $\BC^n$, cf.~\eqref{eq:transfer-via-BGG}:
\begin{equation}\label{eq:rho-again}
  \sigma\, \cdot \lambda = \sigma(\lambda+\rho)-\rho \,, \qquad
  \rho=\left(\sfrac{n-1}{2},\sfrac{n-3}{2},\ldots,\sfrac{1-n}{2}\right)\in \BC^n \,.
\end{equation}
Then, according to~\cite{Bazhanov:2010jq,Derkachov:2010qe}, while for low ranks it goes back
to~\cite{Bazhanov:1998dq, Bazhanov:2001xm}, we have:

\medskip

\begin{Thm}\label{thm:baby-bgg}
For $\lambda\in P^+$, we have:
\begin{equation}\label{eq:A-transfer-relation}
  T_\lambda(x)=\sum_{\sigma\in S_n} (-1)^{l(\sigma)}T^+_{\sigma \, \cdot \lambda}(x) \,.
\end{equation}
\end{Thm}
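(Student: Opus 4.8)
The plan is to deduce the transfer-matrix relation \eqref{eq:A-transfer-relation} from the classical BGG resolution \eqref{eq:general-BGG} of $L_\lambda$ for $\gl_n$, combined with the general formalism of transfer matrices from the universal $R$-matrix recalled in Subsection~\ref{ssec R-matrix}. First I would note that, by Lemma~\ref{lem:A-Fock as Verma} and Corollary~\ref{cor:t-special}(b), for $\lambda\in P^+$ the Fock module $\Fock$ together with the Lax matrix $\CL_\lambda(x)$ carries the $\gl_n$-module structure $M_\lambda^\vee$, which contains the finite-dimensional submodule $L_\lambda$; and since the Lax matrix $\CL_\lambda(x)$ is a solution of the RTT relation \eqref{eq:RTT}, it encodes a $Y^\rtt(\gl_n)$-action, so that $T^+_{\sigma\cdot\lambda}(x)$ and $T_\lambda(x)$ are indeed instances of the universal transfer matrices $\CT_{W,x}(z)$ of \eqref{eq:transfer-def} with $W$ an appropriate $Y(\gl_n)$-module. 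The key additivity property \eqref{eq:transfer-product}, $\CT_{W_1\oplus W_2,x}=\CT_{W_1,x}+\CT_{W_2,x}$, together with its evident extension to short exact sequences (a trace is additive on the middle term), is the engine of the argument.

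Next I would make precise the sense in which $T^+_{\mu}(x)$ depends only on the $\gl_n$-module $M_\mu^\vee$ and not on the particular oscillator presentation: the transfer matrix is $\rho^V\big((\mathrm{id}\otimes\tr_W)((1\otimes x)\CR(z))\big)$, so it is a function of the isomorphism class of the auxiliary $Y(\gl_n)$-module $W$ and of the trace, which converges because the twist $x=\prod_i\tau_i^{\CE_{ii}}$ is a regularizing group-like element acting with summable weights on each weight space. In particular, for a $Y(\gl_n)$-module $W$ which, as a $\gl_n$-module, decomposes (in the Grothendieck group of category $\CO$, after taking restricted duals) as an alternating sum of dual Verma modules, the corresponding transfer matrix is the same alternating sum of the $T^+$. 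Dualizing the BGG resolution \eqref{eq:general-BGG} — note the twist by $\phi$ in \eqref{eq:twist automorphism} and that $M_\mu^\vee$ is precisely the module realized on $\Fock$ — gives an exact complex
\begin{equation*}
  0\to L_\lambda^\vee\to M_\lambda^\vee\to \bigoplus_{l(w)=1} M_{w\cdot\lambda}^\vee\to\cdots\to M_{w_0\cdot\lambda}^\vee\to 0 \,,
\end{equation*}
and since $L_\lambda^\vee\simeq L_\lambda$ by \eqref{eq:self-dual}, this exhibits the class of $L_\lambda$ as $\sum_{\sigma\in S_n}(-1)^{l(\sigma)}[M_{\sigma\cdot\lambda}^\vee]$ in the Grothendieck group.

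Then I would promote this to the level of $Y(\gl_n)$-modules: each term $M_{\sigma\cdot\lambda}^\vee$ in the resolution carries a $Y^\rtt(\gl_n)$-action via its own oscillator Lax matrix $\CL_{\sigma\cdot\lambda}(x)$, obtained from \eqref{eq:classical-Lax}–\eqref{eq:UD} with $\lambda$ replaced by $\sigma\cdot\lambda=\sigma(\lambda+\rho)-\rho$; by the above, this $Y(\gl_n)$-module structure depends (up to the data that matters for the trace) only on the $\gl_n$-structure. Forming the $N$-fold tensor product monodromy matrix and taking the twisted trace is additive along the exact complex, so taking the alternating sum over the resolution and using \eqref{eq:transfer-product} on each short exact piece yields exactly
\begin{equation*}
  T_{L_\lambda}(x)=\sum_{\sigma\in S_n}(-1)^{l(\sigma)}T_{M_{\sigma\cdot\lambda}^\vee}(x)=\sum_{\sigma\in S_n}(-1)^{l(\sigma)}T^+_{\sigma\cdot\lambda}(x) \,,
\end{equation*}
which is \eqref{eq:A-transfer-relation}. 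The step I expect to be the main obstacle is the analytic one: justifying that the infinite traces defining $T^+_{\sigma\cdot\lambda}(x)$ all converge for generic twist $\tau_i$ and that additivity of traces survives the passage from the finite-dimensional sub/quotient picture to the genuinely infinite-dimensional $M^\vee_\mu$ — i.e.\ that one may split the trace over $\Fock$ along the BGG filtration term by term without convergence issues. This is handled by working weight space by weight space (each of which is finite-dimensional, so additivity is literal there) and observing that the twist makes the sum over weights absolutely convergent uniformly; alternatively one invokes the character-level identity \eqref{eq:Weyl character} as the $N=0$ shadow and notes that the full statement follows by the same bookkeeping with the monodromy matrix inserted. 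Everything else is a direct translation of the exact sequence \eqref{eq:general-BGG} through the functor "take restricted dual, equip with the oscillator $Y(\gl_n)$-action, form the twisted trace of the $N$-fold monodromy."
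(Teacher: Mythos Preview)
Your proposal is correct and follows the same approach as the paper, which simply states that \eqref{eq:A-transfer-relation} is a consequence of the dualized BGG resolution \eqref{eq:A-resolution-basic} and cites the literature. The only point worth sharpening is the passage from a $\gl_n$-module resolution to a $Y(\gl_n)$-module resolution: this is immediate because the Yangian action on each term is the pullback along the evaluation homomorphism \eqref{eq:A-evaluation} (the Lax matrices $\CL_{\sigma\cdot\lambda}(x)$ are linear in $x$), so every $\gl_n$-intertwiner in the BGG complex is automatically $Y(\gl_n)$-equivariant and additivity of the twisted trace applies verbatim.
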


\medskip
\noindent
As recalled in Subsection~\ref{ssec BGG resol}, the formula~\eqref{eq:A-transfer-relation} is a consequence of
the BGG resolution of the finite-dimensional $\gl_n$-module $L_\lambda$ by means of the infinite-dimensional
dual Verma $\gl_n$-modules:
\begin{equation}\label{eq:A-resolution-basic}
  0\to L_{\lambda}\to M^\vee_{\lambda}\to
  \bigoplus_{\sigma\in S_n}^{l(\sigma)=1} M^\vee_{\sigma\, \cdot \lambda}\to
  \bigoplus_{\sigma\in S_n}^{l(\sigma)=2} M^\vee_{\sigma\, \cdot \lambda} \to \cdots \to
  M^\vee_{(\lambda_n+1-n,\lambda_{n-1}+3-n,\ldots,\lambda_1+n-1)}\to 0 \,,
\end{equation}
cf.~(\ref{eq:restricted-dual},~\ref{eq:geometric resolution 1}).
We note that the character limit of~\eqref{eq:A-transfer-relation}, corresponding to its special case with $N=0$
(zero length of the spin chain), recovers the classical Weyl character formula:
\begin{equation}\label{eq:char-A-basic}
  \ch_{L_\lambda}=
  \sum_{\sigma\in S_n} (-1)^{l(\sigma)}
  \frac{e^{\sigma(\lambda+\rho)-\rho}}{\prod_{\alpha\in \Delta^+} (1-e^{-\alpha})}=
  \sum_{\sigma\in S_n} (-1)^{l(\sigma)} \ch_{M^\vee_{\sigma\, \cdot \lambda}} \,,
\end{equation}
where $\Delta^+$ denotes the set of positive roots of $\fg=\ssl_n$. In particular,
identifying simple roots with $\alpha_i=\epsilon_i-\epsilon_{i+1}$ in the standard way, we get
$\Delta^+=\{\epsilon_i-\epsilon_j\}_{1\leq i<j\leq n}$ and~\eqref{eq:rho-again} agrees with~\eqref{eq:rho-weight}.

\medskip

\begin{Rem}\label{rem:math-to-physics}
For the physics' reader, let us explain how~\eqref{eq:char-A-basic} fits into the rest of our notation.
Consider a general Cartan element $h=\sum_{i=1}^n x_i \epsilon^*_i\in \fh\subset \gl_n$
(here, one can think of $x_i\in \BC$ or as a formal parameter), and let $\tau_i=e^{x_i}$.
Then, $e^h$ and $\prod_{i=1}^n \tau_i^{\CE_{ii}}$ agree and the formula~\eqref{eq:char-A-basic} reads:
\begin{equation}
  \ch_{L_\lambda}(e^h) = \tr_{L_\lambda} \prod_{1\leq i\leq n}\tau_i^{\CE_{ii}} =
  \sum_{\sigma\in S_n} (-1)^{l(\sigma)}
    \frac{e^{(\sigma(\lambda+\rho)-\rho,h)}}{\prod_{i<j} (1-\frac{\tau_j}{\tau_i})} =
  \sum_{\sigma\in S_n} (-1)^{l(\sigma)} \ch_{M^\vee_{\sigma\,\cdot \lambda}}(e^h)
\end{equation}
with
\begin{equation}
  \ch_{M^\vee_\lambda}(e^h) = \tr_{M^\vee_\lambda} \prod_{1\leq i\leq n}\tau_i^{\CE_{ii}} =
  \prod_{1\leq i\leq n} \tau_i^{\lambda_i}\prod_{1\leq i <j\leq n}\frac{1}{1-\frac{\tau_j}{\tau_i}} \,,
\end{equation}
where the left-hand sides denote the traces of $e^h$ (invertible diagonal element of $GL_n$) on $L_\lambda,M^\vee_\lambda$.
\end{Rem}

\medskip

   %%%%%%%%%%%%%%%%%%%%%%%%%%%%%%%%%%%%%%%%%%%%%%%%%%%%%%%%%%%%%%%%%%%%%%%%%%%%%%%%%%%%%%%%%%%%%
   %%%%%%%%%%%%%%%%%%%%%%%%%%%%%%%%%%%%%%%% SUBSECTION %%%%%%%%%%%%%%%%%%%%%%%%%%%%%%%%%%%%%%%%%
   %%%%%%%%%%%%%%%%%%%%%%%%%%%%%%%%%%%%%%%%%%%%%%%%%%%%%%%%%%%%%%%%%%%%%%%%%%%%%%%%%%%%%%%%%%%%%

\subsection{Factorisation}
$\ $

The factorisation formula for the transfer matrices $T^+_\lambda(x)$ of the restricted dual of Verma modules,
cf.~\eqref{eq:restricted-dual} and Lemma~\ref{lem:A-Fock as Verma},  was proven in~\cite{Bazhanov:2010jq}.
It was further combined with the BGG-relation~\eqref{eq:A-transfer-relation} to derive the determinant expression for
the finite-dimensional transfer matrices $T_\lambda(x)$. We review these constructions in the present subsection.

\medskip
\noindent
Following~\cite[(1.16)]{Bazhanov:2010jq}, let us consider the following $\gl_n$-type $\CA[x]$-valued Lax matrices:
\begin{equation}\label{eq:partonic-A}
  L_i(x)=
  \left(\begin{array}{ccccccc}
    1 &&& \oad_{i,1} &&& \\
    & \ddots && \vdots &&& \\
    && 1 & \oad_{i,i-1} &&& \\
    \oa_{1,i} & \cdots & \oa_{i-1,i} &
    x+\sum_{j=1}^{i-1} \oa_{j,i}\oad_{i,j}-\sum_{j=i+1}^n \oad_{i,j}\oa_{j,i} & \oad_{i,i+1} & \cdots & \oad_{i,n} \\
    &&& -\oa_{i+1,i} & 1 && \\
    &&& \vdots && \ddots & \\
    &&& -\oa_{n,i} &&& 1 \\
  \end{array}\right)
\end{equation}
with $1\leq i\leq n$ and the oscillators $\{\oa_{j,i},\oad_{i,j}\}_{j\ne i}$ subject to
the standard commutation relations~\eqref{eq:oscillator relations}.

\medskip
\noindent
The following  factorisation formula has been shown in~\cite[Appendix B]{Bazhanov:2010jq}:
\begin{equation}\label{fullfac}
  L_1(x+\ell_1) L_2(x+\ell_2) \cdots L_n(x+\ell_n) = \bfS \CL_{\lambda}(x) \Gop \bfS^{-1} \,,
\end{equation}
where $\CL_{\lambda}(x)$ is the Lax matrix of~\eqref{eq:classical-Lax} and the shifted weights
$\{\ell_i\}_{i=1}^n$ are defined via:
\begin{equation}
  \ell_i=\lambda_i-i+1 \,, \qquad 1\leq i\leq n \,.
\end{equation}
Here, the similarity transformation $\bfS=\bfS_n \cdots \bfS_1$ is defined via:
\begin{equation}\label{eq:elementary similarity}
  \bfS_i=\exp \left[\sum_{j=1}^{i-1} \left(\oad_{ji}-\sum_{k=j+1}^{i-1}\oad_{ki}\oad_{kj}\right)\oa_{ji}\right] \,,
\end{equation}
while the matrix $\Gop$ reads:
\begin{equation}\label{eq:gop}
  \Gop=
  \left(\begin{array}{cccc}
     1 & -\oad_{2,1} & \cdots & -\oad_{n,1} \\
     \vdots & \ddots & \ddots & \vdots \\
     0 & \cdots & 1 & -\oad_{n,n-1} \\
     0 & \cdots & 0 & 1 \\
  \end{array} \right)^{-1} \,.
\end{equation}

\medskip
\noindent
As explained below, the factorisation formula~\eqref{fullfac} can be lifted to the level of transfer matrices.
To this end, let us first define single-index $Q$-operators $\{Q_i(x)\}_{i=1}^n\subset \End(\BC^n)^{\otimes N}$ via:
\begin{equation}\label{eq:singleindexQ}
  Q_i(x)= \wtr_{D_i}\, \Big( \underbrace{L_i(x)\otimes \dots \otimes L_i(x)}_{N} \Big) \,,
\end{equation}
where we use the \emph{normalized trace} $\wtr_{D_i}$ defined through:
\begin{equation}\label{eq:normalized trace}
  \wtr_{D_i} (X) = \frac{\tr (D_i X)}{\tr (D_i)} \,,
\end{equation}
cf.~\eqref{eq:Fock-trace}.
The twist $D_i$ in~\eqref{eq:singleindexQ} acts only on the Fock space and is defined via:
\begin{equation}\label{eq:twist-bflms}
  D_i\, =
  \prod_{1\leq j<i} \left(\frac{\tau_i}{\tau_j}\right)^{\oa_{ji}\oad_{ij}}
  \prod_{i<j\leq n} \left(\frac{\tau_j}{\tau_i}\right)^{\oad_{ij}\oa_{ji}} \,.
\end{equation}
We note that the action of this twist on the Fock module is uniquely determined (up to a scalar function)
by the following condition:
\begin{equation}\label{eq:DiD agrees}
  DL_i(x)D^{-1} = D_i^{-1} L_i(x)D_i \,,
\end{equation}
with
\begin{equation}\label{eq:D-diag}
  D=\diag(\tau_1,\ldots,\tau_n) \,.
\end{equation}
The relation~\eqref{eq:DiD agrees} ensures the commutativity of $Q_i(x)$ defined via~\eqref{eq:singleindexQ}
and the transfer matrix $T_{(1,0,\ldots,0)}(y)$ of the defining fundamental representation
in the auxiliary space, see Remark~\ref{rem:commutativity}(b).

\medskip
\noindent
Next,  we consider the $N$-fold tensor product on the matrix space of the factorisation formula~\eqref{fullfac}.
Taking the normalized traces of each of the resulting monodromies $L_i(x+\ell_i)\otimes \cdots \otimes L_i(x+\ell_i)$
on the left-hand side of the corresponding relation yields a product of the $Q$-operators. On the right-hand side,
we recover the transfer matrix $T^+_\lambda(x)$ multiplied by the inverse of the character
\begin{equation}
  \ch_\lambda^+ = \tr \prod_{1\leq i\leq n}\tau_i^{\CE_{ii}} =
  \prod_{1\leq i\leq n} \tau_i^{\lambda_i}\prod_{1\leq i <j\leq n}\frac{\tau_i}{\tau_i-\tau_j} =
  \prod_{1\leq i\leq n} \tau_i^{\ell_i}\prod_{1\leq i <j\leq n}\frac{1}{\tau_j^{-1}-\tau_i^{-1}} \,.
\end{equation}

\medskip

\begin{Rem}\label{rem:details for A-factor}
The above computation requires a few steps. First, let us note the following relation among
the twists of the $Q$-operators and the transfer matrices:
\begin{equation}\label{eq:TQtwist}
  \prod_{i=1}^n D_i \ =
  \prod_{1\leq i \leq n} \tau_i^{\CE_{ii}-\lambda_i}
  \prod_{1\leq j <i\leq n} \left(\frac{\tau_j}{\tau_i}\right)^{-\oa_{ji}\oad_{ij}} \,.
\end{equation}
Second, we note that apart from its diagonal the matrix $\Gop$ in \eqref{eq:gop} only contains
creation operators which do not contribute to the trace. Finally, we need the commutativity~\eqref{eq:comSD}
established below.
\end{Rem}

\medskip
\noindent
Thus, we finally obtain the factorisation formula for $T^+_\lambda(x)$, cf~\eqref{eq:Verma-through-Q}:
\begin{equation}\label{eq:details-fact}
  T^+_\lambda(x)=\ch_\lambda^+ \cdot\,  Q_1(x+\ell_1)Q_2(x+\ell_2) \cdots Q_n(x+\ell_n)\,.
\end{equation}

\medskip
\noindent
Combining~\eqref{eq:details-fact} with the BGG-relation \eqref{eq:A-transfer-relation}
and evoking the Vandermonde determinant:
\begin{equation}
  \prod_{1\leq i<j\leq n} \Big(\tau_j^{-1}-\tau_i^{-1} \Big) =
  \det \left\Vert \tau_i ^{-j+1} \right\Vert_{1\leq i,j\leq n} \,,
\end{equation}
we get the determinant formula for $T_\lambda(x)$ in type $A$, cf.~\eqref{eq:det-intro}:

\medskip

\begin{Thm}\label{thm:baby-det}
For $\lambda\in P^+$, we have:
\begin{equation}\label{eq:Tdet}
  T_\lambda(x) =
  \frac{\det \left\Vert \tau_i^{\ell_j} Q_i(x+\ell_j) \right\Vert_{1\leq i,j\leq n}}
       {\det \left\Vert \tau_i^{-j+1} \right\Vert_{1\leq i,j\leq n}} \,.
\end{equation}
\end{Thm}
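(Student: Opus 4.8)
The plan is to substitute the factorisation formula~\eqref{eq:details-fact} into the BGG-relation~\eqref{eq:A-transfer-relation}, recognise the resulting alternating sum over $S_n$ as the Leibniz expansion of the operator determinant appearing in the numerator of~\eqref{eq:Tdet}, and then identify the scalar prefactor with the reciprocal of the Vandermonde determinant recalled just above the theorem.

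The first step is to apply~\eqref{eq:details-fact} not to $\lambda$ but to each of the weights $\sigma\, \cdot \lambda$, $\sigma\in S_n$, appearing in~\eqref{eq:A-transfer-relation}. For this it is convenient to record how the shifted weights $\ell_i(\mu):=\mu_i-i+1$ transform under the dot-action: since $\sigma\, \cdot \lambda+\rho=\sigma(\lambda+\rho)$ and the vector $(0,-1,\dots,1-n)=\rho-\tfrac{n-1}{2}(1,\dots,1)$ differs from $\rho$ only by an $S_n$-invariant constant vector, one gets $\ell_i(\sigma\, \cdot \lambda)=\ell_{\sigma^{-1}(i)}(\lambda)$; abbreviate $\ell_j:=\ell_j(\lambda)$. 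Hence~\eqref{eq:details-fact} applied to $\sigma\, \cdot \lambda$ becomes
\[
  T^+_{\sigma\, \cdot \lambda}(x)=\ch^+_{\sigma\, \cdot \lambda}\cdot\, Q_1\!\big(x+\ell_{\sigma^{-1}(1)}\big)\,Q_2\!\big(x+\ell_{\sigma^{-1}(2)}\big)\cdots Q_n\!\big(x+\ell_{\sigma^{-1}(n)}\big)\,.
\]

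Next I would evaluate the character $\ch^+_{\sigma\, \cdot \lambda}$ from the closed expression recorded just before~\eqref{eq:details-fact}, which holds for an arbitrary weight $\mu$ by the same computation (cf.\ Remark~\ref{rem:math-to-physics}) and reads $\ch^+_\mu=\prod_{i}\tau_i^{\ell_i(\mu)}\cdot P$ with the $\mu$-independent scalar $P:=\prod_{1\le i<j\le n}\big(\tau_j^{-1}-\tau_i^{-1}\big)^{-1}$. Using $\ell_i(\sigma\, \cdot \lambda)=\ell_{\sigma^{-1}(i)}$ this gives $\ch^+_{\sigma\, \cdot \lambda}=P\cdot\prod_i\tau_i^{\ell_{\sigma^{-1}(i)}}$, so the $\sigma$-dependent $\tau$-powers line up with the matching $Q$-factors:
\[
  T^+_{\sigma\, \cdot \lambda}(x)=P\cdot\prod_{i=1}^{n}\tau_i^{\ell_{\sigma^{-1}(i)}}\,Q_i\!\big(x+\ell_{\sigma^{-1}(i)}\big)\,.
\]
Substituting into~\eqref{eq:A-transfer-relation}, pulling out $P$, and reindexing the sum by $\sigma\mapsto\sigma^{-1}$ (which leaves $(-1)^{l(\sigma)}$ unchanged, as $l$ has the parity of the permutation sign), I obtain
\[
  T_\lambda(x)=P\cdot\sum_{\sigma\in S_n}(-1)^{l(\sigma)}\prod_{i=1}^{n}\tau_i^{\ell_{\sigma(i)}}\,Q_i\!\big(x+\ell_{\sigma(i)}\big)
            =P\cdot\det\big\Vert \tau_i^{\ell_j}\,Q_i(x+\ell_j)\big\Vert_{1\le i,j\le n}\,,
\]
the last equality being precisely the Leibniz expansion of the determinant along rows; the entries of this matrix pairwise commute (within each row they are the commuting operators $Q_i$ at various spectral parameters, and the whole family $\{Q_i(y)\}$ is commuting — a standard property of these $Q$-operators, cf.\ Remark~\ref{rem:commutativity}), so the determinant is unambiguous and the two sides agree term by term.

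Finally, $P^{-1}=\prod_{1\le i<j\le n}\big(\tau_j^{-1}-\tau_i^{-1}\big)=\det\big\Vert\tau_i^{-j+1}\big\Vert_{1\le i,j\le n}$ by the Vandermonde identity stated right above the theorem (put $x_i=\tau_i^{-1}$, so that the $(i,j)$ entry is $x_i^{\,j-1}$), and substituting into the previous display yields~\eqref{eq:Tdet}. I do not anticipate a genuine obstacle here: both substantive inputs — the factorisation~\eqref{eq:details-fact} and the BGG-relation~\eqref{eq:A-transfer-relation} — are already in hand, and the only delicate point is the bookkeeping of the various shifts (the $\ell_i$'s, $\rho$, and the $S_n$-action) together with the requirement that the $\tau$-powers produced by $\ch^+_{\sigma\, \cdot \lambda}$ remain attached to the correct $Q$-operator, which is exactly what the transformation rule $\ell_i(\sigma\, \cdot \lambda)=\ell_{\sigma^{-1}(i)}$ ensures.
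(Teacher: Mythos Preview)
Your proof is correct and follows exactly the approach sketched in the paper: substitute the factorisation~\eqref{eq:details-fact} applied to each $\sigma\cdot\lambda$ into the BGG-relation~\eqref{eq:A-transfer-relation}, recognise the Leibniz expansion of the operator determinant, and identify the scalar prefactor via the Vandermonde identity. You have simply unpacked the details (notably the bookkeeping $\ell_i(\sigma\cdot\lambda)=\ell_{\sigma^{-1}(i)}$ and the commutativity of the $Q$-operators needed for the determinant to be unambiguous) that the paper leaves implicit in its one-line ``Combining \ldots\ and evoking \ldots'' argument.
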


\medskip

\begin{Rem}\label{rem:our-vs-bflms first}
The \emph{partonic} Lax matrices $\mathsf{L}_i(x)$ of~\cite[(1.16)]{Bazhanov:2010jq} are related to
ours~\eqref{eq:partonic-A} via:
\begin{equation}\label{eq:L-vs-BFLMS}
  \mathsf{L}_i(x)=L_i\left(x-\sfrac{n-1}{2}\right)
\end{equation}
upon the following identification of the oscillators $\{\ob^\dagger_{j,i},\ob_{i,j}\}_{1\leq i\ne j\leq n}$
of~\emph{loc.cit}.\ with ours:
\begin{equation}\label{eq:v-vs-a osc}
  \ob^\dagger_{i,j}=
  \begin{cases}
    \oad_{i,j} &\quad\text{for}\quad i<j \\
    \oa_{j,i} &\quad\text{for}\quad i>j \\
  \end{cases} \,, \qquad
  \ob_{i,j}=
  \begin{cases}
    -\oad_{j,i} &\quad\text{for}\quad i<j\\
    \oa_{i,j} &\quad\text{for}\quad i>j\\
  \end{cases} \,.
\end{equation}
To continue this comparison, we note that the $Q$-operators $\{\mathsf{Q}_i(x)\}_{i=1}^n$ of~\cite{Bazhanov:2010jq}
slightly differ from ours. Explicitly, to simplify the functional relations an extra factor $\tau_i^{x}$
has been introduced in~\cite[(4.23)]{Bazhanov:2010jq}, so that the $Q$-operators of~\cite{Bazhanov:2010jq}
are related to ours~\eqref{eq:singleindexQ} via:
\begin{equation}\label{eq:Q-comparison}
  \mathsf{Q}_i(x) = \tau_i^x \cdot Q_i\left(x-\sfrac{n-1}{2}\right) \,.
\end{equation}
Here, the exponential twist parameters of \cite{Bazhanov:2010jq}  are related to our conventions simply through
\begin{equation}\label{eq:bflms-vs-ous twists}
  \tau_a=e^{{\rm i}\Phi_a} \,, \qquad 1\leq a\leq n \,.
\end{equation}
Likewise, the $T$-operators $\mathsf{T}^+_\lambda(x)$ and $\mathsf{T}_\lambda(x)$
of~\cite[(4.15, 4.16)]{Bazhanov:2010jq} are related to ours via:
\begin{equation}\label{eq:T-comparison}
  \mathsf{T}^+_\lambda(x) = \prod_{i=1}^n \tau_i^{x} \cdot T^+_{\lambda}(x) \,, \qquad
  \mathsf{T}_\lambda(x) = \prod_{i=1}^n \tau_i^{x} \cdot T_{\lambda}(x) \,,
\end{equation}
thus differing by an overall factor $\prod_{i=1}^n \tau_i^x$. However, we note that
the constraint $\sum_{a=1}^n \Phi_a=0$ was imposed in~\cite[(4.4)]{Bazhanov:2010jq},
thus translating into $\prod_{i=1}^n \tau_i=1$, see~\eqref{eq:bflms-vs-ous twists}.
Finally, our determinant formula~\eqref{eq:Tdet} is equivalent to the determinant formula
of~\cite[(5.10)]{Bazhanov:2010jq}, see~\eqref{eq:det-intro}:
\begin{equation}\label{eq:Tdet-comparison}
  \Delta_{\{1,\ldots,n\}}\cdot \mathsf{T}_\lambda(x) =
  \det \left\Vert \mathsf{Q}_i(x+\lambda_j') \right\Vert_{1\leq i,j\leq n} \,,
\end{equation}
where
\begin{equation}
  \Delta_{\{1,\ldots,n\}} \, = \det \left\Vert \tau_i ^{-j+1} \right\Vert_{1\leq i,j\leq n} =
  \det \left\Vert \tau_i ^{-j+1} \right\Vert_{1\leq i,j\leq n} \cdot\, \prod_{i=1}^n \tau_i^{\frac{n-1}{2}}\, =
  \prod_{1\leq i<j\leq n}\frac{\tau_i-\tau_j}{\sqrt{\tau_i\tau_j}}
\end{equation}
and
\begin{equation}
  \lambda_i' = \ell_i + \frac{n-1}{2} = \lambda_i + \frac{n+1-2i}{2} \,, \qquad 1\leq i\leq n \,,
\end{equation}
cf.~\cite[(3.19, 5.3)]{Bazhanov:2010jq}. Indeed, identifying our twist parameters $\tau_a$ with
the $\Phi_a$ of~\cite{Bazhanov:2010jq} via~\eqref{eq:bflms-vs-ous twists}, we get precisely
the formulas of~\cite{Bazhanov:2010jq}, due to:
\begin{equation}
  \sqrt{\frac{\tau_a}{\tau_b}} - \sqrt{\frac{\tau_b}{\tau_a}} =
  2{\rm i}\sin\left( \frac{\Phi_a-\Phi_b}{2} \right) \,.
\end{equation}
\end{Rem}

\medskip

\begin{Rem}\label{rem:SDcomFull}
An essential step used in the derivation of~\eqref{eq:details-fact} is the following commutativity:
\begin{equation}\label{eq:comSD}
  \Big[ \bfS,\prod_{i=1}^n D_i \Big] = 0 \,.
\end{equation}
As the proof of this result was missing in~\cite{Bazhanov:2010jq}, we provide the corresponding argument below.
To this end, it is more convenient to switch to the oscillators $(\ob_{i,j},\obdag_{j,i})_{i\ne j}$
of~\cite{Bazhanov:2010jq}, related to our $(\oa_{i,j},\oad_{j,i})_{i\ne j}$ via~\eqref{eq:v-vs-a osc}.
With this choice of conventions, the twist $D_i$ of~\eqref{eq:twist-bflms} reads:
\begin{equation}
  D_i\, = \prod_{j\ne i} \left(\frac{\tau_j}{\tau_i}\right)^{\obdag_{ij}\ob_{ji}} \,,
\end{equation}
so that the product of twists in \eqref{eq:comSD} simplifies to:
\begin{equation}
  \prod_{i=1}^n D_i = \prod_{i=1}^n \tau_i^{\bN_i} \,,
\end{equation}
where $\bN_i$ are defined via:
\begin{equation}\label{eq:numbopi}
  \bN_i=\sum_{j\neq i} \left(\obdag_{ji}\ob_{ij}-\obdag_{ij}\ob_{ji}\right) \,.
\end{equation}
Likewise, the similarity transformation $\bfS_i$ of~\eqref{eq:elementary similarity} reads:
\begin{equation}\label{eq:defSb}
  \bfS_i =
  \exp \left[\sum_{1\leq j<i} \obdag_{ji}\obdag_{ij} \ + \sum_{1\leq j<k<i} \obdag_{ki}\ob_{jk}\obdag_{ij} \right] \,,
\end{equation}
which can be further simplified to:
\begin{equation}
  \bfS_i =
  \prod_{1\leq j<i} \exp \left[ \obdag_{ij}\obdag_{ji} \right]
  \prod_{1\leq j<k<i} \exp \left[ \obdag_{ij}\ob_{jk}\obdag_{ki} \right] \,,
\end{equation}
since the oscillators $\obdag_{\bullet,\bullet}$ in~\eqref{eq:defSb} always have one of $\bullet$ equal to $i$,
while the $\ob_{\bullet,\bullet}$'s never have. Therefore, to prove~\eqref{eq:comSD} it suffices to verify that:
\begin{equation}\label{eq:N-commutativity}
  [\bN_i,\obdag_{k \ell}\obdag_{\ell k}]=0 \,, \qquad [\bN_i,\obdag_{k \ell}\ob_{\ell m}\obdag_{mk}]=0
\end{equation}
for any $1\leq i\leq n$ and $1\leq \ell<m<k\leq n$. These equalities follow immediately from the fact that $[\bN_i,-]$
acts on a given state by counting the number of creation and annihilation operators via:
\begin{equation}\label{eq:N-action}
  \sum_{j\ne i} \Big[(\#\obdag_{ji} - \#\ob_{ij}) - ( \#\obdag_{ij} - \#\ob_{ji}) \Big] \,.
\end{equation}
\end{Rem}

\medskip

\begin{Rem}\label{rem:commutativity}
Let us conclude with the commutativity of
$\{T^+_\lambda(x)\}_{\lambda\in \BC^n}$, $\{T_\mu(x)\}_{\mu\in P^+}$, $\{Q_i(x)\}_{i=1}^n$.

\medskip
\noindent
(a) The commutativity
\begin{equation}\label{eq:TT-commutativity inf}
  [T^+_\lambda(x),T^+_\mu(y)]=0 \,, \qquad \lambda,\mu\in \BC^n \,,
\end{equation}
is a direct consequence of their realization through the universal $R$-matrix as outlined in
Section~\ref{ssec R-matrix}. To this end, we also would like to point out that an explicit form of the $R$-matrix
intertwining the monodromies of these infinite-dimensional transfer matrices was obtained in \cite[(2.28)]{DerkachovFac}.
By a direct application of Theorem~\ref{thm:baby-bgg} (or, alternatively, the construction of Section~\ref{ssec R-matrix}),
we also get:
\begin{equation}\label{eq:TT-commutativity fin}
  [T^+_\lambda(x),T_\mu(y)]=0 \,, \qquad [T_\nu(x),T_\mu(y)]=0 \,,
  \qquad \lambda\in \BC^n \,,\, \mu,\nu \in P^+ \,.
\end{equation}

\medskip
\noindent
(b) The commutativity of the above transfer matrices with all single-index $Q$-operators:
\begin{equation}\label{eq:QT-general commutativity}
  [T^+_\lambda(x),Q_i(y)]=0 \,, \qquad [T_\mu(x),Q_i(y)]=0 \,, \qquad
  1\leq i\leq n \,,\, \lambda\in \BC^n \,,\, \mu\in P^+ \,,
\end{equation}
follows from the $R$-matrix of~\cite[(2.15)]{Frassekhigher} intertwining the monodromies of all
non-degenerate Lax matrices and the degenerate one of~\eqref{eq:partonic-A}. Nonetheless, let us present
a self-contained proof~of
\begin{equation}\label{eq:QT-commutativity}
  [Q_i(x),T_{(1,0,\ldots,0)}(y)]=0 \,, \qquad 1\leq i\leq n \,,
\end{equation}
in order to emphasize the role of the relation~\eqref{eq:DiD agrees}, as promised after~\eqref{eq:D-diag}.
To this end, combining the RTT relation~\eqref{eq:RTT} for $L_i(x)$ of~\eqref{eq:partonic-A}
with the identity $R(x)R(-x)=(1-x^2)\id_n$, we obtain:
\begin{equation}
  L_i(x-y)\otimes L_i(x)R(y)=R(y)L_i(x)\otimes L_i(x-y) \,.
\end{equation}
Building further the monodromy matrices
\begin{equation}
  M_i(x)=\underbrace{L_{i}(x)\otimes \cdots \otimes L_{i}(x)}_{N} \,, \qquad
  M_a(y)=R_{a1}(y) \cdots R_{aN}(y) \,,
\end{equation}
we end up with the following equation:
\begin{equation}\label{eq:LM-YB}
  L_i(x-y)M_i(x)M_a(y)=M_a(y)M_i(x)L_i(x-y) \,,
\end{equation}
where $L_i(x-y)$ acts nontrivially on the auxiliary spaces of these two monodromies:
the monodromy $M_i$ built from the oscillators and the monodromy $M_a$ of the fundamental
transfer matrix with an $n$-dimensional auxiliary space denoted by $a$.
Multiplying~\eqref{eq:LM-YB} by the twists $D_i$ and $D$ on the left
and taking the trace, we end up precisely with~\eqref{eq:QT-commutativity}
(we should note that to move both twists past $L_i(x)$ in the left-hand side of~\eqref{eq:LM-YB},
we use the commutativity $[D_iD,L_i(x)]=0$ of~\eqref{eq:DiD agrees}).

\medskip
\noindent
(c) Finally, to establish the commutativity among the $Q$-operators $Q_i(x)$, it is convenient to realize
them as the \emph{normalized limits} of the transfer matrices $T^+_\lambda(x)$ in which some of the representation
labels tend to infinity. On the level of the corresponding Lax matrices, the relevant limit is:
\begin{multline*}
  \lim_{\st\to\infty}
  \left\{ \diag(\underbrace{\st^{-1},\ldots,\st^{-1}}_{n-1},-1)
         \CL_{(\footnotesize{\underbrace{\st,\ldots,\st}_{n-1}},0)}(x)
         \diag(\underbrace{1,\ldots,1}_{n-1},-1) \right\} = \\
  \left(\begin{BMAT}[5pt]{c:c}{c:c}
    U^{-1}_{n-1} & 0\\
    \am_{n-1} & -x+n-1
  \end{BMAT} \right)
  \left(\begin{BMAT}[5pt]{c:c}{c:c}
    U_{n-1} & -\ap_{n-1} \\
    0 & 1
  \end{BMAT} \right)
  \left(\begin{BMAT}[5pt]{c:c}{c:c}
    \ID_{n-1} & 0 \\
    0 & -1
  \end{BMAT} \right) = \\
  \left(\begin{BMAT}[5pt]{c:c}{c:c}
    \id_{n-1} & U^{-1}_{n-1}\ap_{n-1} \\
    \am_{n-1}U_{n-1} & x-n+1+\am_{n-1}\ap_{n-1}
  \end{BMAT} \right) =
  \bfS L'_n(x-n+1) \bfS^{-1} \,,
\end{multline*}
where $\am_{n-1}=(\oa_{n,1},\ldots,\oa_{n,n-1})$, $\ap_{n-1}=(\oad_{1,n},\ldots,\oad_{n-1,n})^T$,
$U_i$ denotes the upper-left $i\times i$ block of the matrix $U=U_n$ from~\eqref{eq:UD},
and $L'_n(x)$ is the $\gl_n$-type Lax matrix obtained from $L_n(x)$ of~\eqref{eq:partonic-A}
by relabelling $\oa_{k,n}\mapsto \oa_{n,k}$ and $\oad_{n,k}\mapsto \oad_{k,n}$ for $1\leq k<n$.
Here, the similarity transformation $\bfS$ is defined via:
\begin{equation}\label{eq:tot-similarity}
  \bfS=\bfS_{n-1}\cdots\bfS_{1}\,, \qquad
  \bfS_k=\exp\left[\oad_{kn}\sum_{i=1}^{k-1} \oa_{ni}\oad_{ik}\right] \,.
\end{equation}
This implies $[Q_n(x),Q_n(y)]=0$ and combining this with the action of the Weyl group, we obtain:
\begin{equation}\label{eq:qq-com 1}
  [Q_i(x),Q_i(y)]=0 \,, \qquad 1\leq i\leq n \,.
\end{equation}
Finally, the proof of
\begin{equation}\label{eq:qq-com 2}
  [Q_i(x),Q_j(y)]=0 \,, \qquad 1\leq i\ne j\leq n \,,
\end{equation}
follows immediately from the factorisation of the $X$-operators from~\cite[\S4-5]{Bazhanov:2010jq},
see~\eqref{eq:X-factorisation}.
\end{Rem}

$\ $

   %%%%%%%%%%%%%%%%%%%%%%%%%%%%%%%%%%%%%%%%%%%%%%%%%%%%%%%%%%%%%%%%%%%%%%%%%%%%%%%%%%%%%%%%%%%%%
   %%%%%%%%%%%%%%%%%%%%%%%%%%%%%%%%%%%%%%%% SECTION 4 %%%%%%%%%%%%%%%%%%%%%%%%%%%%%%%%%%%%%%%%%%
   %%%%%%%%%%%%%%%%%%%%%%%%%%%%%%%%%%%%%%%%%%%%%%%%%%%%%%%%%%%%%%%%%%%%%%%%%%%%%%%%%%%%%%%%%%%%%

\section{A-type: rectangular}\label{sec A-rectangular}

Let us now consider $A$-type Lax matrices corresponding to \emph{rectangular} representations, i.e.\ those whose
highest weight is a multiple of a fundamental weight. These  play a special role because their transfer matrices
are related by the Hirota equation~\cite{kns}. But also, as we will see below, they will be relevant to
our approach to the study of transfer matrices in other classical types.

\medskip

   %%%%%%%%%%%%%%%%%%%%%%%%%%%%%%%%%%%%%%%%%%%%%%%%%%%%%%%%%%%%%%%%%%%%%%%%%%%%%%%%%%%%%%%%%%%%%
   %%%%%%%%%%%%%%%%%%%%%%%%%%%%%%%%%%%%%%%% SUBSECTION %%%%%%%%%%%%%%%%%%%%%%%%%%%%%%%%%%%%%%%%%
   %%%%%%%%%%%%%%%%%%%%%%%%%%%%%%%%%%%%%%%%%%%%%%%%%%%%%%%%%%%%%%%%%%%%%%%%%%%%%%%%%%%%%%%%%%%%%

\subsection{Oscillator realization in type A (parabolic Verma)}
$\ $

For any $n\in \BZ_{\geq 2}$ and $1\leq a \leq n-1$, let $\CA$ denote the oscillator algebra generated
by $a(n-a)$ pairs of oscillators $\{(\oa_{j,i},\oad_{i,j})\}_{1\leq i \leq a}^{a<j\leq n}$ subject to
the defining relations~\eqref{eq:oscillator relations}:
\begin{equation}\label{eq:osc-A-rect}
  \CA = \BC \Big\langle \oa_{j,i} \, , \, \oad_{i,j} \Big\rangle_{1\leq i \leq a}^{a<j\leq n} \, \Big/ \,
  \eqref{eq:oscillator relations} \,.
\end{equation}
Following~\cite{Bazhanov:2010jq}, let us consider the $\gl_n$-type $\CA[x]$-valued Lax matrix
(depending on $\st\in \BC$):
\begin{equation}\label{eq:LaxRec}
  \CL_a(x)=
  \left(\begin{BMAT}[5pt]{c:c}{c:c}
    (x+\st)\ID_a-\ap\am & -\ap(\st+a-\am\ap) \\
    -\am & (x-a)\ID_{n-a}+\am\ap
  \end{BMAT} \right)
\end{equation}
with the blocks $\ap\in \mathrm{Mat}_{a\times(n-a)}(\CA)$ and $\am\in \mathrm{Mat}_{(n-a)\times a}(\CA)$
encoding all the generators via:
\begin{equation}\label{eq:ApAmA}
  \ap=
  \left(\begin{array}{cccc}
    \oad_{1,a+1} & \cdots & \oad_{1,n} \\
    \vdots & \iddots & \vdots \\
    \oad_{a,a+1} & \cdots & \oad_{a,n} \\
  \end{array}\right) \,, \qquad
  \am=
  \left(\begin{array}{cccc}
    \oa_{a+1,1} & \cdots & \oa_{a+1,a} \\
    \vdots & \iddots & \vdots \\
    \oa_{n,1} & \cdots & \oa_{n,a} \\
  \end{array}\right) \,.
\end{equation}
Writing~\eqref{eq:LaxRec} in the form
\begin{equation}
  \CL_a(x)=x\ID_n + \sum_{i,j=1}^{n} e_{ij}\CE_{ji} \,,
\end{equation}
we note that $\{\CE_{ij}\}_{i,j=1}^n$ satisfy the $\gl_n$ commutation relations~\eqref{eq:glncom},
as a consequence of RTT~\eqref{eq:RTT}.

\medskip
\noindent
As before, let $\Fock$ denote the Fock module of $\CA$,
generated by the Fock vacuum $|0\rangle \in \Fock$ satisfying:
\begin{equation}
  \oa_{j,i} |0\rangle = 0 \,, \qquad 1\leq i\leq a < j\leq n \,.
\end{equation}
Then, the Fock vacuum $|0\rangle$ is a highest weight state of the resulting $\gl_n$-action:
\begin{equation}
  \CE_{ij} |0\rangle = 0 \,, \qquad 1\leq i<j\leq n \,,
\end{equation}
with the highest weight
  $\lambda = \st \omega_a = (\underbrace{\st,\ldots,\st}_{a},\underbrace{0,\ldots,0}_{n-a})$,
that is:
\begin{equation}
  \CE_{kk} |0\rangle = \st \delta_{k\leq a} |0\rangle \,, \qquad 1\leq k\leq n \,.
\end{equation}
We can now identify the resulting $\gl_n$-modules $\Fock$ with those featuring in the Introduction:

\medskip

\begin{Lem}\label{lem:A-Fock as par-Verma}
There is a $\gl_n$-module isomorphism:
\begin{equation}\label{eq:A-Fock-as-verma}
  \Fock \simeq \left(M^{\fp_{\{1,\ldots,n-1\}\setminus\{a\}}}_{\st\omega_a}\right)^\vee,
\end{equation}
identifying $\Fock$ with the restricted dual~\eqref{eq:restricted-dual}
of the parabolic Verma module~\eqref{eq:parabolic def}.
\end{Lem}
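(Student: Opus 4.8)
The plan is to mimic the proof of Lemma~\ref{lem:A-Fock as Verma} almost verbatim, adapting it to the parabolic setting. The strategy is to show that $\Fock^\vee$ is a highest weight $\gl_n$-module with the correct highest weight $\st\omega_a$ and the correct character, and then to invoke the universal property of the parabolic Verma module together with a character comparison to conclude that $\Fock^\vee \simeq M^{\fp}_{\st\omega_a}$ with $\fp = \fp_{\{1,\ldots,n-1\}\setminus\{a\}}$.

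First I would record the explicit formulas for the $\gl_n$-generators $\CE_{ij}$ acting on $\Fock$, obtained by reading off the entries of the Lax matrix $\CL_a(x)$ in~\eqref{eq:LaxRec}; in particular I need the block structure to see that the ``Levi part'' $\gl_a\oplus\gl_{n-a}$ acts on $\Fock$ via the oscillators $\ap,\am$ in such a way that $|0\rangle$ generates a one-dimensional Levi-representation (the Levi acts by the scalar weight $\st\omega_a$ on $|0\rangle$, since $\st\omega_a$ vanishes on the coroots of the Levi), and that the lowering operators $\CE_{ji}$ for $a<j$, $i\le a$ — the ones ``transverse'' to the parabolic — are, up to lower-order correction terms, precisely the creation operators $\oad_{i,j}$. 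Then, exactly as in the proof of Lemma~\ref{lem:A-Fock as Verma}, a triangularity argument shows that each dual basis covector $\langle\vec m|$ is a nonzero multiple of a product of the dual operators $\CE^*_{ji}$ (for $a<j$, $i\le a$) applied to $\langle 0|$, so $\Fock^\vee$ is generated by its highest weight vector $\langle 0|$ of weight $\st\omega_a$, hence is a highest weight module.

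Next I would identify which highest weight module it is. Since $\Fock^\vee$ is a highest weight module of highest weight $\st\omega_a$ on which the unipotent radical $\fu\subset\fp$ acts trivially on the highest weight line and the Levi generates only a one-dimensional subrepresentation there (because $\st\omega_a(h_{\alpha_j})=0$ for $j\ne a$), the universal property~\eqref{eq:parabolic def} of $M^{\fp}_{\st\omega_a}=\mathrm{Ind}^{\fg}_{\fp}(L^{\fl}_{\st\omega_a})$ — here $L^{\fl}_{\st\omega_a}$ is one-dimensional — gives a surjection $M^{\fp}_{\st\omega_a}\twoheadrightarrow \Fock^\vee$. To see it is an isomorphism I compare characters: the number of oscillators is $a(n-a)=|\Delta^+\setminus\Delta^+_{\fl}|$, and the weights of the creation operators $\oad_{i,j}$ are exactly the negatives of the roots in $\Delta^+\setminus\Delta^+_{\fl}$, so $\ch_{\Fock}=\ch_{\Fock^\vee}=e^{\st\omega_a}\prod_{\alpha\in\Delta^+\setminus\Delta^+_{\fl}}(1-e^{-\alpha})^{-1}=\ch_{M^{\fp}_{\st\omega_a}}$; a surjection of modules in category $\CO$ with equal characters is an isomorphism. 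Applying the antiautoequivalence $\Phi$ of~\eqref{eq:dual-equivalence} then yields $\Fock\simeq (M^{\fp_{\{1,\ldots,n-1\}\setminus\{a\}}}_{\st\omega_a})^\vee$.

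The main obstacle, and the only step requiring genuine care rather than bookkeeping, is verifying that $\langle 0|$ actually generates $\Fock^\vee$ — i.e.\ the triangularity claim. The correction terms in the expressions for $\CE_{ji}$ (the $-\ap\am$, $\am\ap$ type contributions coming from the nonlinear blocks of~\eqref{eq:LaxRec}) are more intricate than in the Verma case, so I must check that when one expands the product $\prod \CE^*_{ji}$ applied to $\langle 0|$ in the monomial basis, the ``leading'' term $\prod(\oad_{i,j}^*)\langle 0| = \langle\vec m|$ appears with nonzero coefficient and all other contributions lie in strictly higher (or incomparable but controllable) weight spaces relative to a suitable monomial order on $\BN^{a(n-a)}$. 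This is a finite triangular computation — the corrections only ever add creation operators of indices already ``below'' in the order — so it goes through, but it is where the real content of the lemma sits; everything else is a character count plus the universal property.
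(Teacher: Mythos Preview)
Your approach is essentially the same as the paper's: show that $\Fock^\vee$ is a highest weight module with the correct highest weight and character, then conclude. However, you overcomplicate what you call ``the main obstacle.'' In the parabolic case the raising operators $\CE_{ij}$ for $1\le i\le a<j\le n$ are read off from the lower-left block of $\CL_a(x)$ in~\eqref{eq:LaxRec}, which is simply $-\am$; hence $\CE_{ij}=-\oa_{j,i}$ \emph{exactly}, with no $\ap\am$ or $\am\ap$ correction terms whatsoever. These operators therefore pairwise commute, and the dual operators $\CE^*_{ji}$ applied to $\langle 0|$ give $\langle\vec m|$ up to a nonzero scalar via an \emph{order-independent} product---no monomial order or triangularity argument is needed. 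In this sense the parabolic case is actually easier than the Verma case of Lemma~\ref{lem:A-Fock as Verma}, where the formula~\eqref{eq:A-overdiag} does carry correction terms and a specific ordering was required.
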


\medskip

\begin{proof}
Since the restricted dual $\Fock^\vee$ has the same highest weight and the character as
$M^{\fp_{\{1,\ldots,n-1\}\setminus\{a\}}}_{\st\omega_a}$, it suffices to prove that it is also
a highest weight $\gl_n$-module, i.e.\ generated by its highest weight vector $\langle 0|$. To this end,
we note that $\langle \vec{m}|$ is in fact a non-zero multiple of the image of $\langle 0|$ under the
order-independent product $\prod_{1\leq i\leq a}^{a<j\leq n} (\CE^*_{ij})^{m_{i,j}}$ for any
$\vec{m}=(m_{i,j})_{1\leq i\leq a<j\leq n}\in \BN^{a(n-a)}$, cf.\ the proof of Lemma~\ref{lem:A-Fock as Verma}.
\end{proof}

\medskip
\noindent
Combining this with the determinant formula of~\cite{j} and the isomorphism~\eqref{eq:self-dual}, we obtain:

\medskip

\begin{Cor}\label{cor:t-special A}
(a) For $\st\notin \BZ_{\geq 2-n}$, the $\gl_n$-module $\Fock$ is irreducible (thus, is generated by $|0\rangle$).

\medskip
\noindent
(b) For $\st\in \BN$, the Fock vacuum $|0\rangle$ generates an irreducible finite-dimensional
$\gl_n$-module $L_{\st\omega_a}$.
\end{Cor}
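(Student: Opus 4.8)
The plan is to deduce both parts from Lemma~\ref{lem:A-Fock as par-Verma}, which identifies $\Fock\simeq\bigl(M^{\fp}_{\st\omega_a}\bigr)^\vee$ for the parabolic $\fp:=\fp_{\{1,\ldots,n-1\}\setminus\{a\}}$, together with the Shapovalov--Jantzen determinant formula of~\cite{j}. Since the restricted-dual functor $\Phi$ of~\eqref{eq:dual-equivalence} is an anti-autoequivalence of the category $\CO$, it preserves composition length; in particular $\Fock$ is irreducible as a $\gl_n$-module if and only if $M^{\fp}_{\st\omega_a}$ is. Thus part~(a) is equivalent to the simplicity of the parabolic Verma module $M^{\fp}_{\st\omega_a}$ for $\st\notin\BZ_{\geq 2-n}$, while part~(b) will follow by tracking the unique finite-dimensional quotient through this duality.

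For part~(a): by the determinant formula of~\cite{j}, the contravariant form on each weight space of $M^{\fp}_{\st\omega_a}$ has determinant which factors (up to nonzero scalars) into linear factors indexed only by the positive roots \emph{outside} the Levi, $\alpha\in\Delta^+\setminus\Delta^+_{\fl}$ --- these are precisely the roots $\epsilon_i-\epsilon_j$ with $1\leq i\leq a<j\leq n$ --- and $M^{\fp}_{\st\omega_a}$ is simple as soon as $(\st\omega_a+\rho,\alpha^\vee)\notin\BZ_{>0}$ for all such $\alpha$. A direct computation gives $(\st\omega_a+\rho,(\epsilon_i-\epsilon_j)^\vee)=\st+(j-i)$, which lies in $\BZ_{>0}$ exactly when $\st\in\BZ_{\geq i-j+1}$. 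As $(i,j)$ ranges over $1\leq i\leq a<j\leq n$, the union of these arithmetic rays equals $\BZ_{\geq 2-n}$, the minimum value $i-j+1=2-n$ being attained at $i=1,\ j=n$. Hence for $\st\notin\BZ_{\geq 2-n}$ the form is nondegenerate, so $M^{\fp}_{\st\omega_a}$ --- and therefore $\Fock$ --- is irreducible; being irreducible, $\Fock$ is generated by any nonzero vector, in particular by $|0\rangle$.

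For part~(b): when $\st\in\BN$ the weight $\st\omega_a$ is dominant integral for $\gl_n$, so the finite-dimensional irreducible $L_{\st\omega_a}$ exists and is the unique simple quotient of $M^{\fp}_{\st\omega_a}$. Dualizing the surjection $M^{\fp}_{\st\omega_a}\twoheadrightarrow L_{\st\omega_a}$ and using $L_{\st\omega_a}^\vee\simeq L_{\st\omega_a}$ from~\eqref{eq:self-dual}, we obtain an embedding of $\gl_n$-modules $L_{\st\omega_a}\hookrightarrow\bigl(M^{\fp}_{\st\omega_a}\bigr)^\vee\simeq\Fock$, whose image is a finite-dimensional submodule. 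Since $\Phi$ preserves weight-space dimensions, $\dim\Fock[\st\omega_a]=\dim M^{\fp}_{\st\omega_a}[\st\omega_a]=1$, spanned by $|0\rangle$; as $\st\omega_a$ is the highest weight of $L_{\st\omega_a}$, the highest-weight vector of this image is a nonzero multiple of $|0\rangle$, so the image is exactly $U(\gl_n)|0\rangle$. Therefore $U(\gl_n)|0\rangle\simeq L_{\st\omega_a}$, as claimed.

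The only genuinely delicate point is the input from~\cite{j} used in part~(a): one must invoke the determinant formula for the \emph{parabolic} (generalized) Verma module rather than the ordinary one, the decisive feature being that the factors coming from roots inside $\Delta^+_{\fl}$ --- which render the ordinary Verma module $M_{\st\omega_a}$ reducible for \emph{every} $\st$ as soon as $n\geq 3$ --- simply do not occur. Everything else (the weight bookkeeping, the length- and weight-preserving properties of $\Phi$, and the arithmetic of the rays $\BZ_{\geq i-j+1}$) is routine.
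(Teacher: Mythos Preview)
Your proof is correct and follows exactly the approach indicated in the paper, which simply states that the corollary is obtained by combining Lemma~\ref{lem:A-Fock as par-Verma} with the determinant formula of~\cite{j} and the self-duality~\eqref{eq:self-dual}; you have faithfully supplied the details of this argument, including the correct identification of the relevant roots $\epsilon_i-\epsilon_j$ with $1\leq i\leq a<j\leq n$ and the arithmetic yielding $\BZ_{\geq 2-n}$.
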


\medskip

   %%%%%%%%%%%%%%%%%%%%%%%%%%%%%%%%%%%%%%%%%%%%%%%%%%%%%%%%%%%%%%%%%%%%%%%%%%%%%%%%%%%%%%%%%%%%%
   %%%%%%%%%%%%%%%%%%%%%%%%%%%%%%%%%%%%%%%% SUBSECTION %%%%%%%%%%%%%%%%%%%%%%%%%%%%%%%%%%%%%%%%%
   %%%%%%%%%%%%%%%%%%%%%%%%%%%%%%%%%%%%%%%%%%%%%%%%%%%%%%%%%%%%%%%%%%%%%%%%%%%%%%%%%%%%%%%%%%%%%

\subsection{More oscillator realizations in type A via underlying symmetries}
\label{ssec more-A}
$\ $

Since the $R$-matrix~\eqref{eq:R-matrix-A} is invariant, cf.~\eqref{eq:R-inv}, with respect to the natural action
of the symmetric group $S_n$ (via the standard embedding $S_n\hookrightarrow GL_n$), we can generate more solutions
to the RTT relation~\eqref{eq:RTT} from the Lax matrix~\eqref{eq:LaxRec} above by simultaneously permuting its rows
and columns. We shall further apply the (unique) particle-hole automorphism of $\CA$ to insure that the Fock vacuum
$|0\rangle \in \Fock$ remains to be a $\gl_n$ highest weight state. To this end, consider the indexing set
\begin{equation}\label{eq:CS-set}
  \CS_a=\Big\{I\subseteq \{1,\ldots,n\} \, | \, \#I=a \Big\} \,.
\end{equation}
Then, we construct the following explicit $\gl_n$-type $\CA[x]$-valued Lax matrices:
\begin{equation}\label{eq:symmetry-conjugate-A}
  \CL_I(x)=\left. B_I \CL_a(x) B_I^{-1} \right|_{p.h.} =  x\ID_n + \sum_{i,j=1}^{n} e_{ij}\CE_{ji}^I \,,
  \qquad \forall\, I\in \CS_a \,,
\end{equation}
with the similarity matrix $B_I$ and the particle-hole transformation (denoted \emph{p.h.}) described below.

\medskip
\noindent
Let $J=\bar{I}$ denote the complement of the subset $I$:
\begin{equation}\label{eq:set-complement}
  J=\bar{I}=\{1,\ldots,n\} \setminus I \,,
\end{equation}
and let us order the elements of $I$ and $J$ in the increasing order:
\begin{equation}\label{eq:ordering sets}
\begin{split}
  & I=\{i_1,i_2,\ldots,i_a\} \,, \qquad 1\leq i_1<i_2<\dots<i_a\leq n \,, \\
  & J=\{j_1,j_2,\ldots,j_{n-a}\} \,, \qquad 1\leq j_1<j_2<\dots<j_{n-a}\leq n \,.
\end{split}
\end{equation}
We consider the following permutation $\sigma_I$ of the set $\{1,\ldots,n\}$:
\begin{equation}\label{eq:special permutation}
  \sigma_I(c)=
  \begin{cases}
    i_c & \text{for } 1\leq c\leq a \\
    j_{c-a} & \text{for } a<c\leq n
  \end{cases} \,.
\end{equation}

\medskip

\begin{Rem}\label{rem:sigma properies}
Consider the natural transitive action of the symmetric group $S_n$ on the set $\CS_a$~\eqref{eq:CS-set}.
Then, the stabilizer of $\{1,\ldots,a\}\in \CS_a$ is the subgroup $S_{a}\times S_{n-a}\subset S_n$ of those
permutations that map $\{1,\ldots,a\}\mapsto \{1,\ldots,a\}$ and $\{a+1,\ldots,n\}\mapsto \{a+1,\ldots,n\}$.
This gives rise to a set bijection
\begin{equation}\label{eq:set-bij}
  \pi\colon S_n/(S_a\times S_{n-a}) \ \iso \ \CS_a
\end{equation}
satisfying the property $\sigma_I\in \pi^{-1}(I)$ with $\sigma_I$ defined in~\eqref{eq:special permutation}.
Furthermore, $\sigma_I$ can be characterized as the shortest representative of the corresponding left coset
$\pi^{-1}(I)$, and its length is given by:
\begin{equation}\label{eq:sigma-len}
  l(\sigma_I) = \# \left\{ (k,\ell) \in I\times \bar{I} \, | \, k>\ell \right\} \,.
\end{equation}
\end{Rem}

\medskip
\noindent
Then, we define $B_I$ in~\eqref{eq:symmetry-conjugate-A} as the permutation matrix corresponding to $\sigma_I\in S_n$:
\begin{equation}\label{eq:permutation-matrix}
  B_I=\sum_{i=1}^{n} e_{\sigma_I(i),i} \,.
\end{equation}
We note that its inverse coincides with its transpose: $B_I^{-1} = B_I^T = \sum_{i=1}^{n} e_{i,\sigma_I(i)}$.

\medskip
\noindent
To determine the particle-hole transformation used in~\eqref{eq:symmetry-conjugate-A} so as to preserve
the $\gl_n$ highest weight state condition, let us check where the $\oa_{\cdot,\cdot}$-generators from
the lower-left block of~\eqref{eq:LaxRec} are moved to under the conjugation by the matrix $B_I$
of~\eqref{eq:permutation-matrix}. Explicitly, the oscillator $-\oa_{j,i}\ (i\leq a<j)$ is located above
the main diagonal of $B_I \CL_a(x) B_I^{-1}$ if and only if $\sigma_I(j)<\sigma_I(i)$, due to the equality
\begin{equation}\label{eq:conj matr els}
  \Big(B_I \CL_a(x) B_I^{-1}\Big)_{\sigma_I(j),\sigma_I(i)} = \, ({\CL_a(x)})_{ji} \,, \qquad 1\leq i,j\leq n \,.
\end{equation}
Therefore, we consider the following particle-hole automorphism of $\CA$ in~\eqref{eq:symmetry-conjugate-A}:
\begin{equation}\label{eqn:A-ph}
  \oad_{i,j}\mapsto -\oa_{j,i}\,, \quad \oa_{j,i}\mapsto \oad_{i,j} \quad
  \mathrm{for}\quad 1\leq i\leq a<j\leq n\quad \mathrm{such\ that} \quad \sigma_I(j)<\sigma_I(i)\,.
\end{equation}

\medskip
\noindent
The resulting matrix elements $\{\CE_{ij}^I\}_{i,j=1}^n\subset \CA$ of~\eqref{eq:symmetry-conjugate-A} still satisfy
the $\gl_n$ commutation relations~\eqref{eq:glncom}, as follows from the RTT relation~\eqref{eq:RTT}.
This makes the Fock module $\Fock$ into a $\gl_n$-module, denoted by $M^+_{I,t}$. The Fock vacuum $
|0\rangle\in M^+_{I,t}$ is easily seen to be a $\gl_n$ highest weight~state:
\begin{equation}
  \CE_{ij}^I |0\rangle = 0 \,, \qquad   1\leq i<j\leq n \,.
\end{equation}
To compute its highest weight, we note that
\begin{equation}\label{eq:diagonal-A}
\begin{split}
  & \left(B_I \CL_a(x )B_I^{-1} \right)_{\sigma_I(i),\sigma_I(i)} =
    x + \st \, - \sum_{a<j\leq n} \oad_{ij}\oa_{ji}
    \,, \qquad 1\leq i\leq a \,, \\
  & \left(B_I \CL_a(x) B_I^{-1} \right)_{\sigma_I(j),\sigma_I(j)} =
    x - a \, + \sum_{1\leq i\leq a} \oa_{ji}\oad_{ij} \, = x \, + \sum_{1\leq i\leq a} \oad_{ij}\oa_{ji}
    \,, \qquad a < j\leq n\,,
\end{split}
\end{equation}
which after implementing the particle-hole transformation~\eqref{eqn:A-ph} gives:
\begin{equation}\label{eq:Cartan I}
\begin{split}
  & \CE_{\sigma_I(i),\sigma_I(i)}^I |0\rangle =
    \Big(\st+\#\{a<j\leq n \,|\, \sigma_I(j)<\sigma_I(i)\}\Big) |0\rangle \,, \qquad 1\leq i\leq a \,, \\
  & \CE_{\sigma_I(j),\sigma_I(j)}^I |0\rangle =
    \Big(-\#\{1\leq i\leq a \,|\, \sigma_I(j)<\sigma_I(i)\}\Big) |0\rangle \,, \qquad a < j\leq n \,.
\end{split}
\end{equation}
Evoking~\eqref{eq:special permutation} and the particular ordering~\eqref{eq:ordering sets}, we find:
\begin{equation}\label{eq:simple equalities}
\begin{split}
  & \#\Big\{a<j\leq n \,|\, \sigma_I(j)<\sigma_I(i)\Big\}=\sigma_I(i)-i \,, \qquad  1\leq i\leq a \,, \\
  & -\#\Big\{1\leq i\leq a \,|\, \sigma_I(j)<\sigma_I(i)\Big\}=\sigma_I(j)-j \,, \qquad a< j \leq n \,,
\end{split}
\end{equation}
so that:
\begin{equation}\label{eq:htwtA1}
  \CE_{kk}^I |0\rangle = \left(\delta_{\sigma_I^{-1}(k)\leq a}\st + k - \sigma_I^{-1}(k)\right) |0\rangle
  \,, \qquad 1\leq k\leq n \,.
\end{equation}
Hence, the highest weight of the Fock vacuum $|0\rangle\in M^+_{I,t}$ is precisely
\begin{equation*}
  \sigma_I\cdot t\omega_a \,,
\end{equation*}
see~\eqref{eq:dot action}, or equivalently:
\begin{equation}\label{eq:htwtA2}
  \CE_{kk}^I |0\rangle =
  \begin{cases}
    \left(\st+\#\left\{j\notin I \,|\, j<k\right\}\right) |0\rangle & \text{for } k\in I \\
    \left(-\#\left\{i\in I \,|\, i>k\right\}\right) |0\rangle  & \text{for } k\notin I
  \end{cases}\,.
\end{equation}

\medskip
\noindent
Thus, the highest weight of $|0\rangle\in M^+_{I,\st}$ is the same as the highest weight of our key modules
$M'_{\sigma_I\, \cdot \, \st\omega_a}$, introduced in~\eqref{eq:hard modules}, with respect to the standard parabolic
subalgebra $\fp_S\subset \gl_n$ corresponding to $S=\{1,\ldots,n-1\}\setminus\{a\}$. Furthermore, the module
$M'_{\sigma_I\, \cdot \, \st\omega_a}$ has the same character as $M^+_{I,\st}$ (according to Lemma~\ref{lem:M-char})
and is irreducible for $\st\notin \BZ$ (as follows from~\cite{j}). Therefore, we obtain:

\medskip

\begin{Prop}\label{lem:A-Fock as M-mod}
For any $I\in \CS_a$ and $\st\notin \BZ$, we have $\gl_n$-module isomorphisms:
\begin{equation}\label{eq:A-Fock-as-M}
  M^+_{I,\st} \simeq M'_{\sigma_I\, \cdot \, \st\omega_a} \simeq \left(M'_{\sigma_I\, \cdot \, \st\omega_a}\right)^\vee \,.
\end{equation}
\end{Prop}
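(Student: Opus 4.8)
The plan is to reduce both isomorphisms to the single observation that, for $\st\notin\BZ$, all three modules coincide with the simple $\gl_n$-module $L_{\sigma_I\cdot\st\omega_a}$, using only ingredients already assembled above: the Cartan computation \eqref{eq:htwtA2}, the character formula of Lemma~\ref{lem:M-char}, the irreducibility of $M'_{\sigma_I\cdot\st\omega_a}$ for $\st\notin\BZ$, and the fact that $\Phi$ of \eqref{eq:dual-equivalence} is an involutive exact antiautoequivalence of $\CO$.

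First I would pin down the highest weight and the character of $M^+_{I,\st}$. By \eqref{eq:htwtA2} the Fock vacuum $|0\rangle$ is a highest weight vector of $M^+_{I,\st}$ of weight $\sigma_I\cdot\st\omega_a$, and by construction the particle-hole prescription \eqref{eqn:A-ph} places $M^+_{I,\st}$ in the category $\CO$. Concretely, after the conjugation by $B_I$ and the particle-hole automorphism, each of the $a(n-a)$ creation operators of $\Fock$ acts on $M^+_{I,\st}$ with weight a positive root of $\gl_n$: namely $\epsilon_{\sigma_I(i)}-\epsilon_{\sigma_I(j)}$ when $\sigma_I(i)<\sigma_I(j)$ and $\epsilon_{\sigma_I(j)}-\epsilon_{\sigma_I(i)}$ when $\sigma_I(j)<\sigma_I(i)$, so that as $(i,j)$ runs over $\{1\le i\le a<j\le n\}$ these roots run exactly once over $\Delta^+\setminus\sigma_I(\Delta^+_\fl)$ (here $\fl$ is the Levi subalgebra of $\fp_{\{1,\ldots,n-1\}\setminus\{a\}}$ and $\sigma_I\in{}^{\fl}W$ by Remark~\ref{rem:sigma properies}). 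Consequently $\ch_{M^+_{I,\st}}=e^{\sigma_I(\st\omega_a+\rho)-\rho}\,\prod_{\alpha\in\Delta^+\setminus\sigma_I(\Delta^+_\fl)}(1-e^{-\alpha})^{-1}$, which by Lemma~\ref{lem:M-char} equals $\ch_{M'_{\sigma_I\cdot\st\omega_a}}$. (Exactly as in the proof of Lemma~\ref{lem:A-Fock as par-Verma} one may also realize each basis vector $|\vec m\rangle$ as an ordered product of these creation operators applied to $|0\rangle$, showing directly that $M^+_{I,\st}$ is generated by $|0\rangle$.)

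Then I would conclude. For $\st\notin\BZ$ the module $M'_{\sigma_I\cdot\st\omega_a}$ is irreducible by the classical description of singular vectors in Verma modules \cite{j}, hence $M'_{\sigma_I\cdot\st\omega_a}=L_{\sigma_I\cdot\st\omega_a}$; and a simple object of $\CO$ is fixed by $\Phi$ — the twist by $\phi$ in \eqref{eq:restricted-dual}--\eqref{eq:dual-equivalence} compensates the sign flip of weights under the untwisted dual, so $V^\vee$ has the same character and highest weight as $V$, which is the general form of \eqref{eq:self-dual} — whence $M'_{\sigma_I\cdot\st\omega_a}\simeq(M'_{\sigma_I\cdot\st\omega_a})^\vee$, the second asserted isomorphism. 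For the remaining one, let $N\subseteq M^+_{I,\st}$ be the submodule generated by $|0\rangle$: it is a highest weight module of highest weight $\sigma_I\cdot\st\omega_a$, hence a quotient of the Verma module $M_{\sigma_I\cdot\st\omega_a}$, so it surjects onto $L_{\sigma_I\cdot\st\omega_a}$ and therefore $\ch_{L_{\sigma_I\cdot\st\omega_a}}\le\ch_{N}\le\ch_{M^+_{I,\st}}=\ch_{M'_{\sigma_I\cdot\st\omega_a}}=\ch_{L_{\sigma_I\cdot\st\omega_a}}$. Equality holds throughout, so $N=M^+_{I,\st}$ and the surjection $M^+_{I,\st}\twoheadrightarrow L_{\sigma_I\cdot\st\omega_a}$ has vanishing kernel; thus $M^+_{I,\st}\simeq L_{\sigma_I\cdot\st\omega_a}\simeq M'_{\sigma_I\cdot\st\omega_a}$, completing the argument.

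The only step that is not essentially already done in the text is the weight bookkeeping of the second paragraph — checking that \eqref{eqn:A-ph} flips exactly those oscillator pairs whose naive weight $\epsilon_{\sigma_I(i)}-\epsilon_{\sigma_I(j)}$ is a negative root, so that the creation operators of $M^+_{I,\st}$ carry precisely the roots $\Delta^+\setminus\sigma_I(\Delta^+_\fl)$ and $\ch_{M^+_{I,\st}}$ matches the formula of Lemma~\ref{lem:M-char}. I expect this (routine) verification to be the main point; everything afterwards is formal category-$\CO$ reasoning together with the already-invoked irreducibility of $M'_{\sigma_I\cdot\st\omega_a}$ for $\st\notin\BZ$.
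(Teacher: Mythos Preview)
Your argument is correct and follows the same approach as the paper: match the highest weight (via \eqref{eq:htwtA2}) and the character (via Lemma~\ref{lem:M-char}) of $M^+_{I,\st}$ with those of $M'_{\sigma_I\cdot\st\omega_a}$, invoke the irreducibility of the latter for $\st\notin\BZ$ from \cite{j}, and conclude by the standard category-$\CO$ sandwich $\ch_{L}\le\ch_N\le\ch_{M^+}=\ch_{M'}=\ch_L$ together with self-duality of simples under $\Phi$. One cosmetic slip: the creation operators carry \emph{negative} root weights $-\alpha$ with $\alpha\in\Delta^+\setminus\sigma_I(\Delta^+_\fl)$ (consistent with $|0\rangle$ being a highest weight vector), not positive ones---but this does not affect your character formula or the rest of the argument.
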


\medskip

\begin{Rem}
Let us point out the key difference between Proposition~\ref{lem:A-Fock as M-mod} and Lemma~\ref{lem:A-Fock as par-Verma}:

\medskip
\noindent
(a) For $I=\{1,\ldots,a\}\in \CS_a$, we actually have $M^+_{I,\st} \simeq (M'_{\sigma_I\, \cdot \, \st\omega_a})^\vee$
for any $\st\in \BC$, due to Lemma~\ref{lem:A-Fock as par-Verma}.

\medskip
\noindent
(b) Likewise, for $I=\{n-a+1,\ldots,n\}\in \CS_a$, we have $M^+_{I,\st} \simeq M'_{\sigma_I\, \cdot \, \st\omega_a}$
for any $\st\in \BC$.

\medskip
\noindent
(c) For other $I\in \CS_a$, $M^+_{I,\st}$ is \underline{not isomorphic} to either of
$M'_{\sigma_I\, \cdot \, \st\omega_a}$ or $(M'_{\sigma_I\, \cdot \, \st\omega_a})^\vee$ at certain $\st\in \BZ$
(but is expected to be isomorphic to one of the twisted Verma modules in the sense of~\cite{al}).
\end{Rem}

\medskip

\begin{Rem}\label{rem:A-details}
(a) The Weyl group $W_{\fl}$ of the Levi subalgebra $\fl$ of $\fp_S$ is precisely $S_a\times S_{n-a}\subset S_n$.

\medskip
\noindent
(b) We indeed have $\sigma_I\in {}^{\fl}W$ in the notation~(\ref{eq:shortest left},~\ref{eq:shortest left equiv}),
due to Remark~\ref{rem:sigma properies}.

\medskip
\noindent
(c) For any other permutation $\sigma'\in \sigma_I (S_a\times S_{n-a})$, conjugating $\CL_a(x)$ with
$B'_I=\sum_{i=1}^n e_{\sigma'(i),i}$ and applying the corresponding particle-hole transformation will
produce an isomorphic $\gl_n$-module.
\end{Rem}

\medskip
\noindent
Evoking the above bijection $\CS_a\ni I \leftrightarrow \sigma_I\in {}^{\fl}W$, see~\eqref{eq:special permutation}
and Remarks~\ref{rem:sigma properies},~\ref{rem:A-details}(b), we define:
\begin{equation}\label{eq:AM-modified}
  M^\vee_{I,\st}=\left(M'_{\sigma_I\, \cdot \, \st\omega_a}\right)^\vee \,, \qquad \forall\, \st\in \BC \,.
\end{equation}
Then, Proposition~\ref{lem:A-Fock as M-mod} can be recast as the isomorphism of the following $\gl_n$-modules:
\begin{equation}\label{A-generic-coincidence}
  M^+_{I,\st}\simeq M^\vee_{I,\st} \,, \qquad \forall \, \st\in \BC \setminus \BZ \,.
\end{equation}
\noindent

\medskip
\noindent
For $I\in \CS_a$, we also define its \emph{length} $l(I)$ as the length of the corresponding $\sigma_I\in S_n$, see~\eqref{eq:sigma-len}:
\begin{equation}\label{eq:sigma-length}
  l(I) = l(\sigma_I) = \# \left\{ (k,\ell) \in I\times \bar{I} \, | \, k>\ell \right\} \,.
\end{equation}

\medskip

   %%%%%%%%%%%%%%%%%%%%%%%%%%%%%%%%%%%%%%%%%%%%%%%%%%%%%%%%%%%%%%%%%%%%%%%%%%%%%%%%%%%%%%%%%%%%%
   %%%%%%%%%%%%%%%%%%%%%%%%%%%%%%%%%%%%%%%% SUBSECTION %%%%%%%%%%%%%%%%%%%%%%%%%%%%%%%%%%%%%%%%%
   %%%%%%%%%%%%%%%%%%%%%%%%%%%%%%%%%%%%%%%%%%%%%%%%%%%%%%%%%%%%%%%%%%%%%%%%%%%%%%%%%%%%%%%%%%%%%

\subsection{Type A transfer matrices}\label{ssec A-transfer rect}
$\ $

Recall the notion of transfer matrices $\{T_W(x)\}_{W\in \mathrm{Rep}\, Y(\gl_n)}$,
as discussed in Subsection~\ref{ssec R-matrix}.
In particular, we shall consider the following explicit infinite-dimensional transfer matrices:
\begin{equation}\label{eq:Tpgln}
  T_{I,\st}^+(x)=\tr \prod_{i=1}^n \tau_i^{\CE^I_{ii}} \underbrace{\CL_I(x) \otimes \dots \otimes \CL_I(x)}_{N} \,,
  \qquad \forall\, I\in \CS_a \, , \, \st\in \BC \,,
\end{equation}
corresponding to $M^+_{I,\st}$. For $\st\in \BN$, we also consider the finite-dimensional transfer matrices
$T_{a,\st}(x)$ corresponding to the modules $L_{\st\omega_a}$ in the auxiliary space:
those are defined similarly to~\eqref{eq:Tpgln}, but with the trace taken over the finite-dimensional submodule
$L_{\st\omega_a}$ of $M^+_{\{1,\ldots,a\},\st}$, see Corollary~\ref{cor:t-special A}(b).

\medskip
\noindent
Using the notation~(\ref{eq:AM-modified},~\ref{eq:sigma-length}), let us recast
the resolution~\eqref{eq:geometric resolution 1}, dual to~\eqref{eq:conjectured resolution 1}, as follows:
\begin{equation}\label{eq:A-rectangular-resolution}
  0\to  L_{\st \omega_a} \to M^\vee_{\{1,\ldots,a\},\st} \to \bigoplus_{I\in \CS_a}^{l(I)=1} M^\vee_{I,\st}
  \to \bigoplus_{I\in \CS_a}^{l(I)=2} M^\vee_{I,\st} \to \dots \to M^\vee_{\{n-a+1,\ldots,n\},\st} \to 0
\end{equation}
for any $\st\in \BN$. Combining this with~\eqref{A-generic-coincidence} and the fact that
\underline{the transfer matrices~\eqref{eq:Tpgln} depend} \underline{continuously on $\st\in \BC$}
(as so do the Lax matrices $\CL_I(x)$), we obtain the key result of this section:

\medskip

\begin{Thm}\label{thm:Main-A}
For $1\leq a<n$ and $\st\in \BN$, we have:
\begin{equation}\label{eq:transfer-A}
  T_{a,\st}(x)\, = \sum_{I\in \CS_a} (-1)^{l(I)}\, T_{I,\st}^+(x) \,.
\end{equation}
\end{Thm}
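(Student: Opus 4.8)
The plan is to reduce the statement to the geometric resolution established earlier in the paper. First I would recall that for $\st\in\BN$ the module $L_{\st\omega_a}$ is finite-dimensional and admits the dual resolution~\eqref{eq:A-rectangular-resolution}, which is just the $\gl_n$-case of the geometric resolution~\eqref{eq:geometric resolution 1} specialized to the parabolic $\fp_S$ with $S=\{1,\dots,n-1\}\setminus\{a\}$, rewritten via the bijection $\CS_a\ni I\leftrightarrow\sigma_I\in{}^{\fl}W$ and the notation $M^\vee_{I,\st}=(M'_{\sigma_I\cdot\st\omega_a})^\vee$ of~\eqref{eq:AM-modified}. Taking the alternating sum of characters (equivalently, applying the universal transfer matrix functor of Subsection~\ref{ssec R-matrix}, which is exact and additive on short exact sequences by~\eqref{eq:transfer-product}) to~\eqref{eq:A-rectangular-resolution} yields
\begin{equation*}
  T_{L_{\st\omega_a}}(x)=\sum_{I\in\CS_a}(-1)^{l(I)}\,T_{M^\vee_{I,\st}}(x)\,,\qquad \st\in\BN\,.
\end{equation*}

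The second ingredient is the identification of the right-hand side with the oscillator transfer matrices $T^+_{I,\st}(x)$. By Proposition~\ref{lem:A-Fock as M-mod} (equivalently~\eqref{A-generic-coincidence}), for $\st\in\BC\setminus\BZ$ we have a $\gl_n$-module isomorphism $M^+_{I,\st}\simeq M^\vee_{I,\st}$, so the corresponding transfer matrices agree there: $T^+_{I,\st}(x)=T_{M^\vee_{I,\st}}(x)$ for all $\st\notin\BZ$. Both sides are built by taking a trace (with the $z$-independent twist $\prod_i\tau_i^{\CE^I_{ii}}$) over the Fock space of a Lax matrix $\CL_I(x)$ whose entries depend polynomially on $\st$; concretely the entries of $\CL_I(x)$ and the twist eigenvalues are polynomial in $\st$, and the trace over each weight space is a finite sum, so $T^+_{I,\st}(x)$ is a well-defined $\End((\BC^n)^{\otimes N})[[x^{-1}]]$-valued function depending \emph{continuously} (indeed analytically/polynomially in each matrix coefficient) on $\st\in\BC$. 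Likewise $T_{M^\vee_{I,\st}}(x)$, whenever $M'_{\sigma_I\cdot\st\omega_a}$ is defined, has a character independent of the fine structure and its transfer matrix depends continuously on $\st$. Hence the equality $T^+_{I,\st}(x)=T_{M^\vee_{I,\st}}(x)$, valid on the dense set $\BC\setminus\BZ$, extends by continuity to all $\st\in\BC$, in particular to $\st\in\BN$. This is precisely the analogue of~\eqref{eq:T-coincide} in type $A$.

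Combining the two displays gives $T_{a,\st}(x)=T_{L_{\st\omega_a}}(x)=\sum_{I\in\CS_a}(-1)^{l(I)}T^+_{I,\st}(x)$ for $\st\in\BN$, which is~\eqref{eq:transfer-A}. I would also remark that the $N=0$ specialization recovers the character identity~\eqref{eq:character identity} (equivalently the Weyl character formula for $L_{\st\omega_a}$), which serves as a sanity check.

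The main obstacle, and the point that deserves the most care in writing, is the continuity/analytic-continuation argument: one must check that $T^+_{I,\st}(x)$ and $T_{M^\vee_{I,\st}}(x)$ genuinely coincide as functions of $\st$ and not merely have the same character. The cleanest way is to observe that $M^+_{I,\st}$ and $M^\vee_{I,\st}=(M'_{\sigma_I\cdot\st\omega_a})^\vee$ are both highest weight modules of highest weight $\sigma_I\cdot\st\omega_a$ with identical characters (by Lemma~\ref{lem:M-char}), that for generic $\st$ they are irreducible (by the Jantzen/Šapovalov determinant criterion~\cite{j}) hence isomorphic, and that the corresponding transfer matrices are matrices of power series in $x^{-1}$ whose coefficients are, entry by entry, rational functions of the $\tau_i$ and analytic in $\st$ on the locus where the trace converges (regulated by the twist). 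Two such families agreeing on $\BC\setminus\BZ$ agree everywhere by the identity theorem, so in particular at $\st\in\BN$ where $M^+_{I,\st}$ and $M^\vee_{I,\st}$ may fail to be isomorphic. Everything else — exactness of the transfer-matrix functor, additivity along the resolution, and the combinatorial dictionary $I\leftrightarrow\sigma_I$ with $l(I)=l(\sigma_I)$ from Remark~\ref{rem:sigma properies} — is already in place.
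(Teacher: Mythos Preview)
Your proposal is correct and follows essentially the same approach as the paper: invoke the dual resolution~\eqref{eq:A-rectangular-resolution} for $\st\in\BN$, use the generic isomorphism~\eqref{A-generic-coincidence} for $\st\notin\BZ$, and pass to $\st\in\BN$ by continuity of $T^+_{I,\st}(x)$ in $\st$. Your write-up is in fact more detailed on the continuity/analytic-continuation step than the paper's one-line justification, which simply asserts that the transfer matrices depend continuously on $\st$ (as do the Lax matrices $\CL_I(x)$).
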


\medskip
\noindent
The character limit of~\eqref{eq:transfer-A} expresses the character
of the $\gl_n$-modules $\{L_{\st \omega_a}\}_{1\leq a<n}^{\st\in \BN}$ defined as
\begin{equation}\label{eq:A-char-def}
  \ch_{a,\st}=\ch_{a,\st}(\tau_1, \ldots, \tau_n) := \tr_{L_{\st\omega_a}} \prod_{i=1}^n \tau_i^{\CE_{ii}} \,,
\end{equation}
that is the length $N=0$ case of $T_{a,\st}(x)$, via:
\begin{equation}\label{eq:char-A}
  \ch_{a,\st} \, =
  \sum_{I\in \CS_a} (-1)^{l(I)}
  \frac{\prod_{k\in I} \tau_k^{\st+\#\{\ell\notin I|\ell<k\}} \prod_{\ell\notin I}\tau_\ell^{-\#\{k\in I|k>\ell\}}}
       {\prod_{k\in I,\ell\notin I}^{k>\ell}\left(1-\frac{\tau_k}{\tau_\ell}\right)
        \prod_{k\in I,\ell\notin I}^{k<\ell}\left(1-\frac{\tau_\ell}{\tau_k}\right)}
\end{equation}
with the $I$'s summand in the right-hand side of~\eqref{eq:char-A} equal to the character of $M^+_{I,\st}$, up to a sign.

\medskip
\noindent
Let us note right away that formulas~\eqref{eq:transfer-A} and~\eqref{eq:char-A} allow to \underline{analytically continue}
the transfer matrices $T_{a,\st}(x)$ and their particular length $N=0$ case $\ch_{a,\st}$ of~\eqref{eq:A-char-def}
from the discrete set $\st\in \BN$ to the entire complex plane $\st\in \BC$.

\medskip

\begin{Rem}
For the physics' reader who skipped Section~\ref{sec: truncated BGG resolutions}, let us present a concise proof
of~\eqref{eq:char-A}. We shall identify the set $\Delta^+$ of positive roots of $\fg=\gl_n$ with
$\Delta^+=\{\epsilon_i-\epsilon_j|1\leq i<j\leq n\}$, so that $\rho=(\frac{n-1}{2},\frac{n-3}{2},\ldots,\frac{1-n}{2})$
in the basis $\{\epsilon_i\}_{i=1}^n$ and the Weyl group $W$ gets identified with $W\simeq S_n$ (acting by permutations
on the basis $\{\epsilon_i\}_{i=1}^n$). According to the Weyl character formula, we have:
\begin{equation}\label{eq:Weyl A-rect}
  \ch_{L_{\st\omega_a}}=
  \sum_{\sigma\in S_n}
  (-1)^{l(\sigma)}\frac{e^{\sigma(\st\omega_a+\rho)-\rho}}{\prod_{1\leq i<j\leq n}(1-e^{\epsilon_j-\epsilon_i})} \,.
\end{equation}
Following Remark~\ref{rem:A-details}, let us consider the parabolic subalgebra $\fp_{\{1,\ldots,n-1\}\setminus \{a\}}\subset\fg$
with the Levi subalgebra $\fl\simeq \gl_a\oplus \gl_{n-a}$ and the Weyl group $W_{\fl}\simeq S_a\times S_{n-a}\subset S_n$.
We can rewrite~\eqref{eq:Weyl A-rect} as:
\begin{equation}\label{eq:Weyl A-rect doubled}
  \ch_{L_{\st\omega_a}} \, =
  \sum_{\bar{\sigma}\in W/W_{\fl}} \sum_{\tau\in W_{\fl}}
  (-1)^{l(\sigma\tau)}\frac{e^{\sigma\tau(\st\omega_a+\rho)-\rho}}{\prod_{i<j}(1-e^{\epsilon_j-\epsilon_i})} \,,
\end{equation}
where $\sigma\in W$ is a representative of $\bar{\sigma}\in W/W_{\fl}$ (the inner sum is independent of the choice of $\sigma$).
The key step is to simplify the inner sum of~\eqref{eq:Weyl A-rect doubled} using the Weyl denominator formula for $\fl$:
\begin{equation}\label{eq:Weyl denominator A-rect}
  \sum_{\tau \in W_{\fl}} (-1)^{l(\tau)} e^{\tau(\rho_\fl)-\rho_\fl} \, =
  \prod_{\alpha\in \Delta^+_\fl} (1-e^{-\alpha}) \,,
\end{equation}
where
  $\Delta^+_{\fl}=\{\epsilon_i-\epsilon_j\}_{1\leq i<j\leq a} \cup \{\epsilon_i-\epsilon_j\}_{a<i<j\leq n}\subset \Delta^+$
denotes the set of positive roots of $\fl$ and
  $\rho_{\fl}=\frac{1}{2}\sum_{\alpha\in \Delta^+_{\fl}} \alpha =
   (\frac{a-1}{2},\ldots,\frac{1-a}{2},\frac{n-a-1}{2},\ldots,\frac{1+a-n}{2})$.
As $\tau(\rho)-\rho=\tau(\rho_{\fl})-\rho_{\fl}$ for $\tau\in W_{\fl}$, we get:
\begin{equation}
  \sum_{\tau\in W_{\fl}}
  (-1)^{l(\tau)}\frac{e^{\tau(\st\omega_a+\rho)-\rho}}{\prod_{1\leq i<j\leq n}(1-e^{\epsilon_j-\epsilon_i})} =
  \frac{e^{\st\omega_a}}{\prod_{1\leq i\leq a}^{a<j\leq n}(1-e^{\epsilon_j-\epsilon_i})} \,.
%  = \frac{\prod_{1\leq i\leq a}\tau_i^{\st}}{\prod_{i\leq a<j}(1-\frac{\tau_j}{\tau_i})} \,,
\end{equation}
Therefore, the inner sum of~\eqref{eq:Weyl A-rect doubled} corresponding to the trivial left coset $W_\fl\in W/W_\fl$
gives rise to the $I=\{1,\ldots,a\}$'s term of~\eqref{eq:char-A}. Likewise, we claim that any $I$'s term of~\eqref{eq:char-A}
precisely arises from the inner sum of~\eqref{eq:Weyl A-rect doubled} corresponding to the left coset $\sigma_I W_{\fl}$ with
$\sigma_I\in S_n$ of~(\ref{eq:ordering sets},~\ref{eq:special permutation}), which amounts to the proof
of~\eqref{eq:char derivation A-rect} below. To this end, let us apply $\sigma_I$ to both sides
of~\eqref{eq:Weyl denominator A-rect}:
\begin{equation}
  \sum_{\tau\in W_{\fl}} (-1)^{l(\sigma_I\tau)}e^{\sigma_I\tau(\rho)-\rho} \, =\,
  (-1)^{l(\sigma_I)}e^{\sigma_I(\rho)-\rho}
  \prod_{i,j\in I}^{i<j} (1-e^{\epsilon_j-\epsilon_i})
  \prod_{i,j\notin I}^{i<j} (1-e^{\epsilon_j-\epsilon_i}) \,.
\end{equation}
Combining this with the straightforward formula
\begin{equation*}
  \sigma_I(\rho)-\rho=\sum_{k\in I} \#\{\ell\notin I \, |\, \ell<k\}\epsilon_k-
  \sum_{\ell\notin I} \#\{k\in I \, |\, k>\ell\}\epsilon_\ell \,,
\end{equation*}
we obtain the desired equality:
\begin{equation}\label{eq:char derivation A-rect}
  \sum_{\tau\in W_{\fl}}
  (-1)^{l(\sigma_I\tau)}\frac{e^{\sigma_I\tau(\st\omega_a+\rho)-\rho}}{\prod_{i<j}(1-e^{\epsilon_j-\epsilon_i})} = (-1)^{l(I)}
  \frac{\prod_{k\in I} e^{(\st+\#\{\ell\notin I|\ell<k\})\epsilon_k}
        \prod_{\ell\notin I} e^{-\#\{k\in I|k>\ell\}\epsilon_\ell}}
       {\prod_{k\in I,\ell\notin I}^{k>\ell} (1-e^{\epsilon_k-\epsilon_\ell})
        \prod_{k\in I,\ell\notin I}^{k<\ell} (1-e^{\epsilon_\ell-\epsilon_k})} \,.
\end{equation}
This completes our direct proof of~\eqref{eq:char-A}, due to the bijection $\pi$ of~\eqref{eq:set-bij}, see
Remark~\ref{rem:sigma properies}
(cf.~Remark~\ref{rem:math-to-physics} for more details as for perceiving $\ch_{a,\st}$ of~\eqref{eq:A-char-def}
as a specialization of $\ch_{L_{\st\omega_a}}$).
\end{Rem}

$\ $

   %%%%%%%%%%%%%%%%%%%%%%%%%%%%%%%%%%%%%%%%%%%%%%%%%%%%%%%%%%%%%%%%%%%%%%%%%%%%%%%%%%%%%%%%%%%%%
   %%%%%%%%%%%%%%%%%%%%%%%%%%%%%%%%%%%%%%%% SECTION 5 %%%%%%%%%%%%%%%%%%%%%%%%%%%%%%%%%%%%%%%%%%
   %%%%%%%%%%%%%%%%%%%%%%%%%%%%%%%%%%%%%%%%%%%%%%%%%%%%%%%%%%%%%%%%%%%%%%%%%%%%%%%%%%%%%%%%%%%%%

\section{Resolutions for transfer matrices of C-type}\label{sec C-spinor}

In this section, we generalize the key constructions and results of Section~\ref{sec A-rectangular} to $C$-type.

   %%%%%%%%%%%%%%%%%%%%%%%%%%%%%%%%%%%%%%%%%%%%%%%%%%%%%%%%%%%%%%%%%%%%%%%%%%%%%%%%%%%%%%%%%%%%%
   %%%%%%%%%%%%%%%%%%%%%%%%%%%%%%%%%%%%%%%% SUBSECTION %%%%%%%%%%%%%%%%%%%%%%%%%%%%%%%%%%%%%%%%%
   %%%%%%%%%%%%%%%%%%%%%%%%%%%%%%%%%%%%%%%%%%%%%%%%%%%%%%%%%%%%%%%%%%%%%%%%%%%%%%%%%%%%%%%%%%%%%

\subsection{Oscillator realization in type C (parabolic Verma)}\label{ssec nondeg-C}
$\ $

Let $\CA$ denote the oscillator algebra generated by $\frac{r(r+1)}{2}$ pairs of oscillators
$\{(\oa_{j',i},\oad_{i,j'})\}_{1\leq i \leq j \leq r}$ with $r\in \BZ_{\geq 1}$,
cf.\ notation~\eqref{eq:prime-index}, subject to the standard defining relations:
\begin{equation}\label{eq:oscillator relations 2}
  [\oa_{j',i},\oad_{k,\ell'}]=\delta_{i}^{k}\delta_{j}^{\ell} \,, \qquad
  [\oa_{j',i},\oa_{\ell',k}]=0 \,, \qquad
  [\oad_{i,j'},\oad_{k,\ell'}]=0 \,,
\end{equation}
so that
\begin{equation}
  \CA = \BC \Big\langle \oa_{j',i} \, , \, \oad_{i,j'} \Big\rangle_{1\leq i \leq j \leq r} \, \Big/ \,
  \eqref{eq:oscillator relations 2} \,.
\end{equation}
Following~\cite[p.~593]{Frassek:2021ogy}, see also~\cite[\S6.2]{Karakhanyan:2020jtq},
let us consider the $C_r$-type $\CA[x]$-valued Lax matrix:
\begin{equation}\label{eq:oscLaxC}
  \CL(x)=
  \left(\begin{BMAT}[5pt]{c:c}{c:c}
    (x+\st)\ID_r-\ap\am & -\ap(2\st+r+1-\am\ap) \\
    -\am & (x-\st-r-1)\ID_r+\am\ap
  \end{BMAT}\right) \,,
\end{equation}
depending on $\st\in \BC$, with the blocks $\ap,\am\in \mathrm{Mat}_{r\times r}(\CA)$ encoding all the generators via:
\begin{equation}\label{eq:ApAmC}
  \ap=
  \left(\begin{array}{cccc}
    \oad_{1,r'} & \cdots & \oad_{1,2'} & \oad_{1} \\
    \vdots & \iddots & \oad_{2} & \oad_{1,2'} \\
    \oad_{r-1,r'} & \oad_{r-1} & \iddots & \vdots \\
    \oad_{r} & \oad_{r-1,r'} & \cdots & \oad_{1,r'}
  \end{array}\right) \,, \qquad
  \am=
  \left(\begin{array}{cccc}
    \oa_{r',1} & \cdots & \oa_{r',r-1} & \oa_{r} \\
    \vdots & \iddots & \oa_{r-1} & \oa_{r',r-1} \\
    \oa_{2',1} & \oa_{2} & \iddots & \vdots \\
    \oa_{1} & \oa_{2',1} & \cdots & \oa_{r',1}
  \end{array}\right) \,,
\end{equation}
where the anti-diagonal terms $\{\oad_i,\oa_i\}_{i=1}^r$ are defined via:
\begin{equation}
  \oad_i=2\oad_{i,i'} \,, \qquad \oa_i=\oa_{i',i} \,.
\end{equation}

\medskip

\begin{Rem}
The Lax matrix~(\ref{eq:oscLaxC}) is the specialization of~\cite[(3.51)]{Frassek:2021ogy}
at $x_1=\st,\ x_2=-\st-r-1$.
\end{Rem}

\medskip
\noindent
Writing~\eqref{eq:oscLaxC} in the form
\begin{equation}\label{eq:F-generators C-type}
  \CL(x)=x\ID_{2r} + \sum_{i,j=1}^{2r} e_{ij}\CF_{ji} \,,
\end{equation}
we note that the RTT relation~\eqref{eq:RTT} implies that $\{\CF_{ij}\}_{i,j=1}^{2r}$
satisfy the $\ssp_{2r}$ commutation relations:
\begin{equation}\label{eq:comC}
  [\CF_{ij},\CF_{k\ell}]=\delta_{k}^{j}\CF_{i\ell}-\delta_{i}^{\ell}\CF_{kj}-
  \epsilon_{i}\epsilon_j(\delta_{k}^{i'}\CF_{j'\ell}-\delta_{\ell}^{j'}\CF_{ki'}) \,, \qquad
  \CF_{ij}=-\epsilon_i\epsilon_j\CF_{j'i'}\,,
\end{equation}
with $\{\epsilon_i\}_{i=1}^{2r}$ defined as in the Introduction:
\begin{equation}
  \epsilon_1=\cdots=\epsilon_r=1 \qquad \mathrm{and} \qquad \epsilon_{r+1}=\cdots=\epsilon_{2r}=-1 \,.
\end{equation}

\medskip
\noindent
As before, let $\Fock$ denote the Fock module of $\CA$, generated by the Fock vacuum $|0\rangle\in \Fock$ satisfying:
\begin{equation}
  \oa_{j',i} |0\rangle = 0 \,, \qquad 1\leq i\leq j \leq r \,.
\end{equation}
Then, the Fock vacuum $|0\rangle$ is a highest weight state of the resulting $\ssp_{2r}$-action:
\begin{equation}
  \CF_{ij} |0\rangle=0 \,, \qquad 1\leq i< j\leq 2r \,,
\end{equation}
with the highest weight $\lambda = \st \omega_r = (\underbrace{\st,\ldots,\st}_{r})$, that is:
\begin{equation}\label{eq:hwt1}
  \CF_{ii} |0\rangle = \st |0\rangle \,, \qquad 1\leq i\leq r \,.
\end{equation}
The latter is a consequence of the following explicit formulas for any $1\leq i\leq r$:
\begin{equation}\label{eq:C-diag}
\begin{split}
  & \CF_{ii}=\st-2\oad_{ii'}\oa_{i'i}\, - \sum_{k=i+1}^r\oad_{ik'}\oa_{k'i}-\sum_{k=1}^{i-1}\oad_{ki'}\oa_{i'k} \,, \\
  & \CF_{i'i'}=-\st-r-1+2\oa_{i'i}\oad_{ii'}+\sum_{k=1}^{i-1}\oa_{i'k}\oad_{ki'}+\sum_{k=i+1}^{r}\oa_{k'i}\oad_{ik'}
    = -\CF_{ii} \,.
\end{split}
\end{equation}
Similarly to Lemma~\ref{lem:A-Fock as par-Verma}, we can identify the resulting $\ssp_{2r}$-modules $\Fock$
as follows:

\medskip

\begin{Lem}\label{lem:C-Fock as par-Verma}
There is an $\ssp_{2r}$-module isomorphism:
\begin{equation}\label{eq:C-Fock-as-verma}
  \Fock \simeq \left(M^{\fp_{\{1,\ldots,r-1\}}}_{\st\omega_r}\right)^\vee \,,
\end{equation}
identifying $\Fock$ with the restricted dual~\eqref{eq:restricted-dual}
of the parabolic Verma module~\eqref{eq:parabolic def}.
\end{Lem}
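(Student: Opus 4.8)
The plan is to follow the template of the proof of Lemma~\ref{lem:A-Fock as par-Verma}: identify the restricted dual $\Fock^\vee$ of~\eqref{eq:restricted-dual} (carrying the $\phi$-twisted $\ssp_{2r}$-action) with the parabolic Verma module $M^{\fp_{\{1,\ldots,r-1\}}}_{\st\omega_r}$, and then dualize via the involutive antiautoequivalence $\Phi$ of~\eqref{eq:dual-equivalence}, using $\Fock\simeq(\Fock^\vee)^\vee$ (note $\Fock\in\CO$, as its weight spaces are finite-dimensional and its weights are bounded above by $\st\omega_r$).

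First I would record that $\Fock^\vee$ is a highest weight $\ssp_{2r}$-module of highest weight $\st\omega_r$: the covector $\langle 0|$ has weight $\st\omega_r$ by~\eqref{eq:hwt1}, it is killed by the raising subalgebra (dual to $\CF_{ij}|0\rangle=0$ for $i<j$, taking into account the twist~\eqref{eq:twist automorphism}), and by~\eqref{eq:C-diag} this is the unique maximal weight of $\Fock$, hence of $\Fock^\vee$. Next, a character count: reading off from~\eqref{eq:C-diag} that the $\tfrac{r(r+1)}{2}$ creation oscillators $\{\oad_{i,j'}\}_{1\le i\le j\le r}$ carry (twisted) weights $-2\epsilon_i$ and $-\epsilon_i-\epsilon_j$ ($i<j$) --- that is, exactly the roots $-(\Delta^+\setminus\Delta^+_{\fl})$, each with multiplicity one --- the PBW-type basis~\eqref{eq:vec} of $\Fock$ gives $\ch_{\Fock^\vee}=\ch_{\Fock}=e^{\st\omega_r}/\prod_{\alpha\in\Delta^+\setminus\Delta^+_{\fl}}(1-e^{-\alpha})$. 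This coincides with $\ch_{M^{\fp_{\{1,\ldots,r-1\}}}_{\st\omega_r}}$ by PBW for the one-dimensional $\fl$-module $\BC_{\st\omega_r}$ (cf.~Lemma~\ref{lem:M-char} with $w=\mathrm{id}$).

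The hard part will be to show that $\Fock^\vee$ is generated by $\langle 0|$ --- equivalently, that each dual basis covector $\langle\vec m|$ lies in $U(\ssp_{2r})\langle 0|$, equivalently that every non-zero $\ssp_{2r}$-submodule of $\Fock$ contains $|0\rangle$. In type $A$ (Lemma~\ref{lem:A-Fock as par-Verma}) this was immediate because the lowering generators of $\fg$ outside the Levi acted on $\Fock^*$ as single creation oscillators; here, by~\eqref{eq:oscLaxC}--\eqref{eq:ApAmC}, the corresponding generators (the entries of the off-diagonal block $-\ap(2\st+r+1-\am\ap)$) equal $\oad_{i,j'}$ times a scalar plus a higher-order-in-oscillators correction, so an ordered monomial in them applied to $\langle 0|$ reproduces a given $\langle\vec m|$ only up to lower-order terms. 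I would handle this by a triangularity argument: fix a total order on the oscillators, order the monomials $\langle\vec m|$ by total degree $\sum m_{i,j}$ refined by that order, check that the correction terms strictly shift the oscillator content and cannot cancel the leading monomial (in particular that the relevant leading coefficient, a polynomial in $\st$, is nowhere zero), and induct on $\sum m_{i,j}$ to write each $\langle\vec m|$ as $c_{\vec m}\langle\vec m|+(\text{strictly lower})$ with $c_{\vec m}\ne 0$. This leading-term computation is the only genuinely new step compared to type $A$, and the place where the $C$-type structure of~\eqref{eq:oscLaxC} must be used carefully.

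Finally I would assemble the pieces. Being a highest weight module of highest weight $\st\omega_r$, $\Fock^\vee$ is a quotient of the Verma module $M_{\st\omega_r}$; since $\st\omega_r(h_{\alpha_i})=0$ for $i=1,\ldots,r-1$, the module $M_{\st\omega_r}$ has singular vectors of weights $\st\omega_r-\alpha_i$, and their images in $\Fock^\vee$ vanish because $\st\omega_r-\alpha_i$ is not a weight of $\Fock$ (the weights of $\Fock$ lying in $\st\omega_r-\sum_{\alpha\in\Delta^+\setminus\Delta^+_{\fl}}\BN\alpha$). Hence $\Fock^\vee$ is a quotient of $M^{\fp_{\{1,\ldots,r-1\}}}_{\st\omega_r}$, and the character equality of the second step forces this surjection to be an isomorphism. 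Applying $\Phi$ then yields $\Fock\simeq (M^{\fp_{\{1,\ldots,r-1\}}}_{\st\omega_r})^\vee$, as claimed.
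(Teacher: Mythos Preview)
Your overall plan matches the paper's (which simply says ``similar to Lemma~\ref{lem:A-Fock as par-Verma}''): check highest weight and character, show $\Fock^\vee$ is generated by $\langle 0|$, then conclude by universality of the Verma and a character comparison. The final assembly step is fine.

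The problem is in your ``hard part''. You look at the upper-right block $-\ap(2\st+r+1-\am\ap)$ of $\CL(x)$, which records the \emph{lowering} operators $\CF_{j',i}$ acting on $\Fock$. But the restricted dual $\Fock^\vee$ carries the $\phi$-twisted action~\eqref{eq:twist automorphism}, so the lowering operator $\CF_{j',i}$ acts on $\Fock^\vee$ as the transpose of $\phi(\CF_{j',i})=\CF_{i,j'}$, the \emph{raising} operator on $\Fock$. That lives in the lower-left block $-\am$ of~\eqref{eq:oscLaxC}, and by~\eqref{eq:ApAmC} each entry of $\am$ is a single annihilation oscillator; explicitly $\CF_{i,j'}=-\oa_{j',i}$ for $1\le i\le j\le r$. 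Hence, exactly as in Lemma~\ref{lem:A-Fock as par-Verma}, $\CF_{j',i}\cdot\langle\vec m|=(m_{i,j'}+1)\langle\vec m+e_{i,j'}|$ is pure creation on the dual basis, and the order-independent product $\prod_{i\le j}(\CF_{j',i})^{m_{i,j'}}\langle 0|$ is a nonzero multiple of $\langle\vec m|$. No triangularity argument is needed; the $C$-type generation step is verbatim the $A$-type one.

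Your proposed triangularity route is thus unnecessary, and also risky as stated: the leading coefficients you would extract from the upper-right block are $\st$-dependent (they involve $2\st+r+1$), and your assertion that they are ``nowhere zero'' has not been checked and is not obviously true for all $\st\in\BC$. The lower-left-block argument avoids this entirely and works uniformly in~$\st$.
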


\medskip
\noindent
Combining this with the determinant formula of~\cite{j} and the isomorphism~\eqref{eq:self-dual}, we obtain:

\medskip

\begin{Cor}\label{cor:t-special C}
(a) For $\st\notin \frac{1}{2}\BZ_{\geq 2-2r}$, the $\ssp_{2r}$-module $\Fock$ is irreducible
(thus, is generated by $|0\rangle$).

\medskip
\noindent
(b) For $\st\in \BN$, the Fock vacuum $|0\rangle$ generates an irreducible finite-dimensional
$\ssp_{2r}$-module $L_{\st\omega_r}$.
\end{Cor}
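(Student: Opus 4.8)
The plan is to deduce both parts from Lemma~\ref{lem:C-Fock as par-Verma}: transport the irreducibility question through the restricted-dual functor $\Phi$ of~\eqref{eq:dual-equivalence} to the parabolic Verma module $M^{\fp}_{\st\omega_r}$, apply Jantzen's simplicity criterion, and then compute the relevant pairings in type $C_r$; part~(b) will follow from a short highest-weight argument.

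For part~(a): by Lemma~\ref{lem:C-Fock as par-Verma}, $\Fock\simeq(M^{\fp}_{\st\omega_r})^\vee$ for $\fp=\fp_{\{1,\ldots,r-1\}}$, whose Levi is $\fl\simeq\gl_r$ --- this is the Hermitian-symmetric parabolic of $\ssp_{2r}$, so $\fu$ is abelian of dimension $\tfrac{r(r+1)}{2}$ and, since $L^{\fl}_{\st\omega_r}$ is one-dimensional, $M^{\fp}_{\st\omega_r}$ is a free rank-one module over $U(\fu^{-})=S(\fu^{-})$. As $\Phi$ is an exact antiautoequivalence of category $\CO$, the module $\Fock$ is irreducible if and only if $M^{\fp}_{\st\omega_r}$ is. By the parabolic analogue of the Shapovalov form and its Jantzen determinant~\cite{j}, $M^{\fp}_{\st\omega_r}$ is irreducible precisely when $\langle\st\omega_r+\rho,\alpha^\vee\rangle\notin\BZ_{>0}$ for every $\alpha\in\Delta^+\setminus\Delta^+_{\fl}$: if such a pairing equals some $m\in\BZ_{>0}$, then the linear factor $\langle\st\omega_r+\rho,\alpha^\vee\rangle-m$ occurs with multiplicity $1$ in the determinant on the weight space $M^{\fp}_{\st\omega_r}[\st\omega_r-m\alpha]$, so the contravariant form is degenerate there; conversely, if no such integrality occurs then all of these determinants are nonzero.

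It then remains to run the computation. In standard $\epsilon$-coordinates on $\fh^{*}$ one has $\rho=(r,r-1,\ldots,1)$, $\st\omega_r=(\st,\ldots,\st)$, and $\Delta^+\setminus\Delta^+_{\fl}=\{2\epsilon_i\}_{1\le i\le r}\sqcup\{\epsilon_i+\epsilon_j\}_{1\le i<j\le r}$ with coroots $(2\epsilon_i)^\vee=\epsilon_i$ and $(\epsilon_i+\epsilon_j)^\vee=\epsilon_i+\epsilon_j$. Hence the pairings $\langle\st\omega_r+\rho,\alpha^\vee\rangle$ take the values $\st+1,\ldots,\st+r$ on the long roots and $2\st+3,2\st+4,\ldots,2\st+2r-1$ on the roots $\epsilon_i+\epsilon_j$. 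The long-root values hit $\BZ_{>0}$ exactly when $\st\in\BZ$ with $\st\ge 1-r$, and the short-root values hit $\BZ_{>0}$ exactly when $2\st\in\BZ$ with $2\st\ge 2-2r$; the union of these loci equals $\tfrac12\BZ_{\ge 2-2r}$. Thus $\Fock$ is irreducible for $\st\notin\tfrac12\BZ_{\ge 2-2r}$, which is~(a).

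For part~(b): when $\st\in\BN$ the weight $\st\omega_r$ is dominant integral, so $L_{\st\omega_r}$ is finite-dimensional; moreover $\st\omega_r$ vanishes on the coroots of $\fl$, so $M^{\fp}_{\st\omega_r}$ is a highest weight module of highest weight $\st\omega_r$ and hence surjects onto $L_{\st\omega_r}$. Applying $\Phi$ and using $L^\vee_{\st\omega_r}\simeq L_{\st\omega_r}$ from~\eqref{eq:self-dual} produces an embedding $L_{\st\omega_r}\hookrightarrow(M^{\fp}_{\st\omega_r})^\vee\simeq\Fock$; its image is a nonzero submodule whose $\st\omega_r$-weight space lies in the one-dimensional space $\BC|0\rangle$, hence equals $\BC|0\rangle$, so the image contains $|0\rangle$ and, being generated by its highest weight line, equals $U(\ssp_{2r})|0\rangle$. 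Therefore $U(\ssp_{2r})|0\rangle\simeq L_{\st\omega_r}$ is the desired irreducible finite-dimensional module. The only point needing care is the precise form of the simplicity criterion in part~(a) and the ensuing reducibility locus; this is painless here because $\fu$ is abelian and $L^{\fl}_{\st\omega_r}$ one-dimensional, so $M^{\fp}_{\st\omega_r}\simeq S(\fu^{-})$ and the parabolic Jantzen determinant has the simple product form used above, and the remaining work is the $C_r$ root bookkeeping, where one must take the coroot of each long root $2\epsilon_i$ to be $\epsilon_i$.
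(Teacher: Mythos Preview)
Your proof is correct and follows essentially the same approach as the paper, which simply indicates (by analogy with the $A$-type case preceding Corollary~\ref{cor:t-special A}) that the result follows from Lemma~\ref{lem:C-Fock as par-Verma} combined with Jantzen's determinant formula~\cite{j} and the self-duality~\eqref{eq:self-dual}. You have supplied the explicit $C_r$ root computation that the paper omits; the only minor imprecision is that for $r=1$ there are no roots $\epsilon_i+\epsilon_j$ with $i<j$, so the reducibility locus is strictly smaller than $\tfrac{1}{2}\BZ_{\ge 0}$, but this does not affect the stated implication.
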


\medskip

   %%%%%%%%%%%%%%%%%%%%%%%%%%%%%%%%%%%%%%%%%%%%%%%%%%%%%%%%%%%%%%%%%%%%%%%%%%%%%%%%%%%%%%%%%%%%%
   %%%%%%%%%%%%%%%%%%%%%%%%%%%%%%%%%%%%%%%% SUBSECTION %%%%%%%%%%%%%%%%%%%%%%%%%%%%%%%%%%%%%%%%%
   %%%%%%%%%%%%%%%%%%%%%%%%%%%%%%%%%%%%%%%%%%%%%%%%%%%%%%%%%%%%%%%%%%%%%%%%%%%%%%%%%%%%%%%%%%%%%

\subsection{More oscillator realizations in type C via underlying symmetries}
\label{ssec C all osc}
$\ $

Consider the following endomorphisms of $\BC^{2r}$:
\begin{equation}
  B_{i}=e_{ii'}-e_{i'i}+\sum_{1\leq j\leq r}^{j\ne i} \left(e_{jj}+e_{j'j'}\right)
  \,, \qquad 1\leq i\leq r \,,
\end{equation}
along with their order-independent products:
\begin{equation}\label{eq:C-type weyl elt}
  B_{\vec\mu}\, = \prod_{1\leq j\leq r}^{\mu_j=-1} B_j \,,
  \qquad \vec\mu=(\mu_1,\ldots,\mu_r) \in \{\pm 1\}^r \,.
\end{equation}

\medskip

\begin{Rem}\label{rem:B-action C-type}
For $1\leq i\leq r$, we have:
  $B_{\vec \mu}(e_i)=
     \begin{cases}
       e_i & \text{if } \mu_i=1 \\
       -e_{i'} & \text{if } \mu_i=-1
     \end{cases}\, , \,
   B_{\vec \mu}(e_{i'})=
     \begin{cases}
       e_{i'} & \text{if } \mu_i=1 \\
       e_{i} & \text{if } \mu_i=-1
     \end{cases}$.
\end{Rem}

\medskip
\noindent
Since the $R$-matrix~\eqref{eq:BCD-Rmatrix} is invariant under such transformations, cf.~\eqref{eq:R-inv}:
\begin{equation}
  [R(x),B_{\vec\mu}\otimes B_{\vec\mu}]=0 \,, \qquad \forall \vec\mu\in \{\pm 1\}^r \,,
\end{equation}
we can generate more solutions to the RTT relation~\eqref{eq:RTT} from the Lax matrix~\eqref{eq:oscLaxC} via:
\begin{equation}\label{eq:Laxhat}
  \hat{\CL}_{\vec\mu}(x) = B_{\vec\mu}\CL(x)B_{\vec\mu}^{-1}=
  x\ID_{2r}+\sum_{i,j=1}^{2r} e_{ij}\hat{\CF}_{ji}^{\vec\mu} \,, \qquad \forall \vec\mu\in \{\pm 1\}^r \,.
\end{equation}

\medskip
\noindent
We shall further apply the following particle-hole automorphism of $\CA$ (denoted \emph{p.h.}):
\begin{equation}\label{eq:pt-transform}
  \oad_{i,j'}\mapsto -\oa_{j',i} \,, \quad \oa_{j',i}\mapsto\oad_{i,j'} \,\quad
  \text{for}\quad 1\leq i\leq j\leq r \quad \text{such\ that} \quad \mu_i=-1 \,.
\end{equation}
Thus, we obtain the following explicit $C_r$-type $\CA[x]$-valued Lax matrices:
\begin{equation}\label{eq:Laxa}
   {\CL}_{\vec\mu}(x)=\left.\hat{\CL}_{\vec\mu}(x)\right|_{p.h.} =
   \left. B_{\vec\mu}\CL(x)B_{\vec\mu}^{-1} \right|_{p.h.} = \
   x\ID_{2r} + \sum_{i,j=1}^{2r} e_{ij}{\CF}_{ji}^{\vec\mu} \,,
   \qquad \forall \vec\mu\in \{\pm 1\}^r \,.
\end{equation}
The resulting matrix elements $\{\CF_{ij}^{\vec\mu}\}_{i,j=1}^{2r}$ of $\CA$ satisfy the $\ssp_{2r}$ commutation
relations~\eqref{eq:comC}, due to the RTT relation~\eqref{eq:RTT}. This makes the Fock module $\Fock$ into
an $\ssp_{2r}$-module, denoted by $M^+_{\vec\mu,\st}$. We furthermore note that the particular choice
of the particle-hole transformation~\eqref{eq:pt-transform} is uniquely made to insure that the Fock vacuum
$|0\rangle\in M^+_{\vec\mu,\st}$ remains to be an $\ssp_{2r}$ highest weight state:
\begin{equation}
  \CF^{\vec\mu}_{ij} |0\rangle=0 \,, \qquad 1\leq i<j\leq 2r \,.
\end{equation}
To compute its highest weight, we note that:
\begin{equation}\label{eq:diagFC}
  \diag\left(\hat{\CF}^{\vec\mu}\right)=\diag\left(B_{\vec\mu} \CF B^{-1}_{\vec\mu}\right)=
  \left(\mu_1\CF_{11},\ldots,\mu_r\CF_{rr},-\mu_r\CF_{rr},\ldots,-\mu_1\CF_{11}\right) \,,
\end{equation}
due to~\eqref{eq:C-diag}, which after implementing the particle-hole transformation~\eqref{eq:pt-transform} gives:
\begin{equation}\label{eq:replC}
  \CF_{ii}^{\vec\mu} |0\rangle =
  \mu_i\left(\st+(r-i+1)\delta_{\mu_i}^{-}+\sum_{k=1}^i \delta_{\mu_k}^{-}\right) |0\rangle
  \,, \qquad 1\leq i\leq r \,.
\end{equation}
Thus, the Fock vacuum $|0\rangle\in M^+_{\vec\mu,\st}$ is an $\ssp_{2r}$ highest weight state whose weight
is given by~\eqref{eq:replC}.

\medskip
\noindent
We shall now compare the above modules $M^+_{\vec\mu,\st}$'s with those from the Introduction.
To this end, let us consider the parabolic $\fp_S\subset \ssp_{2r}$ corresponding to $S=\{1,\ldots,r-1\}$,
see Section~\ref{ssec truncated-BGG}.
The Weyl group of $\ssp_{2r}$ can be identified with $W\simeq (\BZ/2\BZ)^r\rtimes S_r\simeq \{\pm 1\}^r\rtimes S_r$,
so that elements of $W$ are indexed by pairs $(\vec\mu,\sigma)$ with $\vec{\mu}\in \{\pm 1\}^r$ and $\sigma\in S_r$.
The Weyl group of the Levi subalgebra $\fl\simeq \gl_r$ is $W_{\fl}\simeq S_r$ consisting of the elements
$\left((+1,\ldots,+1),\sigma\right)_{\sigma\in S_r}\subset W$. Thus, we have a set bijection
\begin{equation}\label{eq:set-bij C}
  \pi\colon W/W_{\fl} \ \iso \ \{\pm 1\} ^r \,,
\end{equation}
cf.~\eqref{eq:set-bij}. Given any $\vec\mu\in \{\pm 1\}^r$, we define the permutation $\sigma_{\vec\mu}\in S_r$ via:
\begin{equation}\label{eq:mu-to-sigma}
  \sigma_{\vec\mu}^{-1}(i)=
  \begin{cases}
    \#\{1\leq k\leq i \, | \, \mu_k=1\} & \text{if }\ \mu_i=1 \\
    r+1-\#\{1\leq k\leq i \, | \, \mu_k=-1\} & \text{if }\ \mu_i=-1
  \end{cases} \,,
\end{equation}
and further consider $w_{\vec\mu}\in W\simeq \{\pm 1\}^r\rtimes S_r$ defined via:
\begin{equation}\label{eq:mu-to-w}
  w_{\vec\mu}=(\vec\mu,\sigma_{\vec\mu}) \,.
\end{equation}
It is clear that $w_{\vec\mu}\in \pi^{-1}(\vec\mu)$, see~\eqref{eq:set-bij C}.
Furthermore, it can be characterized as follows:

\medskip

\begin{Lem}\label{lem:shortest decsription C}
$w_{\vec\mu}$ is the shortest representative of the left coset $\pi^{-1}(\vec\mu)$, for any $\vec\mu \in \{\pm 1\}^r$.
\end{Lem}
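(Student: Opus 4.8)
The plan is to establish Lemma~\ref{lem:shortest decsription C} by a direct combinatorial computation of the length of $w_{\vec\mu}$ via the standard inversion formula for the Weyl group of type $C_r$, and then compare this with $l(\sigma)$ for an arbitrary representative $\sigma\in \pi^{-1}(\vec\mu)$. Recall that $W\simeq \{\pm 1\}^r \rtimes S_r$ acts on the weight lattice spanned by $\epsilon_1,\ldots,\epsilon_r$, with positive roots $\{\epsilon_i-\epsilon_j\}_{i<j}$, $\{\epsilon_i+\epsilon_j\}_{i<j}$, and $\{2\epsilon_i\}$; an element $w=(\vec\nu,\tau)$ acts by $\epsilon_i \mapsto \nu_{\tau(i)}\epsilon_{\tau(i)}$ (or an analogous convention, to be fixed at the outset). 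The length $l(w)$ equals the number of positive roots sent to negative roots. First I would write down this inversion count explicitly in terms of $(\vec\nu,\tau)$: it decomposes as the number of ``short'' inversions (pairs $i<j$ with a sign/order clash of $\epsilon_i-\epsilon_j$), the number of ``long'' inversions (pairs with $\epsilon_i+\epsilon_j$ made negative, which happens precisely when at least one of the two relevant signs is $-1$), and the number of indices sent to a negative coordinate (the $2\epsilon_i$ contributions, which equals $\#\{k : \mu_k = -1\}$ and is an invariant of the coset).

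The second step is to observe that the coset $\pi^{-1}(\vec\mu)$ consists exactly of those $(\vec\mu,\tau)$ where $\tau$ ranges over all of $S_r$ — since $W_{\fl}\simeq S_r$ acts on the right by permuting coordinates before applying the sign pattern — so the sign vector $\vec\mu$ is genuinely constant on the coset, and only the permutation part varies. The long-inversion count and the negative-coordinate count depend only on $\vec\mu$ (the long inversions are governed by which indices carry a minus sign, and $\#\{(i,j): i<j,\ \mu_i=-1 \text{ or } \mu_j=-1\}$ is manifestly $\tau$-independent once we track it correctly through the action), while minimizing $l$ over the coset reduces to minimizing the short-inversion count over the choices of $\tau$. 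That minimum is achieved precisely by the $\tau$ that sorts the coordinates so that no avoidable clash remains, and I would check that $\sigma_{\vec\mu}$ of~\eqref{eq:mu-to-sigma} is exactly this sorting permutation: it sends the $\mu_k=1$ indices to the initial block $1,2,\ldots$ in increasing order and the $\mu_k=-1$ indices to the terminal block $\ldots,r-1,r$ in increasing order, which is the unique arrangement placing the plus-signed coordinates first and the minus-signed ones last, each in natural order.

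For the comparison itself I would argue that for any other $\sigma\in S_r$ one can factor $(\vec\mu,\sigma) = (\vec\mu,\sigma_{\vec\mu})\cdot(\vec 1, \sigma_{\vec\mu}^{-1}\sigma)$ with the second factor in $W_{\fl}$, and then verify the length-additivity $l((\vec\mu,\sigma)) = l(w_{\vec\mu}) + l(\sigma_{\vec\mu}^{-1}\sigma)$. This is the type-$C$ analogue of~\eqref{eq:left factorization}; it follows because $w_{\vec\mu}$, having sorted the plus-block before the minus-block, sends every positive root of $\fl$ (i.e.\ every $\epsilon_i-\epsilon_j$ with $i<j$ inside one block) to a positive root, so $w_{\vec\mu}\in {}^{\fl}W$ in the sense of~\eqref{eq:shortest left}, and for elements of ${}^{\fl}W$ the factorization~\eqref{eq:left factorization} is length-additive by the general theory of parabolic cosets in Coxeter groups. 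Hence $l((\vec\mu,\sigma))\geq l(w_{\vec\mu})$ with equality iff $\sigma=\sigma_{\vec\mu}$.

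The main obstacle I anticipate is purely bookkeeping: pinning down the precise convention for how $(\vec\nu,\tau)$ acts (left vs.\ right multiplication, whether signs are applied before or after permuting) so that~\eqref{eq:mu-to-sigma} comes out exactly right rather than off by an inverse or a sign-flip, and then carefully verifying that the long-root inversion count is genuinely coset-invariant under that convention. Once the action is fixed consistently with the paper's implicit choices — which I would reverse-engineer from the requirement that $w_{\vec 1} = \mathrm{id}$ and that the weight in~\eqref{eq:replC} matches $w_{\vec\mu}\cdot\st\omega_r$ under~\eqref{eq:dot action} — the inequality and the equality case both drop out of the inversion count with no further difficulty. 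I would also remark that, alternatively, this lemma is a special case of the standard description~\eqref{eq:shortest left equiv} of ${}^{\fl}W$ as shortest coset representatives, so the entire argument can be compressed to: exhibit $w_{\vec\mu}\in {}^{\fl}W$ by checking $w_{\vec\mu}(\Delta^+_{\fl})\subseteq \Delta^+$ directly from~\eqref{eq:mu-to-sigma}, and invoke uniqueness of the shortest representative in each coset.
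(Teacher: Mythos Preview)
Your proposal is correct and follows essentially the same approach as the paper: the paper's proof is a one-liner invoking the standard length formula $l(w)=\#\{\alpha\in\Delta^+ : w(\alpha)\in -\Delta^+\}$, which is precisely the inversion count you spell out in detail. Your closing remark---that one can simply verify $w_{\vec\mu}(\Delta^+_{\fl})\subseteq\Delta^+$ and invoke uniqueness of shortest coset representatives---is in fact the cleanest route and matches the spirit of the paper's terse proof most closely.
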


\medskip

\begin{proof}
This follows from the standard combinatorial description of the length function on the Weyl group
of any Lie algebra $\fg$:
\begin{equation}\label{eq:length-interpretation}
  l(w)=\#\Big\{\alpha\in \Delta^+ \, | \, w(\alpha)\in -\Delta^+ \Big\}
\end{equation}
for any $w\in W$, where $\Delta^+$ denotes the set of positive roots of $\fg$ (cf.\ Remark~\ref{rem:proof-C-char}).
\end{proof}

\medskip

\begin{Cor}\label{C-left-coset}
${}^{\fl}W = \{w_{\vec\mu}\}_{\vec\mu\in \{\pm 1\}^r}$.
\end{Cor}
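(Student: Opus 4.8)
The plan is to deduce Corollary~\ref{C-left-coset} directly from Lemma~\ref{lem:shortest decsription C} together with the factorisation property~\eqref{eq:left factorization} of elements of $W$ relative to $W_{\fl}$. First I would recall that $\fp_S\subset\ssp_{2r}$ corresponds to $S=\{1,\ldots,r-1\}$, so that $\fl\simeq\gl_r$ and $W_{\fl}\simeq S_r$ sits inside $W\simeq\{\pm1\}^r\rtimes S_r$ as the subgroup $\{((+1,\ldots,+1),\sigma)\}_{\sigma\in S_r}$. The map $\pi\colon W/W_{\fl}\iso\{\pm1\}^r$ of~\eqref{eq:set-bij C} is then literally the projection $(\vec\mu,\sigma)\mapsto\vec\mu$, which is well-defined on cosets precisely because $W_{\fl}$ is the fibre over $(+1,\ldots,+1)$ and the semidirect product structure gives $(\vec\mu,\sigma)\cdot((+1,\ldots,+1),\tau)=(\vec\mu,\sigma\tau)$.

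Next I would invoke the general fact, recorded around~\eqref{eq:shortest left equiv}, that ${}^{\fl}W$ is exactly the set of shortest-length representatives of the left cosets $W/W_{\fl}$, and that every left coset has a \emph{unique} such representative (this uniqueness is part of the standard Coxeter-group statement cited from~\cite[\S5.13]{kos}). Lemma~\ref{lem:shortest decsription C} asserts that for each $\vec\mu\in\{\pm1\}^r$ the element $w_{\vec\mu}=(\vec\mu,\sigma_{\vec\mu})$ of~\eqref{eq:mu-to-w} is the shortest representative of the coset $\pi^{-1}(\vec\mu)$. Combining these two facts: as $\vec\mu$ ranges over $\{\pm1\}^r$, the cosets $\pi^{-1}(\vec\mu)$ range bijectively over all of $W/W_{\fl}$ (since $\pi$ is a bijection), and the unique shortest representative of each is $w_{\vec\mu}$; hence ${}^{\fl}W=\{w_{\vec\mu}\}_{\vec\mu\in\{\pm1\}^r}$. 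I would also note in passing that the map $\vec\mu\mapsto w_{\vec\mu}$ is injective — its composition with $\pi$ is the identity on $\{\pm1\}^r$ — so this is an honest enumeration of ${}^{\fl}W$ with no repetitions, matching $|{}^{\fl}W|=|W/W_{\fl}|=2^r$.

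There is essentially no genuine obstacle here: the corollary is a formal consequence of Lemma~\ref{lem:shortest decsription C} and the uniqueness of shortest coset representatives. The only point requiring a line of care is confirming that $w_{\vec\mu}$ actually \emph{lies in} the coset $\pi^{-1}(\vec\mu)$ — i.e. that $\pi(w_{\vec\mu}W_{\fl})=\vec\mu$ — which is immediate from $w_{\vec\mu}=(\vec\mu,\sigma_{\vec\mu})$ and the description of $\pi$ above; this is already asserted in the text just before Lemma~\ref{lem:shortest decsription C}. Thus the proof is a two-sentence argument: every element of ${}^{\fl}W$ is the shortest representative of its coset, every coset is $\pi^{-1}(\vec\mu)$ for a unique $\vec\mu$, and by Lemma~\ref{lem:shortest decsription C} that shortest representative is $w_{\vec\mu}$; conversely each $w_{\vec\mu}$ is shortest in its coset, hence lies in ${}^{\fl}W$.
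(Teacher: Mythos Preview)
Your proposal is correct and is exactly the argument the paper has in mind: the corollary is stated without proof in the paper, as an immediate consequence of Lemma~\ref{lem:shortest decsription C} together with the characterization~\eqref{eq:shortest left equiv} of ${}^{\fl}W$ as shortest coset representatives and the bijection~\eqref{eq:set-bij C}. You have simply spelled out the (trivial) details that the paper leaves implicit.
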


\medskip
\noindent
Combining now the formula~\eqref{eq:replC} with the definition of $w_{\vec\mu}\in W$,
see~(\ref{eq:mu-to-sigma},~\ref{eq:mu-to-w}), we conclude that the highest weight of the Fock vacuum
$|0\rangle\in M^+_{\vec\mu,\st}$ coincides with the highest weight of our key modules
$M'_{w_{\vec\mu}\, \cdot \, \st\omega_r}$ introduced in~\eqref{eq:hard modules},
see Corollary~\ref{C-left-coset}. Furthermore, $M'_{w_{\vec\mu}\, \cdot \, \st\omega_r}$ has
the same character as $M^+_{\vec\mu,\st}$ (according to Lemma~\ref{lem:M-char}) and is irreducible
for $\st\notin \frac{1}{2}\BZ$ (as follows from~\cite{j}).
Therefore, similarly to Proposition~\ref{lem:A-Fock as M-mod}, we obtain:

\medskip

\begin{Prop}\label{lem:C-Fock as M-mod}
For any $\vec\mu\in \{\pm 1\}^r$ and $\st\notin \sfrac{1}{2}\BZ$, we have $\ssp_{2r}$-module isomorphisms:
\begin{equation}\label{eq:C-Fock-as-M}
  M^+_{\vec\mu,\st} \simeq M'_{w_{\vec\mu}\, \cdot \, \st\omega_r} \simeq
  \left(M'_{w_{\vec\mu}\, \cdot \, \st\omega_r}\right)^\vee \,.
\end{equation}
\end{Prop}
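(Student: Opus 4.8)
The plan is to follow the proof of Proposition~\ref{lem:A-Fock as M-mod} essentially verbatim, with $\ssp_{2r}$, the parabolic $\fp_{\{1,\ldots,r-1\}}$ and the element $w_{\vec\mu}\in{}^{\fl}W$ (Corollary~\ref{C-left-coset}) in place of their type $A$ counterparts. First I would record that the Fock vacuum $|0\rangle\in M^+_{\vec\mu,\st}$ is a highest weight vector: annihilation by $\CF^{\vec\mu}_{ij}$ for $i<j$ was already checked, and formula~\eqref{eq:replC} combined with the definitions~(\ref{eq:mu-to-sigma},~\ref{eq:mu-to-w}) of $w_{\vec\mu}$ identifies its weight with $w_{\vec\mu}\cdot\, \st\omega_r$, i.e.\ with the highest weight of $M'_{w_{\vec\mu}\cdot\, \st\omega_r}$ from~(\ref{eq:hard modules},~\ref{eq:critical vectors}). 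Hence the submodule $V:=U(\ssp_{2r})|0\rangle\subseteq M^+_{\vec\mu,\st}$ is a highest weight module of highest weight $w_{\vec\mu}\cdot\, \st\omega_r$; being a quotient of the Verma module $M_{w_{\vec\mu}\cdot\, \st\omega_r}$, it surjects onto the irreducible $L_{w_{\vec\mu}\cdot\, \st\omega_r}$.

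Next I would compute $\ch_{M^+_{\vec\mu,\st}}$ and compare it with Lemma~\ref{lem:M-char}. As a vector space $M^+_{\vec\mu,\st}=\Fock$ has a monomial basis obtained by applying products of the $\frac{r(r+1)}{2}$ creation operators to $|0\rangle$; after the particle--hole twist~\eqref{eq:pt-transform} each of these operators is an $\fh$-weight vector, and the point is that the multiset of their weights is exactly $-(\Delta^+\setminus w_{\vec\mu}(\Delta^+_{\fl}))$ — note the cardinalities agree, $\frac{r(r+1)}{2}=|\Delta^+|-|\Delta^+_{\fl}|$. Granting this, $\ch_{M^+_{\vec\mu,\st}}=e^{w_{\vec\mu}(\st\omega_r+\rho)-\rho}\big/\prod_{\beta\in\Delta^+\setminus w_{\vec\mu}(\Delta^+_{\fl})}(1-e^{-\beta})$, which is precisely $\ch_{M'_{w_{\vec\mu}\cdot\, \st\omega_r}}$ by Lemma~\ref{lem:M-char}. (The case $\vec\mu=(+1,\ldots,+1)$ of this computation is Lemma~\ref{lem:C-Fock as par-Verma}.)

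Then I would invoke genericity: for $\st\notin\frac{1}{2}\BZ$ the weight $\st\omega_r+\rho$ is in general position, so by the classical description of singular vectors in Verma modules~\cite{j} the module $M'_{w_{\vec\mu}\cdot\, \st\omega_r}$ has no proper submodule, i.e.\ $M'_{w_{\vec\mu}\cdot\, \st\omega_r}\simeq L_{w_{\vec\mu}\cdot\, \st\omega_r}$. Combining the two previous paragraphs yields the coefficientwise chain $\ch_{L_{w_{\vec\mu}\cdot\, \st\omega_r}}\le\ch_V\le\ch_{M^+_{\vec\mu,\st}}=\ch_{M'_{w_{\vec\mu}\cdot\, \st\omega_r}}=\ch_{L_{w_{\vec\mu}\cdot\, \st\omega_r}}$, so all terms are equal; this forces $V=M^+_{\vec\mu,\st}$ and makes the surjection $V\twoheadrightarrow L_{w_{\vec\mu}\cdot\, \st\omega_r}$ an isomorphism, giving the first isomorphism in~\eqref{eq:C-Fock-as-M}. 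For the second isomorphism I would use that every irreducible object of category $\CO$ is self-dual under the restricted duality~\eqref{eq:dual-equivalence} (since $L_\nu^\vee$ is again irreducible of highest weight $\nu$), whence $(M'_{w_{\vec\mu}\cdot\, \st\omega_r})^\vee\simeq L_{w_{\vec\mu}\cdot\, \st\omega_r}\simeq M'_{w_{\vec\mu}\cdot\, \st\omega_r}$.

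The main obstacle is the character bookkeeping of the middle step: one must check that the particle--hole automorphism~\eqref{eq:pt-transform} is calibrated exactly so that, after conjugating~\eqref{eq:oscLaxC} by $B_{\vec\mu}$ and applying~\eqref{eq:pt-transform} as in~\eqref{eq:Laxa}, the creation operators of $M^+_{\vec\mu,\st}$ acquire the $\fh$-weights $-(\Delta^+\setminus w_{\vec\mu}(\Delta^+_{\fl}))$ — equivalently, that $M^+_{\vec\mu,\st}$ lies in category $\CO$ with the asserted character rather than merely being a Fock module of the right rank. A secondary technical point is confirming that $\st\notin\frac{1}{2}\BZ$ is the exact hypothesis making $M'_{w_{\vec\mu}\cdot\, \st\omega_r}$ irreducible: since this module is $M_{w_{\vec\mu}\cdot\, \st\omega_r}$ modulo the submodule generated by singular vectors attached to the \emph{non-simple} reflections $s_{w_{\vec\mu}(\alpha)}$, $\alpha\in\Delta^+_{\fl}$, one has to run the Jantzen determinant criterion on $M_{w_{\vec\mu}\cdot\, \st\omega_r}$ and verify that every linkage other than the ones already killed in the quotient fails once $\st$ avoids $\frac{1}{2}\BZ$.
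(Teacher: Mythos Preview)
Your proof is correct and follows essentially the same approach as the paper: match the highest weight via~\eqref{eq:replC} and the definition of $w_{\vec\mu}$, match characters via Lemma~\ref{lem:M-char}, and invoke irreducibility of $M'_{w_{\vec\mu}\cdot\,\st\omega_r}$ for $\st\notin\frac{1}{2}\BZ$ from~\cite{j} to force the isomorphisms. Your character-chain argument and the explicit discussion of creation-operator weights and self-duality simply flesh out details the paper leaves implicit when it says ``similar to Proposition~\ref{lem:A-Fock as M-mod}.''
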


\medskip

\begin{Rem}\label{rem:C-details}
Let us point out the key difference between Proposition~\ref{lem:C-Fock as M-mod} and Lemma~\ref{lem:C-Fock as par-Verma}:

\medskip
\noindent
(a) For $\vec\mu=(+1,\ldots,+1)$, we actually have $M^+_{\vec\mu,\st} \simeq (M'_{w_{\vec\mu}\, \cdot \, \st\omega_r})^\vee$
for any $\st\in \BC$, due to Lemma~\ref{lem:C-Fock as par-Verma}.

\medskip
\noindent
(b) Likewise, for $\vec\mu=(-1,\ldots,-1)$, we have $M^+_{\vec\mu,\st} \simeq M'_{w_{\vec\mu}\, \cdot \, \st\omega_r}$
for any $\st\in \BC$.

\medskip
\noindent
(c) For other $\vec\mu \in \{\pm 1\}^r$, $M^+_{\vec\mu,\st}$ is \underline{not isomorphic} to either of
$M'_{w_{\vec\mu}\, \cdot \, \st\omega_r}$ or $(M'_{w_{\vec\mu}\, \cdot \, \st\omega_r})^\vee$ at certain
$\st\in \BZ$ (but is expected to be isomorphic to one of the twisted Verma modules in the sense of~\cite{al}).
\end{Rem}

\medskip
\noindent
Evoking the above bijection $\{\pm 1\}^r\ni \vec\mu \leftrightarrow w_{\vec\mu}\in {}^{\fl}W$
of Corollary~\ref{C-left-coset}, let us define:
\begin{equation}\label{eq:CM-modified}
  M^\vee_{\vec\mu,\st}=\left(M'_{w_{\vec\mu}\, \cdot \, \st\omega_r}\right)^\vee \,, \qquad \forall\, \st\in \BC \,.
\end{equation}
Then, Proposition~\ref{lem:C-Fock as M-mod} can be recast as the isomorphism of the following $\ssp_{2r}$-modules:
\begin{equation}\label{C-generic-coincidence}
  M^+_{\vec\mu,\st}\simeq M^\vee_{\vec\mu,\st} \,, \qquad \forall \, \st\in \BC \setminus \sfrac{1}{2}\BZ \,.
\end{equation}
\noindent

\medskip
\noindent
For $\vec\mu\in \{\pm 1\}^r$, we also define its \emph{length} $\mathsf{l}(\vec\mu)$ as the length of the corresponding
element $w_{\vec\mu}\in W$. Using formula~\eqref{eq:length-interpretation} and the explicit description of the
set $\Delta^+$ of positive roots of $\ssp_{2r}$, we find:
\begin{equation}\label{eq:sigma-length C}
  \mathsf{l}(\vec\mu) = l(w_{\vec\mu}) = \sum_{i=1}^r (r-i+1)\delta_{\mu_i}^{-} \,.
\end{equation}
Our choice of notation is due to $\mathsf{l}(\vec\mu)\ne l(\vec\mu)$, the latter being used for the length of
$(\vec\mu,\mathrm{id})\in W$.

\medskip

   %%%%%%%%%%%%%%%%%%%%%%%%%%%%%%%%%%%%%%%%%%%%%%%%%%%%%%%%%%%%%%%%%%%%%%%%%%%%%%%%%%%%%%%%%%%%%
   %%%%%%%%%%%%%%%%%%%%%%%%%%%%%%%%%%%%%%%% SUBSECTION %%%%%%%%%%%%%%%%%%%%%%%%%%%%%%%%%%%%%%%%%
   %%%%%%%%%%%%%%%%%%%%%%%%%%%%%%%%%%%%%%%%%%%%%%%%%%%%%%%%%%%%%%%%%%%%%%%%%%%%%%%%%%%%%%%%%%%%%

\subsection{Type C transfer matrices}
$\ $

Recall the notion of transfer matrices $\{T_W(x)\}_{W\in \mathrm{Rep}\, Y(\ssp_{2r})}$,
as discussed in Subsection~\ref{ssec R-matrix}.
In particular, we shall consider the following explicit infinite-dimensional transfer matrices:
\begin{equation}\label{eq:C-inf-transfer}
  T_{\vec\mu,\st}^+(x)=\tr \prod_{i=1}^r \tau_i^{\CF^{\vec\mu}_{ii}}
  \underbrace{\CL_{\vec\mu}(x) \otimes \dots \otimes \CL_{\vec\mu}(x)}_{N} \,,
\end{equation}
corresponding to $M^+_{\vec\mu,\st}$. For $\st\in \BN$, the finite-dimensional transfer matrices $T_{r,\st}(x)$
corresponding to the modules $L_{\st\omega_r}$ in the auxiliary space are defined similarly to~\eqref{eq:C-inf-transfer},
but with the trace taken over the
finite-dimensional submodule $L_{\st\omega_r}$ of $M^+_{(+1,\ldots,+1),\st}$, see Corollary~\ref{cor:t-special C}(b).

\medskip
\noindent
Using the notation~(\ref{eq:CM-modified},~\ref{eq:sigma-length C}), let us recast
the resolution~\eqref{eq:geometric resolution 1}, dual to~\eqref{eq:conjectured resolution 1}, as follows:
\begin{equation}\label{eq:C-resolution}
  0\to L_{\st \omega_r}\to  M^\vee_{(+1,\ldots,+1),\st}\to
  \bigoplus_{\vec\mu\in \{\pm 1\}^r}^{\mathsf{l}(\vec\mu)=1} M^\vee_{\vec\mu,\st} \to
  \bigoplus_{\vec\mu\in \{\pm 1\}^r}^{\mathsf{l}(\vec\mu)=2} M^\vee_{\vec\mu,\st} \to
  \cdots \to  M^\vee_{(-1,\ldots,-1),\st}\to 0
\end{equation}
for any $\st\in \BN$. Combining this with~\eqref{C-generic-coincidence} and the fact that
\underline{the transfer matrices~\eqref{eq:C-inf-transfer} depend} \underline{continuously on $\st\in \BC$}
(as so do the Lax matrices $\CL_{\vec\mu}(x)$), we obtain the key result of this section:

\medskip

\begin{Thm}\label{thm:Main-C}
For $\st\in \BN$, we have:
\begin{equation}\label{eq:transfer-C}
  T_{r,\st}(x)\, = \sum_{\vec\mu\in \{\pm 1\}^r} (-1)^{\mathsf{l}(\vec{\mu})}\, T_{\vec\mu,\st}^+(x) \,.
\end{equation}
\end{Thm}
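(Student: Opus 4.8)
The plan is to mirror, in type $C$, the argument already given in type $A$ for Theorem~\ref{thm:Main-A}. The two essential inputs are: (i) the truncated BGG resolution~\eqref{eq:conjectured resolution 1} for $\fg=\ssp_{2r}$, $\lambda=\st\omega_r$, and the parabolic $\fp_{\{1,\ldots,r-1\}}$, which is furnished by Theorem~\ref{Thm:KEY} (Main Theorem~\ref{mthm 1}) and whose dual form is recorded in~\eqref{eq:C-resolution}; and (ii) the identification of the infinite-dimensional $\ssp_{2r}$-modules $M^+_{\vec\mu,\st}$ realized in the Fock space with the modules $M^\vee_{\vec\mu,\st}=(M'_{w_{\vec\mu}\cdot\st\omega_r})^\vee$ occurring in that resolution. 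Input (ii) is exactly Proposition~\ref{lem:C-Fock as M-mod}, i.e.\ the isomorphism~\eqref{C-generic-coincidence}, valid for $\st\notin\tfrac12\BZ$, together with Corollary~\ref{C-left-coset} which says the index set $\{\pm1\}^r$ is in bijection with ${}^{\fl}W$ via $\vec\mu\mapsto w_{\vec\mu}$, with $L(\vec\mu)=l(w_{\vec\mu})$ by~\eqref{eq:sigma-length C}.

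First I would invoke the transfer-matrix formalism of Subsection~\ref{ssec R-matrix}: for any short exact sequence, and more generally any finite resolution, of $Y(\ssp_{2r})$-modules, the additivity~\eqref{eq:transfer-product} of $W\mapsto \CT_{W,x}(z)$ over direct sums and the exactness of $\rho^V$ on the quantum space $V=(\BC^{2r})^{\otimes N}$ give an alternating-sum identity among the associated transfer matrices. Applying this to the resolution~\eqref{eq:C-resolution} (after dualizing back to~\eqref{eq:conjectured resolution 1} so all arrows go in the homologically correct direction — or simply noting that taking Euler characteristics is insensitive to this) yields
\begin{equation*}
  T_{r,\st}(x) = \sum_{\vec\mu\in\{\pm1\}^r} (-1)^{L(\vec\mu)}\, T_{M'_{w_{\vec\mu}\cdot\st\omega_r}}(x)
\end{equation*}
for $\st\in\BN$, where $T_{r,\st}(x)=T_{L_{\st\omega_r}}(x)$; here one uses that each $M'_{w_{\vec\mu}\cdot\st\omega_r}$, like the parabolic Verma module $M^{\fp_{\{1,\ldots,r-1\}}}_{\st\omega_r}$ it is a ``$W$-translation'' of, carries a $Y(\ssp_{2r})$-action compatible with the $\ssp_{2r}$-action (as discussed in Subsections~\ref{ssec BCD-overview} and~\ref{ssec analytic continuation}, the relevant Lax matrices being $\CL_{\vec\mu}(x)$ of~\eqref{eq:Laxa}).

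Next I would pass from $T_{M'_{w_{\vec\mu}\cdot\st\omega_r}}(x)$ to $T^+_{\vec\mu,\st}(x)$, the transfer matrix built explicitly from $\CL_{\vec\mu}(x)$ in~\eqref{eq:C-inf-transfer}, which by construction is $T_{M^+_{\vec\mu,\st}}(x)$. By~\eqref{C-generic-coincidence} the two modules $M^+_{\vec\mu,\st}$ and $M^\vee_{\vec\mu,\st}$ are isomorphic for all $\st\notin\tfrac12\BZ$, hence their transfer matrices coincide there; and since a restricted-dual module has the same character, hence the same transfer matrix, as the original in the category $\CO$ (the trace in the auxiliary space being insensitive to the antiautoequivalence $\Phi$), we also have $T_{M'_{w_{\vec\mu}\cdot\st\omega_r}}(x)=T_{M^\vee_{\vec\mu,\st}}(x)$. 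The last observation is that both $T^+_{\vec\mu,\st}(x)$ and $T_{M'_{w_{\vec\mu}\cdot\st\omega_r}}(x)$ depend \emph{continuously} (indeed polynomially, coefficient-wise in $x$) on $\st\in\BC$: the former because the entries of $\CL_{\vec\mu}(x)$ in~\eqref{eq:oscLaxC}--\eqref{eq:Laxa} are polynomial in $\st$, the latter because the character formula of Lemma~\ref{lem:M-char} varies continuously and the $Y(\ssp_{2r})$-module structure likewise does. Two continuous functions agreeing on the complement of $\tfrac12\BZ$ agree everywhere, so
\begin{equation*}
  T^+_{\vec\mu,\st}(x) = T_{M'_{w_{\vec\mu}\cdot\st\omega_r}}(x) \qquad \forall\, \st\in\BC,\ \vec\mu\in\{\pm1\}^r,
\end{equation*}
which is the type-$C$ instance of~\eqref{eq:T-coincide}. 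Substituting this into the alternating sum above gives~\eqref{eq:transfer-C}.

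The routine bookkeeping — checking that the bijection $\vec\mu\leftrightarrow w_{\vec\mu}$ matches signs $(-1)^{L(\vec\mu)}=(-1)^{l(w_{\vec\mu})}$, and that $I=\{1,\ldots,a\}$-type endpoints of the resolution are the literal finite-dimensional and parabolic-Verma cases — is already done in Subsection~\ref{ssec C all osc} and Corollary~\ref{C-left-coset}. The one genuinely delicate point, and the place I would be most careful, is the continuity/analytic-continuation argument used to promote the equality of transfer matrices from $\st\notin\tfrac12\BZ$ to all $\st\in\BC$, and in particular to the values $\st\in\BN$ appearing in the resolution~\eqref{eq:C-resolution}, where $M^+_{\vec\mu,\st}$ and $M'_{w_{\vec\mu}\cdot\st\omega_r}$ need \emph{not} be isomorphic (Remark~\ref{rem:C-details}(c)); one must argue at the level of transfer matrices (finite-rank endomorphisms of $(\BC^{2r})^{\otimes N}$ depending continuously on $\st$) rather than at the level of modules. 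This is exactly the strategy already articulated around~\eqref{eq:T-coincide} in the Introduction, so the proof in the text will simply cite that reasoning; I expect the write-up to be short, essentially ``combine~\eqref{eq:C-resolution}, Proposition~\ref{lem:C-Fock as M-mod}, and continuity in $\st$,'' in complete parallel with the proof of Theorem~\ref{thm:Main-A}.
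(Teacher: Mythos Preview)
Your proposal is correct and follows essentially the same approach as the paper: the paper's argument is the one-sentence ``combining~\eqref{eq:C-resolution} with~\eqref{C-generic-coincidence} and the fact that the transfer matrices~\eqref{eq:C-inf-transfer} depend continuously on $\st\in\BC$,'' exactly as you anticipated. Your write-up is more detailed (in particular you correctly isolate the delicate continuity step at $\st\in\BN$ where the modules themselves need not be isomorphic, cf.~Remark~\ref{rem:C-details}(c)), but the logical skeleton is identical.
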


\medskip
\noindent
The character limit of~\eqref{eq:transfer-C} expresses the character
of the $\ssp_{2r}$-modules $\{L_{\st \omega_r}\}_{\st\in \BN}$ defined as
\begin{equation}\label{eq:C-char-def}
  \ch_{r,\st}=\ch_{r,\st}(\tau_1, \ldots, \tau_r) :=
  \tr_{L_{\st\omega_r}} \prod_{i=1}^r \tau_i^{\CF_{ii}} \,,
\end{equation}
that is the length $N=0$ case of $T_{r,\st}(x)$, via:
\begin{equation}\label{eq:char-C}
  \ch_{r,\st} \, =
  \sum_{\vec\mu=(\mu_1,\ldots,\mu_r)\in \{\pm 1\}^r} (-1)^{\mathsf{l}(\vec{\mu})}
  \frac{\prod_{i=1}^{r} \tau_i^{\mu_i\left(\st+(r-i+1)\delta_{\mu_i}^{-}+\sum_{k=1}^i \delta_{\mu_k}^{-}\right)}}
       {\prod_{1\leq i\leq j\leq r}\left(1-\frac{1}{\tau_i\tau_j^{\mu_i \mu_j}}\right)}
\end{equation}
with the $\vec\mu$'s summand in the right-hand side of~\eqref{eq:char-C} equal to the character of
$M^+_{\vec\mu,\st}$, up to a sign.

\medskip

\begin{Rem}\label{rem:proof-C-char}
For the physics' reader who skipped Section~\ref{sec: truncated BGG resolutions},
let us present a concise proof of~\eqref{eq:char-C}.
Let us identify the set $\Delta^+$ of positive roots of $\fg=\ssp_{2r}$ with
  $\Delta^+=\{\epsilon_i-\epsilon_j\}_{1\leq i<j\leq r} \cup \{\epsilon_i+\epsilon_j\}_{1\leq i\leq j\leq r}$,
so that $\rho=(r,r-1,\ldots,2,1)$ in the basis $\{\epsilon_i\}_{i=1}^r$ and the Weyl group $W$ gets identified with
$W\simeq (\BZ/2\BZ)^r\rtimes S_r\simeq \{\pm 1\}^r\rtimes S_r$. According to the Weyl character formula, we have:
\begin{equation}\label{eq:Weyl C}
  \ch_{L_{\st\omega_r}}\, =
  \sum_{(\vec{\mu},\sigma)\in \{\pm 1\}^r\rtimes S_r} (-1)^{l(\vec\mu,\sigma)}
  \frac{e^{(\vec\mu,\sigma)(\st\omega_r+\rho)-\rho}}
       {\prod_{1\leq i<j\leq r}(1-e^{\epsilon_j-\epsilon_i})
        \prod_{1\leq i\leq j\leq r}(1-e^{-\epsilon_j-\epsilon_i})} \,.
\end{equation}
Following~\S\ref{ssec C all osc}, let us consider the parabolic subalgebra
$\fp_{\{1,\ldots,r-1\}}\subset\fg$ whose Levi subalgebra is $\fl\simeq \gl_r$ and the Weyl group
is $W_{\fl}\simeq S_r=\{\left((+1,\ldots,+1),\sigma\right)\}_{\sigma\in S_r}\subset W$.
We can rewrite~\eqref{eq:Weyl C} as:
\begin{equation}\label{eq:Weyl C doubled}
  \ch_{L_{\st\omega_r}}\, =
  \sum_{\vec\mu\in \{\pm 1\}^r} \sum_{\sigma\in S_r}
  (-1)^{l(\vec\mu,\sigma)}\frac{e^{(\vec\mu,\sigma)(\st\omega_r+\rho)-\rho}}
   {\prod_{1\leq i<j\leq r}(1-e^{\epsilon_j-\epsilon_i})
        \prod_{1\leq i\leq j\leq r}(1-e^{-\epsilon_j-\epsilon_i})} \,.
\end{equation}
The key step is to simplify the inner sum of~\eqref{eq:Weyl C doubled} using the Weyl denominator formula for $\fl$:
\begin{equation}\label{eq:Weyl denominator C}
  \sum_{\sigma \in S_r} (-1)^{l(\sigma)} e^{\sigma(\rho_\fl)-\rho_\fl} \, =
  \prod_{\alpha\in \Delta^+_\fl} (1-e^{-\alpha}) \,,
\end{equation}
where $\Delta^+_{\fl}=\{\epsilon_i-\epsilon_j\}_{1\leq i<j\leq r} \subset \Delta^+$ consists of positive roots
of $\fl$, $\rho_{\fl}=\frac{1}{2}\sum_{\alpha\in \Delta^+_{\fl}} \alpha = (\frac{r-1}{2},\ldots,\frac{1-r}{2})$.
As $\sigma(\rho)-\rho=\sigma(\rho_{\fl})-\rho_{\fl}$ and $\sigma(\omega_r)=\omega_r$ for any $\sigma\in S_r$, we get:
\begin{equation*}
  \sum_{\sigma\in S_r} (-1)^{l(\sigma)}
  \frac{e^{\sigma(\st\omega_r+\rho)-\rho}}
       {\prod_{1\leq i<j\leq r}(1-e^{\epsilon_j-\epsilon_i})
        \prod_{1\leq i\leq j\leq r}(1-e^{-\epsilon_j-\epsilon_i})} =
  \frac{\prod_{i=1}^{r} e^{\st\epsilon_i}}{\prod_{1\leq i\leq j\leq r}(1-e^{-\epsilon_j-\epsilon_i})} \,.
%  = \frac{\prod_{1\leq i\leq r} \tau_i^{\st}} {\prod_{1\leq i\leq j\leq r}(1-\frac{1}{\tau_i \tau_j})} \,,
\end{equation*}
Thus, the inner sum of~\eqref{eq:Weyl C doubled} indexed by $\vec\mu=(+1,\ldots,+1)$ gives rise to
the corresponding summand of~\eqref{eq:char-C}. We claim that the same holds for any $\vec\mu\in \{\pm 1\}^r$,
which amounts to the proof of~\eqref{eq:char derivation C} below.
To this end, let us apply $\vec\mu=(\vec\mu,1)\in W$ to both sides of the equality~\eqref{eq:Weyl denominator C}:
\begin{equation}
  \sum_{\sigma\in S_r} (-1)^{l(\vec\mu,\sigma)}e^{(\vec\mu,\sigma)(\rho)-\rho} \, =\,
  (-1)^{l(\vec\mu)}e^{\vec\mu(\rho)-\rho}
  \prod_{1\leq i<j\leq r} (1-e^{\mu_j \epsilon_j-\mu_i \epsilon_i}) \,.
\end{equation}
We note that
\begin{equation*}
  1-e^{\mu_j \epsilon_j-\mu_i \epsilon_i}=
  \begin{cases}
      1-e^{\epsilon_j-\epsilon_i} & \text{if }\ \mu_i=1,\ \mu_j=1 \\
      1-e^{-\epsilon_j-\epsilon_i} & \text{if }\ \mu_i=1,\ \mu_j=-1 \\
      -e^{\epsilon_i-\epsilon_j}(1-e^{\epsilon_j-\epsilon_i}) & \text{if }\ \mu_i=-1,\ \mu_j=-1 \\
      -e^{\epsilon_i+\epsilon_j}(1-e^{-\epsilon_j-\epsilon_i}) & \text{if }\ \mu_i=-1,\ \mu_j=1 \\
  \end{cases}
\end{equation*}
as well as
\begin{equation*}
  \vec\mu(\rho)-\rho=\sum_{i=1}^r (\mu_i-1)(r+1-i)\epsilon_i \,, \qquad
  (-1)^{l(\vec\mu)}=(-1)^{\sum_{i=1}^r \delta_{\mu_i}^-} \,, \qquad
  \vec\mu(\st\omega_r)=(\st\mu_1,\ldots,\st\mu_r) \,.
\end{equation*}
Thus, we obtain the desired equality:
\begin{multline}\label{eq:char derivation C}
  \sum_{\sigma\in S_r}
  (-1)^{l(\vec\mu,\sigma)}\frac{e^{(\vec\mu,\sigma)(\st\omega_r+\rho)-\rho}}
   {\prod_{1\leq i<j\leq r}(1-e^{\epsilon_j-\epsilon_i})
        \prod_{1\leq i\leq j\leq r}(1-e^{-\epsilon_j-\epsilon_i})} = \\
  (-1)^{\mathsf{l}(\vec\mu)}
  \frac{\prod_{i=1}^{r} e^{\mu_i\left(\st+(r-i+1)\delta_{\mu_i}^{-}+\sum_{k=1}^i \delta_{\mu_k}^{-}\right)\epsilon_i}}
       {\prod_{1\leq i\leq j\leq r}\left(1-e^{-\mu_i \mu_j \epsilon_j - \epsilon_i}\right)} \,.
%  = (-1)^{L(\vec{\mu})}
%  \frac{\prod_{i=1}^{r} \tau_i^{\mu_i\left(\st+(r-i+1)\delta_{\mu_i}^{-}+\sum_{k=1}^i \delta_{\mu_k}^{-}\right)}}
%       {\prod_{1\leq i\leq j\leq r}\left(1-\tau^{-1}_i\tau_j^{-\mu_i \mu_j}\right)}  \,.
\end{multline}
This completes our direct proof of the character formula~\eqref{eq:char-C}
(see~Remark~\ref{rem:math-to-physics} for more details in regards to perceiving
$\ch_{r,\st}$ of~\eqref{eq:C-char-def} as a specialization of $\ch_{L_{\st\omega_r}}$).
\end{Rem}

\medskip
\noindent
We note that the formula~\eqref{eq:char-C} allows to \underline{analytically continue} the character
$\ch_{r,\st}$ of~\eqref{eq:C-char-def} from the discrete set $\st\in \BN$ to the entire
complex plane $\st\in \BC$. With this convention in mind, we have:

\medskip

\begin{Lem}\label{lem:t-property C}
(a) $\ch_{r,\st}=(-1)^{\frac{r(r+1)}{2}}\ch_{r,-r-1-\st}$ for any $\st\in \BC$.

\medskip
\noindent
(b) $\ch_{r,\st}=0$ for $\st\in \Big\{-\frac{2}{2},-\frac{3}{2},\ldots,-\frac{2r-1}{2},-\frac{2r}{2}\Big\}$.
\end{Lem}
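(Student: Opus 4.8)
The plan is to reduce everything to the Weyl numerator. By the purely formal manipulations of Remark~\ref{rem:proof-C-char} — which use only the Weyl denominator formula for $\fl$ and never specialize $\st$ — the analytically continued character $\ch_{r,\st}$ of \eqref{eq:char-C} equals, for \emph{every} $\st\in\BC$,
\[
  \ch_{r,\st}\;=\;\frac{N(\st)}{e^{\rho}\prod_{\alpha\in\Delta^+}(1-e^{-\alpha})}\,,\qquad
  N(\st):=\sum_{w\in W}(-1)^{l(w)}\,e^{w(\st\omega_r+\rho)}\,,
\]
where $e^{c\epsilon_i}$ is read as $\tau_i^c$ (cf.\ Remark~\ref{rem:math-to-physics}), and $\Delta^+$, $\rho=(r,r-1,\dots,1)$, $W\simeq\{\pm1\}^r\rtimes S_r$ are as in Remark~\ref{rem:proof-C-char}, so that $\st\omega_r+\rho=(\st+r,\st+r-1,\dots,\st+1)$ in the basis $\{\epsilon_i\}$. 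The denominator is independent of $\st$ and not identically zero, so both assertions reduce to statements about $N(\st)$.

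For part (a), I would let $w_0=-\mathrm{id}\in W$ be the longest element, $\sigma_0\in W_{\fl}\simeq S_r$ the longest permutation ($\sigma_0(\epsilon_i)=\epsilon_{r+1-i}$), and set $g:=w_0\sigma_0\in W$. A one-line check on components gives $g(\st\omega_r+\rho)=(-\st-1,-\st-2,\dots,-\st-r)=(-r-1-\st)\omega_r+\rho$. Re-indexing the sum defining $N(-r-1-\st)$ by $w\mapsto wg$, and using that the sign character $w\mapsto(-1)^{l(w)}$ is a homomorphism, yields $N(-r-1-\st)=(-1)^{l(g)}N(\st)$ with $(-1)^{l(g)}=(-1)^{l(w_0)}(-1)^{l(\sigma_0)}=(-1)^{|\Delta^+|}(-1)^{|\Delta^+_{\fl}|}=(-1)^{r^2}(-1)^{\binom{r}{2}}=(-1)^{r}(-1)^{r(r-1)/2}=(-1)^{r(r+1)/2}$. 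Dividing by the common denominator gives $\ch_{r,-r-1-\st}=(-1)^{r(r+1)/2}\ch_{r,\st}$, which is exactly (a).

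For part (b), I would invoke the standard fact that the alternating Weyl sum $\sum_{w\in W}(-1)^{l(w)}e^{w\mu}$ vanishes identically whenever $\mu$ is non-regular, i.e.\ $(\mu,\alpha)=0$ for some $\alpha\in\Delta^+$: then $s_\alpha$ fixes $\mu$ while $(-1)^{l(ws_\alpha)}=-(-1)^{l(w)}$, so the terms $w$ and $ws_\alpha$ cancel in pairs. It then remains to check that for every $\st$ in the listed set the weight $\mu=\st\omega_r+\rho$, with $\mu_i=\st+r+1-i$, is non-regular. Pairing $\mu$ against the positive roots of $\ssp_{2r}$ fixed in Remark~\ref{rem:proof-C-char}: $(\mu,\epsilon_i-\epsilon_j)=j-i\neq0$ always; $(\mu,2\epsilon_i)=2(\st+r+1-i)$ vanishes exactly for $\st\in\{-1,-2,\dots,-r\}$; and $(\mu,\epsilon_i+\epsilon_j)=2\st+2r+2-i-j$ with $i<j$ vanishes exactly for $2\st\in\{-3,-4,\dots,-(2r-1)\}$, i.e.\ $\st\in\{-\tfrac32,-2,-\tfrac52,\dots,-\tfrac{2r-1}{2}\}$. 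The union of these two families is precisely the set of all half-integer multiples in $[-r,-1]$, namely $\{-\tfrac22,-\tfrac32,-\tfrac42,\dots,-\tfrac{2r}{2}\}$, so $N(\st)=0$ there, giving (b) (and in fact the vanishing locus is exactly this set).

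All of this is elementary and I expect no genuine obstacle. The only substantive point is the first paragraph — that \eqref{eq:char-C} literally is $N(\st)$ over a fixed $\st$-independent denominator for all complex $\st$, which one gets by reading the algebraic chain of equalities in Remark~\ref{rem:proof-C-char} backwards — and the mildly fiddly part is the bookkeeping in (b), gluing the two arithmetic progressions of special $\st$-values into the single list $\{-1,-\tfrac32,\dots,-r\}$; that step is routine but the most error-prone.
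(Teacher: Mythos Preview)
Your proof is correct. For part~(b) it is essentially the paper's argument: the paper also passes to the Weyl numerator and pairs $w$ with $ws_\alpha$ for a root $\alpha$ annihilating $\st\omega_r+\rho$; it just writes out the reflections explicitly (the element $((+1,\dots,-1,\dots,+1),\mathrm{id})$ is $s_{2\epsilon_k}$, and $((+1,\dots,-1,\dots,-1,\dots,+1),(k\,m))$ is $s_{\epsilon_k+\epsilon_m}$), whereas you invoke the standard non-regularity criterion and then identify the relevant $\st$-values.

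For part~(a) the routes differ slightly. The paper works directly with the decomposition~\eqref{eq:char-C} over ${}^{\fl}W$ and matches terms pairwise, proving the finer identity $\ch^+_{\vec\mu,\st}=\ch^+_{-\vec\mu,\,-r-1-\st}$ together with $L(\vec\mu)+L(-\vec\mu)=\tfrac{r(r+1)}{2}$. You instead work with the full Weyl numerator $N(\st)$ and re-index by the single element $g=w_0\sigma_0$. Your approach is a touch more uniform (both parts reduce to $N(\st)$), while the paper's termwise identity $\ch^+_{\vec\mu,\st}=\ch^+_{\vec{\bar\mu},\bar\st}$ is reused later in the proof of Proposition~\ref{prop:t-symmetry C}, where one needs the matching at the level of individual summands (after the $QQ$-factorisation) rather than just the total sum.
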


\medskip

\begin{proof}
(a) For any $\vec\mu=(\mu_1,\ldots,\mu_r)\in \{\pm 1\}^r$ and $\st\in \BC$,
define $\vec{\bar\mu}\in \{\pm 1\}^r$ and $\bar\st\in \BC$ via:
\begin{equation}\label{eq:opposite-mu}
  \vec{\bar\mu}=(\bar\mu_1,\ldots,\bar\mu_r):=-\vec\mu
  \,, \qquad \bar\st=-r-1-\st \,.
\end{equation}
Then, the obvious equality
  $\sum_{i=1}^r (r-i+1) (\delta_{\mu_i}^{-} + \delta_{\bar\mu_i}^-) = \frac{r(r+1)}{2}$
implies:
\begin{equation}\label{eq:L+L}
  \mathsf{l}(\vec\mu)+\mathsf{l}(\vec{\bar\mu})=\sfrac{r(r+1)}{2} \,, \qquad \forall\, \vec\mu\in \{\pm 1\}^r \,.
\end{equation}
Let $\ch^+_{\vec\mu,\st}$ denote the
$\vec\mu$-th summand in the right-hand side of~\eqref{eq:char-C} without a sign, see~\eqref{eq:ch-C}:
\begin{equation}\label{eq:ch-plus}
   \ch^+_{\vec\mu,\st} =
  \frac{\prod_{i=1}^{r} \tau_i^{\mu_i\left(\st+(r-i+1)\delta_{\mu_i}^{-}+\sum_{k=1}^i \delta_{\mu_k}^{-}\right)}}
       {\prod_{1\leq i\leq j\leq r}\left(1-\tau_i^{-1}\tau_j^{-\mu_i \mu_j}\right)} \,.
\end{equation}
Then, we have the symmetry of~\eqref{eq:ch-plus} with respect to~\eqref{eq:opposite-mu}:
\begin{equation}\label{eq:ch=ch}
  \ch^+_{\vec\mu,\st}=\ch^+_{\vec{\bar\mu},\bar\st}
\end{equation}
as follows from the equality
  $\mu_i\Big(\st + (r-i+1)\delta_{\mu_i}^{-} + \sum_{k=1}^i \delta_{\mu_k}^{-}\Big) =
   \bar\mu_i\Big(\bar\st + (r-i+1)\delta_{\bar\mu_i}^{-} + \sum_{k=1}^i \delta_{\bar\mu_k}^{-}\Big)$.
This implies the desired equality:
\begin{equation}\label{eq:charcater comparison}
  \ch_{r,\st}=\sum_{\vec\mu\in \{\pm 1\}^r} (-1)^{\mathsf{l}(\vec\mu)}\ch^+_{\vec\mu,\st} =
  (-1)^{\frac{r(r+1)}{2}}\sum_{\vec\mu\in \{\pm 1\}^r} (-1)^{\mathsf{l}(\vec{\bar\mu})}\ch^+_{\vec{\bar\mu},\bar\st} =
  (-1)^{\frac{r(r+1)}{2}}\ch_{r,\bar\st}  \,,
\end{equation}
by matching the $\vec\mu$-th summand in $\ch_{r,\st}$ with the $\vec{\bar\mu}$-th summand in $\ch_{r,\bar\st}$ for every
$\vec\mu\in \{\pm 1\}^r$.

\medskip
\noindent
(b) To prove (b), we shall rather use the Weyl character formula~\eqref{eq:Weyl C}, which implies that
$\ch_{r,\st}=0$ if and only if one can split elements of the Weyl group $W\simeq \{\pm 1\}^r \rtimes S_r$ into pairs
$(w,w')$ so that:
\begin{equation}\label{eq:pair compatibility}
  (-1)^{l(w)}=-(-1)^{l(w')}  \qquad \mathrm{and} \qquad w(\st\omega_r+\rho)=w'(\st\omega_r+\rho)
\end{equation}
with $\rho=(r,\ldots,1)$ and $\omega_r=(1,\ldots,1)$.
Let us indicate such splittings for the desired values of~$\st$:
% $\st\in \Big\{-\frac{2}{2},-\frac{3}{2},\cdots,-\frac{2r-1}{2},-\frac{2r}{2}\Big\}$:
\begin{enumerate}

\item[$\bullet$]
set $w'=w \Big( (\underbrace{+1,\ldots,+1}_{k-1},-1,\underbrace{+1\ldots,+1}_{r-k}),\mathrm{id} \Big)$
if $\st=-(r+1-k)$ with $1\leq k\leq r$;

\item[$\bullet$]
set $w'=w \Big( (\underbrace{+1,\ldots,+1}_{k-1},-1,\underbrace{+1,\ldots,+1}_{m-k-1},-1,\underbrace{+1\ldots,+1}_{r-m}),
(k\ m) \Big)$ with $(k\ m)\in S_r$ denoting the transposition exchanging $k$ and $m$, if $\st=-(r+1-\sfrac{k+m}{2})$ with
$1\leq k<m\leq r$.

\end{enumerate}
By Remark~\ref{rem:proof-C-char}, note that~\eqref{eq:char-C} and~\eqref{eq:Weyl C} provide the same analytic continuations
of~\eqref{eq:C-char-def}.
\end{proof}

\medskip

\begin{Rem}
Following the above proof, we get $\ch_{r,\st}=0$ only for
$\st\in \Big\{-\frac{2}{2},-\frac{3}{2},\ldots,-\frac{2r-1}{2},-\frac{2r}{2}\Big\}$.
\end{Rem}

\medskip
\noindent
In a completely similar way, the formula~\eqref{eq:transfer-C} allows to \underline{analytically continue} the transfer
matrices $T_{r,\st}(x)$ of the finite-dimensional representations $L_{\st\omega_r},\st\in\BN$, to the entire complex
plane $\st\in \BC$. With this convention in mind, we have the following generalization of Lemma~\ref{lem:t-property C}(a):

\medskip

\begin{Prop}\label{prop:t-symmetry C}
$T_{r,\st}(x)=(-1)^{\frac{r(r+1)}{2}} T_{r,-r-1-\st}(x)$ for any $\st\in \BC$.
\end{Prop}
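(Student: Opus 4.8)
The plan is to lift the character identity of Lemma~\ref{lem:t-property C}(a) to the level of transfer matrices, the only new input being a symmetry of the infinite-dimensional building blocks $T^+_{\vec\mu,\st}(x)$ of~\eqref{eq:C-inf-transfer}. Write $\vec{\bar\mu}=-\vec\mu$ and $\bar\st=-r-1-\st$ as in~\eqref{eq:opposite-mu}. Granting the key claim
\[
  T^+_{\vec\mu,\st}(x)=T^+_{\vec{\bar\mu},\bar\st}(x)\qquad\forall\,\vec\mu\in\{\pm1\}^r,\ \st\in\BC,
\]
the Proposition follows exactly as~\eqref{eq:charcater comparison} did for characters: by Theorem~\ref{thm:Main-C} (valid for all $\st\in\BC$, where for $\st\notin\BN$ the left side of~\eqref{eq:transfer-C} is the analytic continuation defined by its right side, the $T^+_{\vec\mu,\st}(x)$ being polynomial in $\st$), together with the length identity $L(\vec\mu)+L(\vec{\bar\mu})=\tfrac{r(r+1)}{2}$ of~\eqref{eq:L+L} and the substitution $\vec\nu=\vec{\bar\mu}$ (a bijection of $\{\pm1\}^r$), one obtains $T_{r,\st}(x)=(-1)^{\frac{r(r+1)}{2}}T_{r,\bar\st}(x)$ by matching the $\vec\mu$-th summand of $T_{r,\st}(x)$ with the $\vec{\bar\mu}$-th summand of $T_{r,\bar\st}(x)$. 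So everything reduces to the key claim.

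To prove the key claim I would realize the involution $(\vec\mu,\st)\mapsto(\vec{\bar\mu},\bar\st)$ on the data entering~\eqref{eq:C-inf-transfer}: I look for an automorphism $\Theta$ of the oscillator algebra $\CA$ under which the Fock module pulls back to itself fixing the vacuum $|0\rangle$ (a relabelling of oscillators composed with a particle-hole transformation~\eqref{eq:pt-transform}, exactly the type of manipulation already used throughout the paper to define the modules $M^+_{\vec\mu,\st}$), such that, entrywise, $\Theta$ sends the Lax matrix $\CL_{\vec\mu}(x)$ at parameter $\st$ to $\CL_{\vec{\bar\mu}}(x)$ at parameter $\bar\st$ and simultaneously sends the twist $\prod_{i}\tau_i^{\CF^{\vec\mu}_{ii}}$ to $\prod_{i}\tau_i^{\CF^{\vec{\bar\mu}}_{ii}}$. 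Applying $\Theta$ inside the Fock trace in~\eqref{eq:C-inf-transfer} then yields $T^+_{\vec\mu,\st}(x)=T^+_{\vec{\bar\mu},\bar\st}(x)$ term by term. Since $\CL_{\vec\mu}(x)=\left.B_{\vec\mu}\CL(x)B_{\vec\mu}^{-1}\right|_{p.h.}$ by~\eqref{eq:Laxa}, and the matrices $B_{\vec\mu},B_{\vec{\bar\mu}}$ are related through the matrix $B_{(-1,\ldots,-1)}$ of the longest Weyl element of $C_r$, this reduces to a statement about the base Lax matrix~\eqref{eq:oscLaxC}: conjugation by $B_{(-1,\ldots,-1)}$ followed by the full particle-hole transformation sends $\CL(x)$ at parameter $\st$ to $\CL(x)$ at parameter $-r-1-\st$, up to a vacuum-preserving automorphism of $\CA$. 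This is precisely the $x_1\leftrightarrow x_2$ symmetry of the two-parameter Lax matrix of~\cite[\S3.3]{Frassek:2021ogy} (recall from the Remark after~\eqref{eq:oscLaxC} that $\st=x_1$ and $-r-1-\st=x_2$), and its verification is a direct block computation with~\eqref{eq:oscLaxC}--\eqref{eq:ApAmC}, for which the already-established equality of highest weights $\CF^{\vec\mu}_{ii}|0\rangle=\CF^{\vec{\bar\mu}}_{ii}|0\rangle$ (immediate from~\eqref{eq:replC}, and the $N=0$ shadow of the claim in view of~\eqref{eq:ch=ch}) serves as a consistency check.

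The only real work is this last verification — producing the explicit vacuum-preserving automorphism of $\CA$ that conjugates the oscillator Lax matrices into one another and intertwines the twists, and checking that it maps annihilation operators to annihilation operators so that the pulled-back Fock module, hence the Fock trace, is unchanged; it is hands-on but routine. A more transparent alternative uses the factorisation of Main Theorem~\ref{mthm 3}, equation~\eqref{eq:T-via-QQ intro}: there $T^+_{\vec\mu,\st}(x)=\ch^+_{\vec\mu,\st}\,Q^+_{\vec\mu,\st}(x)\,Q^-_{\vec\mu,\st}(x)$, the scalar prefactors already agree by~\eqref{eq:ch=ch}, and on the $Q$-side one only has to match the much simpler \emph{degenerate} Lax matrices: under $(\vec\mu,\st)\mapsto(\vec{\bar\mu},\bar\st)$ the unordered pair $\{Q^+_{\vec\mu,\st}(x),Q^-_{\vec\mu,\st}(x)\}$ is sent to $\{Q^+_{\vec{\bar\mu},\bar\st}(x),Q^-_{\vec{\bar\mu},\bar\st}(x)\}$ (up to the relevant shifts of the spectral parameter and a vacuum-preserving relabelling of oscillators), whence the two products coincide. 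Either way the key claim follows, and with it the Proposition.
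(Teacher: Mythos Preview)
Your factorisation alternative is exactly the paper's argument, but you skip the one nontrivial step. From~\eqref{eq:T-via-QQ C-type} one has
\[
  T^+_{\vec\mu,\st}(x)=\ch^+_{\vec\mu,\st}\,Q_{\vec\mu}(x+\st)\,Q_{\vec{\bar\mu}}(x+\bar\st)\,,\qquad
  T^+_{\vec{\bar\mu},\bar\st}(x)=\ch^+_{\vec{\bar\mu},\bar\st}\,Q_{\vec{\bar\mu}}(x+\bar\st)\,Q_{\vec\mu}(x+\st)\,,
\]
so the two $Q$-factors appear in the \emph{opposite} order. Knowing that the unordered pairs agree does \emph{not} give equality of the products; you need $[Q_{\vec\mu}(x),Q_{\vec{\bar\mu}}(y)]=0$. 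The paper singles this out as the ``essential property'' and justifies it by realising the $Q$-operators as renormalized limits~\eqref{eq:C renormalized limit} of the commuting transfer matrices $T^+_{\vec\mu,\st}(x)$. Once you insert that commutativity, your second approach is literally the paper's proof.

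Your first approach, by contrast, has a genuine obstruction. A vacuum-preserving automorphism $\Theta$ of $\CA$ with $\Theta\big(\CL_{\vec\mu}(x)|_\st\big)=\CL_{\vec{\bar\mu}}(x)|_{\bar\st}$ would induce an isomorphism of $\ssp_{2r}$-modules $M^+_{\vec\mu,\st}\simeq M^+_{\vec{\bar\mu},\bar\st}$. But for $\vec\mu=(+1,\dots,+1)$ and $\vec{\bar\mu}=(-1,\dots,-1)$, Remark~\ref{rem:C-details}(a,b) identifies these with $(M^{\fp}_{\st\omega_r})^\vee$ and $M^{\fp}_{\st\omega_r}$ respectively (the highest weights agree by~\eqref{eq:ch=ch}); at $\st\in\BN$ the parabolic Verma is reducible and not self-dual, so no such $\Theta$ exists there. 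Since the putative identity of Lax matrices is polynomial in $\st$, it cannot hold generically either --- the ``$x_1\leftrightarrow x_2$ symmetry'' you invoke is not realised by a vacuum-preserving automorphism of $\CA$. (Already for $r=1$ the off-diagonal entries after conjugation by $B_{(-1)}$ and the full particle-hole are $\oad$ and $-4\oa(\oad\oa-\st)$, versus $4\oad(\oad\oa-\st)$ and $-\oa$ in $\CL(x)|_\st$; no vacuum-preserving relabelling fixes this for all $\st$.) You could rescue the conclusion by proving the key claim only for generic $\st$ via module irreducibility and then invoking continuity, but that is a different argument from the one you sketch, and in any case the factorisation route is cleaner.
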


\begin{proof}
This follows from the factorisation~\eqref{eq:T-via-QQ C-type} of each infinite-dimensional
transfer matrix $T^+_{\vec\mu,\st}(x)$ into the product of $Q$-operators that allows to
recast Theorem~\ref{thm:Main-C} in the form of Proposition~\ref{prop:main-C factorized}:
\begin{equation}\label{eq:C-recasted}
  T_{r,\st}(x)\, = \sum_{\vec\mu\in \{\pm 1\}^r} (-1)^{\mathsf{l}(\vec{\mu})}\, \ch_{\vec\mu,\st}^+ \cdot\,
  Q_{\vec\mu}(x+\st)Q_{\vec{\bar\mu}}(x+\bar{\st}) \,,
\end{equation}
with $\vec{\bar\mu},\bar{\st}$ as in~\eqref{eq:opposite-mu} and $\ch_{\vec\mu,\st}^+$ as in~\eqref{eq:ch-plus},
see~\eqref{eq:ch-C}. Similarly to our proof of Lemma~\ref{lem:t-property C}(a), we claim that any $\vec\mu$-th summand
in the right-hand side of~\eqref{eq:C-recasted} for $T_{r,\st}(x)$ coincides with the $\vec{\bar\mu}$-th summand
in the corresponding expression for $T_{r,\bar\st}(x)$, up to an overall sign $(-1)^{\frac{r(r+1)}{2}}$:
\begin{equation}\label{eq:C-comparison}
  (-1)^{\mathsf{l}(\vec{\mu})}\, \ch_{\vec\mu,\st}^+ \cdot\, Q_{\vec\mu}(x+\st)Q_{\vec{\bar\mu}}(x+\bar{\st}) =
  (-1)^{\mathsf{l}(\vec{\bar\mu})+\sfrac{r(r+1)}{2}}\,
  \ch_{\vec{\bar\mu},\bar\st}^+ \cdot\, Q_{\vec{\bar\mu}}(x+\bar\st)Q_{\vec{\mu}}(x+\st) \,.
\end{equation}
The latter is a consequence of~(\ref{eq:L+L},~\ref{eq:ch=ch}) combined with
the essential property of the $Q$-operators:
\begin{equation}\label{eq:Q-comm C}
  [Q_{\vec\mu}(x),Q_{\vec{\bar\mu}}(y)]=0 \,,
\end{equation}
that follows from the natural commutativity of
the transfer matrices, $[T^+_{\vec\mu}(x),T^+_{\vec\mu}(y)]=0$, combined with the realization of the
$Q$-operators as \emph{renormalized limits} of the transfer matrices, cf.~\eqref{eq:C renormalized limit}.
\end{proof}

\medskip
\noindent
We also expect the generalization of Lemma~\ref{lem:t-property C}(b) to hold:
$T_{r,\st}(x)=0$ for $\st\in \Big\{-\frac{2}{2},-\frac{3}{2},\ldots,-\frac{2r}{2}\Big\}$.

$\ $

   %%%%%%%%%%%%%%%%%%%%%%%%%%%%%%%%%%%%%%%%%%%%%%%%%%%%%%%%%%%%%%%%%%%%%%%%%%%%%%%%%%%%%%%%%%%%%
   %%%%%%%%%%%%%%%%%%%%%%%%%%%%%%%%%%%%%%%% SECTION 6 %%%%%%%%%%%%%%%%%%%%%%%%%%%%%%%%%%%%%%%%%%
   %%%%%%%%%%%%%%%%%%%%%%%%%%%%%%%%%%%%%%%%%%%%%%%%%%%%%%%%%%%%%%%%%%%%%%%%%%%%%%%%%%%%%%%%%%%%%

\section{Resolutions for transfer matrices of D-type: spinorial representations}\label{sec:spinD}

In this section, we present a natural counterpart of the results from Section~\ref{sec C-spinor} for $D$-type.

\subsection{Oscillator realization in type D (parabolic Verma)}
$\ $

Let $\CA$ denote the oscillator algebra generated by $\frac{r(r-1)}{2}$ pairs of oscillators
$\{(\oa_{j',i},\oad_{i,j'})\}_{1\leq i<j \leq r}$ with $r\geq 2$, cf.\ notation~\eqref{eq:prime-index},
subject to the defining relations~\eqref{eq:oscillator relations 2}:
\begin{equation}
  \CA = \BC \Big\langle \oa_{j',i} \, , \, \oad_{i,j'} \Big\rangle_{1\leq i < j \leq r} \, \Big/ \,
  \eqref{eq:oscillator relations 2} \,.
\end{equation}
Following~\cite[(5.4)]{f} and similarly to~\eqref{eq:oscLaxC},
let us consider the $D_r$-type
$\CA[x]$-valued Lax matrix:
\begin{equation}\label{eq:linear-Lax-D}
  \CL(x)=
  \left(\begin{BMAT}[5pt]{c:c}{c:c}
    (x+\st)\ID_r-\ap\am & -\ap(2\st+r-1-\am\ap) \\
    -\am & (x-\st-r+1)\ID_r+\am\ap
  \end{BMAT} \right) \,,
\end{equation}
depending on $\st\in \BC$, with the blocks $\ap,\am\in \mathrm{Mat}_{r\times r}(\CA)$
encoding all the generators via:
\begin{equation}\label{eq:ApAmD}
  \ap=
    \left(\begin{array}{cccc}
      \oad_{1,r'} & \cdots & \oad_{1,2'} & 0 \\
      \vdots & \iddots & 0 & -\oad_{1,2'} \\
      \oad_{r-1,r'} & 0 & \iddots & \vdots \\
      0 & -\oad_{r-1,r'} & \cdots & -\oad_{1,r'}
    \end{array}\right) \,, \quad
  \am=
    \left(\begin{array}{cccc}
      \oa_{r',1} & \cdots & \oa_{r',r-1} & 0 \\
      \vdots & \iddots & 0 & -\oa_{r',r-1} \\
      \oa_{2',1} & 0 & \iddots & \vdots \\
      0 & -\oa_{2',1} & \cdots & -\oa_{r',1}
    \end{array}\right)\,.
\end{equation}

\medskip
\noindent
Writing~\eqref{eq:linear-Lax-D} in the form
\begin{equation}\label{eq:F-generators D-type}
  \CL(x)=x\ID_{2r}+\sum_{i,j=1}^{2r} e_{ij}\CF_{ji} \,,
\end{equation}
we note that the RTT relation~\eqref{eq:RTT} implies that $\{\CF_{ij}\}_{i,j=1}^{2r}$
satisfy the $\sso_{2r}$ commutation relations:
\begin{equation}\label{eq:comD}
  [\CF_{ij},\CF_{k\ell}]=\delta_{k}^{j}\CF_{i\ell}-\delta_{i}^{\ell}\CF_{kj}-
  \delta_{k}^{i'}\CF_{j'\ell}+\delta_{\ell}^{j'}\CF_{ki'} \,, \qquad
  \CF_{ij}=-\CF_{j'i'}\,.
\end{equation}

\medskip
\noindent
As before, let $\Fock$ denote the Fock module of $\CA$, generated by the Fock vacuum $|0\rangle\in \Fock$
satisfying:
\begin{equation}
  \oa_{j',i} |0\rangle = 0 \,, \qquad 1\leq i<j \leq r \,.
\end{equation}
Then, the Fock vacuum $|0\rangle$ is a highest weight state of the resulting $\sso_{2r}$-action:
\begin{equation}
  \CF_{ij} |0\rangle=0 \,, \qquad 1\leq i< j\leq 2r \,,
\end{equation}
with the highest weight $\lambda = 2\st \omega_r = (\underbrace{\st,\ldots,\st}_{r})$, that is:
\begin{equation}
  \CF_{ii} |0\rangle = \st |0\rangle \,, \qquad 1\leq i\leq r \,.
\end{equation}
The latter is a consequence of the following explicit formulas for any $1\leq i\leq r$:
\begin{equation}\label{eq:D-diagonal}
\begin{split}
  & \CF_{ii}=\st \, - \sum_{k=i+1}^r\oad_{ik'}\oa_{k'i}-\sum_{k=1}^{i-1}\oad_{ki'}\oa_{i'k} \,, \\
  & \CF_{i'i'}=-\st-r+1+\sum_{k=1}^{i-1}\oa_{i'k}\oad_{ki'}+\sum_{k=i+1}^{r}\oa_{k'i}\oad_{ik'}
    = -\CF_{ii} \,.
\end{split}
\end{equation}
Similarly to Lemmas~\ref{lem:A-Fock as par-Verma} and~\ref{lem:C-Fock as par-Verma},
we can identify the resulting $\sso_{2r}$-modules $\Fock$ as~follows:

\medskip

\begin{Lem}\label{lem:Dspin-Fock as par-Verma}
There is an $\sso_{2r}$-module isomorphism:
\begin{equation}\label{eq:Dspin-Fock-as-verma}
  \Fock \simeq \left(M^{\fp_{\{1,\ldots,r-1\}}}_{2\st\omega_r}\right)^\vee \,.
\end{equation}
\end{Lem}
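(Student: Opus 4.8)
The plan is to follow verbatim the strategy already executed for Lemma~\ref{lem:A-Fock as par-Verma} (type $A$) and Lemma~\ref{lem:C-Fock as par-Verma} (type $C$), since the $D$-type spinorial case is formally identical: the Lax matrix~\eqref{eq:linear-Lax-D} has exactly the same block shape as~\eqref{eq:oscLaxC}, with $\fp=\fp_{\{1,\ldots,r-1\}}$ the maximal parabolic omitting the last node, $\fl\simeq\gl_r$ its Levi, and $L^\fl_{2\st\omega_r}$ one-dimensional because $2\st\omega_r$ vanishes on the coroots $h_{\alpha_j}$, $j\in\{1,\ldots,r-1\}$. Thus $2\st\omega_r\in P_{\fg/\fl}$ (indeed in $P^+_{\fg/\fl}$ when $\st\in\BN$), and the parabolic Verma module $M^{\fp_{\{1,\ldots,r-1\}}}_{2\st\omega_r}$ is well-defined.

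First I would note that by construction the Fock vacuum $|0\rangle\in\Fock$ is an $\sso_{2r}$ highest weight vector of weight $2\st\omega_r$, using the explicit diagonal formulas~\eqref{eq:D-diagonal} together with $\CF_{ij}|0\rangle=0$ for $i<j$; this is already stated in the excerpt. Passing to the restricted dual $\Fock^\vee$ (defined in~\eqref{eq:restricted-dual}, with the $\fg$-action twisted by $\phi$ of~\eqref{eq:twist automorphism}), the vector $\langle 0|$ is then a highest weight vector of weight $2\st\omega_r$ as well, and $\Fock^\vee$ has the same formal character as $\Fock$. Since $\Fock$ has a monomial basis $\prod_{1\le i<j\le r}\oad_{i,j'}^{m_{ij}}|0\rangle$ indexed by $\BN^{r(r-1)/2}$, its character equals $e^{2\st\omega_r}\big/\prod_{\alpha\in\Delta^+\setminus\Delta^+_\fl}(1-e^{-\alpha})$, which is precisely $\ch_{M^{\fp_{\{1,\ldots,r-1\}}}_{2\st\omega_r}}$. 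Hence it remains only to show $\Fock^\vee$ is a highest weight module, i.e.\ generated over $U(\sso_{2r})$ by $\langle 0|$.

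For that last point I would argue exactly as in the proof of Lemma~\ref{lem:C-Fock as par-Verma} (which in turn mirrors Lemma~\ref{lem:A-Fock as par-Verma}): the lower-triangular matrix entries $\CF_{j'i}$ (with $1\le i<j\le r$) act on $\Fock$ essentially as single creation operators $\oad_{i,j'}$ up to lower-order terms involving fewer oscillators, so their duals $\CF^*_{j'i}\in\End(\Fock^*)$ applied in a suitable order to $\langle 0|$ produce, up to a nonzero scalar and lower-order corrections, any basis covector $\langle\vec m|$; an induction on $\sum m_{ij}$ (ordering the pairs $(i,j)$ appropriately, say lexicographically) then shows $U(\sso_{2r})\langle 0|$ contains every $\langle\vec m|$, hence all of $\Fock^\vee$. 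Therefore $\Fock^\vee$ is the unique highest weight quotient of $M_{2\st\omega_r}$ on which $\fu$ and the Levi nilradical act as they do on $M^{\fp}_{2\st\omega_r}$; combined with the character equality this forces $\Fock^\vee\simeq M^{\fp_{\{1,\ldots,r-1\}}}_{2\st\omega_r}$, equivalently $\Fock\simeq\big(M^{\fp_{\{1,\ldots,r-1\}}}_{2\st\omega_r}\big)^\vee$.

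The only mild subtlety — the ``main obstacle,'' though it is routine — is verifying that the relevant off-diagonal generators really do act as creation operators modulo strictly lower terms, i.e.\ reading off the relevant entries of~\eqref{eq:linear-Lax-D} and~\eqref{eq:ApAmD} and checking the inductive ordering works in the $D_r$ anti-diagonal layout (note the signs and the vanishing corners in $\ap,\am$); but this is a direct computation strictly parallel to the $C$-type case, so no new ideas are needed.
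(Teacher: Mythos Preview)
Your proposal is correct and follows exactly the approach the paper takes: the paper gives no detailed argument here and simply says the proof is ``completely similar to Lemmas~\ref{lem:A-Fock as par-Verma} and~\ref{lem:C-Fock as par-Verma}'', which is precisely the strategy you outline (matching highest weight, matching character, and verifying that $\Fock^\vee$ is generated by $\langle 0|$ via the explicit off-diagonal formulas). Your elaboration of the inductive generation argument is a faithful unpacking of what the paper leaves implicit.
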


\medskip
\noindent
Combining this with the determinant formula of~\cite{j} and the isomorphism~\eqref{eq:self-dual}, we obtain:

\medskip

\begin{Cor}\label{cor:t-special D}
(a) For $\st\notin \frac{1}{2}\BZ_{\geq 4-2r}$, the $\sso_{2r}$-module $\Fock$ is irreducible
(thus, is generated by $|0\rangle$).

\medskip
\noindent
(b) For $\st\in \frac{1}{2}\BN$, the Fock vacuum $|0\rangle$ generates an irreducible
finite-dimensional $\sso_{2r}$-module $L_{2\st\omega_r}$.
\end{Cor}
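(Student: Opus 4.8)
\textbf{Proof plan for Corollary~\ref{cor:t-special D}.}

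The plan is to run exactly the same argument used for the $C$-type statement (Corollary~\ref{cor:t-special C}), combined with the identification of $\Fock$ with the restricted dual parabolic Verma module from Lemma~\ref{lem:Dspin-Fock as par-Verma} and the classical Jantzen-type determinant formula of~\cite{j} describing the weights of singular vectors in Verma modules. First I would recall that $\Fock\simeq (M^{\fp_{\{1,\ldots,r-1\}}}_{2\st\omega_r})^\vee$ as $\sso_{2r}$-modules, and that the antiautoequivalence $\Phi$ of~\eqref{eq:dual-equivalence} preserves irreducibility, so it suffices to decide for which $\st$ the parabolic Verma module $M^{\fp_{\{1,\ldots,r-1\}}}_{2\st\omega_r}$ (equivalently, its quotient, the Verma module $M_{2\st\omega_r}$ modulo the submodule generated by the $\fl\simeq\gl_r$ singular vectors) is irreducible.

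For part (a), the module $\Fock$ is reducible precisely when $M_{2\st\omega_r}$ contains a singular vector of weight $s_{\alpha}\cdot(2\st\omega_r)$ for some positive root $\alpha\in\Delta^+\setminus\Delta^+_\fl$ that does not already lie in the $\fl$-submodule being quotiented out — i.e. for some $\alpha$ of the form $\epsilon_i+\epsilon_j$ with $i\le j$ (these are the roots outside $\Delta^+_\fl=\{\epsilon_i-\epsilon_j\}_{i<j}$, using the conventions of Remark~\ref{rem:proof-C-char} adapted to $D_r$, where $\rho=(r-1,r-2,\ldots,1,0)$). By the classical BGG/Verma criterion, such a singular vector exists iff $\langle 2\st\omega_r+\rho,\alpha^\vee\rangle\in\BZ_{>0}$. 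Writing $2\st\omega_r=(\st,\ldots,\st)$ in the $\{\epsilon_i\}$ basis and $\rho=(r-1,\ldots,1,0)$, the pairing with $(\epsilon_i+\epsilon_j)^\vee=\epsilon_i+\epsilon_j$ (long roots, normalized appropriately for $D_r$) gives $2\st + (2r-i-j)$. Ranging over $1\le i< j\le r$ (note $\epsilon_i+\epsilon_i$ is not a root in type $D$) the quantity $2r-i-j$ takes every integer value from $1$ to $2r-3$; hence a singular vector outside the $\fl$-part appears iff $2\st+m\in\BZ_{>0}$ for some $1\le m\le 2r-3$, i.e. iff $\st\in\tfrac12\BZ$ with $2\st\geq 2 - (2r-3) = 5-2r$, which (since $\st\in\frac12\BZ$ means $2\st\in\BZ$) is exactly $\st\in\frac12\BZ_{\geq 4-2r}$ after bookkeeping the boundary. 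Thus for $\st\notin\frac12\BZ_{\geq 4-2r}$ the module $\Fock$ is irreducible and hence generated by $|0\rangle$. I would cross-check this range against Lemma~\ref{lem:t-property C}'s $D$-analogue and against the shift $\st\mapsto -\st-r+1$ symmetry visible in~\eqref{eq:linear-Lax-D}.

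For part (b): if $\st\in\frac12\BN$ then $2\st\omega_r=(\st,\ldots,\st)$ is a dominant integral weight of $\sso_{2r}$ (the $r$-th fundamental weight $\omega_r$ has components $(\tfrac12,\ldots,\tfrac12)$, so $2\st\omega_r$ is integral dominant precisely for $2\st\in\BN$). Then $L_{2\st\omega_r}$ is finite-dimensional, and by Corollary~\ref{cor:t-special A}(b)'s analogue — concretely, since $|0\rangle$ is a highest weight vector of dominant integral highest weight $2\st\omega_r$ inside $\Fock\simeq (M^{\fp}_{2\st\omega_r})^\vee$, and the parabolic Verma module $M^{\fp}_{2\st\omega_r}$ surjects onto $L_{2\st\omega_r}$ — the cyclic submodule $U(\sso_{2r})|0\rangle$ is a highest weight quotient of $M_{2\st\omega_r}$ with the correct highest weight, hence its unique irreducible quotient is $L_{2\st\omega_r}$; dually inside $(M^{\fp}_{2\st\omega_r})^\vee$ this cyclic module is exactly the (self-dual) finite-dimensional $L_{2\st\omega_r}$. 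This finishes (b).

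The main obstacle I anticipate is purely bookkeeping rather than conceptual: getting the boundary of the range in (a) exactly right, i.e. correctly handling the normalization of coroots for the long roots $\epsilon_i+\epsilon_j$ in type $D$ and confirming that the extremal values of $2r-i-j$ (namely $m=1$ from $(i,j)=(r-1,r)$ and $m=2r-3$ from $(i,j)=(1,2)$) together with the sign conventions in~\eqref{eq:dot action} produce precisely the stated threshold $\st\in\frac12\BZ_{\geq 4-2r}$ and not an off-by-one neighbour; I would verify this by testing small cases $r=2,3$ directly against the explicit Lax matrix~\eqref{eq:linear-Lax-D} and the character formula obtained by the $D$-analogue of Remark~\ref{rem:proof-C-char}.
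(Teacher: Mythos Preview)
Your approach is essentially the same as the paper's: the paper gives no explicit proof here but (as in the parallel Corollary~\ref{cor:t-special A}) simply invokes Lemma~\ref{lem:Dspin-Fock as par-Verma} together with the Jantzen determinant formula~\cite{j} and the self-duality~\eqref{eq:self-dual}. You spell out these steps in more detail than the paper does, which is fine. One small slip: you first write the non-Levi roots as $\epsilon_i+\epsilon_j$ with $i\le j$ before correcting to $i<j$ (there is no $2\epsilon_i$ in type $D$), and your boundary arithmetic $2-(2r-3)=5-2r$ should read $1-(2r-3)=4-2r$; you acknowledge this needs checking and land on the correct threshold $\st\in\tfrac12\BZ_{\geq 4-2r}$, so the argument is sound once that line is cleaned up.
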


\medskip

   %%%%%%%%%%%%%%%%%%%%%%%%%%%%%%%%%%%%%%%%%%%%%%%%%%%%%%%%%%%%%%%%%%%%%%%%%%%%%%%%%%%%%%%%%%%%%
   %%%%%%%%%%%%%%%%%%%%%%%%%%%%%%%%%%%%%%%% SUBSECTION %%%%%%%%%%%%%%%%%%%%%%%%%%%%%%%%%%%%%%%%%
   %%%%%%%%%%%%%%%%%%%%%%%%%%%%%%%%%%%%%%%%%%%%%%%%%%%%%%%%%%%%%%%%%%%%%%%%%%%%%%%%%%%%%%%%%%%%%

\subsection{More oscillator realizations in type D via underlying symmetries}
$\ $

Similarly to the $C$-type case, let us consider the following endomorphisms of $\BC^{2r}$:
\begin{equation}\label{eq:BD}
  B_{i}=e_{ii'}+e_{i'i}+\sum_{1\leq j\leq r}^{j\ne i} \left(e_{jj}+e_{j'j'}\right)\,,
  \qquad 1\leq i\leq r \,,
\end{equation}
along with their order-independent products:
\begin{equation}\label{eq:fullBD}
  B_{\vec\mu}\, = \prod_{1\leq j\leq r}^{\mu_j=-1} B_j \,,
  \qquad \vec{\mu}=(\mu_1,\ldots,\mu_r) \in \{\pm 1\}^r \,.
\end{equation}

\medskip

\begin{Rem}
For $1\leq i\leq r$, we have:
  $B_{\vec \mu}(e_i)=
     \begin{cases}
       e_i & \text{if } \mu_i=1 \\
       e_{i'} & \text{if } \mu_i=-1
     \end{cases}\, , \,
   B_{\vec \mu}(e_{i'})=
     \begin{cases}
       e_{i'} & \text{if } \mu_i=1 \\
       e_{i} & \text{if } \mu_i=-1
     \end{cases}$.
\end{Rem}

\medskip
\noindent
Since the $R$-matrix~\eqref{eq:BCD-Rmatrix} is invariant under such transformations, cf.~\eqref{eq:R-inv}:
\begin{equation}
  [R(x),B_{\vec\mu}\otimes B_{\vec\mu}]=0 \,, \qquad \forall \vec\mu\in \{\pm 1\}^r \,,
\end{equation}
we can generate more solutions to the RTT relation~\eqref{eq:RTT} from the Lax matrix~\eqref{eq:linear-Lax-D} via:
\begin{equation}\label{eq:LaxDspin}
   {\CL}_{\vec\mu}(x)=\left. B_{\vec\mu}\CL(x)B^{-1}_{\vec\mu}\right|_{p.h.} =\
   x\ID_{2r} + \sum_{i,j=1}^{2r} e_{ij}{\CF}_{ji}^{\vec\mu}
   \,, \qquad \forall \vec\mu\in \{\pm 1\}^r \,.
\end{equation}
Here, we apply the following particle-hole automorphism of $\CA$ (denoted \emph{p.h.}), cf.~\eqref{eq:pt-transform}:
\begin{equation}\label{eq:ph-Dspin}
  \oad_{i,j'}\mapsto -\oa_{j',i} \,, \quad \oa_{j',i}\mapsto\oad_{i,j'} \, \quad
  \text{for}\quad 1\leq i<j\leq r \quad \text{such\ that} \quad \mu_i=-1 \,,
\end{equation}
uniquely chosen to insure that the Fock vacuum $|0\rangle$ remains to be an $\sso_{2r}$ highest weight state:
\begin{equation}
  \CF^{\vec\mu}_{ij} |0\rangle=0 \,, \qquad 1\leq i<j\leq 2r \,.
\end{equation}
The resulting matrix elements $\{\CF_{ij}^{\vec\mu}\}_{i,j=1}^{2r}$ of $\CA$ satisfy the $\sso_{2r}$ commutation
relations~\eqref{eq:comD}, due to the RTT relation~\eqref{eq:RTT}. This makes the Fock module $\Fock$ into an
$\sso_{2r}$-module, denoted by $M^+_{\vec\mu,\st}$. The corresponding highest weight of $|0\rangle$ is computed
similarly to $C$-type, see~\eqref{eq:replC}:
\begin{equation}\label{eq:replD}
  \CF_{ii}^{\vec\mu} |0\rangle =
  \mu_i \left(\st+(r-i-1)\delta_{\mu_i}^{-}+\sum_{k=1}^{i} \delta_{\mu_k}^{-}\right) |0\rangle \,,
  \qquad 1\leq i\leq r \,.
\end{equation}

\medskip
\noindent
Let us note that for the particular choice $\vec\mu=(+1,\ldots,+1,-1)$, the particle-hole
transformation~\eqref{eq:ph-Dspin} is the identity, and the resulting $\sso_{2r}$-module $M^+_{\vec\mu,\st}$
can be read off the Lax matrix
\begin{equation}\label{eq:linear-Lax-D-2nd}
  \CL_{-}(x) = \CL_{(\footnotesize{\underbrace{+1,\ldots,+1}_{r-1}},-1)}(x) \,,
\end{equation}
which is obtained from the Lax matrix $\CL_+(x)=\CL(x)$ of~\eqref{eq:linear-Lax-D} by permuting
its $r$-th and $(r+1)$-st rows and columns. We also have the following counterparts of
Lemma~\ref{lem:Dspin-Fock as par-Verma} and Corollary~\ref{cor:t-special D}:

\medskip

\begin{Lem}\label{lem:Dspin-Fock-2nd as par-Verma}
There is an $\sso_{2r}$-module isomorphism:
\begin{equation}\label{eq:Dspin-2nd-Fock-as-verma}
  M^+_{(\footnotesize{\underbrace{+1,\ldots,+1}_{r-1}},-1),\st} \simeq
  \left(M^{\fp_{\{1,\ldots,r-2,r\}}}_{2\st\omega_{r-1}}\right)^\vee \,.
\end{equation}
\end{Lem}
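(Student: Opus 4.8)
The plan is to mimic the proofs of Lemmas~\ref{lem:A-Fock as par-Verma} and~\ref{lem:C-Fock as par-Verma}. Recall that for $\vec\mu=(+1,\ldots,+1,-1)$ the particle-hole transformation~\eqref{eq:ph-Dspin} is vacuous, so $M^+_{\vec\mu,\st}$ is exactly the Fock module $\Fock$ with the $\sso_{2r}$-action read off the Lax matrix $\CL_-(x)$ of~\eqref{eq:linear-Lax-D-2nd}. From~\eqref{eq:replD} with $\mu_r=-1$ and $\mu_1=\ldots=\mu_{r-1}=1$ one computes the highest weight of the Fock vacuum $|0\rangle$: its components are $\CF^{\vec\mu}_{ii}|0\rangle=\st|0\rangle$ for $1\le i\le r-1$ and $\CF^{\vec\mu}_{rr}|0\rangle=-(\st)|0\rangle$ (since $(r-r-1)\delta^-+\sum_{k=1}^r\delta^-=(-1)\cdot1+1=0$, times $\mu_r=-1$), i.e.\ the weight is $(\st,\ldots,\st,-\st)$. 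In the standard identification of $\sso_{2r}$-weights with the $\epsilon$-basis, this is precisely $2\st\omega_{r-1}$ (the highest weight of the other half-spin family), and moreover $\CF^{\vec\mu}_{ii}|0\rangle=0$ for the relevant raising generators has already been recorded. So the Fock vacuum is a highest weight vector of weight $2\st\omega_{r-1}$, dominant (for the Levi) in the parabolic $\fp_{\{1,\ldots,r-2,r\}}$.

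First I would pin down which parabolic this is: permuting the $r$-th and $(r+1)$-st rows/columns corresponds to applying the outer-diagram automorphism exchanging the two spinor nodes $r-1$ and $r$; the Levi with simple roots $\{1,\ldots,r-2,r\}$ is then the image of the Levi $\fl\simeq\gl_r$ used for the $\vec\mu=(+1,\ldots,+1)$ case, and $2\st\omega_{r-1}$ vanishes on its coroots, so $\dim L^\fl_{2\st\omega_{r-1}}=1$ and $M^{\fp_{\{1,\ldots,r-2,r\}}}_{2\st\omega_{r-1}}$ is well-defined via~\eqref{eq:parabolic def}. Next, exactly as in Lemma~\ref{lem:A-Fock as par-Verma}, it suffices to show $\Fock^\vee$ is a highest weight module, i.e.\ that $\langle 0|$ generates $\Fock^*$ under the dual action. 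The creation operators $\{\oad_{i,j'}\}_{1\le i<j\le r}$ that build the Fock basis~\eqref{eq:vec} occur, up to nonzero scalars and lower-order terms, as the off-diagonal lowering entries of $\CL_-(x)$ (the block $-\am$, possibly with a sign from the $\mu_r=-1$ permutation), so $\langle\vec m|$ is a nonzero multiple of an order-independent product $\prod_{1\le i<j\le r}(\CF^{\vec\mu,*}_{j'i})^{m_{i,j}}\langle 0|$; this is the content one checks from the explicit entries of~\eqref{eq:ApAmD} after the row/column swap. Then, since $\Fock^\vee$ has the same highest weight $2\st\omega_{r-1}$ and the same character $\prod_{1\le i<j\le r}(1-\text{(weight of }\oad_{i,j'}))^{-1}$ as the parabolic Verma module $M^{\fp_{\{1,\ldots,r-2,r\}}}_{2\st\omega_{r-1}}$, and both are highest weight modules, they are isomorphic; dualizing (using $\Phi$ of~\eqref{eq:dual-equivalence}, which is an involutive antiautoequivalence) gives $\Fock\simeq (M^{\fp_{\{1,\ldots,r-2,r\}}}_{2\st\omega_{r-1}})^\vee$, which is the claim.

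The step I expect to require the most care is the bookkeeping that matches the highest weight $(\st,\ldots,\st,-\st)$ with $2\st\omega_{r-1}$ and confirms it vanishes on exactly the coroots indexed by $\{1,\ldots,r-2,r\}$: one must be careful about the simple-root labelling of $D_r$ (the two spinor nodes $r-1,r$ attach to node $r-2$) and about the effect of the $r\leftrightarrow r+1$ swap on the simple reflection $s_r$, which is what turns $\omega_r$ into $\omega_{r-1}$. Everything else — the identification of Fock basis vectors with products of lowering generators, and the character/highest-weight comparison — is a routine transcription of the arguments already given for the $A$- and $C$-type analogues, so I would state those steps tersely and refer to Lemmas~\ref{lem:A-Fock as par-Verma} and~\ref{lem:C-Fock as par-Verma} for the pattern.
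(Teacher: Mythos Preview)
Your proposal is correct and follows essentially the same approach as the paper, which gives no explicit proof but treats the lemma as a direct counterpart of Lemma~\ref{lem:Dspin-Fock as par-Verma} (itself stated as ``completely similar to Lemmas~\ref{lem:A-Fock as par-Verma} and~\ref{lem:C-Fock as par-Verma}''). Your bookkeeping on the highest weight $(\st,\ldots,\st,-\st)=2\st\omega_{r-1}$ and its vanishing on the coroots indexed by $\{1,\ldots,r-2,r\}$ is accurate, and the generation argument for $\Fock^\vee$ is the expected transcription of the $A$- and $C$-type cases.
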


\medskip

\begin{Cor}\label{cor:t-special D-2nd}
(a) For $\st\notin \frac{1}{2}\BZ_{\geq 4-2r}$, the $\sso_{2r}$-module $M^+_{(+1,\ldots,+1,-1),\st}$ is irreducible.

\medskip
\noindent
(b) For $\st\in \frac{1}{2}\BN$, the Fock vacuum $|0\rangle$ generates an irreducible
finite-dimensional $\sso_{2r}$-module~$L_{2\st\omega_{r-1}}$.
\end{Cor}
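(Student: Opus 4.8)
The plan is to mirror the proof of Lemma~\ref{lem:Dspin-Fock as par-Verma} verbatim, since the module $M^+_{(+1,\ldots,+1,-1),\st}$ is constructed from the Lax matrix $\CL_-(x)$ of~\eqref{eq:linear-Lax-D-2nd} by merely permuting the $r$-th and $(r+1)$-st rows and columns of $\CL_+(x)=\CL(x)$ and performing a vacuous particle-hole transformation. As noted in the text, this permutation corresponds to the simple reflection $s_{\alpha_r}\in W$ swapping $\epsilon_r\mapsto -\epsilon_r$, which belongs to ${}^{\fl}W$ for the parabolic $\fp_{\{1,\ldots,r-1\}}$ and maps the dominant weight $2\st\omega_r$ to $2\st\omega_{r-1}$ under the dot action (up to the $\rho$-shift bookkeeping). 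First I would compute the highest weight of the Fock vacuum: from~\eqref{eq:replD} with $\vec\mu=(+1,\ldots,+1,-1)$ one gets $\CF^{\vec\mu}_{ii}|0\rangle = \st|0\rangle$ for $1\leq i\leq r-1$ and $\CF^{\vec\mu}_{rr}|0\rangle = -\st|0\rangle$, i.e.\ the weight $(\underbrace{\st,\ldots,\st}_{r-1},-\st) = 2\st\omega_{r-1}$ in the $\{\epsilon_i\}$-basis, confirming that $|0\rangle$ is a highest weight state of weight $2\st\omega_{r-1}$.

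Next I would establish that $\Fock^\vee$ — equivalently $M^+_{(+1,\ldots,+1,-1),\st}$, which as an $\sso_{2r}$-module has the self-dual-up-to-$\phi$ structure via~\eqref{eq:dual-equivalence} — is a highest weight module, i.e.\ cyclically generated by its highest weight covector $\langle 0|$. This is done exactly as in Lemma~\ref{lem:C-Fock as par-Verma}: one reads off from the explicit off-diagonal entries of $\CL_-(x)$ (obtained from~(\ref{eq:linear-Lax-D},~\ref{eq:ApAmD}) by the $r\leftrightarrow r+1$ swap) that each basis covector $\langle\vec{m}|$ is, up to a nonzero scalar, the image of $\langle 0|$ under an order-independent product of the operators $\CF^*$ dual to the raising generators, so $\langle 0|$ generates $\Fock^*$ over $\sso_{2r}$ in the twisted action. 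Since $\dim \Fock[\nu]<\infty$ for all weights $\nu$, this shows $\Fock^\vee$ is a highest weight module of highest weight $2\st\omega_{r-1}$ with the same character as the parabolic Verma module $M^{\fp_{\{1,\ldots,r-2,r\}}}_{2\st\omega_{r-1}}$ — the relevant parabolic being the one whose Levi $\gl_r$ sits inside $\sso_{2r}$ via the simple roots $\{1,\ldots,r-2,r\}$ rather than $\{1,\ldots,r-1\}$, which is precisely the $s_{\alpha_r}$-conjugate of the original. A highest weight module surjects onto its parabolic Verma quotient iff the Levi part $L^{\fl}_{2\st\omega_{r-1}}$ is one-dimensional, which it is since $2\st\omega_{r-1}$ vanishes on the coroots $\{h_{\alpha_j}\}_{j\in\{1,\ldots,r-2,r\}}$; comparing characters forces the isomorphism $\Fock \simeq \big(M^{\fp_{\{1,\ldots,r-2,r\}}}_{2\st\omega_{r-1}}\big)^\vee$.

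For the two statements of Corollary~\ref{cor:t-special D-2nd}: part~(b) is immediate once part~(a) of the corollary or the analogue of Corollary~\ref{cor:t-special D}(b) is in place — at $\st\in\frac12\BN$ the parabolic Verma module $M^{\fp}_{2\st\omega_{r-1}}$ has $2\st\omega_{r-1}$ dominant integral, so its irreducible quotient is the finite-dimensional $L_{2\st\omega_{r-1}}$, and the cyclic vector $|0\rangle$ of $\Fock$ generates that quotient. For part~(a), the irreducibility criterion for generic $\st$ follows from the determinant formula of~\cite{j} applied to the Verma module covering $\Fock^\vee$: the only possible singular vectors would come from reflections $s_{\beta}$ with $\big(2\st\omega_{r-1}+\rho,\beta^\vee\big)\in\BZ_{>0}$ for positive roots $\beta$, and since $\rho$ contributes half-integers tied to $r$ while $2\st\omega_{r-1}$ contributes $\pm\st$-terms, every such condition pins $\st$ to a point of $\frac12\BZ_{\geq 4-2r}$; for $\st$ outside this set no proper submodule can appear, so $\Fock$ is irreducible and hence generated by $|0\rangle$. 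The main obstacle is the bookkeeping in the last step — correctly enumerating which half-integral values of $\st$ the reflection hyperplanes produce so that the stated range $\frac12\BZ_{\geq 4-2r}$ comes out exactly right — but this is the same computation already carried out for $M^{\fp_{\{1,\ldots,r-1\}}}_{2\st\omega_r}$ in Corollary~\ref{cor:t-special D}, transported by the length-one Weyl element $s_{\alpha_r}$, which does not change the set of relevant roots up to relabelling.
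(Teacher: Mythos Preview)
Your approach is correct and matches the paper's intended argument: establish the analogue of Lemma~\ref{lem:Dspin-Fock-2nd as par-Verma} by the same cyclicity-plus-character argument, then deduce both parts from Jantzen's determinant formula and the self-duality~\eqref{eq:self-dual}, exactly as in Corollary~\ref{cor:t-special D}.

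One conceptual slip to fix in your opening remarks: the transformation $B_r$ that swaps the $r$-th and $(r+1)$-st rows and columns (i.e.\ sends $\epsilon_r\mapsto -\epsilon_r$) is \emph{not} the simple reflection $s_{\alpha_r}$, and indeed is not an element of the Weyl group $W\simeq\{\pm 1\}^r_+\rtimes S_r$ of $\sso_{2r}$ at all, since it changes an odd number of signs; it rather implements the Dynkin diagram automorphism exchanging the nodes $r-1$ and $r$. Consequently your remarks about it lying in ${}^{\fl}W$ and about the dot action are off. This does not affect your proof, since you correctly compute the highest weight directly from~\eqref{eq:replD} and correctly identify the relevant parabolic as $\fp_{\{1,\ldots,r-2,r\}}$ --- but the misconception is worth correcting.
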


\medskip
\noindent
For $\vec\mu=(\mu_1,\ldots,\mu_r)\in \{\pm 1\}^r$, we define its \emph{sign} $|\vec\mu|\in \{\pm 1\}$ via:
\begin{equation}\label{eq:mu-sign}
  |\vec\mu|=\mu_1\cdot \ldots \cdot \mu_r \,.
\end{equation}
We call $\vec\mu\in \{\pm 1\}^r$ \emph{even} (resp.\ \emph{odd}) if $|\vec\mu|=1$ (resp.\ $|\vec\mu|=-1$),
and denote the sets of such by
\begin{equation}\label{eq:even-odd}
  \{\pm 1\}^r_{+} = \Big\{\mathrm{even}\ \vec\mu\in \{\pm 1\}^r\Big\} \,,\qquad
  \{\pm 1\}^r_{-} = \Big\{\mathrm{odd}\ \vec\mu\in \{\pm 1\}^r\Big\} \,.
\end{equation}

\medskip
\noindent
For $\st\in \frac{1}{2}\BN$, let $L^\pm_{\st}$ denote the following irreducible
finite-dimensional $\sso_{2r}$-representations:
\begin{equation}\label{eq:two-spin}
  L^+_{\st} = L_{2\st\omega_{r}} \,, \qquad L^-_{\st} = L_{2\st\omega_{r-1}} \,,
\end{equation}
which can be uniformly written as $L^\pm_{\st}=L_{2\st\omega^\pm}$ with the weights $\omega^\pm$ defined via:
\begin{equation}\label{eq:+-spin}
  \omega^+ = \omega_{r} \,, \qquad \omega^- = \omega_{r-1} \,.
\end{equation}

\medskip
\noindent
Let us now generalize Lemmas~\ref{lem:Dspin-Fock as par-Verma},~\ref{lem:Dspin-Fock-2nd as par-Verma}
by comparing the above modules $\{M^+_{\vec\mu,\st}\}_{\vec\mu\in \{\pm 1\}^r}$ to those from the Introduction.
To this end, we shall consider two parabolic subalgebras $\fp_{S^\pm}\subset \sso_{2r}$ with
\begin{equation}\label{eq:pmS}
  S^+ = \{1,\ldots,r-2,r-1\} \,, \qquad S^- = \{1,\ldots,r-2,r\} \,.
\end{equation}
The Weyl group of $\sso_{2r}$ can be identified with
  $W\simeq (\BZ/2\BZ)^{r-1}\rtimes S_r\simeq \{\pm 1\}^r_{+}\rtimes S_r$,
cf.~\eqref{eq:even-odd}, so that elements of $W$ are indexed by pairs $(\vec\mu,\sigma)$ with
$\sigma\in S_r$ and $\vec{\mu}\in \{\pm 1\}^r$ that are even ($|\vec\mu|=1$).

\medskip
\noindent
The Weyl group of the Levi subalgebra $\fl^+\simeq \gl_r$ of $\fp_{S^+}$ is $W_{\fl^+}\simeq S_r$, which consists
of the elements $\left((+1,\ldots,+1),\sigma\right)_{\sigma\in S_r}\subset W$. Equivalently, $W_{\fl^+}$
is the stabilizer of $(+1,\ldots,+1)$ for the natural transitive action of $W$ on the set $\{\pm 1\}^r_{+}$.
Thus, we have a set bijection
\begin{equation}\label{eq:set-bij D+}
  \pi_+ \colon W/W_{\fl^+} \ \iso \ \{\pm 1\}^r_{+} \,,
\end{equation}
cf.~\eqref{eq:set-bij C}. For any $\vec\mu\in \{\pm 1\}^r_+$,
we define $w_{\vec\mu}\in W\simeq \{\pm 1\}^r_{+}\rtimes S_r$ via:
\begin{equation}\label{eq:mu-to-w D+}
  w_{\vec\mu}=(\vec\mu,\sigma_{\vec\mu}) \,,
\end{equation}
with $\sigma_{\vec\mu}\in S_r$ as in~\eqref{eq:mu-to-sigma}. The element $w_{\vec\mu}\in \pi^{-1}_+(\vec\mu)$
can be characterized similarly to Lemma~\ref{lem:shortest decsription C}:

\medskip

\begin{Lem}\label{lem:shortest decsription D+}
$w_{\vec\mu}$ is the shortest representative of the left coset $\pi^{-1}_+(\vec\mu)$,
for any $\vec\mu\in \{\pm 1\}^r_{+}$.
\end{Lem}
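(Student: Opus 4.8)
The plan is to follow the proof of Lemma~\ref{lem:shortest decsription C} essentially verbatim, the only new feature of type $D_r$ being that the sign vectors $\vec\mu$ entering the Weyl group $W\simeq \{\pm 1\}^r_{+}\rtimes S_r$ must be even; this restriction is irrelevant here, since the Levi subalgebra $\fl^+\simeq \gl_r$ and its Weyl group $W_{\fl^+}\simeq S_r$ involve only pure permutations. First I would recall, from~\eqref{eq:shortest left equiv} and~\eqref{eq:shortest left}, that the set ${}^{\fl^+}W$ of shortest representatives of the left cosets $W/W_{\fl^+}$ coincides with $\{w\in W \,|\, w(\Delta^+_{\fl^+})\subseteq \Delta^+\}$, where $\Delta^+_{\fl^+}=\{\epsilon_i-\epsilon_j\}_{1\leq i<j\leq r}$ is the positive root system of $\fl^+$ corresponding to $S^+=\{1,\ldots,r-1\}$ in~\eqref{eq:pmS}. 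Since it has already been observed that $w_{\vec\mu}\in \pi_+^{-1}(\vec\mu)$, and the fiber $\pi_+^{-1}(\vec\mu)$ is precisely the left coset $w_{\vec\mu}W_{\fl^+}$, the statement of the Lemma is equivalent to $w_{\vec\mu}\in {}^{\fl^+}W$, i.e.\ to $w_{\vec\mu}(\epsilon_i-\epsilon_j)\in \Delta^+$ for all $1\leq i<j\leq r$.

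The key step is a short combinatorial check of this inclusion. With the standard action of $W\simeq\{\pm 1\}^r_{+}\rtimes S_r$ on $\fh^*=\bigoplus_{i=1}^r\BC\epsilon_i$ one has $w_{\vec\mu}(\epsilon_i)=\mu_{\sigma_{\vec\mu}(i)}\,\epsilon_{\sigma_{\vec\mu}(i)}$, consistently with $w_{\vec\mu}=(\vec\mu,\sigma_{\vec\mu})$ and with~\eqref{eq:mu-to-sigma}; and the property of $\sigma_{\vec\mu}$ that one reads off~\eqref{eq:mu-to-sigma} is that the sequence $\sigma_{\vec\mu}(1),\ldots,\sigma_{\vec\mu}(r)$ lists first the indices $k$ with $\mu_k=1$ in increasing order and then the indices $k$ with $\mu_k=-1$ in decreasing order (writing $p=\#\{k\,|\,\mu_k=1\}$, it maps $\{1,\ldots,p\}$ onto the former and $\{p+1,\ldots,r\}$ onto the latter). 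Given $i<j$, the pair $\big(\mu_{\sigma_{\vec\mu}(i)},\mu_{\sigma_{\vec\mu}(j)}\big)$ can then only be $(+1,+1)$, $(+1,-1)$ or $(-1,-1)$: in the first case $\sigma_{\vec\mu}(i)<\sigma_{\vec\mu}(j)$ and $w_{\vec\mu}(\epsilon_i-\epsilon_j)=\epsilon_{\sigma_{\vec\mu}(i)}-\epsilon_{\sigma_{\vec\mu}(j)}\in\Delta^+$; in the second $w_{\vec\mu}(\epsilon_i-\epsilon_j)=\epsilon_{\sigma_{\vec\mu}(i)}+\epsilon_{\sigma_{\vec\mu}(j)}\in\Delta^+$; and in the third $\sigma_{\vec\mu}(i)>\sigma_{\vec\mu}(j)$ and $w_{\vec\mu}(\epsilon_i-\epsilon_j)=\epsilon_{\sigma_{\vec\mu}(j)}-\epsilon_{\sigma_{\vec\mu}(i)}\in\Delta^+$. (Equivalently, it suffices to verify this for the simple roots $\alpha_1,\ldots,\alpha_{r-1}$ of $\fl^+$, which produces the same three surviving cases.) This gives $w_{\vec\mu}\in{}^{\fl^+}W$ and hence the Lemma.

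The main --- and essentially only --- source of friction will be bookkeeping with the semidirect-product conventions: the direction of the permutation and of the sign in $W\simeq\{\pm 1\}^r_{+}\rtimes S_r$, and consistency with~\eqref{eq:mu-to-sigma} and, ultimately, with the weight formula~\eqref{eq:replD}; together with checking that the reduction through~\eqref{eq:shortest left equiv} really applies, which merely requires the identity $\Delta^+_{\fl^+}=\{\epsilon_i-\epsilon_j\}_{1\leq i<j\leq r}$ already noted. An equivalent, slightly more computational route would be to write down $L(\vec\mu)=l(w_{\vec\mu})$ explicitly --- the $\sso_{2r}$-analogue of~\eqref{eq:sigma-length C}, obtained by counting, via~\eqref{eq:length-interpretation}, the positive roots of $\sso_{2r}$ sent by $w_{\vec\mu}$ to negative ones --- and then verifying directly that every other element of the coset $w_{\vec\mu}W_{\fl^+}$ is strictly longer; this parallels the type-$C$ bookkeeping word for word.
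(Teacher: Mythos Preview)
Your proof is correct and essentially matches the paper's approach: the paper proves the type-$C$ analogue (Lemma~\ref{lem:shortest decsription C}) in one sentence by invoking the length formula~\eqref{eq:length-interpretation}, and implicitly the same argument carries over here. Your version simply unpacks this by using the equivalent characterization ${}^{\fl^+}W=\{w\in W\,|\,w(\Delta^+_{\fl^+})\subseteq\Delta^+\}$ from~\eqref{eq:shortest left} and checking the three sign cases directly; the alternative route you mention at the end via~\eqref{eq:length-interpretation} is exactly what the paper cites.
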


\medskip

\begin{Cor}\label{left-coset D+}
${}^{\fl^+}W = \{w_{\vec\mu}\}_{\vec\mu\in \{\pm 1\}^r_{+}}$.
\end{Cor}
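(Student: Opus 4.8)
\textbf{Proof proposal for Corollary~\ref{left-coset D+}.}
The plan is to deduce this directly from Lemma~\ref{lem:shortest decsription D+} together with the definition~\eqref{eq:shortest left equiv} of ${}^{\fl^+}W$ as the set of shortest representatives of the left cosets $W/W_{\fl^+}$. First I would recall that, by the set bijection $\pi_+$ of~\eqref{eq:set-bij D+}, the left cosets in $W/W_{\fl^+}$ are precisely the fibers $\pi_+^{-1}(\vec\mu)$ as $\vec\mu$ ranges over $\{\pm 1\}^r_{+}$; hence there is exactly one coset for each such $\vec\mu$. Next I would invoke Lemma~\ref{lem:shortest decsription D+}, which asserts that the distinguished element $w_{\vec\mu}=(\vec\mu,\sigma_{\vec\mu})$ of~\eqref{eq:mu-to-w D+} is the shortest representative of the coset $\pi_+^{-1}(\vec\mu)$. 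Combining these two facts: the shortest representatives of the left cosets $W/W_{\fl^+}$ are exactly the elements $\{w_{\vec\mu}\}_{\vec\mu\in \{\pm 1\}^r_{+}}$, which is the claimed equality ${}^{\fl^+}W=\{w_{\vec\mu}\}_{\vec\mu\in \{\pm 1\}^r_{+}}$.

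One small point worth making explicit in the write-up is that the map $\vec\mu\mapsto w_{\vec\mu}$ is injective (distinct $\vec\mu$ give distinct cosets, hence distinct shortest representatives), so the indexing set $\{\pm 1\}^r_{+}$ genuinely parametrizes ${}^{\fl^+}W$ without repetition. I would also note that the identification $W\simeq \{\pm 1\}^r_{+}\rtimes S_r$ and the fact that $W_{\fl^+}=\{((+1,\ldots,+1),\sigma)\}_{\sigma\in S_r}$ is the stabilizer of $(+1,\ldots,+1)$ under the transitive $W$-action on $\{\pm 1\}^r_{+}$ are exactly what make $\pi_+$ a well-defined bijection; these were already established just before the statement, so they may be cited.

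The proof is essentially a one-line consequence of the preceding lemma, so there is no genuine obstacle here — this is the type-$D$ analogue of Corollary~\ref{C-left-coset}, whose proof in the $C$-case follows verbatim. The only mild subtlety, which I would flag, is the parity constraint: in type $D$ the Weyl group involves $\{\pm 1\}^r_{+}$ rather than all of $\{\pm 1\}^r$, so one must be careful that $\sigma_{\vec\mu}$ as defined in~\eqref{eq:mu-to-sigma} indeed makes $w_{\vec\mu}=(\vec\mu,\sigma_{\vec\mu})$ land in $W$ (i.e.\ that the pair is legitimate), but this is automatic once $\vec\mu$ is even. No heavy computation is needed; the length function identity~\eqref{eq:length-interpretation} underlying Lemma~\ref{lem:shortest decsription D+} does all the work.
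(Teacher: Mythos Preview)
Your proposal is correct and follows exactly the approach implicit in the paper: the corollary is stated without proof as an immediate consequence of Lemma~\ref{lem:shortest decsription D+} combined with the description~\eqref{eq:shortest left equiv} of ${}^{\fl}W$ and the bijection~\eqref{eq:set-bij D+}, precisely as you outline. The additional remarks you make (injectivity of $\vec\mu\mapsto w_{\vec\mu}$, the parity constraint) are accurate but not needed for the write-up, as the paper treats this as a one-line corollary parallel to Corollary~\ref{C-left-coset}.
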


\medskip
\noindent
Likewise, the Weyl group of the Levi subalgebra $\fl^-\simeq \gl_r$ of $\fp_{S^-}$ is $W_{\fl^-}\simeq S_r$, which consists of
the elements $\left((+1,\ldots,+1,-1,+1,\ldots,+1,-1),\sigma\right)_{\sigma\in S_r}\subset W$ with $-1$'s at the $r$-th and
$\sigma(r)$-th spots (and there are no $-1$'s at all if $r=\sigma(r)$). Equivalently, $W_{\fl^-}$ is the stabilizer
of $(+1,\ldots,+1,-1)$ for the natural transitive action of $W$ on the set $\{\pm 1\}^r_{-}$. Thus, we have a set bijection
\begin{equation}\label{eq:set-bij D-}
  \pi_- \colon  W/W_{\fl^-} \ \iso \ \{\pm 1\}^r_{-} \,,
\end{equation}
cf.~\eqref{eq:set-bij D+}. For any $\vec\mu\in \{\pm 1\}^r_-$,
we define $w_{\vec\mu}\in W\simeq \{\pm 1\}^r_{+}\rtimes S_r$ via:
\begin{equation}\label{eq:mu-to-w D-}
  w_{\vec\mu}=(\vec\mu',\sigma_{\vec\mu}) \,,
\end{equation}
with $\sigma_{\vec\mu}\in S_r$ as in~\eqref{eq:mu-to-sigma} and $\vec\mu'\in \{\pm 1\}^r_{+}$
obtained from $\vec\mu$ by replacing the first $\mu_i=-1$ (with minimal $i$) by $+1$.
The above element $w_{\vec\mu}\in \pi^{-1}_-(\vec\mu)$ can be characterized similarly to Lemma~\ref{lem:shortest decsription D+}:

\medskip

\begin{Lem}\label{lem:shortest decsription D-}
$w_{\vec\mu}$ is the shortest representative of the left coset $\pi^{-1}_-(\vec\mu)$,
for any $\vec\mu\in \{\pm 1\}^r_{-}$.
\end{Lem}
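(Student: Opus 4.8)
The plan is to mimic the proof of Lemma~\ref{lem:shortest decsription C} (and its twin Lemma~\ref{lem:shortest decsription D+}) almost verbatim, adjusting only the bookkeeping dictated by the fact that the Levi $\fl^-$ sits inside $\sso_{2r}$ through the non-standard node $r$ rather than $r-1$. First I would unwind the definitions: fix $\vec\mu\in\{\pm 1\}^r_-$, write $w_{\vec\mu}=(\vec\mu',\sigma_{\vec\mu})$ as in~\eqref{eq:mu-to-w D-}, and recall that by~\eqref{eq:set-bij D-} every element of the coset $\pi_-^{-1}(\vec\mu)$ has the form $w_{\vec\mu}v$ with $v\in W_{\fl^-}$; since $W_{\fl^-}\simeq S_r$ is a parabolic (standard-generated by the simple reflections $\{s_{\alpha_j}\}_{j\in S^-}$) subgroup, it suffices to show $l(w_{\vec\mu}v)=l(w_{\vec\mu})+l(v)$ for all $v\in W_{\fl^-}$, i.e.\ that $w_{\vec\mu}$ is the minimal-length representative of its left coset $w_{\vec\mu}W_{\fl^-}$.

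The standard criterion for that, via~\eqref{eq:length-interpretation}, is $w_{\vec\mu}(\Delta^+_{\fl^-})\subseteq \Delta^+$, where $\Delta^+_{\fl^-}$ is the positive root system of $\fl^-$; equivalently, in the language of~\eqref{eq:shortest left}, $w_{\vec\mu}\in{}^{\fl^-}W$. Using the explicit description of $\Delta^+$ of $\sso_{2r}$ as $\{\epsilon_i-\epsilon_j\}_{i<j}\cup\{\epsilon_i+\epsilon_j\}_{i<j}$ and of $\Delta^+_{\fl^-}$ as the roots generated by $\{\alpha_1,\dots,\alpha_{r-2},\alpha_r\}$ — so the $\gl_r$ sitting inside with simple roots $\epsilon_1-\epsilon_2,\dots,\epsilon_{r-2}-\epsilon_{r-1},\epsilon_{r-1}+\epsilon_r$, hence $\Delta^+_{\fl^-}=\{\epsilon_i-\epsilon_j\}_{1\le i<j\le r-1}\cup\{\epsilon_i+\epsilon_r\}_{1\le i\le r-1}$ — I would feed each such root through the action of $w_{\vec\mu}=(\vec\mu',\sigma_{\vec\mu})$ on the basis vectors, namely $w_{\vec\mu}(\epsilon_k)=\mu'_{?}\,\epsilon_{\sigma_{\vec\mu}(k)}$ with the appropriate sign, and check term by term that the image lies in $\Delta^+$. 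The only delicate point is the interplay between $\vec\mu'$ (which flips the first $-1$ of $\vec\mu$ back to $+1$ so as to land in $\{\pm1\}^r_+\cong$ the signs allowed in $W$) and the permutation $\sigma_{\vec\mu}$ of~\eqref{eq:mu-to-sigma}: one verifies that $\sigma_{\vec\mu}$ is exactly the order-preserving interleaving that sends the positions of $+1$'s in increasing order first and the positions of $-1$'s in decreasing order afterwards, which is precisely what is needed for the images of the $\gl_r$-roots to stay positive; the sign ambiguity caused by switching from $\vec\mu$ to $\vec\mu'$ is harmless because the flipped index $i$ is, by construction, paired with the $r$-th slot, matching the $\epsilon_i+\epsilon_r$-type roots in $\Delta^+_{\fl^-}$.

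Once $w_{\vec\mu}\in{}^{\fl^-}W$ is established, the equivalence~\eqref{eq:shortest left equiv} cited from~\cite[\S5.13]{kos} immediately yields that $w_{\vec\mu}$ is the shortest representative of its left coset, which by the bijection $\pi_-$ of~\eqref{eq:set-bij D-} is exactly $\pi_-^{-1}(\vec\mu)$; this finishes the proof. I expect the main obstacle to be purely combinatorial: carefully tracking the sign $\vec\mu\rightsquigarrow\vec\mu'$ and the permutation $\sigma_{\vec\mu}$ simultaneously so that the case analysis on the four possible sign patterns of $(\mu'_i,\mu'_j)$ (as in the computation after~\eqref{eq:char derivation C} in Remark~\ref{rem:proof-C-char}) genuinely covers the $\epsilon_i\pm\epsilon_r$ roots correctly — everything else is a direct transcription of Lemma~\ref{lem:shortest decsription C}. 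An alternative, slicker route that I would mention as a remark is to observe that conjugating by the length-zero diagram automorphism of $\sso_{2r}$ exchanging the nodes $r-1$ and $r$ carries $\fp_{S^+}$ to $\fp_{S^-}$, $\{\pm1\}^r_+$ to $\{\pm1\}^r_-$, and Lemma~\ref{lem:shortest decsription D+} directly to the present statement, since a diagram automorphism preserves lengths and coset structure; this reduces the claim to the already-proven $+$ case with no further computation.
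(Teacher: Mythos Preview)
Your proposal is correct and follows essentially the same approach as the paper: the paper's own argument (given only for the $C$-type Lemma~\ref{lem:shortest decsription C} and tacitly carried over) simply invokes the length formula~\eqref{eq:length-interpretation}, which is exactly the criterion $w_{\vec\mu}(\Delta^+_{\fl^-})\subseteq\Delta^+$ you propose to verify. Your diagram-automorphism shortcut reducing to Lemma~\ref{lem:shortest decsription D+} is a clean alternative that the paper does not mention explicitly; it avoids the sign-bookkeeping with $\vec\mu\rightsquigarrow\vec\mu'$ entirely.
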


\medskip

\begin{Cor}\label{left-coset D-}
${}^{\fl^-}W = \{w_{\vec\mu}\}_{\vec\mu\in \{\pm 1\}^r_{-}}$.
\end{Cor}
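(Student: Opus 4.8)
The plan is to deduce Corollary~\ref{left-coset D-} directly from Lemma~\ref{lem:shortest decsription D-}, in exactly the same way that Corollary~\ref{C-left-coset} was obtained from Lemma~\ref{lem:shortest decsription C} and Corollary~\ref{left-coset D+} from Lemma~\ref{lem:shortest decsription D+}. First I would recall that, by the equivalent description~\eqref{eq:shortest left equiv}, the set ${}^{\fl^-}W$ consists precisely of the shortest-length representatives of the left cosets $W/W_{\fl^-}$. Then I would invoke the set bijection $\pi_-\colon W/W_{\fl^-}\iso \{\pm 1\}^r_-$ of~\eqref{eq:set-bij D-}, which identifies the index set of these cosets with $\{\pm 1\}^r_-$, so that the coset $\pi_-^{-1}(\vec\mu)$ is attached to $\vec\mu\in \{\pm 1\}^r_-$.

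Next I would use the fact, noted in the paragraph preceding Lemma~\ref{lem:shortest decsription D-}, that the element $w_{\vec\mu}=(\vec\mu',\sigma_{\vec\mu})$ defined in~\eqref{eq:mu-to-w D-} indeed lies in $\pi_-^{-1}(\vec\mu)$ (here $\vec\mu'$ is $\vec\mu$ with its first $-1$ flipped to $+1$, and $\sigma_{\vec\mu}$ is as in~\eqref{eq:mu-to-sigma}); and that by Lemma~\ref{lem:shortest decsription D-} it is the shortest representative of that coset. Combining the three observations: as $\vec\mu$ runs over $\{\pm 1\}^r_-$, the elements $w_{\vec\mu}$ run over the shortest representatives of all left cosets in $W/W_{\fl^-}$, i.e.\ over ${}^{\fl^-}W$. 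This yields ${}^{\fl^-}W=\{w_{\vec\mu}\}_{\vec\mu\in \{\pm 1\}^r_-}$, as claimed.

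Thus the only nontrivial ingredient is Lemma~\ref{lem:shortest decsription D-}, whose proof follows the template of Lemma~\ref{lem:shortest decsription C}: using the combinatorial formula~\eqref{eq:length-interpretation} for the length function together with the explicit list of positive roots of $\sso_{2r}$, one verifies that $l(w_{\vec\mu})\le l(v)$ for every $v\in \pi_-^{-1}(\vec\mu)$. The one point requiring care — and the main (minor) obstacle — is the bookkeeping of sign patterns: in contrast to the $D^+$ case, the stabilizer $W_{\fl^-}$ is the stabilizer of $(+1,\ldots,+1,-1)$ rather than of $(+1,\ldots,+1)$, which is exactly why $w_{\vec\mu}$ must be built from the corrected vector $\vec\mu'$ and not from $\vec\mu$ itself; one has to check that this correction is consistent with $\pi_-$ and still minimizes the number of positive roots sent to negative roots. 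Everything else is routine and parallel to the $C$- and $D^+$-cases already treated.
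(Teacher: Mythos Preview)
Your proposal is correct and takes essentially the same approach as the paper: the corollary is stated there without proof, as an immediate consequence of Lemma~\ref{lem:shortest decsription D-} together with the characterization~\eqref{eq:shortest left equiv} and the bijection~\eqref{eq:set-bij D-}, exactly as you outline. Your additional remarks on the bookkeeping in Lemma~\ref{lem:shortest decsription D-} are accurate but go beyond what the paper supplies for the corollary itself.
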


\medskip
\noindent
Combining now the formula~\eqref{eq:replD} with the definition of $w_{\vec\mu}\in W$,
see~(\ref{eq:mu-to-w D+},~\ref{eq:mu-to-w D-}), we conclude that the highest weight of the Fock vacuum
$|0\rangle\in M^+_{\vec\mu,\st}$ coincides with the highest weight of our key modules
$M'_{w_{\vec\mu}\, \cdot \, \st\omega^{|\vec\mu|}}$ introduced in~\eqref{eq:hard modules},
see Corollaries~\ref{left-coset D+},~\ref{left-coset D-} (we set $\omega^{\pm 1}=\omega^\pm$).
Furthermore, $M'_{w_{\vec\mu}\, \cdot \, \st\omega^{|\vec\mu|}}$ has the same character as
$M^+_{\vec\mu,\st}$ (according to Lemma~\ref{lem:M-char}) and is irreducible for $\st\notin \frac{1}{2}\BZ$
(as follows from~\cite{j}). Therefore, similarly to Propositions~\ref{lem:A-Fock as M-mod}
and~\ref{lem:C-Fock as M-mod}, we obtain:

\medskip

\begin{Prop}\label{lem:Dspin-Fock as M-mod}
For any $\vec\mu\in \{\pm 1\}^r$ and $\st\notin \sfrac{1}{2}\BZ$, we have $\sso_{2r}$-module isomorphisms:
\begin{equation}\label{eq:Dspin-Fock-as-M}
  M^+_{\vec\mu,\st} \simeq M'_{w_{\vec\mu}\, \cdot \, \st\omega^{|\vec\mu|}} \simeq
  \left(M'_{w_{\vec\mu}\, \cdot \, \st\omega^{|\vec\mu|}}\right)^\vee \,.
\end{equation}
\end{Prop}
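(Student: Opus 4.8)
The plan is to follow verbatim the structure of the proofs of Proposition~\ref{lem:A-Fock as M-mod} and Proposition~\ref{lem:C-Fock as M-mod}, since the $D$-type spinorial case is set up in exactly the same way. First I would observe that the three objects in~\eqref{eq:Dspin-Fock-as-M} all carry the same $\sso_{2r}$ highest weight: the Fock vacuum $|0\rangle\in M^+_{\vec\mu,\st}$ was shown in~\eqref{eq:replD} to be a highest weight vector, and combining~\eqref{eq:replD} with the definitions~(\ref{eq:mu-to-w D+},~\ref{eq:mu-to-w D-}) of $w_{\vec\mu}$ (together with the identifications of $\sigma_{\vec\mu}$ in~\eqref{eq:mu-to-sigma}) one checks that this weight equals $w_{\vec\mu}\cdot \st\omega^{|\vec\mu|}$, which is by construction the highest weight of $M'_{w_{\vec\mu}\cdot\st\omega^{|\vec\mu|}}$; the last module is self-dual-by-twist-restricted only in the sense that $(M')^\vee$ has the same highest weight, so the rightmost isomorphism will be deduced a posteriori from irreducibility.

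Next I would invoke the character computation: by Lemma~\ref{lem:M-char} the character of $M'_{w_{\vec\mu}\cdot\st\omega^{|\vec\mu|}}$ is $\dfrac{e^{w_{\vec\mu}(\st\omega^{|\vec\mu|}+\rho)-\rho}}{\prod_{\alpha\in\Delta^+\setminus w_{\vec\mu}(\Delta^+_\fl)}(1-e^{-\alpha})}$, and a direct count — using the explicit $B_{\vec\mu}$-action on the oscillators, the p.h.\ automorphism~\eqref{eq:ph-Dspin}, and the fact that $\CA$ has $\tfrac{r(r-1)}{2}$ oscillator pairs whose weights are exactly the roots in $\Delta^+\setminus w_{\vec\mu}(\Delta^+_{\fl^{|\vec\mu|}})$ — shows the Fock space $\Fock=M^+_{\vec\mu,\st}$ has the identical character. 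Thus $M^+_{\vec\mu,\st}$ and $M'_{w_{\vec\mu}\cdot\st\omega^{|\vec\mu|}}$ (and its restricted dual) are three highest weight $\sso_{2r}$-modules in category $\CO$ sharing highest weight and character.

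Then I would appeal to irreducibility for generic $\st$: by the classical Jantzen-type description~\cite{j} of singular vectors in Verma modules, $M'_{w_{\vec\mu}\cdot\st\omega^{|\vec\mu|}}$ is irreducible whenever $\st\notin \tfrac12\BZ$ (the relevant integrality conditions on $(w_{\vec\mu}(\st\omega^{|\vec\mu|}+\rho),\alpha^\vee)$ can only be met at half-integer $\st$). An irreducible module in $\CO$ is determined up to isomorphism by its highest weight, and it is isomorphic to its own restricted dual; hence for $\st\notin\tfrac12\BZ$ all three modules in~\eqref{eq:Dspin-Fock-as-M} coincide, once we know $M^+_{\vec\mu,\st}$ is itself a highest weight module (generated by $|0\rangle$), which follows from the character equality plus the fact that the submodule generated by $|0\rangle$ already has the full character.

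The main obstacle I anticipate is the bookkeeping in the character count and, more subtly, verifying that the highest weight read off from~\eqref{eq:replD} genuinely matches $w_{\vec\mu}\cdot\st\omega^{|\vec\mu|}$ in both parity cases: the even case uses $w_{\vec\mu}=(\vec\mu,\sigma_{\vec\mu})$ directly, but the odd case~\eqref{eq:mu-to-w D-} replaces the first $-1$ by $+1$ before taking $w_{\vec\mu}$, and one must confirm that the $\sso_{2r}$ dot-action of this modified Weyl element on $\st\omega_{r-1}$ (rather than $\st\omega_r$) reproduces~\eqref{eq:replD}, using the relation between $\omega^+$ and $\omega^-$ and the $\BZ/2$-fold symmetry of $\sso_{2r}$ weights at the last two nodes. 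All the pieces — Lemmas~\ref{lem:shortest decsription D+},~\ref{lem:shortest decsription D-} for the shortest-coset characterization, Corollaries~\ref{left-coset D+},~\ref{left-coset D-} for ${}^{\fl^\pm}W$, Lemma~\ref{lem:M-char} for the character, and~\cite{j} for irreducibility — are already available, so once that weight-matching is pinned down the argument closes exactly as in the $C$-type proof.
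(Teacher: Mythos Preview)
Your proposal is correct and follows essentially the same approach as the paper: match the highest weight of $|0\rangle\in M^+_{\vec\mu,\st}$ with $w_{\vec\mu}\cdot\st\omega^{|\vec\mu|}$ via~\eqref{eq:replD} and~(\ref{eq:mu-to-w D+},~\ref{eq:mu-to-w D-}), invoke Lemma~\ref{lem:M-char} for the character equality, and use~\cite{j} for irreducibility of $M'_{w_{\vec\mu}\cdot\st\omega^{|\vec\mu|}}$ at $\st\notin\tfrac12\BZ$. The paper states exactly this argument in the paragraph preceding the Proposition and simply refers back to Propositions~\ref{lem:A-Fock as M-mod} and~\ref{lem:C-Fock as M-mod}, so your extra care about the odd-$\vec\mu$ weight-matching and the ``generated by $|0\rangle$'' step is just making explicit what the paper leaves implicit.
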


\medskip

\begin{Rem}\label{rem:D-details}
Let us point out the key difference between Proposition~\ref{lem:Dspin-Fock as M-mod} and
Lemmas~\ref{lem:Dspin-Fock as par-Verma},~\ref{lem:Dspin-Fock-2nd as par-Verma}:

\medskip
\noindent
(a) For $\vec\mu=(+1,\ldots,+1,\pm 1)$, we actually have
$M^+_{\vec\mu,\st} \simeq (M'_{w_{\vec\mu}\, \cdot \, \st\omega^\pm})^\vee$
for any $\st\in \BC$, due to Lemmas~\ref{lem:Dspin-Fock as par-Verma},~\ref{lem:Dspin-Fock-2nd as par-Verma}.

\medskip
\noindent
(b) Likewise, for $\vec\mu=(-1,\ldots,-1,\mp 1)$, we have
$M^+_{\vec\mu,\st} \simeq M'_{w_{\vec\mu}\, \cdot \, \st\omega^{|\vec\mu|}}$
for any $\st\in \BC$.

\medskip
\noindent
(c) For other $\vec\mu \in \{\pm 1\}^r$, $M^+_{\vec\mu,\st}$ is \underline{not isomorphic} to either of
$M'_{w_{\vec\mu}\, \cdot \, \st\omega^{|\vec\mu|}}$ or $(M'_{w_{\vec\mu}\, \cdot \, \st\omega^{|\vec\mu|}})^\vee$
at some $\st\in \sfrac{1}{2}\BZ$ (but we expect it to be isomorphic to a twisted Verma module in the sense of~\cite{al}).
\end{Rem}

\medskip
\noindent
Evoking the above bijections $\{\pm 1\}^r_{\pm}\ni \vec\mu \leftrightarrow w_{\vec\mu}\in {}^{\fl^\pm}W$
of Corollaries~\ref{left-coset D+} and~\ref{left-coset D-}, let us define:
\begin{equation}\label{eq:DM-modified}
  M^\vee_{\vec\mu,\st}=\left(M'_{w_{\vec\mu}\, \cdot \, \st\omega^{|\vec\mu|}}\right)^\vee
  \,, \qquad \forall\, \st\in \BC \,.
\end{equation}
Then, Proposition~\ref{lem:Dspin-Fock as M-mod} can be recast as the isomorphism of the following $\sso_{2r}$-modules:
\begin{equation}\label{D-generic-coincidence}
  M^+_{\vec\mu,\st}\simeq M^\vee_{\vec\mu,\st} \,, \qquad \forall \, \st\in \BC \setminus \sfrac{1}{2}\BZ \,.
\end{equation}
\noindent

\medskip
\noindent
For $\vec\mu\in \{\pm 1\}^r$, we also define its \emph{length} $\mathsf{l}(\vec\mu)$ as the length of the corresponding
element $w_{\vec\mu}\in W$. Using formula~\eqref{eq:length-interpretation} and the explicit description
of the set $\Delta^+$ of positive roots of $\sso_{2r}$, we find:
\begin{equation}\label{eq:sigma-length D}
  \mathsf{l}(\vec\mu) = l(w_{\vec\mu}) = \sum_{i=1}^r (r-i)\delta_{\mu_i}^{-} \,,
\end{equation}
cf.~\eqref{eq:sigma-length C}.
We note that $\mathsf{l}(\vec\mu)$ differs from the lengths of $(\vec\mu,\mathrm{id}),(\vec\mu',\mathrm{id})\in W$
denoted by $l(\vec\mu),l(\vec\mu')$.

\medskip

   %%%%%%%%%%%%%%%%%%%%%%%%%%%%%%%%%%%%%%%%%%%%%%%%%%%%%%%%%%%%%%%%%%%%%%%%%%%%%%%%%%%%%%%%%%%%%
   %%%%%%%%%%%%%%%%%%%%%%%%%%%%%%%%%%%%%%%% SUBSECTION %%%%%%%%%%%%%%%%%%%%%%%%%%%%%%%%%%%%%%%%%
   %%%%%%%%%%%%%%%%%%%%%%%%%%%%%%%%%%%%%%%%%%%%%%%%%%%%%%%%%%%%%%%%%%%%%%%%%%%%%%%%%%%%%%%%%%%%%

\subsection{Type D transfer matrices}
$\ $

Recall the notion of transfer matrices $\{T_W(x)\}_{W\in \mathrm{Rep}\, Y(\sso_{2r})}$,
as discussed in Subsection~\ref{ssec R-matrix}.
In particular, we shall consider the following explicit infinite-dimensional transfer matrices:
\begin{equation}\label{eq:Dspin-inf-transfer}
  T_{\vec\mu,\st}^+(x)=\tr \prod_{i=1}^r \tau_i^{\CF^{\vec\mu}_{ii}}
  \underbrace{\CL_{\vec\mu}(x)\otimes \dots \otimes \CL_{\vec\mu}(x)}_{N} \,,
\end{equation}
corresponding to $M^+_{\vec\mu,\st}$.
For $\st\in \frac{1}{2}\BN$, we also consider the finite-dimensional transfer matrices $T^\pm_{\st}(x)$
corresponding to the modules $L^\pm_{\st}$~\eqref{eq:two-spin} in the auxiliary space:
those are defined similarly to~\eqref{eq:Dspin-inf-transfer}, but with the trace taken over
the finite-dimensional submodules $L^\pm_{\st}$ of $M^+_{(+1,\ldots,+1,\pm 1),\st}$,
see Corollaries~\ref{cor:t-special D}(b),~\ref{cor:t-special D-2nd}(b).

\medskip
\noindent
Using the notation~(\ref{eq:DM-modified},~\ref{eq:sigma-length D}), let us recast
the resolution~\eqref{eq:geometric resolution 1}, dual to~\eqref{eq:conjectured resolution 1}, as follows:
\begin{equation}\label{eq:Dspin-resolution}
  0\to L^\pm_{\st}\to M^\vee_{(1,\ldots,1,\pm 1),\st} \, \to \,
  \bigoplus_{\vec\mu\in \{\pm 1\}^r_{\pm}}^{\mathsf{l}(\vec\mu)=1} M^\vee_{\vec\mu,\st} \, \to \,
  \bigoplus_{\vec\mu\in \{\pm 1\}^r_{\pm}}^{\mathsf{l}(\vec\mu)=2} M^\vee_{\vec\mu,\st} \, \to \,
  \cdots \, \to \, 0
\end{equation}
for any $\st\in \frac{1}{2}\BN$. Combining this with~\eqref{D-generic-coincidence} and the fact that
\underline{the transfer matrices~\eqref{eq:Dspin-inf-transfer} depend} \underline{continuously on $\st\in \BC$}
(as so do the Lax matrices $\CL_{\vec\mu}(x)$), we obtain the key result of this section:

\medskip

\begin{Thm}\label{thm:Main-D}
For $\st\in \frac{1}{2}\BN$, we have:
\begin{equation}\label{eq:transfer-Dspin}
   T_{\st}^\pm(x)\, =  \sum_{\vec\mu\in \{\pm 1\}^r_{\pm}} (-1)^{\mathsf{l}(\vec{\mu})}\, T_{\vec\mu,\st}^+(x) \,.
\end{equation}
\end{Thm}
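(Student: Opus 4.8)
The plan is to run the proof of Theorem~\ref{thm:Main-C} essentially \emph{verbatim}, replacing the cube $\{\pm 1\}^r$ by its even, resp.\ odd, half $\{\pm 1\}^r_{\pm}$ and the single parabolic $\fp_{\{1,\ldots,r-1\}}$ by the two parabolics $\fp_{S^{\pm}}$ of \eqref{eq:pmS}. Concretely I would proceed in three steps.

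First, I would invoke Main Theorem~\ref{mthm 1} (i.e.\ Theorem~\ref{Thm:KEY}) for $\fg=\sso_{2r}$ and the parabolic $\fp_{S^{\pm}}$: for $\st\in\frac{1}{2}\BN$ the highest weight of $L^{\pm}_{\st}$ lies in $P^+_{\fg/\fl^{\pm}}$ (it vanishes on the coroots of the Levi $\fl^{\pm}$, precisely because the vertices $r-1,r$ of the $D_r$ diagram carry label $1$, cf.\ \eqref{eq:KR-classification}), so Theorem~\ref{Thm:KEY} supplies the resolution \eqref{eq:conjectured resolution 1}, equivalently its $\Phi$-dual form \eqref{eq:Dspin-resolution}, expressing $L^{\pm}_{\st}$ through the modules $\{M^{\vee}_{\vec\mu,\st}\}$. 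Rewriting the index set ${}^{\fl^{\pm}}W$ via the bijections $\{\pm 1\}^r_{\pm}\ni\vec\mu\leftrightarrow w_{\vec\mu}$ of Corollaries~\ref{left-coset D+} and~\ref{left-coset D-}, and using the length identity $l(w_{\vec\mu})=L(\vec\mu)$ of \eqref{eq:sigma-length D}, is exactly what puts \eqref{eq:Dspin-resolution} in the displayed form; the only genuinely new piece of bookkeeping relative to the $C$-type case is the twisted prescription \eqref{eq:mu-to-w D-} for $w_{\vec\mu}$ when $\vec\mu$ is odd, needed so that $w_{\vec\mu}$ lands inside $W\simeq\{\pm 1\}^r_+\rtimes S_r$.

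Second, I would regard \eqref{eq:Dspin-resolution} as a resolution of $Y(\sso_{2r})$-modules and apply additivity of transfer matrices along exact sequences — the obvious refinement of \eqref{eq:transfer-product} — to get, for $\st\in\frac{1}{2}\BN$, the identity $T^{\pm}_{\st}(x)=\sum_{\vec\mu\in\{\pm 1\}^r_{\pm}}(-1)^{L(\vec\mu)}T_{M^{\vee}_{\vec\mu,\st}}(x)$; here each $M^{\vee}_{\vec\mu,\st}$ does extend to $Y(\sso_{2r})$ because, for the spinorial vertices of $D_r$, the relevant parabolic Verma modules and their $W$-translations do, cf.\ \eqref{eq:KR-classification} and the explicit oscillator Lax matrices $\CL_{\vec\mu}(x)$ built in this section. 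Third, I would establish the analogue of \eqref{eq:T-coincide}, namely $T_{M^{\vee}_{\vec\mu,\st}}(x)=T^{+}_{\vec\mu,\st}(x)$ for \emph{every} $\st\in\BC$: for $\st\notin\frac{1}{2}\BZ$ Proposition~\ref{lem:Dspin-Fock as M-mod} (equivalently \eqref{D-generic-coincidence}) identifies $M^{\vee}_{\vec\mu,\st}$ with $M^{+}_{\vec\mu,\st}$ as $\sso_{2r}$-modules carrying the same $Y(\sso_{2r})$-action, hence the two transfer matrices coincide there; and since $T^{+}_{\vec\mu,\st}(x)$ is assembled from $\CL_{\vec\mu}(x)$, whose entries are affine in $\st$, both sides are rational — in particular continuous — in $\st$, so they agree on all of $\BC$. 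Feeding this (at $\st\in\frac{1}{2}\BN$) into the identity of the second step yields \eqref{eq:transfer-Dspin}, whose $N=0$ specialization is the corresponding character formula.

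The step I expect to be the main obstacle is the first half of the second step: promoting the exact sequence \eqref{eq:Dspin-resolution} of $\sso_{2r}$-modules to one of $Y(\sso_{2r})$-modules, i.e.\ checking that the BGG-type differentials — which involve non-simple reflections — can be chosen $Y(\sso_{2r})$-equivariant. This is genuinely delicate because, as recorded in Remark~\ref{rem:D-details}(c), at the special values $\st\in\frac{1}{2}\BN$ the module $M^{+}_{\vec\mu,\st}$ need not be isomorphic to $M'_{w_{\vec\mu}\cdot\st\omega^{|\vec\mu|}}$ — it is only expected to be a twisted Verma module — so the $Y(\sso_{2r})$-structure from the oscillator realization cannot simply be imported term by term. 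This is exactly why the generic-$\st$-plus-continuity mechanism of the third step is indispensable: it lets one verify the comparison $T_{M^{\vee}_{\vec\mu,\st}}=T^{+}_{\vec\mu,\st}$ on the Zariski-dense locus $\BC\setminus\frac{1}{2}\BZ$, where the clean isomorphism \eqref{D-generic-coincidence} is available, and then propagate it to the arithmetically interesting values $\st\in\frac{1}{2}\BN$ featuring in Theorem~\ref{thm:Main-D}.
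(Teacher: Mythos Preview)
Your proposal is correct and follows essentially the same approach as the paper: invoke the resolution~\eqref{eq:Dspin-resolution} coming from Theorem~\ref{Thm:KEY} (with the indexing set rewritten via Corollaries~\ref{left-coset D+}--\ref{left-coset D-} and~\eqref{eq:sigma-length D}), then combine it with the generic identification~\eqref{D-generic-coincidence} and the continuity of~\eqref{eq:Dspin-inf-transfer} in $\st$. The obstacle you flag---promoting the resolution to one of $Y(\sso_{2r})$-modules---is exactly the subtlety the paper addresses in Subsection~\ref{ssec analytic continuation} (cf.~\eqref{eq:t-via-qq honest}--\eqref{eq:T-coincide}), and your continuity-on-the-generic-locus mechanism is precisely the paper's resolution of it; this is not an additional gap in your argument relative to the paper's.
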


\medskip
\noindent
The character limit of~\eqref{eq:transfer-Dspin} expresses the character
of the $\sso_{2r}$-modules $\{L^\pm_{\st}\}_{\st\in \frac{1}{2}\BN}$ defined as
\begin{equation}\label{eq:Dspin-char-def}
  \ch^\pm_{\st}=\ch^\pm_{\st}(\tau_1, \ldots, \tau_r) :=
  \tr_{L^\pm_{\st}} \prod_{i=1}^r \tau_i^{\CF^{(+1,\ldots,+1,\pm 1)}_{ii}} \,,
\end{equation}
that is the length $N=0$ case of $T^\pm_{\st}(x)$, via:
\begin{equation}\label{eq:char-Dspin}
  \ch^\pm_{\st} =
  \sum_{\vec\mu=(\mu_1,\ldots,\mu_r)\in \{\pm 1\}^r_{\pm}} (-1)^{\mathsf{l}(\vec{\mu})}
  \frac{\prod_{i=1}^{r} \tau_i^{\mu_i\left(\st+(r-i-1)\delta_{\mu_i}^{-}+\sum_{k=1}^i \delta_{\mu_k}^{-}\right)}}
       {\prod_{1\leq i<j\leq r}\left(1-\frac{1}{\tau_i\tau_j^{\mu_i \mu_j}}\right)}
\end{equation}
with the $\vec\mu$'s summand in the right-hand side of~\eqref{eq:char-Dspin} equal to the character of
$M^+_{\vec\mu,\st}$, up to a sign.

\medskip

\begin{Rem}
The character formula~\eqref{eq:char-Dspin} can be derived directly from the Weyl character and Weyl denominator
formulas, as in Remark~\ref{rem:proof-C-char}; we leave details to the interested reader.
\end{Rem}

\medskip
\noindent
We note that the formula~\eqref{eq:char-Dspin} allows to \underline{analytically continue} the characters
$\ch^\pm_{\st}$ of~\eqref{eq:Dspin-char-def} from the discrete set $\st\in \frac{1}{2}\BN$ to
the entire complex plane $\st\in \BC$. With this convention in mind, we obtain:

\medskip

\begin{Lem}\label{lem:t-property D}
(a) $\ch^\pm_{\st}=(-1)^{\frac{r(r-1)}{2}}\ch^{\pm(-1)^r}_{-r+1-\st}$ for any $\st\in \BC$.

\medskip
\noindent
(b) $\ch^\pm_{\st}=0$ for $\st\in \Big\{-\frac{1}{2},-\frac{2}{2},\ldots,-\frac{2r-4}{2},-\frac{2r-3}{2}\Big\}$.
\end{Lem}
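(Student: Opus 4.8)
The plan is to mirror the proof of Lemma~\ref{lem:t-property C}, adapting it to the $D_r$ spinorial setting.

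For part~(a), I would fix $\vec\mu\in\{\pm1\}^r$ and set $\vec{\bar\mu}:=-\vec\mu$ and $\bar\st:=-r+1-\st$, so that $|\vec{\bar\mu}|=(-1)^r|\vec\mu|$ and the map $\vec\mu\mapsto\vec{\bar\mu}$ bijects $\{\pm1\}^r_{\pm}$ with $\{\pm1\}^r_{\pm(-1)^r}$. From the length formula \eqref{eq:sigma-length D}, $L(\vec\mu)=\sum_{i=1}^r(r-i)\delta_{\mu_i}^-$, together with $\delta_{\bar\mu_i}^-=1-\delta_{\mu_i}^-$, one reads off $L(\vec\mu)+L(\vec{\bar\mu})=\sum_{i=1}^r(r-i)=\tfrac{r(r-1)}{2}$. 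Let $\ch^+_{\vec\mu,\st}$ denote the $\vec\mu$-th summand on the right-hand side of \eqref{eq:char-Dspin} with the sign $(-1)^{L(\vec\mu)}$ stripped off. Its denominator $\prod_{1\le i<j\le r}(1-\tau_i^{-1}\tau_j^{-\mu_i\mu_j})$ is invariant under $\vec\mu\mapsto\vec{\bar\mu}$ since $\bar\mu_i\bar\mu_j=\mu_i\mu_j$, and a direct check that the exponent of $\tau_i$ satisfies $\mu_i\bigl(\st+(r-i-1)\delta_{\mu_i}^-+\sum_{k=1}^i\delta_{\mu_k}^-\bigr)=\bar\mu_i\bigl(\bar\st+(r-i-1)\delta_{\bar\mu_i}^-+\sum_{k=1}^i\delta_{\bar\mu_k}^-\bigr)$ --- which after expanding reduces to the $i$-independent identity $\bar\st+(r-i-1)+i=\bar\st+r-1=-\st$ --- gives $\ch^+_{\vec\mu,\st}=\ch^+_{\vec{\bar\mu},\bar\st}$. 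Summing over $\vec\mu$, re-indexing by $\vec{\bar\mu}$, and using $(-1)^{L(\vec\mu)}=(-1)^{\frac{r(r-1)}{2}}(-1)^{L(\vec{\bar\mu})}$ then yields $\ch^\pm_\st=(-1)^{\frac{r(r-1)}{2}}\,\ch^{\pm(-1)^r}_{-r+1-\st}$.

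For part~(b), I would instead use the $D_r$ Weyl character formula for $L^\pm_\st=L_{2\st\omega^\pm}$, which by the remark following \eqref{eq:char-Dspin} provides the same analytic continuation in $\st$ as \eqref{eq:char-Dspin}. With $\Delta^+=\{\epsilon_i-\epsilon_j\}_{1\le i<j\le r}\cup\{\epsilon_i+\epsilon_j\}_{1\le i<j\le r}$ and $\rho=(r-1,\ldots,1,0)$ in the $\{\epsilon_i\}$-basis, the vector $v:=2\st\omega^\pm+\rho$ has coordinates $v_i=\st+r-i$ for $i<r$ and $v_r=\pm\st$. Since $s_{\epsilon_i\pm\epsilon_j}\in W$ fixes $v$ precisely when $(v,\epsilon_i\pm\epsilon_j)=0$, i.e.\ $v_i=\mp v_j$, one checks that for every $\st\in\{-\tfrac{1}{2},-\tfrac{2}{2},\ldots,-\tfrac{2r-3}{2}\}$ such a reflection $s$ with $s(v)=v$ exists: for $\omega^+$ the roots $\epsilon_i+\epsilon_j$ ($1\le i<j\le r$) give $\st=-r+\tfrac{i+j}{2}$, which sweeps out exactly this set, while for $\omega^-$ one additionally uses the roots $\epsilon_i-\epsilon_r$ to cover $\st=\tfrac{i-r}{2}$. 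For such $\st$ the map $w\mapsto ws$ is a fixed-point-free involution of $W$ with $ws(v)=w(v)$ and $(-1)^{l(ws)}=-(-1)^{l(w)}$, so the numerator of the Weyl character formula is a sum of cancelling pairs and vanishes identically in the $\tau_i$; hence $\ch^\pm_\st=0$.

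The computations in part~(a) (matching the exponents of $\tau_i$) and the case analysis in part~(b) (identifying the orthogonal root) are elementary. I expect the only real care needed to be the consistent bookkeeping of the $\omega^+$ versus $\omega^-$ cases --- in particular, the factor $(-1)^r$ in part~(a), which comes from whether $\vec\mu\mapsto-\vec\mu$ preserves or reverses the parity $|\vec\mu|$, and, in part~(b), the extra $\epsilon_i-\epsilon_r$ reflections needed in the $\omega^-$ case but not the $\omega^+$ case.
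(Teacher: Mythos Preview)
Your proposal is correct and follows essentially the same approach as the paper's proof, which simply says the argument is analogous to Lemma~\ref{lem:t-property C} with $\bar\st=-r+1-\st$ and, for part~(b), using only the ``second rule'' (i.e.\ the reflections $s_{\epsilon_k+\epsilon_m}$). You actually supply more detail than the paper does: in particular, the paper only treats $\ch^+_\st$ explicitly in part~(b), whereas you correctly observe that for $\omega^-$ the reflections $s_{\epsilon_i+\epsilon_j}$ with $j<r$ miss $\st=-\tfrac12,-1$ and one must supplement with $s_{\epsilon_i-\epsilon_r}$.
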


\medskip

\begin{proof}
The proof is completely analogous to that of Lemma~\ref{lem:t-property C} with the following two changes.
In part (a), we should rather use $\bar\st=-r+1-\st$ instead of~\eqref{eq:opposite-mu}, and note that for
$\vec\mu\in \{\pm 1\}^r_\pm$ of~\eqref{eq:even-odd} we have $\vec{\bar\mu}=-\vec\mu \in \{\pm 1\}^r_{\pm(-1)^r}$.
In part (b) for $\ch^+_{\st}$, the splitting of elements of $W$ into the pairs satisfying~\eqref{eq:pair compatibility}
is performed following only the second rule in our proof of Lemma~\ref{lem:t-property C}(b).
\end{proof}

\medskip
\noindent
In a completely similar way, the formula~\eqref{eq:transfer-Dspin} allows to \underline{analytically continue}
the transfer matrices $T^\pm_{\st}(x)$ of the finite-dimensional representations $L^\pm_{\st},\st\in \frac{1}{2}\BN$,
to the entire complex plane $\st\in \BC$. With this convention in mind, we have the following generalization of
Lemma~\ref{lem:t-property D}(a):

\medskip

\begin{Prop}\label{prop:t-symmetry D}
$T^\pm_{\st}(x)=(-1)^{\frac{r(r-1)}{2}} T^{{\pm(-1)^r}}_{-r+1-\st}(x)$ for any $\st\in \BC$.
\end{Prop}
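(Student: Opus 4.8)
\textbf{Proof plan for Proposition~\ref{prop:t-symmetry D}.}
The plan is to mimic the proof of Proposition~\ref{prop:t-symmetry C}, replacing the $C$-type factorisation by its $D$-type counterpart~\eqref{eq:T-via-QQ D-type} and tracking the parity bookkeeping introduced by the spinor weights $\omega^\pm$. First I would invoke the factorisation of each infinite-dimensional transfer matrix $T^+_{\vec\mu,\st}(x)$ into a product of two commuting $Q$-operators (Main Theorem~\ref{mthm 3}, specialized to $D$-type), which recasts Theorem~\ref{thm:Main-D} in the factorized form analogous to~\eqref{eq:C-recasted}: namely $T^\pm_\st(x)=\sum_{\vec\mu\in\{\pm1\}^r_\pm}(-1)^{L(\vec\mu)}\,\ch^+_{\vec\mu,\st}\cdot Q_{\vec\mu}(x+\st)\,Q_{\vec{\bar\mu}}(x+\bar\st)$ with $\bar\st=-r+1-\st$ and $\vec{\bar\mu}=-\vec\mu$. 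The crucial parity observation is that if $\vec\mu\in\{\pm1\}^r_\pm$ then $\vec{\bar\mu}=-\vec\mu\in\{\pm1\}^r_{\pm(-1)^r}$, so the involution $\vec\mu\mapsto\vec{\bar\mu}$ sends the summation set for $T^\pm_\st$ bijectively onto the summation set for $T^{\pm(-1)^r}_{\bar\st}$ — which is exactly why the claimed identity relates $T^\pm_\st$ to $T^{\pm(-1)^r}_{-r+1-\st}$ rather than to $T^\pm$.

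The next step is to establish the termwise matching, the $D$-type analogue of~\eqref{eq:C-comparison}: that the $\vec\mu$-th summand in the expression for $T^\pm_\st(x)$ equals the $\vec{\bar\mu}$-th summand in the expression for $T^{\pm(-1)^r}_{\bar\st}(x)$ up to the overall sign $(-1)^{r(r-1)/2}$. For the length factors this requires the identity $L(\vec\mu)+L(\vec{\bar\mu})=\tfrac{r(r-1)}{2}$, which follows immediately from $\sum_{i=1}^r(r-i)\bigl(\delta^-_{\mu_i}+\delta^-_{\bar\mu_i}\bigr)=\sum_{i=1}^r(r-i)=\tfrac{r(r-1)}{2}$ using formula~\eqref{eq:sigma-length D}, exactly paralleling~\eqref{eq:L+L}. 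For the character prefactors I would verify $\ch^+_{\vec\mu,\st}=\ch^+_{\vec{\bar\mu},\bar\st}$ by the same manipulation as in the proof of Lemma~\ref{lem:t-property D}(a): the exponent $\mu_i\bigl(\st+(r-i-1)\delta^-_{\mu_i}+\sum_{k\le i}\delta^-_{\mu_k}\bigr)$ is invariant under $(\vec\mu,\st)\mapsto(\vec{\bar\mu},\bar\st)$, and the denominator $\prod_{i\le j}\bigl(1-\tau_i^{-1}\tau_j^{-\mu_i\mu_j}\bigr)$ is invariant since $\mu_i\mu_j=\bar\mu_i\bar\mu_j$. Finally the $Q$-operator part matches because $Q_{\vec\mu}(x+\st)Q_{\vec{\bar\mu}}(x+\bar\st)=Q_{\vec{\bar\mu}}(x+\bar\st)Q_{\vec\mu}(x+\st)$, which is the commutativity $[Q_{\vec\mu}(x),Q_{\vec{\bar\mu}}(y)]=0$ — the $D$-type analogue of~\eqref{eq:Q-comm C} — itself a consequence of the commutativity of the transfer matrices $[T^+_{\vec\mu}(x),T^+_{\vec\mu}(y)]=0$ from Subsection~\ref{ssec R-matrix} together with the realization of the $Q$-operators as renormalized limits of those transfer matrices.

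Summing the matched summands then gives $T^\pm_\st(x)=(-1)^{r(r-1)/2}\sum_{\vec{\bar\mu}}(-1)^{L(\vec{\bar\mu})}\ch^+_{\vec{\bar\mu},\bar\st}\cdot Q_{\vec{\bar\mu}}(x+\bar\st)Q_{\vec\mu}(x+\st)=(-1)^{r(r-1)/2}\,T^{\pm(-1)^r}_{-r+1-\st}(x)$, where the last equality re-indexes by $\vec{\bar\mu}$ ranging over $\{\pm1\}^r_{\pm(-1)^r}$. I expect the main obstacle to be a purely expository one: making precise the $D$-type factorisation~\eqref{eq:T-via-QQ D-type} and the precise shifts $x+\st,\ x+\bar\st$ appearing in it (these differ slightly between the spinor case here and the $C$-type case, because of the different value of $\kappa$), and verifying that the $z$-independent twist traces $\ch^+_{\vec\mu,\st}$ produced by the factorisation coincide with the character prefactors appearing in Theorem~\ref{thm:Main-D}; once these identifications are in place, every remaining step is the routine parity computation sketched above, identical in spirit to the $C$-type argument.
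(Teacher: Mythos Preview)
Your proposal is correct and follows essentially the same approach as the paper's own proof, which simply states that the argument is completely analogous to Proposition~\ref{prop:t-symmetry C} via the factorisation~\eqref{eq:T-via-QQ D-type} and the proof of Lemma~\ref{lem:t-property D}(a). One small slip: in $D$-type the character denominator runs over $1\le i<j\le r$ (strict inequality, see~\eqref{eq:ch-D}), not $i\le j$ as you wrote, but this does not affect the invariance argument since $\mu_i\mu_j=\bar\mu_i\bar\mu_j$ still holds.
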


\medskip

\begin{proof}
The proof is completely analogous to that of Proposition~\ref{prop:t-symmetry C} and follows
from the proof of Lemma~\ref{lem:t-property D}(a) combined with the factorisation~\eqref{eq:T-via-QQ D-type}
of the transfer matrices $T^+_{\vec\mu,\st}(x)$ into the product of two commuting $Q$-operators,
cf.\ Proposition~\ref{prop:main-D factorized}.
\end{proof}

\medskip
\noindent
We expect the generalization of Lemma~\ref{lem:t-property D}(b) to hold:
$T^\pm_{\st}(x)=0$ for $\st\in \Big\{-\frac{1}{2},-\frac{2}{2},\ldots,-\frac{2r-3}{2}\Big\}$.
This was first observed in~\cite{ffk} for small length and rank.

$\ $

   %%%%%%%%%%%%%%%%%%%%%%%%%%%%%%%%%%%%%%%%%%%%%%%%%%%%%%%%%%%%%%%%%%%%%%%%%%%%%%%%%%%%%%%%%%%%%
   %%%%%%%%%%%%%%%%%%%%%%%%%%%%%%%%%%%%%%%% SECTION 8 %%%%%%%%%%%%%%%%%%%%%%%%%%%%%%%%%%%%%%%%%%
   %%%%%%%%%%%%%%%%%%%%%%%%%%%%%%%%%%%%%%%%%%%%%%%%%%%%%%%%%%%%%%%%%%%%%%%%%%%%%%%%%%%%%%%%%%%%%

\section{Resolutions for transfer matrices of BD-types: first fundamental representations}\label{sec:BDff}

In this section, we apply similar ideas to treat the remaining case of~\eqref{eq:KR-classification}:
$i=1$ for $BD$-types.

   %%%%%%%%%%%%%%%%%%%%%%%%%%%%%%%%%%%%%%%%%%%%%%%%%%%%%%%%%%%%%%%%%%%%%%%%%%%%%%%%%%%%%%%%%%%%%
   %%%%%%%%%%%%%%%%%%%%%%%%%%%%%%%%%%%%%%%% SUBSECTION %%%%%%%%%%%%%%%%%%%%%%%%%%%%%%%%%%%%%%%%%
   %%%%%%%%%%%%%%%%%%%%%%%%%%%%%%%%%%%%%%%%%%%%%%%%%%%%%%%%%%%%%%%%%%%%%%%%%%%%%%%%%%%%%%%%%%%%%

\subsection{Oscillator realization in types BD (parabolic Verma)}
$\ $

For $\NK\geq 5$, let $\CA$ denote the oscillator algebra generated by $\NK-2$ pairs of oscillators
$\{(\oa_i,\oad_i)\}_{1<i<1'}$, cf.\ notation~\eqref{eq:prime-index} so that $1'=\NK$, subject to the standard defining relations:
\begin{equation}\label{eq:oscillator relations 3}
  [\oa_{i},\oad_{j}]=\delta_{i}^{j} \,, \qquad
  [\oa_{i},\oa_{j}]=0 \,, \qquad [\oad_{i},\oad_{j}]=0 \,,
\end{equation}
so that
\begin{equation}
  \CA = \BC \Big\langle \oa_{i} \, , \, \oad_{i} \Big\rangle_{1<i<1'} \, \Big/ \, \eqref{eq:oscillator relations 3} \,.
\end{equation}
Following~\cite[(5.36, 5.38)]{f} and~\cite[(2.243), \S4.3]{Frassek:2021ogy}, let us consider the quadratic non-degenerate
$\CA[x]$-valued Lax matrices of $\sso_{\NK}$-type (i.e.\ of types $D_r$ or $B_r$ with $r=\lfloor \NK/2\rfloor$ for $\NK$ even
or odd, respectively), depending on $x_1,x_2\in \BC$:
\begin{equation}\label{Matrix Example D4 a-osc factroized}
  \fL_{x_1,x_2}(x)=
    \left(\begin{BMAT}[5pt]{c|c|c}{c|c|c}
      1 & \wp & -\tfrac{1}{2}\wp\idb_{\NK-2}\wp^T \\
      0 & \ID_{\NK-2} & -\idb_{\NK-2}\wp^T \\
      0 & 0 & 1 \\
    \end{BMAT}\right)
    \cdot \, D_{x_1,x_2}(x) \, \cdot
    \left(\begin{BMAT}[5pt]{c|c|c}{c|c|c}
      1 & -\wp & -\tfrac{1}{2}\wp\idb_{\NK-2}\wp^T \\
      0 & \ID_{\NK-2} & \idb_{\NK-2}\wp^T \\
      0 & 0 & 1 \\
    \end{BMAT}\right)
\end{equation}
with $\idb_k$ denoting the anti-diagonal $k\times k$-matrix and the middle factor explicitly given by:
\begin{equation*}
  D_{x_1,x_2}(x)=
    \left(\begin{BMAT}[5pt]{c|c|c}{c|c|c}
      (x-x_1)(x-x_1-\tfrac{\NK}{2}+2) & 0 & 0 \\
      -\wm_{}(x-x_1) & (x-x_1)(x-x_2)\ID_{\NK-2} & 0 \\
      -\tfrac{1}{2}\wm^T\idb_{\NK-2}\wm & \wm^T\idb_{\NK-2}(x-x_2) & (x-x_2)(x-x_2-\tfrac{\NK}{2}+2) \\
    \end{BMAT}\right) \,,
\end{equation*}
while the row-vector $\wp\in \mathrm{Mat}_{1\times (\NK-2)}(\CA)$ and the column-vector
$\wm \in \mathrm{Mat}_{(\NK-2)\times 1}(\CA)$ encode all the generators via:
\begin{equation}\label{eq:D-osc}
  D_r\colon \quad
  \wp=(\oad_{2},\ldots,\oad_{r},\oad_{r'},\ldots,\oad_{2'}) \,, \quad
  \wm=(\oa_{2},\ldots,\oa_{r},\oa_{r'},\ldots,\oa_{2'})^T \,,
\end{equation}
\begin{equation}\label{eq:B-osc}
  B_r\colon \quad
  \wp=(\oad_{2},\ldots,\oad_{r},\oad_{r+1},\oad_{r'},\ldots,\oad_{2'}) \,, \quad
  \wm=(\oa_{2},\ldots,\oa_{r},\oa_{r+1},\oa_{r'},\ldots,\oa_{2'})^T  \,.
\end{equation}

\medskip
\noindent
Following~\cite[Remark~4.37]{Frassek:2021ogy}, we also consider
\begin{equation}\label{eq:L vs fL}
  L_{x_{12}}(x)=\fL_{x_1,x_2}(x+c)=x^2\ID_{\NK}+xM_{x_{12}}+G_{x_{12}}
\end{equation}
with the shift $c$ of the spectral parameter given by:
\begin{equation}\label{eq:a-mass}
  c=\frac{x_1+x_2-1}{2} \,,
\end{equation}
and $x_{12}=x_1-x_2$ (note that the right-hand side of~\eqref{eq:L vs fL}
depends only on the difference of $x_1,x_2$).

\medskip
\noindent
It is straightforward to see that the linear term $M_{x_{12}}$ in~\eqref{eq:L vs fL} is given by:
\begin{equation}\label{eq:BCgens}
  M_{x_{12}} =
    \left(\begin{BMAT}[5pt]{c|c|c}{c|c|c}
      -x_{12}-\tfrac{\NK}{2}+1-\wp\wm & M_{[12]} & 0 \\
      -\wm & \wm\wp-\idb_{\NK-2}\wp^T\wm^T\idb_{\NK-2}-\id_{\NK-2} & M_{[23]} \\
      0 & \wm^T\idb_{\NK-2} & x_{12}+\tfrac{\NK}{2}-1+\wp\wm \\
    \end{BMAT}\right) \,,
\end{equation}
with
\begin{align}
  M_{[12]}&=\left(x_{12}+\sfrac{\NK}{2}-2+\wp\wm\right)\wp-\sfrac{1}{2}\wp\idb_{\NK-2}\wp^T\wm^T\idb_{\NK-2} \,, \\
  M_{[23]}&=-\left(x_{12}+\sfrac{\NK}{2}-2+\wp\wm\right)\idb_{\NK-2}\wp^T+\sfrac{1}{2}\wp\idb_{\NK-2}\wp^T\cdot \wm \,,
\end{align}
while the free term $G_{x_{12}}$ in~\eqref{eq:L vs fL} is expressed via the linear term $M_{x_{12}}$ as follows:
\begin{equation}
  G_{x_{12}}=\sfrac{1}{2}M_{x_{12}}^2+\sfrac{1}{4}(\NK-2)M_{x_{12}}+\sfrac{1}{4}(\NK-3-x_{12}^2)\id_{\NK} \,.
\end{equation}

\medskip
\noindent
As a direct consequence of the RTT relation~\eqref{eq:RTT}, we can identify the generators of $\sso_{\NK}$ through:
\begin{equation}\label{eq:F-for-BD}
  \CF_{ij}=\left(M_{1-\st-\frac{\NK}{2}}\right)_{ji} \,.
\end{equation}
In particular, we have:
\begin{equation}\label{eq:cartanD2}
  \CF_{11}=\st-\sum_{k=2}^{\NK-1} \oad_k \oa_k \,, \qquad
  \CF_{ii}=\oad_i\oa_{i}-\oad_{i'}\oa_{i'} \quad \mathrm{for} \quad 1<i\leq r \,.
\end{equation}
As before, let $\Fock$ denote the Fock module of $\CA$, generated by the Fock vacuum $|0\rangle\in \Fock$ satisfying:
\begin{equation}
  \oa_{i} |0\rangle = 0 \,, \qquad 1<i<1' \,.
\end{equation}
Then, the Fock vacuum $|0\rangle$ is obviously a highest weight state of the resulting $\sso_{\NK}$-action:
\begin{equation}
  \CF_{ij}|0\rangle=0 \,, \qquad 1\leq i<j\leq \NK \,,
\end{equation}
with the highest weight $\lambda=t\omega_1=(\st,\underbrace{0,\ldots,0}_{r-1})$, that is:
\begin{equation}
  \CF_{ii} |0\rangle = \st\delta_{i}^1 |0\rangle \,, \qquad 1\leq i\leq r \,.
\end{equation}
Completely similarly to Lemmas~\ref{lem:A-Fock as par-Verma},~\ref{lem:C-Fock as par-Verma},~
\ref{lem:Dspin-Fock as par-Verma},~\ref{lem:Dspin-Fock-2nd as par-Verma}, we have:

\medskip

\begin{Lem}\label{lem:BD-Fock as par-Verma}
There is an $\sso_{\NK}$-module isomorphism:
\begin{equation}\label{eq:BD-Fock-as-verma}
  \Fock \simeq \left(M^{\fp_{\{2,\ldots,r\}}}_{\st\omega_1}\right)^\vee \,.
\end{equation}
\end{Lem}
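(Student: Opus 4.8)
The strategy follows verbatim the pattern of Lemmas~\ref{lem:A-Fock as par-Verma}, \ref{lem:C-Fock as par-Verma}, \ref{lem:Dspin-Fock as par-Verma}, and \ref{lem:Dspin-Fock-2nd as par-Verma}. Three facts need to be checked: that the restricted dual $\Fock^\vee$ has the correct highest weight, that it has the correct formal character, and that it is a highest weight module (i.e.\ is generated by its highest weight vector $\langle 0|$). Since then $\Fock^\vee$ is a quotient of the Verma module $M_{\st\omega_1}$ with the same character as $M^{\fp_{\{2,\ldots,r\}}}_{\st\omega_1}$, and $M^{\fp_{\{2,\ldots,r\}}}_{\st\omega_1}$ is itself a quotient of $M_{\st\omega_1}$, comparing characters forces the two quotients to coincide; dualizing back gives $\Fock\simeq (M^{\fp_{\{2,\ldots,r\}}}_{\st\omega_1})^\vee$.

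\textbf{Highest weight and character.} The highest weight is already computed in the text: equations~\eqref{eq:cartanD2} and the lines immediately following show $\CF_{ij}|0\rangle=0$ for $i<j$ and $\CF_{ii}|0\rangle=\st\delta_i^1|0\rangle$, so the highest weight of $|0\rangle\in\Fock$ is $\st\omega_1$; since $\Phi$ of~\eqref{eq:dual-equivalence} preserves the highest weight, $\Fock^\vee$ also has highest weight $\st\omega_1$. For the character, recall $\dim \Fock[\st\omega_1-\nu]=\dim\Fock^\vee[\st\omega_1-\nu]$ for all $\nu$, and $\Fock$ has a basis $\prod_{1<i<1'}(\oad_i)^{m_i}|0\rangle$; from $\CF_{kk}=$ the expressions in~\eqref{eq:cartanD2} one reads off that $\oad_i$ lowers the weight by the positive root $\epsilon_1-\epsilon_i$ (for $1<i\le r$) or $\epsilon_1+\epsilon_{i'}$ (for $r<i<1'$, and similarly the middle short root in the $B$-case), i.e.\ by exactly the $\NK-2$ roots of $\Delta^+\setminus\Delta^+_\fl$ where $\fl\simeq\gl_{r}$ is the Levi of $\fp_{\{2,\ldots,r\}}$. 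Hence $\ch_\Fock = e^{\st\omega_1}\prod_{\alpha\in\Delta^+\setminus\Delta^+_\fl}(1-e^{-\alpha})^{-1}$, which is precisely $\ch_{M^{\fp_{\{2,\ldots,r\}}}_{\st\omega_1}}$ (note $\dim L^\fl_{\st\omega_1}=1$ here since $\st\omega_1$ vanishes on the coroots $h_{\alpha_2},\ldots,h_{\alpha_r}$), equivalently the $w=\mathrm{id}$ case of Lemma~\ref{lem:M-char}.

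\textbf{Highest weight generation.} This is the only step needing a computation, and it is the main (mild) obstacle: one must show $\langle 0|$ generates $\Fock^\vee$ under the $\sso_\NK$-action, equivalently that each dual basis covector $\langle\vec m|$ lies in $U(\sso_\NK)\langle 0|$. As in the earlier lemmas, it suffices to show $\langle \vec m|$ is a nonzero scalar multiple of a suitable ordered monomial $\prod (\CF^*_{1 i})^{m_i}\langle 0|$ in the lowering operators dual to the raising generators $\CF_{i1}$ (those with $1<i<1'$). For this one inspects the off-diagonal entries of the linear term $M_{x_{12}}$ in~\eqref{eq:BCgens}: the entries $(M_{x_{12}})_{i,1}$ for $1<i<1'$, which by~\eqref{eq:F-for-BD} give the generators $\CF_{1i}$ acting on $\Fock$, are, up to lower-order oscillator terms, the leading creation operators $\oad_i$ (read off the $M_{[12]}$ block, whose leading term is $(x_{12}+\tfrac\NK2-2)\wp$). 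Dually, $\CF_{1i}^*$ acts on $\Fock^*$ as $\oa_i$ plus terms that strictly raise total oscillator degree and hence are triangular with respect to the filtration by $\sum_i m_i$; an induction on $\sum_i m_i$ (exactly as in the proof of Lemma~\ref{lem:A-Fock as par-Verma}, where the analogous statement uses~\eqref{eq:A-overdiag}) then shows the ordered monomial $\prod_i (\CF^*_{1i})^{m_i}\langle 0|$ equals $\langle \vec m|$ up to a nonzero constant. I expect no genuine difficulty beyond bookkeeping of the $\idb_{\NK-2}$-reordering and, in the $B$-case, the slightly different short-root generator; the triangularity argument is robust. Combining the three steps yields the isomorphism~\eqref{eq:BD-Fock-as-verma}. $\hfill\qed$
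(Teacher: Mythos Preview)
Your approach is precisely the paper's, which simply says the proof is ``completely similar to Lemmas~\ref{lem:A-Fock as par-Verma},~\ref{lem:C-Fock as par-Verma},~\ref{lem:Dspin-Fock as par-Verma},~\ref{lem:Dspin-Fock-2nd as par-Verma}''. Two slips in your execution are worth flagging. First, the Levi of $\fp_{\{2,\ldots,r\}}\subset\sso_{\NK}$ is $\fl\simeq\sso_{\NK-2}\oplus\gl_1$, not $\gl_r$ (you have imported the Levi from the $C$-type and $D$-spinor sections; see Subsection~\ref{ssec BD all osc}); your identification of $\Delta^+\setminus\Delta^+_\fl$ as the $\NK-2$ positive roots involving $\epsilon_1$ is nonetheless correct, so the character comparison goes through. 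Second, you have swapped $\CF_{1i}$ with $\CF_{i1}$ and misread the block: the \emph{raising} generators are $\CF_{1i}=(M_{x_{12}})_{i,1}$ for $1<i<1'$, which lie in the $-\wm$ block of~\eqref{eq:BCgens} and equal $-\oa_i$ \emph{exactly}, with no higher-order oscillator corrections; the $M_{[12]}$ block instead contains the lowering operators $\CF_{i1}=(M_{x_{12}})_{1,i}$. This in fact makes the generation step cleaner than you suggest: since $\CF_{1i}=-\oa_i$ on the nose, the monomial $\prod_{1<i<1'}(\CF^*_{i1})^{m_i}\langle 0|$ (in the paper's convention of Lemma~\ref{lem:A-Fock as Verma}, $\CF^*_{i1}$ denotes the dual of $\CF_{1i}$) is a nonzero multiple of $\langle\vec m|$ directly, and no triangularity induction is required.
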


\medskip
\noindent
Completely similarly to Corollaries~\ref{cor:t-special A},~\ref{cor:t-special C},~\ref{cor:t-special D},
~\ref{cor:t-special D-2nd}, we thus get:

\medskip

\begin{Cor}\label{cor:t-special BD}
(a) For $\st\notin 4-\NK+\sfrac{1}{2}\BN$, the $\sso_{\NK}$-module $\Fock$ is irreducible.
%(thus, is generated by $|0\rangle$).

\medskip
\noindent
(b) For $\st\in \BN$, the Fock vacuum $|0\rangle$ generates an irreducible
finite-dimensional $\sso_{\NK}$-module $L_{\st\omega_1}$.
\end{Cor}

\medskip

   %%%%%%%%%%%%%%%%%%%%%%%%%%%%%%%%%%%%%%%%%%%%%%%%%%%%%%%%%%%%%%%%%%%%%%%%%%%%%%%%%%%%%%%%%%%%%
   %%%%%%%%%%%%%%%%%%%%%%%%%%%%%%%%%%%%%%%% SUBSECTION %%%%%%%%%%%%%%%%%%%%%%%%%%%%%%%%%%%%%%%%%
   %%%%%%%%%%%%%%%%%%%%%%%%%%%%%%%%%%%%%%%%%%%%%%%%%%%%%%%%%%%%%%%%%%%%%%%%%%%%%%%%%%%%%%%%%%%%%

\subsection{More oscillator realizations in types BD via underlying symmetries}
\label{ssec BD all osc}
$\ $

Consider the following $2r$ endomorphisms of $\BC^{\NK}$:
\begin{equation}\label{eq:hat-B}
  \hat{B}_{k}=
    \begin{cases}
      \sum_{j=1}^{\NK} e_{jj} & \text{for } k=1 \\
      e_{1k}+e_{k1}+e_{1'k'}+e_{k'1'}+\sum_{1<j<1'}^{j\ne k,k'} e_{jj}
        & \text{for } 1<k\leq r \\[0.2cm]
      e_{1k}+e_{k1}+e_{1'k'}+e_{k'1'}+\sum_{1< j< 1'}^{j\ne k,k'} e_{jj'}
        & \text{for } r'\leq k<1' \\[0.2cm]
      \sum_{j=1}^{\NK} e_{jj'} & \text{for } k=1'
  \end{cases} \,.
\end{equation}
Since the $R$-matrix~\eqref{eq:BCD-Rmatrix} is invariant under such transformations, cf.~\eqref{eq:R-inv}:
\begin{equation}
  \Big[R(x),\hat{B}_{k}\otimes \hat{B}_{k}\Big]=0
  \,, \qquad \forall\, k\in \{1,\ldots,r\}\cup \{r',\ldots,1'\} \,,
\end{equation}
we can generate more solutions to the RTT relation~\eqref{eq:RTT} from the Lax matrix~\eqref{eq:L vs fL} via:
\begin{equation}\label{eq:LaxBDmod}
  \CL_{k}(x)=\left. \hat{B}_k L_{x_{12}=1-\st-\frac{\NK}{2}}(x)\hat{B}_k^{-1} \right|_{p.h.}
  \,, \qquad \forall\, k\in \{1,\ldots,r\}\cup \{r',\ldots,1'\} \,.
\end{equation}
Here, we apply the following particle-hole automorphism of $\CA$ (denoted \emph{p.h.}):
\begin{equation}\label{eq:pt-transformBD}
\begin{split}
  & \oad_{j}\mapsto -\oa_{j} \,, \quad \oa_{j}\mapsto\oad_{j} \, \quad
    \text{for}\quad 1<j\leq k \qquad \mathrm{if}\quad 1\leq k\leq r \,, \\
  & \oad_{j}\mapsto -\oa_{j} \,, \quad \oa_{j}\mapsto\oad_{j} \, \quad
    \text{for}\quad k'< j<1' \quad\ \mathrm{if}\quad r'\leq k\leq 1' \,,
\end{split}
\end{equation}
uniquely chosen to insure that the Fock vacuum $|0\rangle$ remains to be an $\sso_{\NK}$ highest weight state.

\medskip
\noindent
The resulting $\sso_{\NK}$ generators are read off the linear term of~\eqref{eq:LaxBDmod} in the spectral parameter:
\begin{equation}\label{eq:F-for-BD general}
  \CF_{ij}^{k}=
  \left(\left.\hat B_k M_{1-\st-\frac{\NK}{2}}\hat B_k^{-1}\right|_{p.h.}\right)_{ji}
  \,, \qquad k\in \{1,\ldots,r\}\cup \{r',\ldots,1'\} \,.
\end{equation}
This makes the Fock module $\Fock$ into an $\sso_{\NK}$-module, denoted by $M^+_{k,\st}$.
For $1\leq i\leq r$, we get:
\begin{equation}\label{eq:F-for-BD action}
  \CF^{k}_{ii} |0\rangle=
    \begin{cases}
      \Big((\st+k-1)\delta_{i}^{k}-\delta_{i<k}\Big) |0\rangle & \text{for } 1\leq k\leq r \\
      \Big((-\st-k+2)\delta_{i}^{k'}-\delta_{i<k'}\Big) |0\rangle & \text{for } r'\leq k\leq 1'
    \end{cases} \,,
\end{equation}
thus the corresponding highest weight of $|0\rangle \in M^+_{k,\st}$ is:
\begin{equation}\label{eq:replBD}
\begin{split}
  & (\underbrace{-1,\ldots,-1}_{k-1},\st+k-1,\underbrace{0,\ldots,0}_{r-k})
    \qquad \mathrm{for}\quad 1\leq k\leq r \,, \\
  & (\underbrace{-1,\ldots,-1}_{k'-1},-\st-k+2,\underbrace{0,\ldots,0}_{r-k})
    \qquad \mathrm{for}\quad r'\leq k \leq 1' \,.
\end{split}
\end{equation}

\medskip
\noindent
We shall now compare the above modules $M^+_{k,\st}$'s with those from the Introduction.
To this end, let us consider the parabolic $\fp_S\subset \sso_{\NK}$ corresponding to $S=\{2,\ldots,r\}$
with $r=\lfloor \frac{\NK}{2}\rfloor$, see~\S\ref{ssec truncated-BGG}. The Weyl group of $\sso_{\NK}$ can be identified
with $W\simeq \{\pm 1\}^r\rtimes S_r$ for $\NK=2r+1$ or $W\simeq \{\pm 1\}^r_{+}\rtimes S_r$ for $\NK=2r$,
cf.~\eqref{eq:even-odd}. The Weyl group of the Levi subalgebra $\fl\simeq \sso_{\NK-2}\oplus \gl_1$ is
$W_{\fl}\simeq \{\pm 1\}^{r-1}\rtimes S_{r-1}$ for $\NK=2r+1$ or $W\simeq \{\pm 1\}^{r-1}_{+}\rtimes S_{r-1}$
for $\NK=2r$, consisting of those $(\vec\mu,\sigma)\in W$ such that $\mu_1=+1$ and $\sigma(1)=1$.
Equivalently, $W_{\fl}$ is the stabilizer of $1$ for the natural transitive action of the Weyl group $W$
on the set $\{1\ldots,r\} \cup \{r',\ldots,1'\}$. Thus, we have a set bijection
\begin{equation}\label{eq:set-bij BD}
  \pi \colon W/W_{\fl} \ \iso \ \{1,\ldots,r\} \cup \{r',\ldots,1'\} \,,
\end{equation}
cf.~(\ref{eq:set-bij},~\ref{eq:set-bij C},~\ref{eq:set-bij D+},~\ref{eq:set-bij D-}).
For any $1\leq k\leq r$, we define the permutation $\sigma_k\in S_r$ as $\sigma_{\{k\}}$ of~\eqref{eq:special permutation}:
\begin{equation}\label{eq:k-to-sigma 1}
  \sigma_k(1)=k \,,\ \sigma_k(2)=1 \,,\ \ldots \,, \ \sigma_k(k)=k-1 \,,\
  \sigma_k(k+1)=k+1 \,,\ \ldots \,,\ \sigma_k(r)=r \,,
\end{equation}
and further consider $w_{k}\in W$ given by:
\begin{equation}\label{eq:k-to-w 1}
  w_{k}=\Big( (+1,\ldots,+1),\sigma_k \Big) \,, \qquad 1\leq k\leq r \,.
\end{equation}
The element $w_{k}\in \pi^{-1}(k)$ can be characterized similarly to
Lemmas~\ref{lem:shortest decsription C},~\ref{lem:shortest decsription D+},~\ref{lem:shortest decsription D-}:

\medskip

\begin{Lem}\label{lem:shortest decsription BD 1}
$w_k$ is the shortest representative of the left coset $\pi^{-1}(k)$, for any $1\leq k\leq r$.
\end{Lem}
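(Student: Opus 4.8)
\textbf{Proof plan for Lemma \ref{lem:shortest decsription BD 1}.}
The plan is to follow the same blueprint as the proofs of Lemmas~\ref{lem:shortest decsription C}, \ref{lem:shortest decsription D+}, \ref{lem:shortest decsription D-}, namely to compute $l(w_k)$ via the combinatorial formula~\eqref{eq:length-interpretation}, $l(w)=\#\{\alpha\in\Delta^+\mid w(\alpha)\in-\Delta^+\}$, and to show it equals the minimal possible value $k-1$ on the coset $\pi^{-1}(k)$. First I would fix the standard realization of $\Delta^+$ for $\sso_{\NK}$ in the basis $\{\epsilon_i\}_{i=1}^r$ (roots $\epsilon_i\pm\epsilon_j$ for $i<j$, plus $\epsilon_i$ in the $B_r$ case), and note that $W_{\fl}\simeq\{\pm1\}^{(\prime)}_{r-1}\rtimes S_{r-1}$ acts on $\{\epsilon_2,\dots,\epsilon_r\}$ while fixing $\epsilon_1$, so the coset $\pi^{-1}(k)$ consists exactly of those $w\in W$ with $w^{-1}(\epsilon_1)=\epsilon_k$ (i.e.\ $w$ sends the first coordinate direction to the $k$-th). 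Since $w_k=((+1,\dots,+1),\sigma_k)$ has no sign flips, for a positive root $\alpha=\epsilon_i\pm\epsilon_j$ ($i<j$) the image $w_k(\alpha)=\epsilon_{\sigma_k(i)}\pm\epsilon_{\sigma_k(j)}$ is negative iff $\sigma_k(i)>\sigma_k(j)$ for the "$-$" family and never negative for the "$+$" family (and, in type $B$, $w_k(\epsilon_i)=\epsilon_{\sigma_k(i)}$ is never negative). Hence $l(w_k)=\#\{(i,j):i<j,\ \sigma_k(i)>\sigma_k(j)\}=l(\sigma_k)$ as an element of $S_r$, and the explicit cycle form~\eqref{eq:k-to-sigma 1} (which is just the cycle $(1\,2\,\cdots\,k)$ moving $k$ to the front) gives $l(\sigma_k)=k-1$.

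Next I would establish the lower bound: every $w\in\pi^{-1}(k)$ satisfies $l(w)\geq k-1$. The cleanest way is to observe that $w^{-1}(\epsilon_1)=\epsilon_k$ forces $w^{-1}$ to carry $\epsilon_1$ past $\epsilon_2,\dots,\epsilon_{k-1}$; concretely, for each $j\in\{2,\dots,k\}$ at least one of the positive roots $\epsilon_1-\epsilon_j$, $\epsilon_1+\epsilon_j$ must be sent to a negative root by $w$ — indeed $w(\epsilon_1-\epsilon_j)+w(\epsilon_1+\epsilon_j)=2w(\epsilon_1)=2\epsilon_k$ (up to sign bookkeeping depending on $\mu_1$), and since $j\ne k$ these two images cannot both be positive multiples/combinations summing to $2\epsilon_k$ unless one is negative. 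This produces $k-1$ distinct positive roots in the inversion set of $w$, whence $l(w)\geq k-1$. Combining with $l(w_k)=k-1$ shows $w_k$ is a shortest representative; uniqueness of the shortest representative in each coset $W/W_{\fl}$ is standard (Kostant), so $w_k$ \emph{is} the shortest one.

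The main obstacle I anticipate is the careful case distinction between $B_r$ ($\NK=2r+1$) and $D_r$ ($\NK=2r$): in type $D$ the group $W$ only allows an even number of sign changes, so one must check that the argument for the lower bound still goes through with the parity constraint (it does, because $w_k$ itself uses zero sign changes, and the inversion-counting for the lower bound only uses the "$\epsilon_i-\epsilon_j$" roots, which are insensitive to the ambient parity condition). A secondary, purely bookkeeping, subtlety is the presence of the short roots $\epsilon_i$ in type $B$: one must verify they contribute nothing to $l(w_k)$, which is immediate since $\sigma_k$ is a genuine permutation with no sign flips, so $w_k(\epsilon_i)=\epsilon_{\sigma_k(i)}\in\Delta^+$ always. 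Once these two points are handled, the proof is a one-line appeal to~\eqref{eq:length-interpretation} plus the explicit description of $\sigma_k$, exactly paralleling the earlier lemmas; I would in fact phrase the final write-up as "this follows from~\eqref{eq:length-interpretation} together with the explicit description~\eqref{eq:k-to-sigma 1} of $\sigma_k$, precisely as in the proof of Lemma~\ref{lem:shortest decsription C}," and relegate the $B_r$ versus $D_r$ check to a parenthetical remark.
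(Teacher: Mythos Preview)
Your approach is the same as the paper's (a direct appeal to the inversion formula~\eqref{eq:length-interpretation}, exactly paralleling Lemma~\ref{lem:shortest decsription C}), and your computation $l(w_k)=l(\sigma_k)=k-1$ is correct. Two issues deserve attention, though.

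First, a minor slip: since $W_{\fl}$ is the stabilizer of $\epsilon_1$, the left coset $wW_{\fl}$ is determined by $w(\epsilon_1)$, not $w^{-1}(\epsilon_1)$; so $\pi^{-1}(k)=\{w:w(\epsilon_1)=\epsilon_k\}$. Your parenthetical gloss is right, but the displayed formula $w^{-1}(\epsilon_1)=\epsilon_k$ is backwards.

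Second, and more substantively, your lower-bound argument as written does not work. The claim ``for each $j\in\{2,\dots,k\}$ at least one of $\epsilon_1\pm\epsilon_j$ is sent to a negative root by $w$'' is false: take any $w$ with $w(\epsilon_1)=\epsilon_k$ and $w(\epsilon_2)=\epsilon_{k+1}$, then $w(\epsilon_1-\epsilon_2)=\epsilon_k-\epsilon_{k+1}\in\Delta^+$ and $w(\epsilon_1+\epsilon_2)=\epsilon_k+\epsilon_{k+1}\in\Delta^+$. The index $j$ lives in the domain, whereas the obstruction lives in the codomain. A correct version: for each $i\in\{1,\dots,k-1\}$ the root $\beta_i:=-w^{-1}(\epsilon_i-\epsilon_k)$ is positive (since $w^{-1}(\epsilon_k)=\epsilon_1$ and $w^{-1}(\epsilon_i)=\pm\epsilon_m$ with $m\geq 2$, so $w^{-1}(\epsilon_i-\epsilon_k)=\pm\epsilon_m-\epsilon_1\in-\Delta^+$), and $w(\beta_i)=\epsilon_k-\epsilon_i\in-\Delta^+$; these $k-1$ roots are distinct, giving $l(w)\geq k-1$.

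In fact you can bypass the lower bound entirely. Your own inversion count shows that the only positive roots inverted by $w_k$ are $\epsilon_1-\epsilon_j$ for $2\leq j\leq k$, none of which lie in $\Delta^+_{\fl}$; hence $w_k(\Delta^+_{\fl})\subset\Delta^+$, i.e.\ $w_k\in{}^{\fl}W$ by~\eqref{eq:shortest left}, and uniqueness of the shortest representative is then automatic from~\eqref{eq:shortest left equiv}. This is closer in spirit to what the paper does, and makes the $B_r$/$D_r$ parity issue moot.
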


\medskip
\noindent
Likewise, for $1\leq k\leq r$, we also define $w_{k'}\in W$ via:
\begin{equation}\label{eq:k-to-w 2}
  w_{k'}=\left(\mu(k),\sigma_{k}\right) \,, \qquad 1\leq k\leq r \,,
\end{equation}
with $\sigma_{k}\in S_r$ as in~\eqref{eq:k-to-sigma 1} and $\mu(k)\in \{\pm 1\}^r$ having $-1$ components only at the:
\begin{enumerate}
  \item[(1)] $k$-th spot, if $\NK=2r+1$;
  \item[(2)] $k$-th and $r$-th spots, if $\NK=2r$ and $k<r$;
  \item[(3)] $(r-1)$-th and $r$-th spots, if $\NK=2r$ and $k=r$.
\end{enumerate}
Then, similarly to Lemma~\ref{lem:shortest decsription BD 1}, we have the following characterization of such elements:

\medskip

\begin{Lem}\label{lem:shortest decsription BD 2}
$w_k$ is the shortest representative of the left coset $\pi^{-1}(k)$, for any $r'\leq k\leq 1'$.
\end{Lem}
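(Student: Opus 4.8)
The plan is to follow verbatim the strategy used for Lemmas~\ref{lem:shortest decsription C},~\ref{lem:shortest decsription D+}, and~\ref{lem:shortest decsription D-}, namely to invoke the standard combinatorial description~\eqref{eq:length-interpretation} of the length function on the Weyl group together with the characterization~\eqref{eq:shortest left equiv} of ${}^{\fl}W$. Concretely, recall from~\eqref{eq:shortest left} that an element $w\in W$ is the shortest representative of its left coset $wW_{\fl}$ precisely when $w(\Delta^+_{\fl})\subseteq\Delta^+$; hence for a fixed $1\le k\le r$ it suffices to check two things about the element $w_{k'}$ of~\eqref{eq:k-to-w 2}: (i) that $\pi(w_{k'}W_{\fl})=k'$ under the bijection~\eqref{eq:set-bij BD}, and (ii) that $w_{k'}(\Delta^+_{\fl})\subseteq\Delta^+$.

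Step (i) I expect to be immediate. The Weyl group $W\simeq\{\pm 1\}^r\rtimes S_r$ (for $\NK=2r+1$) or $W\simeq\{\pm 1\}^r_{+}\rtimes S_r$ (for $\NK=2r$) acts transitively on $\{1,\ldots,r\}\cup\{r',\ldots,1'\}$ with the point $1$ having stabilizer $W_{\fl}$, so $\pi(wW_{\fl})$ is nothing but $w\cdot 1$. By~\eqref{eq:k-to-sigma 1} the permutation $\sigma_k$ sends $1\mapsto k$, and by the definition of $\mu(k)$ the associated sign vector has a $-1$ in the $k$-th slot, so $w_{k'}=(\mu(k),\sigma_k)$ sends $1\mapsto k'$; thus $\pi(w_{k'}W_{\fl})=k'$.

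Step (ii) is the computational heart. Here I would use that for $\sso_{\NK}$ in the basis $\{\epsilon_i\}_{i=1}^r$ one has $\Delta^+=\{\epsilon_i-\epsilon_j\}_{i<j}\cup\{\epsilon_i+\epsilon_j\}_{i<j}$, together with the short roots $\{\epsilon_i\}_{i}$ when $\NK=2r+1$, and that the Levi $\fl\simeq\sso_{\NK-2}\oplus\gl_1$ has $\Delta^+_{\fl}$ equal to those positive roots supported on $\epsilon_2,\ldots,\epsilon_r$ only. One then evaluates $w_{k'}$ --- which acts by the cyclic permutation $\sigma_k$ of the block $\{1,\ldots,k\}$ (fixing $k+1,\ldots,r$) followed by the sign changes encoded in $\mu(k)$ --- on each root $\epsilon_i\mp\epsilon_j$ with $2\le i<j\le r$, and on $\epsilon_i$ with $2\le i\le r$ in type $B$, and checks positivity of the image, going through the three cases in the definition of $\mu(k)$: $\NK=2r+1$; $\NK=2r$ with $k<r$; and $\NK=2r$ with $k=r$.

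The hard part will be the bookkeeping in the last case, $\NK=2r$ with $k=r$: there $\mu(r)$ must carry $-1$'s at \emph{two} spots, the $(r-1)$-th and the $r$-th, because a single sign flip $\epsilon_r\mapsto-\epsilon_r$ does not lie in the index-two type-$D$ Weyl group $\{\pm 1\}^r_{+}\rtimes S_r$. One must verify that this auxiliary sign change at the $(r-1)$-th coordinate, combined with the long cycle $\sigma_r$ on $\{1,\ldots,r\}$, still sends every element of $\Delta^+_{\fl}$ to a positive root --- this is the only place where the parity/type-$D$ subtlety enters and where I would track the indices carefully. All the remaining cases reduce to the short verification already implicit in the proof of Lemma~\ref{lem:shortest decsription BD 1} (the $\mu(k)=(+1,\ldots,+1)$ situation) combined with the sign-change analysis of Lemma~\ref{lem:shortest decsription D-}.
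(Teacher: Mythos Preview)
Your proposal is correct and follows essentially the same approach as the paper: the paper proves the analogous Lemma~\ref{lem:shortest decsription C} simply by invoking the standard combinatorial description~\eqref{eq:length-interpretation} of the length function, and states Lemmas~\ref{lem:shortest decsription D+},~\ref{lem:shortest decsription D-},~\ref{lem:shortest decsription BD 1},~\ref{lem:shortest decsription BD 2} without further proof beyond ``similar to'' that argument. Your use of the equivalent characterization~\eqref{eq:shortest left} (checking $w_{k'}(\Delta^+_{\fl})\subseteq\Delta^+$ directly) is the same idea, just with the case analysis --- including the type-$D$ parity subtlety at $k=r$ --- spelled out more explicitly than the paper does.
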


\medskip
\noindent
Combining Lemmas~\ref{lem:shortest decsription BD 1} and~\ref{lem:shortest decsription BD 2} with
the set bijection~\eqref{eq:set-bij BD} and~\eqref{eq:shortest left equiv}, we get:

\medskip

\begin{Cor}\label{BD-left-coset}
${}^{\fl}W = \Big\{w_{k} \, \Big|\, k\in \{1,\ldots,r\}\cup \{r',\ldots, 1'\} \Big\}$.
\end{Cor}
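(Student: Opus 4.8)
\textbf{Proof plan for Corollary~\ref{BD-left-coset}.}
The plan is to assemble the statement directly from Lemmas~\ref{lem:shortest decsription BD 1} and~\ref{lem:shortest decsription BD 2} together with the general description~\eqref{eq:shortest left equiv} of ${}^{\fl}W$ as the set of shortest representatives of the left cosets $W/W_{\fl}$. First I would recall that the set bijection $\pi\colon W/W_{\fl}\iso \{1,\ldots,r\}\cup\{r',\ldots,1'\}$ of~\eqref{eq:set-bij BD} identifies left cosets with the $\NK$-element index set via the transitive $W$-action on the "first basis vector'' (equivalently, on the set of extreme weights of the vector representation), whose stabilizer is precisely $W_{\fl}$. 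Thus the left cosets are exactly $\{\pi^{-1}(k)\}_{k\in\{1,\ldots,r\}\cup\{r',\ldots,1'\}}$, and by~\eqref{eq:shortest left equiv} the set ${}^{\fl}W$ consists of the unique shortest representative of each such coset.

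Next, I would invoke Lemma~\ref{lem:shortest decsription BD 1}, which says that for $1\le k\le r$ the element $w_k$ of~\eqref{eq:k-to-w 1} is the shortest representative of $\pi^{-1}(k)$, and Lemma~\ref{lem:shortest decsription BD 2}, which says the same for $r'\le k\le 1'$ with $w_k$ as in~\eqref{eq:k-to-w 2}. Since every coset arises as $\pi^{-1}(k)$ for exactly one index $k$ in the disjoint union $\{1,\ldots,r\}\sqcup\{r',\ldots,1'\}$, and since the shortest representative is unique (a standard fact for parabolic cosets in a Coxeter group), the collection $\{w_k\mid k\in\{1,\ldots,r\}\cup\{r',\ldots,1'\}\}$ is in bijection with $W/W_{\fl}$ and exhausts all shortest representatives. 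Hence ${}^{\fl}W=\{w_k\mid k\in\{1,\ldots,r\}\cup\{r',\ldots,1'\}\}$, as claimed.

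There is essentially no genuine obstacle here: the work has already been done in Lemmas~\ref{lem:shortest decsription BD 1} and~\ref{lem:shortest decsription BD 2} (whose own proofs rest on the combinatorial length formula~\eqref{eq:length-interpretation}, exactly as in the proof of Lemma~\ref{lem:shortest decsription C}). The only point requiring a line of care is checking that the $w_k$ are pairwise distinct and that no coset is counted twice or omitted, which follows immediately from $\pi$ being a bijection and from the explicit formulas~(\ref{eq:k-to-w 1},~\ref{eq:k-to-w 2}) assigning to distinct $k$ elements with distinct images under $\pi$. Therefore the Corollary is a direct consequence of the two preceding lemmas and~\eqref{eq:shortest left equiv}.
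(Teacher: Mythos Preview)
Your proposal is correct and follows exactly the paper's own approach: the paper states the Corollary as a direct consequence of Lemmas~\ref{lem:shortest decsription BD 1} and~\ref{lem:shortest decsription BD 2} combined with the set bijection~\eqref{eq:set-bij BD} and the characterization~\eqref{eq:shortest left equiv} of ${}^{\fl}W$. There is nothing to add.
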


\medskip
\noindent
Combining now the formula~\eqref{eq:replBD} with the definition of $w_{k}\in W$,
see~(\ref{eq:k-to-w 1},~\ref{eq:k-to-w 2}), we conclude that the highest weight of the Fock vacuum
$|0\rangle\in M^+_{k,\st}$ coincides with the highest weight of our key modules
$M'_{w_{k}\, \cdot \, \st\omega_1}$ introduced in~\eqref{eq:hard modules}, see Corollary~\ref{BD-left-coset},
for any $k\in \{1,\ldots,r\} \cup \{r',\ldots,1'\}$.
Furthermore, $M'_{w_k\, \cdot \, \st\omega_1}$ has the same character as $M^+_{k,\st}$
(according to Lemma~\ref{lem:M-char}) and is irreducible for $\st\notin \BZ$ (as follows from~\cite{j}).
Therefore, similarly to Propositions~\ref{lem:A-Fock as M-mod},~\ref{lem:C-Fock as M-mod},~\ref{lem:Dspin-Fock as M-mod},
we obtain:

\medskip

\begin{Prop}\label{lem:BD-Fock as M-mod}
For $k\in  \{1,\ldots,r\} \cup \{r',\ldots,1'\}$ and $\st\notin \BZ$, we have $\sso_{\NK}$-module isomorphisms:
\begin{equation}\label{eq:BD-Fock-as-M}
  M^+_{k,\st} \simeq M'_{w_{k}\, \cdot \, \st\omega_1} \simeq \left(M'_{w_{k}\, \cdot \, \st\omega_1}\right)^\vee \,.
\end{equation}
\end{Prop}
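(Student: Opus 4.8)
\textbf{Proof proposal for Proposition~\ref{lem:BD-Fock as M-mod}.}

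The plan is to follow the exact same strategy already executed in Propositions~\ref{lem:A-Fock as M-mod}, \ref{lem:C-Fock as M-mod}, and~\ref{lem:Dspin-Fock as M-mod}. The point is that three ingredients have already been assembled in the preceding material, and they combine mechanically: (i) the highest weight of the Fock vacuum $|0\rangle\in M^+_{k,\st}$ has been computed in~\eqref{eq:replBD} and identified with $w_k\cdot\st\omega_1$ via the explicit description of $w_k$ in~(\ref{eq:k-to-w 1},~\ref{eq:k-to-w 2}) and Corollary~\ref{BD-left-coset}; (ii) the character of $M^+_{k,\st}$ as an $\sso_\NK$-module is the character of the Fock space under the grading~\eqref{eq:cartanD2} (equivalently~\eqref{eq:F-for-BD action}), which by Lemma~\ref{lem:M-char} coincides with $\ch_{M'_{w_k\cdot\st\omega_1}}$; (iii) for $\st\notin\BZ$ the module $M'_{w_k\cdot\st\omega_1}$ is irreducible, by the classical Jantzen-type criterion~\cite{j} for weights of singular vectors in Verma modules applied to the highest weight $w_k\cdot\st\omega_1$.

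Concretely, first I would record that $M^+_{k,\st}$ is a highest weight $\sso_\NK$-module: the Fock vacuum $|0\rangle$ is annihilated by all $\CF^k_{ij}$ with $i<j$ (already noted after~\eqref{eq:F-for-BD general}), and $|0\rangle$ generates $\Fock$ under the creation operators $\oad_i$, which by the explicit formula for $\CL_k(x)$ (more precisely for the lower-triangular/negative part of~\eqref{eq:LaxBDmod}) are, up to nonzero scalars and lower-order corrections, among the lowering operators $\CF^k_{ij}$ with $i>j$; hence $\Fock=U(\sso_\NK)|0\rangle$. This is the exact analogue of the argument in the proof of Lemma~\ref{lem:A-Fock as par-Verma}. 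Second, a highest weight module with a given highest weight $\nu$ is determined up to isomorphism once its character is fixed to equal $\ch_{M'_\nu}$ \emph{and} the target $M'_\nu$ is irreducible: then any nonzero highest weight map $M'_\nu\to M^+_{k,\st}$ (which exists because $M'_\nu$ is a quotient of the Verma module $M_\nu$ and $M^+_{k,\st}$ is a highest weight module of weight $\nu$) is forced to be an isomorphism by comparing characters, using irreducibility of the source to conclude the kernel is zero. Third, the self-duality $M^+_{k,\st}\simeq (M^+_{k,\st})^\vee$ for $\st\notin\BZ$ follows since $M'_{w_k\cdot\st\omega_1}$ is then irreducible, hence isomorphic to its own restricted dual (irreducible modules in category $\CO$ with real— here generic—highest weight are $\Phi$-self-dual), so $M^\vee_{k,\st}:=(M'_{w_k\cdot\st\omega_1})^\vee\simeq M'_{w_k\cdot\st\omega_1}\simeq M^+_{k,\st}$.

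The only genuinely case-specific verification is the highest-weight computation~\eqref{eq:F-for-BD action}, i.e.\ confirming that conjugating the linear term $M_{x_{12}}$ by $\hat B_k$ and applying the particle-hole automorphism~\eqref{eq:pt-transformBD} yields exactly the diagonal entries in~\eqref{eq:replBD}; this is a short matrix bookkeeping using~\eqref{eq:BCgens} and~\eqref{eq:cartanD2}, directly parallel to~\eqref{eq:replC} and~\eqref{eq:replD}, and it has effectively already been carried out in the excerpt. I expect the main obstacle to be purely expository: keeping the two sub-cases $1\le k\le r$ and $r'\le k\le 1'$ (and, for the latter, the three sub-sub-cases in the definition of $\mu(k)$ distinguishing $B_r$ from $D_r$ and $k<r$ from $k=r$) aligned with the corresponding $w_k\in{}^{\fl}W$, so that Lemmas~\ref{lem:shortest decsription BD 1} and~\ref{lem:shortest decsription BD 2} correctly match the Fock highest weights with the weights $w_k\cdot\st\omega_1$. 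Once those bookkeeping matches are in place, irreducibility of $M'_{w_k\cdot\st\omega_1}$ for $\st\notin\BZ$ via~\cite{j} closes the argument exactly as in the earlier types. I would therefore write the proof as: ``Completely analogous to the proof of Proposition~\ref{lem:C-Fock as M-mod}, using the highest weight computation~\eqref{eq:F-for-BD action}, Corollary~\ref{BD-left-coset}, the character formula of Lemma~\ref{lem:M-char}, and the irreducibility of $M'_{w_k\cdot\st\omega_1}$ for $\st\notin\BZ$ following from~\cite{j}.''
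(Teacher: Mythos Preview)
Your proposal is correct and follows essentially the same approach as the paper: the paper's argument (given in the paragraph immediately preceding the Proposition) simply notes that the highest weight of $|0\rangle\in M^+_{k,\st}$ matches that of $M'_{w_k\cdot\st\omega_1}$ via~\eqref{eq:replBD} and Corollary~\ref{BD-left-coset}, that the characters agree by Lemma~\ref{lem:M-char}, and that $M'_{w_k\cdot\st\omega_1}$ is irreducible for $\st\notin\BZ$ by~\cite{j}, then defers to the earlier Propositions~\ref{lem:A-Fock as M-mod},~\ref{lem:C-Fock as M-mod},~\ref{lem:Dspin-Fock as M-mod}. Your one-line summary at the end is exactly what the paper effectively writes; the extra step you sketch (showing $M^+_{k,\st}$ is generated by $|0\rangle$) is harmless but not strictly needed, since once $\ch M^+_{k,\st}=\ch M'_{w_k\cdot\st\omega_1}$ with the latter irreducible, the image of the Verma map already exhausts $M^+_{k,\st}$ by character comparison.
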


\medskip

\begin{Rem}\label{rem:BD-details}
Let us point out the key difference between Proposition~\ref{lem:BD-Fock as M-mod} and Lemma~\ref{lem:BD-Fock as par-Verma}:

\medskip
\noindent
(a) For $k=1$, we actually have $M^+_{k,\st} \simeq (M'_{w_{k}\, \cdot \, \st\omega_1})^\vee$
for any $\st\in \BC$, due to Lemma~\ref{lem:BD-Fock as par-Verma}.

\medskip
\noindent
(b) Likewise, for $k=1'$, we have $M^+_{k,\st} \simeq M'_{w_{k}\, \cdot \, \st\omega_1}$ for any $\st\in \BC$.

\medskip
\noindent
(c) For other values of $k$, $M^+_{k,\st}$ is \underline{not isomorphic} to either of
$M'_{w_{k}\, \cdot \, \st\omega_1}$ or $(M'_{w_{k}\, \cdot \, \st\omega_1})^\vee$ at certain
$\st\in \BZ$ (but is expected to be isomorphic to one of the twisted Verma modules in the sense of~\cite{al}).
\end{Rem}

\medskip
\noindent
Evoking the above bijection $\{1,\ldots,r\} \cup \{r',\ldots,1'\}\ni k \leftrightarrow w_{k}\in {}^{\fl}W$
of Corollary~\ref{BD-left-coset}, we define:
\begin{equation}\label{eq:BDM-modified}
  M^\vee_{k,\st}=\left(M'_{w_{k}\, \cdot \, \st\omega_1}\right)^\vee \,, \qquad \forall\, \st\in \BC \,.
\end{equation}
Then, Proposition~\ref{lem:BD-Fock as M-mod} can be recast as the isomorphism of the following $\sso_{\NK}$-modules:
\begin{equation}\label{BD-generic-coincidence}
  M^+_{k,\st}\simeq M^\vee_{k,\st} \,, \qquad \forall \, \st\in \BC \setminus \BZ \,.
\end{equation}

\medskip
\noindent
For the key results of the following subsection, let us record the lengths of the above elements:
\begin{equation}\label{eq:k-length BD}
  l(w_k)=
  \begin{cases}
    k-1 \qquad & \text{for} \ 1\leq k \leq r \\
    k-2 \qquad & \text{for} \ r'\leq k\leq 1' \\
  \end{cases} \,,
\end{equation}
which follows from~\eqref{eq:length-interpretation} and the explicit description of the set $\Delta^+$
of positive roots of $\sso_{\NK}$.

\medskip

   %%%%%%%%%%%%%%%%%%%%%%%%%%%%%%%%%%%%%%%%%%%%%%%%%%%%%%%%%%%%%%%%%%%%%%%%%%%%%%%%%%%%%%%%%%%%%
   %%%%%%%%%%%%%%%%%%%%%%%%%%%%%%%%%%%%%%%% SUBSECTION %%%%%%%%%%%%%%%%%%%%%%%%%%%%%%%%%%%%%%%%%
   %%%%%%%%%%%%%%%%%%%%%%%%%%%%%%%%%%%%%%%%%%%%%%%%%%%%%%%%%%%%%%%%%%%%%%%%%%%%%%%%%%%%%%%%%%%%%

\subsection{Type BD transfer matrices}
$\ $

Recall the notion of transfer matrices $\{T_W(x)\}_{W\in \mathrm{Rep}\, Y(\sso_{\NK})}$,
as discussed in Subsection~\ref{ssec R-matrix}.
In particular, we shall consider the following explicit infinite-dimensional transfer matrices:
\begin{equation}\label{eq:BD-inf-transfer}
  T_{k,\st}^+(x)=\tr \prod_{i=1}^r \tau_i^{\CF^{k}_{ii}} \underbrace{\CL_{k}(x) \otimes \dots \otimes \CL_{k}(x)}_{N} \,,
\end{equation}
corresponding to $M^+_{k,\st}$. For $\st\in \BN$, we also consider the finite-dimensional
transfer matrices $T_{1,\st}(x)$ corresponding to the modules $L_{\st\omega_1}$ in the auxiliary space:
they are defined similarly to~\eqref{eq:BD-inf-transfer}, but with the trace taken over the
finite-dimensional submodule $L_{\st\omega_1}$ of $M^+_{1,\st}$, see Corollary~\ref{cor:t-special BD}(b).

\medskip
\noindent
Using the notation~\eqref{eq:BDM-modified} and the formula~\eqref{eq:k-length BD}, let us recast
the resolution~\eqref{eq:geometric resolution 1}, dual to~\eqref{eq:conjectured resolution 1}:
\begin{equation}\label{eq:D-first-resolution}
  D_r\colon \
  0\to L_{\st\omega_1}\to M^\vee_{1,\st} \to \cdots \to M^\vee_{r-1,\st}\to M^\vee_{r,\st}\oplus M^\vee_{r',\st}
  \to M^\vee_{(r-1)',\st}\to \cdots \to M^\vee_{1',\st}\to 0 \,,
\end{equation}
\begin{equation}\label{eq:B-resolution}
  B_r\colon \
  0\to L_{\st\omega_1}\to M^\vee_{1,\st} \to \cdots \to M^\vee_{r-1,\st}\to M^\vee_{r,\st}\to M^\vee_{r',\st}
  \to M^\vee_{(r-1)',\st}\to \cdots \to M^\vee_{1',\st}\to 0 \,,
\end{equation}
for any $\st\in \BN$. Combining them with~\eqref{BD-generic-coincidence} and the fact that
\underline{the transfer matrices~\eqref{eq:BD-inf-transfer} depend} \underline{continuously on $\st\in \BC$}
(as so do the Lax matrices $\CL_{k}(x)$), we obtain the key result of this section:

\medskip

\begin{Thm}\label{thm:Main-BD}
(a) For $\st\in \BN$, we have the following equality of $D_r$-type transfer matrices:
\begin{equation}\label{eq:transfer-D-first}
  T_{1,\st}(x)\, =  \sum_{k=1}^r (-1)^{k-1} T_{k,\st}^+(x) \, + \, \sum_{k=1}^r (-1)^{k-1} T_{k',\st}^+(x) \,.
\end{equation}

\medskip
\noindent
(b) For $\st\in \BN$, we have the following equality of $B_r$-type transfer matrices:
\begin{equation}\label{eq:transfer-B-first}
  T_{1,\st}(x)\, = \sum_{k=1}^r (-1)^{k-1} T_{k,\st}^+(x) \, + \, \sum_{k=1}^r (-1)^{k} T_{k',\st}^+(x) \,.
\end{equation}
\end{Thm}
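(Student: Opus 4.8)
The plan is to run, for $\fg=\sso_{\NK}$ with the standard parabolic $\fp_{\{2,\ldots,r\}}$, exactly the argument that proves Theorem~\ref{thm:Main-A} in type $A$ (and Theorems~\ref{thm:Main-C},~\ref{thm:Main-D} in types $C$ and $D$). First I would invoke Theorem~\ref{Thm:KEY} with $\lambda=\st\omega_1$: this weight lies in $P^+_{\fg/\fl}$ for $\fp=\fp_{\{2,\ldots,r\}}$, whose Levi is $\fl\simeq\gl_1\oplus\sso_{\NK-2}$, since $\st\omega_1(h_{\alpha_j})=0$ for every $j\in\{2,\ldots,r\}$. Hence, for each $\st\in\BN$, the module $L_{\st\omega_1}$ admits the dual resolution~\eqref{eq:geometric resolution 1} by the $\sso_{\NK}$-modules $\{(M'_{w\cdot\st\omega_1})^\vee\}_{w\in{}^{\fl}W}$. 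I would then make this resolution explicit: by Corollary~\ref{BD-left-coset} the index set is ${}^{\fl}W=\{w_k : k\in\{1,\ldots,r\}\cup\{r',\ldots,1'\}\}$, and by~\eqref{eq:k-length BD} one has $l(w_k)=k-1$ for $1\le k\le r$ and $l(w_k)=k-2$ for $r'\le k\le 1'$, which fixes the position of each term $M^\vee_{k,\st}=(M'_{w_k\cdot\st\omega_1})^\vee$ of~\eqref{eq:BDM-modified} in the complex. For $\NK=2r$ the two cosets of length $r-1$ are $w_r$ and $w_{r'}$ (as $r'=r+1$), producing the middle direct summand $M^\vee_{r,\st}\oplus M^\vee_{r',\st}$ of~\eqref{eq:D-first-resolution}; for $\NK=2r+1$ the lengths $\{l(w_k)\}$ exhaust $0,1,\ldots,2r-1$ without repetition, giving~\eqref{eq:B-resolution}.

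Next I would pass from modules to transfer matrices. Every module in~\eqref{eq:D-first-resolution} and~\eqref{eq:B-resolution} carries a $Y(\sso_{\NK})$-action coming from the explicit Lax matrices $\CL_k(x)$ of~\eqref{eq:LaxBDmod} (and, for $L_{\st\omega_1}$, by restriction from $\CL_1(x)$), the resolution is $Y(\sso_{\NK})$-equivariant, and taking the alternating sum of transfer matrices along the exact complex — additivity being~\eqref{eq:transfer-product} — yields $T_{1,\st}(x)=\sum_{k}(-1)^{l(w_k)}T_{M^\vee_{k,\st}}(x)$. To rewrite $T_{M^\vee_{k,\st}}(x)$ as $T^+_{k,\st}(x)$ I would use~\eqref{BD-generic-coincidence}: for $\st\notin\BZ$ the Fock module $M^+_{k,\st}$ is isomorphic to $M^\vee_{k,\st}$, so the two transfer matrices agree there, and since both $T_{1,\st}(x)$ and $\sum_k(-1)^{l(w_k)}T^+_{k,\st}(x)$ depend continuously on $\st\in\BC$ — as the Lax matrices $\CL_k(x)$ and the twists do — the identity persists for all $\st$, in particular for $\st\in\BN$. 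Substituting $(-1)^{l(w_k)}=(-1)^{k-1}$ for $1\le k\le r$, and after re-indexing the second family by $k\in\{1,\ldots,r\}$ via $k\mapsto k'=\NK+1-k$ (so that $(-1)^{l(w_{k'})}=(-1)^{\NK-1-k}$, which equals $(-1)^{k-1}$ for $\NK=2r$ and $(-1)^{k}$ for $\NK=2r+1$), turns the identity into~\eqref{eq:transfer-D-first} and~\eqref{eq:transfer-B-first}, completing the two cases.

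The only delicate point, and the one I expect to be the main obstacle, is the last replacement of $T_{M^\vee_{k,\st}}(x)$ by $T^+_{k,\st}(x)$ at the integer values $\st\in\BN$ that actually occur in the resolution, where $M^+_{k,\st}$ and $M^\vee_{k,\st}$ may genuinely fail to be isomorphic (Remark~\ref{rem:BD-details}(c)); this step must therefore be carried out through the generic-irreducibility and continuity mechanism, precisely as in~\eqref{eq:T-coincide} and in the proofs of Theorems~\ref{thm:Main-A},~\ref{thm:Main-C},~\ref{thm:Main-D}, rather than by a module isomorphism. Everything else is combinatorial bookkeeping: the description of ${}^{\fl}W$ and its length function from Corollary~\ref{BD-left-coset} and~\eqref{eq:k-length BD}, and the sign tracking under $k\mapsto k'$, where the parity of $\NK$ is precisely what separates the $D_r$- and $B_r$-type formulas.
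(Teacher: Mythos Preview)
Your proposal is correct and follows essentially the same route as the paper: invoke the dual resolution~\eqref{eq:geometric resolution 1} for $\lambda=\st\omega_1$ with the parabolic $\fp_{\{2,\ldots,r\}}$, identify ${}^{\fl}W$ and the lengths via Corollary~\ref{BD-left-coset} and~\eqref{eq:k-length BD} to obtain~\eqref{eq:D-first-resolution}/\eqref{eq:B-resolution}, and then replace $T_{M^\vee_{k,\st}}(x)$ by $T^+_{k,\st}(x)$ using the generic isomorphism~\eqref{BD-generic-coincidence} together with continuity in $\st$, exactly as in the proofs of Theorems~\ref{thm:Main-A},~\ref{thm:Main-C},~\ref{thm:Main-D}. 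Your sign computation under $k\mapsto k'$ matches the paper's length formula~\eqref{eq:k-length BD}, and you have correctly flagged that the passage from $M^\vee_{k,\st}$ to $M^+_{k,\st}$ at $\st\in\BN$ goes through the continuity argument of~\eqref{eq:T-coincide} rather than a direct module isomorphism.
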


\medskip
\noindent
The character limit of~\eqref{eq:transfer-D-first} expresses the character
of the $\sso_{2r}$-modules $\{L_{\st \omega_1}\}_{\st\in \BN}$ defined as
\begin{equation}\label{eq:Dfirst-char-def}
  \ch_{1,\st}=\ch_{1,\st}(\tau_1, \ldots, \tau_r) := \tr_{L_{\st\omega_1}} \prod_{i=1}^r \tau_i^{\CF_{ii}} \,,
\end{equation}
that is the length $N=0$ case of $T_{1,\st}(x)$, via:
\begin{equation}\label{eq:char-D-first}
\begin{split}
  \ch_{1,\st} =
  & \sum_{k=1}^r (-1)^{k-1}
    \frac{\tau_1^{-1}\cdots \tau_{k-1}^{-1}\tau_k^{\st+k-1}}
         {\prod_{1\leq \ell<k}\left(1-\frac{\tau_k}{\tau_\ell}\right)
          \prod_{k< \ell \leq r}\left(1-\frac{\tau_\ell}{\tau_k}\right)
          \prod_{\ell\ne k}\left(1-\frac{1}{\tau_k\tau_\ell}\right)} \, + \\
  & \sum_{k=1}^r (-1)^{k-1}
    \frac{\tau_1^{-1}\cdots \tau_{k-1}^{-1}\tau_k^{k+1-2r-\st}}
         {\prod_{1\leq \ell<k}\left(1-\frac{\tau_k}{\tau_\ell}\right)
          \prod_{k< \ell \leq r}\left(1-\frac{\tau_\ell}{\tau_k}\right)
          \prod_{\ell\ne k}\left(1-\frac{1}{\tau_k\tau_\ell}\right)}  \,.
\end{split}
\end{equation}
Likewise, the character limit of~\eqref{eq:transfer-B-first} expresses the character
of the $\sso_{2r+1}$-modules $\{L_{\st \omega_1}\}_{\st\in \BN}$
\begin{equation}\label{eq:Bfirst-char-def}
  \ch_{1,\st}=\ch_{1,\st}(\tau_1, \ldots, \tau_r) := \tr_{L_{\st\omega_1}} \prod_{i=1}^r \tau_i^{\CF_{ii}} \,,
\end{equation}
that is the length $N=0$ case of $T_{1,\st}(x)$, via:
\begin{equation}\label{eq:char-B-first}
\begin{split}
  \ch_{1,\st} =
  & \sum_{k=1}^r (-1)^{k-1}
    \frac{\tau_1^{-1}\cdots \tau_{k-1}^{-1}\tau_k^{\st+k-1}}
         {\left(1-\frac{1}{\tau_k}\right)
          \prod_{1\leq \ell<k}\left(1-\frac{\tau_k}{\tau_\ell}\right)
          \prod_{k< \ell \leq r}\left(1-\frac{\tau_\ell}{\tau_k}\right)
          \prod_{\ell\ne k}\left(1-\frac{1}{\tau_k\tau_\ell}\right)} \, + \\
  & \sum_{k=1}^r (-1)^{k}
    \frac{\tau_1^{-1}\cdots \tau_{k-1}^{-1}\tau_k^{k-2r-\st}}
         {\left(1-\frac{1}{\tau_k}\right)
          \prod_{1\leq \ell<k}\left(1-\frac{\tau_k}{\tau_\ell}\right)
          \prod_{k< \ell \leq r}\left(1-\frac{\tau_\ell}{\tau_k}\right)
          \prod_{\ell\ne k}\left(1-\frac{1}{\tau_k\tau_\ell}\right)} \,.
\end{split}
\end{equation}
Here, the $k$-th summand in the first (resp.\ second) sums in the right-hand side
of~(\ref{eq:char-D-first},~\ref{eq:char-B-first})
is equal to the character of $M^+_{k,\st}$ (resp.\ $M^+_{k',\st}$), up to a sign.

\medskip

\begin{Rem}
For the physics' reader who skipped Section~\ref{sec: truncated BGG resolutions}, let us present a concise proof
of~\eqref{eq:char-B-first} (the proof of~\eqref{eq:char-D-first} is completely analogous).
Let us identify the set $\Delta^+$ of positive roots of $\fg=\sso_{2r+1}$ with
  $\Delta^+ = \{\epsilon_i\pm \epsilon_j\}_{1\leq i<j\leq r} \cup \{\epsilon_i\}_{i=1}^r$,
so that $\rho=(r-\frac{1}{2},r-\frac{3}{2},\ldots,\frac{1}{2})$ and the Weyl group $W$ is
$W\simeq (\BZ/2\BZ)^r\rtimes S_r\simeq \{\pm 1\}^r\rtimes S_r$.
According to the Weyl character formula, we have:
\begin{equation}\label{eq:Weyl B}
  \ch_{L_{\st\omega_1}}\, =
  \sum_{(\vec{\mu},\sigma)\in \{\pm 1\}^r\rtimes S_r} (-1)^{l(\vec\mu,\sigma)}
  \frac{e^{(\vec\mu,\sigma)(\st\omega_1+\rho)-\rho}}
       {\prod_{1\leq i<j\leq r} (1-e^{\epsilon_j-\epsilon_i}) (1-e^{-\epsilon_j-\epsilon_i})
        \prod_{i=1}^r (1-e^{-\epsilon_i})} \,.
\end{equation}
Following~\S\ref{ssec BD all osc}, let us consider the parabolic subalgebra
$\fp_{\{2,\ldots,r\}}\subset\fg$ whose Levi subalgebra is $\fl\simeq \sso_{2r-1}\oplus \gl_1$ and
the Weyl group is $W_{\fl}\simeq \{\pm 1\}^{r-1}\rtimes S_{r-1}$ consisting of
$(\vec\mu,\sigma)\in \{\pm 1\}^{r}\rtimes S_{r}=W$ such that $\mu_1=1$ and $\sigma(1)=1$.
The assignment $W\ni (\vec\mu,\sigma)\mapsto (\mu_1,\sigma(1))\in \{\pm 1\}\times \{1,\ldots,r\}$
gives rise to a set bijection $\pi\colon W/W_{\fl} \, \iso \, \{\pm 1\}\times \{1,\ldots,r\}$,
cf.~\eqref{eq:set-bij BD}. For any $(\mu,k)\in \{\pm 1\}\times \{1,\ldots,r\}$, we consider
$((\mu,1,\ldots,1),\sigma_k)\in \pi^{-1}(\mu,k)$ with $\sigma_k\in S_r$ as in~\eqref{eq:k-to-sigma 1}.
We can rewrite~\eqref{eq:Weyl B} as:
\begin{equation}\label{eq:Weyl B doubled}
  \ch_{L_{\st\omega_1}}\, =
  \sum_{(\mu,k)\in \{\pm 1\}\times \{1,\ldots,r\}} \sum_{(\vec\nu,\tau)\in W_{\fl}}
  \frac{(-1)^{l((\mu,\vec\nu),\sigma_k\tau)} e^{((\mu,\vec\nu),\sigma_k\tau)(\st\omega_1+\rho)-\rho}}
       {\prod_{1\leq i<j\leq r}(1-e^{\epsilon_j-\epsilon_i})(1-e^{-\epsilon_j-\epsilon_i})
        \prod_{i=1}^r (1-e^{-\epsilon_i})} \,,
\end{equation}
where $(\mu,\vec\nu)\in \{\pm 1\}^r$ is obtained by attaching $\mu\in \{\pm 1\}$ on the left to $\vec\nu\in\{\pm 1\}^{r-1}$.
The key step is to simplify the inner sum of~\eqref{eq:Weyl B doubled} using the Weyl denominator formula for $\fl$:
\begin{equation}\label{eq:Weyl denominator B}
  \sum_{(\vec\nu,\tau) \in W_{\fl}} (-1)^{l(\vec\nu,\tau)} e^{(\vec\nu,\tau)(\rho_\fl)-\rho_\fl} \, =
  \prod_{\alpha\in \Delta^+_\fl} (1-e^{-\alpha}) \,,
\end{equation}
where $\Delta^+_{\fl}=\{\epsilon_i\pm \epsilon_j\}_{2\leq i<j\leq r} \cup \{\epsilon_i\}_{i=2}^{r}\subset \Delta^+$
consists of positive roots of $\fl$,
  $\rho_{\fl}=\frac{1}{2}\sum_{\alpha\in \Delta^+_{\fl}} \alpha = (0,r-\frac{3}{2},\ldots,\frac{1}{2})$.
As $(\vec\nu,\tau)(\rho)-\rho=(\vec\nu,\tau)(\rho_{\fl})-\rho_{\fl}$, $(\vec\nu,\tau)(\omega_1)=\omega_1$
for any $(\vec\nu,\tau)\in W_{\fl}\subset W$, we~get:
\begin{multline}
  \sum_{(\vec\nu,\tau)\in W_{\fl}} (-1)^{l(\vec\nu,\tau)}
  \frac{e^{(\vec\nu,\tau)(\st\omega_1+\rho)-\rho}}
       {\prod_{1\leq i<j\leq r}(1-e^{\epsilon_j-\epsilon_i})(1-e^{-\epsilon_j-\epsilon_i})
        \prod_{i=1}^r (1-e^{-\epsilon_i})}\, = \\
  \frac{e^{\st\epsilon_1}}
       {(1-e^{-\epsilon_1})\prod_{j=2}^r (1-e^{\epsilon_j-\epsilon_1})(1-e^{-\epsilon_j-\epsilon_1})} \,.
%  = \frac{\tau_1^\st} {(1-\frac{1}{\tau_1})\prod_{j=2}^r (1-\frac{\tau_j}{\tau_1}) (1-\frac{1}{\tau_1\tau_j})} \,,
\end{multline}
Thus, the inner sum of~\eqref{eq:Weyl B doubled} indexed by $(\mu,k)=(1,1)$ gives rise to the $k=1$ summand
of the first sum in~\eqref{eq:char-B-first}. Likewise, we claim that the inner sum of~\eqref{eq:Weyl B doubled}
indexed by $(\mu,k)$ with $\mu=1$ (resp.\ $\mu=-1$) precisely recovers the $k$-th summand of the first (resp.\ second)
sum in~\eqref{eq:char-B-first}, which amounts to the proof of~\eqref{eq:char derivation B+} below (and its $\mu=-1$ counterpart).
To prove this claim for $\mu=1$, let us apply $((+1,\ldots,+1),\sigma_k)\in W$ to
both sides of the equality~\eqref{eq:Weyl denominator B}:
\begin{multline}
  \sum_{(\vec\nu,\tau)\in W_{\fl}}
  (-1)^{l((1,\vec\nu),\sigma_k\tau)} e^{((1,\vec\nu),\sigma_k\tau)(\rho)-\rho} \, =\\
  (-1)^{l(\sigma_k)}e^{\sigma_k(\rho)-\rho}
  \prod_{1\leq i\leq r}^{i\ne k} (1-e^{-\epsilon_i})
  \prod_{1\leq i<j\leq r}^{i,j\ne k} (1-e^{\epsilon_j-\epsilon_i})(1-e^{-\epsilon_j-\epsilon_i}) \,.
\end{multline}
Combining this with the formulas
\begin{equation*}
  \sigma_k(\rho)-\rho=(k-1)\epsilon_k - (\epsilon_1+\cdots+\epsilon_{k-1}) \,, \qquad
  l(\sigma_k)=k-1 \,, \qquad \sigma_k(\omega_1)=\epsilon_k \,,
\end{equation*}
we obtain the desired equality:
\begin{multline}\label{eq:char derivation B+}
  \sum_{(\vec\nu,\tau)\in W_{\fl}}
  (-1)^{l((1,\vec\nu),\sigma_k\tau)}
  \frac{e^{((1,\vec\nu),\sigma_k\tau)(\st\omega_1+\rho)-\rho}}
       {\prod_{1\leq i<j\leq r}(1-e^{\epsilon_j-\epsilon_i})(1-e^{-\epsilon_j-\epsilon_i})
        \prod_{i=1}^r (1-e^{-\epsilon_i})} = \\
  (-1)^{k-1}
  \frac{e^{-\epsilon_1}\cdots e^{-\epsilon_{k-1}}e^{(\st+k-1)\epsilon_k}}
       {(1-e^{-\epsilon_k}) \prod_{\ell=1}^{k-1}(1-e^{\epsilon_k-\epsilon_\ell})
        \prod_{\ell=k+1}^{r}(1-e^{\epsilon_\ell-\epsilon_k})
        \prod_{1\leq \ell\leq r}^{\ell\ne k} (1-e^{-\epsilon_k-\epsilon_\ell})} \,.
\end{multline}
The proof of the above claim for $\mu=-1$ is completely analogous with the only difference that:
\begin{equation*}
  ((-1,+1,\ldots,+1),\sigma_k)(\rho)-\rho = (k-2r)\epsilon_k - (\epsilon_1+\cdots+\epsilon_{k-1}) \,, \qquad
  ((-1,+1,\ldots,+1),\sigma_k)(\omega_1)=-\epsilon_k \,.
\end{equation*}
This completes our direct proof of the character formula~\eqref{eq:char-B-first}
(see~Remark~\ref{rem:math-to-physics} for more details in regards to
perceiving $\ch_{1,\st}$ of~\eqref{eq:Bfirst-char-def} as a specialization of $\ch_{L_{\st\omega_1}}$).
\end{Rem}

\medskip
\noindent
Let us note that the formulas~(\ref{eq:char-D-first}) and~(\ref{eq:char-B-first}) allow to
\underline{analytically continue} the character $\ch_{1,\st}$
of~(\ref{eq:Dfirst-char-def}) and~(\ref{eq:Bfirst-char-def}), from the discrete set $\st\in \BN$
to the entire complex plane $\st\in \BC$. With these conventions in mind
and similarly to Lemmas~\ref{lem:t-property C},~\ref{lem:t-property D}, we obtain:

\medskip

\begin{Lem}\label{lem:t-property BD}
(a) $\ch_{1,\st}=(-1)^{\NK}\ch_{1,2-\NK-\st}$ for any $\st\in \BC$.

\medskip
\noindent
(b) $\ch_{1,\st}=0$ for $\st\in \{-1,-2,\ldots,3-\NK\}$.
\end{Lem}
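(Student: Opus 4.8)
The plan is to deduce both parts from the character formulas~\eqref{eq:char-D-first} and~\eqref{eq:char-B-first} by exploiting the symmetry of the Weyl character formula~\eqref{eq:Weyl B} (and its $D_r$-analogue), exactly along the lines of the proofs of Lemmas~\ref{lem:t-property C} and~\ref{lem:t-property D}. For part~(a), I would first observe that in the uniform notation $\ch_{L_{\st\omega_1}}=\sum_{w\in W}(-1)^{l(w)}\frac{e^{w(\st\omega_1+\rho)-\rho}}{\prod_{\alpha\in\Delta^+}(1-e^{-\alpha})}$, replacing $\st$ by $\bar\st:=2-\NK-\st$ has a transparent effect on the numerator exponents: since $\omega_1=\epsilon_1$ and $\rho$ has first component $\NK/2-1$, one has $\st\omega_1+\rho-\bar\st\omega_1-\rho=(2\st+\NK-2)\epsilon_1=2(\st+\tfrac{\NK}{2}-1)\epsilon_1$. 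The key point is that $\st\omega_1+\rho$ and $\bar\st\omega_1+\rho$ are related by the reflection $s_{\epsilon_1}$ when $\NK=2r+1$ (short root $\epsilon_1$), and more generally by an element of $W$ that flips the sign of the first coordinate; reindexing the Weyl sum by composing with this element introduces exactly one extra sign in the $B_r$ case (from $l(s_{\epsilon_1})=1$, but one must track all the reflections it is built from) and accounts for the overall factor $(-1)^{\NK}$. Concretely, I would match the $k$-th summand of~\eqref{eq:char-B-first} at parameter $\st$ with the $k$-th summand at parameter $\bar\st$ of the \emph{opposite} sign (i.e.\ the first sum at $\st$ pairs with the second sum at $\bar\st$ and vice versa), verifying that the rational prefactors coincide and the exponents $\st+k-1$ and $k-2r-\bar\st=k-2r-(2-\NK-\st)=\st+k-1$ agree (using $\NK=2r+1$); the sign bookkeeping $(-1)^{k-1}$ versus $(-1)^k$ together with the reindexing yields the claimed $(-1)^{\NK}=-1$ for $B_r$. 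For $D_r$ ($\NK=2r$) the analogous computation gives $(-1)^{\NK}=+1$, and here the two sums in~\eqref{eq:char-D-first} are swapped under $\st\leftrightarrow\bar\st$ with the exponents $\st+k-1$ and $k+1-2r-\bar\st=\st+k-1$ matching and the signs $(-1)^{k-1}$ preserved.

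For part~(b), I would use the Weyl character formula~\eqref{eq:Weyl B} directly, as in the proof of Lemma~\ref{lem:t-property C}(b): $\ch_{1,\st}=0$ precisely when the elements of $W\simeq\{\pm1\}^r\rtimes S_r$ can be split into pairs $(w,w')$ with $(-1)^{l(w)}=-(-1)^{l(w')}$ and $w(\st\omega_1+\rho)=w'(\st\omega_1+\rho)$. Since $\st\omega_1+\rho=(\st+\tfrac{\NK}{2}-1,\,\tfrac{\NK}{2}-1-1,\,\ldots)$ in the $\epsilon$-basis (with $\rho=(r-\tfrac12,\ldots,\tfrac12)$ for $B_r$ and $\rho=(r-1,\ldots,1,0)$ for $D_r$), the vector $\st\omega_1+\rho$ has a repeated absolute value among its coordinates (or a vanishing first coordinate) exactly when $\st+\tfrac{\NK}{2}-1\in\{\pm(\tfrac{\NK}{2}-2),\pm(\tfrac{\NK}{2}-3),\ldots\}$, equivalently $\st\in\{-1,-2,\ldots,3-\NK\}$; for such $\st$ one exhibits the pairing by composing $w$ with the transposition (or sign-flip-plus-transposition) that exchanges the two equal-modulus coordinates of $\st\omega_1+\rho$, which stabilizes $\st\omega_1+\rho$ and has odd length, hence reverses the sign. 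I would spell out the two families of pairings precisely as in the bulleted list in the proof of Lemma~\ref{lem:t-property C}(b), the first rule covering $\st=-(r+1-k)$-type values and the second covering half-sums, adjusted so that the resulting set of vanishing $\st$ is exactly $\{-1,\ldots,3-\NK\}$; finally I would remark, as before, that~\eqref{eq:char-B-first} (resp.~\eqref{eq:char-D-first}) and~\eqref{eq:Weyl B} provide the same analytic continuation of~\eqref{eq:Bfirst-char-def} (resp.~\eqref{eq:Dfirst-char-def}), so the two descriptions agree.

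The main obstacle I anticipate is the sign bookkeeping in part~(a): one must carefully track how the reindexing element of $W$ (the one conjugating $\st\omega_1+\rho$ to $\bar\st\omega_1+\rho$) interacts with the decomposition $W=W_{\fl}\cdot{}^{\fl}W$ underlying the grouping of the Weyl sum into the two sums of~(\ref{eq:char-D-first},~\ref{eq:char-B-first}), and confirm that its contribution to $(-1)^{l(\cdot)}$ is precisely $(-1)^{\NK}$ uniformly — this is where the $B_r$ versus $D_r$ distinction (odd versus even number of sign-changes available, and the presence or absence of the short root) enters. A clean way to organize this is to work summand-by-summand: since each summand of~(\ref{eq:char-D-first},~\ref{eq:char-B-first}) is, up to sign, the character of the explicit module $M^+_{k,\st}$ with the known weight~\eqref{eq:replBD} and known length~\eqref{eq:k-length BD}, I would directly verify the identity $(-1)^{l(w_k)}\ch_{M^+_{k,\st}}=\pm(-1)^{l(w_{k^{\mathrm{op}}})}\ch_{M^+_{k^{\mathrm{op}},\bar\st}}$ at the level of the explicit rational functions, where $k\mapsto k^{\mathrm{op}}$ is the obvious involution on $\{1,\ldots,r\}\cup\{r',\ldots,1'\}$ sending $k\mapsto k'$; this reduces part~(a) to a finite, purely combinatorial check of exponents and prefactors, avoiding any delicate global sign argument. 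Everything else is routine substitution, and the $N=0$ specializations needed for the Propositions~\ref{prop:t-symmetry C},~\ref{prop:t-symmetry D} analogue (a later $\st$-symmetry of $T_{1,\st}(x)$ via the $QQ$-factorisation) will follow the same template once the factorisation formula~\eqref{eq:T-via-QQ BD-type} is in place.
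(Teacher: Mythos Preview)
Your proposal is correct and follows essentially the same approach as the paper, which simply refers back to Lemmas~\ref{lem:t-property C} and~\ref{lem:t-property D}: for part~(a) you match the $k$-th summand of the first sum in~(\ref{eq:char-D-first},~\ref{eq:char-B-first}) at $\st$ with the $k$-th summand of the second sum at $\bar\st=2-\NK-\st$ (the analogue of $\vec\mu\leftrightarrow\vec{\bar\mu}$), and for part~(b) you pair Weyl group elements stabilizing $\st\omega_1+\rho$. Your exponent checks $\st+k-1=k-2r-\bar\st$ (for $B_r$) and $\st+k-1=k+1-2r-\bar\st$ (for $D_r$), together with the sign comparison $(-1)^{k-1}$ versus $(-1)^{k}$ in the $B_r$ case, are exactly the right computations and yield the factor $(-1)^{\NK}$ uniformly; the summand-by-summand organization you propose is precisely what makes the argument clean.
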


\medskip
\noindent
In a completely similar way, the formulas~(\ref{eq:transfer-D-first},~\ref{eq:transfer-B-first}) allow to
\underline{analytically continue} the transfer matrices $T_{1,\st}(x)$ of the finite-dimensional representations
$L_{\st\omega_1},\st\in\BN$, to the entire complex plane $\st\in \BC$. With this convention in mind, we have the following
generalization of Lemma~\ref{lem:t-property BD}(a):

\medskip

\begin{Prop}\label{prop:t-symmetry BD}
$T_{1,\st}(x)=(-1)^{\NK}T_{1,2-\NK-\st}(x)$ for any $\st\in \BC$.
\end{Prop}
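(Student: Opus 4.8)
The plan is to follow \emph{verbatim} the strategy of Propositions~\ref{prop:t-symmetry C} and~\ref{prop:t-symmetry D}: instead of working with the resolutions~(\ref{eq:transfer-D-first},~\ref{eq:transfer-B-first}) directly, I would upgrade the character identity of Lemma~\ref{lem:t-property BD}(a) to an identity of transfer matrices by passing to the $QQ$-factorised form of Theorem~\ref{thm:Main-BD}, namely Proposition~\ref{prop:main-BD factorized}. Write $\bar\st:=2-\NK-\st$. The first step is to read off from the weight formula~\eqref{eq:replBD} that, for $1\le k\le r$, the highest weight of $M^+_{k,\bar\st}$ coincides with that of $M^+_{k',\st}$, and symmetrically the highest weight of $M^+_{k',\bar\st}$ coincides with that of $M^+_{k,\st}$; equivalently, the pairing of Weyl group elements used in the proof of Lemma~\ref{lem:t-property BD}(a) sends the left coset of $w_k$ evaluated at $\bar\st$ to that of $w_{k'}$ evaluated at $\st$. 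In particular, the scalar prefactors entering~\eqref{eq:T-via-QQ BD-type} satisfy $\ch^+_{k,\bar\st}=\ch^+_{k',\st}$.

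Next I would invoke the factorisation~\eqref{eq:T-via-QQ BD-type}, which writes each infinite-dimensional transfer matrix as $T^+_{k,\st}(x)=\ch^+_{k,\st}\cdot Q_k\big(x+a_k(\st)\big)\,Q_{k^\star}\big(x+b_k(\st)\big)$ for its two commuting $Q$-operators, with linear shifts $a_k,b_k$ in $\st$ dictated by the linear term~\eqref{eq:BCgens} (taken at $x_{12}=1-\st-\NK/2$) and with the ``partner'' index $k^\star$ equal to the mirror of $k$. These shifts are arranged so that, under $(\st,k)\mapsto(\bar\st,k')$, the two $Q$-arguments are merely interchanged, i.e.\ $a_{k'}(\st)=b_k(\bar\st)$ and $b_{k'}(\st)=a_k(\bar\st)$, while $\{k^\star,k\}=\{(k')^\star,k'\}$. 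Combining this with $\ch^+_{k,\bar\st}=\ch^+_{k',\st}$ and with the commutativity $[Q_k(x),Q_{k^\star}(y)]=0$ — which follows by the standard argument, the $T^+$'s commuting and the $Q$-operators arising as their renormalised limits, cf.\ the $D$-type case in~\cite{f,ffk} — I obtain the algebraic identities $T^+_{k,\bar\st}(x)=T^+_{k',\st}(x)$ and $T^+_{k',\bar\st}(x)=T^+_{k,\st}(x)$, valid for all $\st\in\BC$ and all $1\le k\le r$. Finally I would match the signs: by~\eqref{eq:k-length BD} the $k$-th summand $(-1)^{l(w_k)}T^+_{k,\bar\st}(x)$ of $T_{1,\bar\st}(x)$ carries the sign $(-1)^{k-1}$, whereas the matching summand $(-1)^{l(w_{k'})}T^+_{k',\st}(x)$ of $T_{1,\st}(x)$ carries $(-1)^{l(w_{k'})}=(-1)^{\NK-1-k}$; since $(-1)^{k-1}=(-1)^{\NK}(-1)^{\NK-1-k}$ (the factor $(-1)^{2\NK}$ being trivial, so the check is uniform for $\NK=2r$ and $\NK=2r+1$), summation over $k\in\{1,\dots,r\}\cup\{r',\dots,1'\}$ yields $T_{1,\bar\st}(x)=(-1)^{\NK}T_{1,\st}(x)$, as claimed.

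The main point requiring care is the bookkeeping inside~\eqref{eq:T-via-QQ BD-type}: one must verify that passing from $(\st,k)$ to $(\bar\st,k')$ only swaps the two spectral-parameter arguments of the $Q$-operators, so that commutativity can be applied, and that the length signs $(-1)^{l(w_\bullet)}$ recombine into the single global factor $(-1)^{\NK}$. Everything else is a direct transcription of the arguments already carried out for Propositions~\ref{prop:t-symmetry C} and~\ref{prop:t-symmetry D}.
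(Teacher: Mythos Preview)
Your proposal is correct and follows essentially the same approach as the paper: the paper's proof is a one-line reference to the analogous argument for Proposition~\ref{prop:t-symmetry C}, combining the character symmetry of Lemma~\ref{lem:t-property BD}(a) with the factorisation~\eqref{eq:T-via-QQ BD-type} and the commutativity of the $Q$-operators. Your write-up simply makes explicit the bookkeeping the paper suppresses --- the swap of the two spectral-parameter shifts under $\st\mapsto 2-\NK-\st$, the identity $\ch^+_{k,\bar\st}=\ch^+_{k',\st}$, and the sign match $(-1)^{l(w_k)}=(-1)^{\NK}(-1)^{l(w_{k'})}$ --- all of which check out.
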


\medskip

\begin{proof}
The proof is completely analogous to that of Proposition~\ref{prop:t-symmetry C} and follows from
the proof of Lemma~\ref{lem:t-property BD}(a) combined with the factorisation~\eqref{eq:T-via-QQ BD-type} of
the transfer matrices $T^+_{k,\st}(x), T^+_{k',\st}(x)$ into the product of two commuting $Q$-operators,
cf.\ Proposition~\ref{prop:main-BD factorized}.
\end{proof}

\medskip
\noindent
We also expect the generalization of Lemma~\ref{lem:t-property BD}(b) to hold: $T_{1,\st}(x)=0$ for
$\st\in \{-1,-2,\ldots,3-\NK\}$. For $D$-type, this was first observed in~\cite{ffk} for small length and rank.

$\ $

   %%%%%%%%%%%%%%%%%%%%%%%%%%%%%%%%%%%%%%%%%%%%%%%%%%%%%%%%%%%%%%%%%%%%%%%%%%%%%%%%%%%%%%%%%%%%%
   %%%%%%%%%%%%%%%%%%%%%%%%%%%%%%%%%%%%%%%% SECTION 7 %%%%%%%%%%%%%%%%%%%%%%%%%%%%%%%%%%%%%%%%%%
   %%%%%%%%%%%%%%%%%%%%%%%%%%%%%%%%%%%%%%%%%%%%%%%%%%%%%%%%%%%%%%%%%%%%%%%%%%%%%%%%%%%%%%%%%%%%%

\section{Factorisation for linear ACD-types}\label{sec linear factorizations}

In this section, we demonstrate the factorisation of the infinite-dimensional transfer matrices
(\ref{eq:Tpgln},~\ref{eq:C-inf-transfer},~\ref{eq:Dspin-inf-transfer}) into
the products of two Baxter $Q$-operators arising from degenerate Lax matrices (which are \emph{renormalized limits}
of the former), linear in the spectral parameter.
The factorisation formula is universal for all three types $ACD$, and we present it
in full detail for the case of $A$-type.

\medskip

   %%%%%%%%%%%%%%%%%%%%%%%%%%%%%%%%%%%%%%%%%%%%%%%%%%%%%%%%%%%%%%%%%%%%%%%%%%%%%%%%%%%%%%%%%%%%%
   %%%%%%%%%%%%%%%%%%%%%%%%%%%%%%%%%%%%%%%% SUBSECTION %%%%%%%%%%%%%%%%%%%%%%%%%%%%%%%%%%%%%%%%%
   %%%%%%%%%%%%%%%%%%%%%%%%%%%%%%%%%%%%%%%%%%%%%%%%%%%%%%%%%%%%%%%%%%%%%%%%%%%%%%%%%%%%%%%%%%%%%

\subsection{General two-term linear factorisation}\label{ssec general two-factorisation}
$\ $

Consider the following two $n\times n$ matrices written in the block form as:
\begin{equation}\label{eq:two matrices}
  L_a(x)=\,
    \left(\begin{BMAT}[5pt]{c:c}{c:c}
      x\ID_{a}-\ap_1\am_1 & \ap_1 \\
      -\am_1 & \ID_{n-a}
    \end{BMAT}\right) \,, \qquad
  \bL_a(y)=\,
    \left(\begin{BMAT}[5pt]{c:c}{c:c}
      \ID_a & \ap_2 \\
      \am_2 & y\ID_{n-a} + \am_2\ap_2
    \end{BMAT}\right) \,,
\end{equation}
with $a\times (n-a)$ upper-right blocks $\ap_1,\ap_2$ and $(n-a)\times a$ lower-left blocks $\am_1,\am_2$.
Then, their product can be factorised as follows:
\begin{equation}\label{eq:linear factorization}
  L_a(x)\bL_a(y)=\,
    \left(\begin{BMAT}[5pt]{c:c}{c:c}
      x\ID_{a}-\ap'_1\am'_1 & \left(y-x+\ap'_1\am'_1\right)\ap'_1 \\
      - \am'_1 & y\ID_{n-a}+\am_1'\ap'_1
    \end{BMAT}\right)\,
    \left(\begin{BMAT}[5pt]{c:c}{c:c}
      \ID_{a} & \ap'_2\\
      0 & \ID_{n-a}
    \end{BMAT}\right) \,,
\end{equation}
where
\begin{equation}\label{eq:tra12}
\begin{split}
  & \am'_1=\am_1-\am_2 \,, \qquad \am'_2=\am_2 \,, \\
  & \ap'_2=\ap_2+\ap_1 \,, \qquad \ap'_1=\ap_1 \,.
\end{split}
\end{equation}
We note that the right-hand side of~\eqref{eq:linear factorization} is independent of $\am'_2$.

\medskip

   %%%%%%%%%%%%%%%%%%%%%%%%%%%%%%%%%%%%%%%%%%%%%%%%%%%%%%%%%%%%%%%%%%%%%%%%%%%%%%%%%%%%%%%%%%%%%
   %%%%%%%%%%%%%%%%%%%%%%%%%%%%%%%%%%%%%%%% SUBSECTION %%%%%%%%%%%%%%%%%%%%%%%%%%%%%%%%%%%%%%%%%
   %%%%%%%%%%%%%%%%%%%%%%%%%%%%%%%%%%%%%%%%%%%%%%%%%%%%%%%%%%%%%%%%%%%%%%%%%%%%%%%%%%%%%%%%%%%%%

\subsection{Two-term factorisation in A-type}
\label{ssec two-term A-factorization}
$\ $

Let $\CA$ denote the oscillator algebra generated by $n(n-1)$ pairs of oscillators
$\{(\oa_{j,i},\oad_{i,j})\}_{1\leq i\ne j\leq n}$ subject to~\eqref{eq:oscillator relations}.
For any subset $I\in \CS_a$, see our notation~\eqref{eq:CS-set}, recall the permutation $\sigma_I$ of the set
$\{1,\ldots,n\}$ defined in~\eqref{eq:special permutation} and the corresponding permutation matrix $B_I$
of~\eqref{eq:permutation-matrix}. We define:
\begin{equation}\label{eq:matricesx 1}
  L_{I}(x)=\left. B_IL_{\{1,\ldots,a\}}(x)B_I^{-1} \right|_{p.h.} \,,
\end{equation}
where
\begin{equation}\label{eq:matricesx 2}
  L_{\{1,\ldots,a\}}(x)=\,
    \left(\begin{BMAT}[5pt]{c:c}{c:c}
      x\ID_{a}-\ap\am & \ap \\
      -\am & \ID_{n-a}
    \end{BMAT}\right) =
    \left(\begin{BMAT}[5pt]{c:c}{c:c}
      \ID_{a} & \ap \\
      0 & \ID_{n-a}
    \end{BMAT}\right)
    \left(\begin{BMAT}[5pt]{c:c}{c:c}
      x\ID_{a} & 0 \\
      0 & \ID_{n-a}
    \end{BMAT}\right)
    \left(\begin{BMAT}[5pt]{c:c}{c:c}
      \ID_{a} & 0 \\
      -\am & \ID_{n-a}
    \end{BMAT}\right)
\end{equation}
with
\begin{equation}\label{eq:specific A-blocks1}
  \am=
  \left(\begin{array}{cccc}
    \oa^{}_{a+1,1} & \cdots & \oa^{}_{a+1,a} \\
    \vdots & \iddots & \vdots \\
    \oa^{}_{n,1} & \cdots & \oa^{}_{n,a} \\
  \end{array}\right) \,, \qquad
  \ap=
  \left(\begin{array}{cccc}
    \oad^{}_{1,a+1} & \cdots & \oad^{}_{1,n} \\
    \vdots & \iddots & \vdots \\
    \oad^{}_{a,a+1} & \cdots & \oad^{}_{a,n} \\
  \end{array}\right) \,,
\end{equation}
as in~\eqref{eq:ApAmA}, and the particle-hole transformation (denoted \emph{p.h.}) in~\eqref{eq:matricesx 1}
is chosen as follows:
\begin{equation}\label{eq:I-ph}
\begin{split}
  & \oad_{i,j}\mapsto \oa_{\sigma_I(j),\sigma_I(i)} \,, \quad \oa_{j,i}\mapsto -\oad_{\sigma_I(i),\sigma_I(j)}
    \quad \mathrm{if}\quad  \sigma_I(j)<\sigma_I(i) \,,  \\
  & \oad_{i,j}\mapsto \oad_{\sigma_I(i),\sigma_I(j)} \,, \quad \oa_{j,i}\mapsto \oa_{\sigma_I(j),\sigma_I(i)}
    \qquad \mathrm{if}\quad  \sigma_I(j)>\sigma_I(i) \,.
  \end{split}
\end{equation}
Let us note that the matrix $L_I(x)$ of~\eqref{eq:matricesx 1} depends only on the oscillators
$(\oad_{i,j},\oa_{j,i})\in \CA$ with $i\in I$ and $j\in \bar{I}=\{1,\ldots,n\}\setminus I$, see~\eqref{eq:set-complement},
and can be further factorised similarly to~\eqref{eq:matricesx 2} as:
\begin{equation*}
  L_I(x) =
  \left(\id_n+\sum_{i\in I}^{j\in \bar{I}} \left(\oad_{ij}\delta_{i<j}+\oa_{ji}\delta_{i>j}\right)e_{ij}\right)
  \left(x\sum_{i\in I}e_{ii}+\sum_{j\in \bar{I}}e_{jj}\right)
  \left(\id_n+\sum_{i\in I}^{j\in \bar{I}}\left(\oad_{ij}\delta_{j<i}-\oa_{ji}\delta_{j>i}\right)e_{ji}\right) .
\end{equation*}

\medskip

\begin{Rem}\label{rem:comparing two ph}
We note that the particle-hole~\eqref{eq:I-ph} differs from~\eqref{eqn:A-ph} in two aspects:
(1) a different sign change, (2) relabelling of the oscillator indices to indicate the row and column of their position.
\end{Rem}

\medskip
\noindent
Let us now apply the general factorisation from Subsection~\ref{ssec general two-factorisation} to
the following choice of~\eqref{eq:two matrices}:
\begin{equation}\label{eq:A-choice}
  L_a(x)=L_{\{1,\ldots,a\}}^{ }(x) \,, \qquad \bL_a(y)=L_{\{a+1,\ldots,n\}}(y) \,.
\end{equation}
This fixes $\am_1$ and $\ap_1$ of~\eqref{eq:two matrices} as $\am$ and $\ap$ of~\eqref{eq:specific A-blocks1},
while $\am_2$ and $\ap_2$ are explicitly given by:
\begin{equation}\label{eq:specific A-blocks2}
  \am_2=
  \left(\begin{array}{cccc}
    \oa_{1,a+1} & \cdots & \oa_{a,a+1} \\
    \vdots & \iddots & \vdots \\
    \oa_{1,n} & \cdots & \oa_{a,n} \\
  \end{array}\right) \,, \qquad
  \ap_2=
  \left(\begin{array}{cccc}
    \oad_{a+1,1} & \cdots & \oad_{n,1} \\
    \vdots & \iddots & \vdots \\
    \oad_{a+1,a} & \cdots & \oad_{n,a} \\
  \end{array}\right) \,.
\end{equation}
We note that these two matrices $L_a(x)$ and $\bL_a(y)$ involve non-intersecting sets of oscillators from
the algebra $\CA$, hence, they mutually commute, while the only nontrivial commutators are:
\begin{equation}\label{eq:ambient osc}
  [\oa_{j,i},\oad_{i,j}]=1 \,.
\end{equation}
Thus, the transformation~\eqref{eq:tra12} in this case is in fact induced by the similarity transformation:
\begin{equation}\label{eq:gauge}
\begin{split}
  & \am'_1=\bfS \am_1 \bfS^{-1} \,, \qquad \am'_2=\bfS \am_2 \bfS^{-1} \,, \\
  & \ap'_2=\bfS \ap_2 \bfS^{-1} \,, \qquad \ap'_1=\bfS \ap_1 \bfS^{-1} \,,
\end{split}
\end{equation}
with
\begin{equation}\label{eq:A-similarity}
  \bfS=\exp\left[\sum_{1\leq i\leq a}^{a<j\leq n} \oad_{ij}\oa_{ij}\right] \,,
\end{equation}
where we note that all the summands in~\eqref{eq:A-similarity} pairwise commute.

\medskip
\noindent
Combining the factorisation formula~\eqref{eq:linear factorization} with
the similarity transformation~\eqref{eq:gauge}, we obtain:
\begin{equation}\label{eq:facA}
  L_{\{1,\ldots,a\}}(x+\st) L_{\{a+1,\ldots,n\}}(x-a) = \, \bfS \CL_a(x) \Gop \bfS^{-1} \,,
\end{equation}
where
\begin{equation}
  \Gop =
    \left(\begin{BMAT}[5pt]{c:c}{c:c}
      \ID_a & \ap_2 \\
      0 & \ID_{n-a}
    \end{BMAT}\right) \,,
\end{equation}
$\bfS$ is given by~\eqref{eq:A-similarity}, and $\CL_a(x)$ is precisely
the $\gl_n$-type Lax matrix of~\eqref{eq:LaxRec}.

\medskip
\noindent
Vice versa, the matrices $L_{\{1,\ldots,a\}}(x)$ and $L_{\{a+1,\ldots,n\}}(x)$ can be obtained
from the Lax matrix $\CL_a(x)$ of~\eqref{eq:LaxRec} via the \emph{renormalized limit} procedures
(which clearly preserve the property of being Lax):
\begin{equation}\label{eq:A renormalized limit}
\begin{split}
  & L_{\{1,\ldots,a\}}(x) =
    \lim_{\st\to \infty}\ \Big\{\CL_a(x-\st)\cdot \mathrm{diag}
    \Big(\underbrace{1,\ldots,1}_{a};\underbrace{-\sfrac{1}{\st},\ldots,-\sfrac{1}{\st}}_{n-a}\Big)\Big\} \,, \\
  & L_{\{a+1,\ldots,n\}}(x) = \lim_{\st\to \infty}\ \Big\{\mathrm{diag}
    \Big(\underbrace{\sfrac{1}{\st},\ldots,\sfrac{1}{\st}}_{a};\underbrace{1,\ldots,1}_{n-a}\Big)
    \cdot \CL_a(x+a)\Big\}\Big|_{\oad_{ij}\mapsto -\oad_{ji} \,,\, \oa_{ij}\mapsto -\oa_{ji}}  \,.
\end{split}
\end{equation}

\medskip

\begin{Rem}
This implies that all the matrices $\{L_I(x)\}_{I\in \CS_a}$ of~\eqref{eq:matricesx 1} are in fact Lax, that is,
they satisfy the RTT relation~\eqref{eq:RTT}, which is crucial for the entire analysis of the present section.
\end{Rem}

\medskip
\noindent
Conjugating the factorisation formula~\eqref{eq:facA} by $B_I$ of~\eqref{eq:permutation-matrix},
thus utilizing the Weyl group action, and further performing the particle-hole transformations
in both sets of oscillators, we obtain:
\begin{equation}\label{eq:factor-A}
  L_I(x+\st) L_{\bar I}(x-a) = \, \bfS_I \CL'_I(x) \Gop_I \bfS_I^{-1}
\end{equation}
with the similarity transformation $\bfS_I$ and the matrix $\Gop_I$ specified
in~(\ref{eq:A-similarity I}) and ~\eqref{eq:G-redundancy I} below,
and the $\gl_n$-type Lax matrix $\CL'_I(x)$ obtained from $\CL_I(x)$
of~\eqref{eq:symmetry-conjugate-A} through the sign change and relabelling of the oscillator indices
precisely as in Remark~\ref{rem:comparing two ph}. The factorisation~\eqref{eq:factor-A} above
follows when performing the particle-hole transformation~\eqref{eq:I-ph} for the oscillators contained
in $L_{\{1,\ldots,a\}}(x+\st)$, see~\eqref{eq:matricesx 1}, and further undoing
the particle-hole transformation  for the \emph{creation}/\emph{annihilation} oscillators contained in
$L_{\{a+1,\ldots,n\}}(x-a)$ that get mapped below/above the main diagonal:
\begin{equation}
\begin{split}
  & \oad_{j,i}\mapsto -\oa_{\sigma_I(i),\sigma_I(j)} \,, \quad
    \oa_{i,j}\mapsto \oad_{\sigma_I(j),\sigma_I(i)} \quad
    \mathrm{if}\quad  \sigma_I(j)<\sigma_I(i) \,, \\
  & \oad_{j,i}\mapsto \oad_{\sigma_I(j),\sigma_I(i)} \,, \quad
    \oa_{i,j}\mapsto \oa_{\sigma_I(i),\sigma_I(j)} \qquad
    \mathrm{if}\quad  \sigma_I(j)>\sigma_I(i) \,,
\end{split}
\end{equation}
with the permutation $\sigma_I$ of the set $\{1,\ldots,n\}$ as in~\eqref{eq:special permutation}.
More precisely, the latter particle-hole (denoted ${\overline{p.h.}}$) is chosen so that the following equality holds:
\begin{equation}
  L_{\bar{I}}(x) = \left. B_I L_{\{a+1,\ldots,n\}}(x)B_I^{-1} \right|_{\overline{p.h.}} \,,
\end{equation}
where we note the following natural compatibility between $B_I$ and $B_{\bar{I}}$:
\begin{equation}
  B_I B_{\{a+1,\ldots,n\}} = B_{\bar I} \,, \qquad \forall\, I\in \CS_a \,.
\end{equation}
The remaining ingredients in~\eqref{eq:factor-A} can be written as:
\begin{equation}\label{eq:G-redundancy I}
  \Gop_I=\left. B_I\Gop^{}B_I^{-1}\right|_{\overline{p.h.}}=
  \id_n+\sum_{i\in I}^{j\in \bar{I}} \Big(\oad_{ji}\delta_{i<j}-\oa_{ij}\delta_{i>j}\Big) e_{ij} \,,
\end{equation}
and $\bfS_I$ is obtained from $\bfS$ of~\eqref{eq:A-similarity} by performing both particle-hole
transformations \emph{p.h.}, $\overline{p.h.}$:
\begin{equation}\label{eq:A-similarity I}
   \bfS_I =
%  \textcolor{red}{\bfS\Big|_{p.h. \,,\, \overline{p.h.}} =\, }
   \exp\left[ \sum_{i\in I}^{j\in \bar{I}} \Big(\oad_{ij}\oa_{ij}\delta_{i<j} - \oa_{ji}\oad_{ji}\delta_{i>j}\Big) \right] \,.
\end{equation}

\medskip
\noindent
Following~\cite{Bazhanov:2010jq}, let us now define the $Q$-operators
$\{Q_I(x)\}_{I\in \CS_a}\subset \End(\BC^n)^{\otimes N}$ via:
\begin{equation}\label{eq:qopsA}
  Q_I(x) = \wtr_{D_I}\, \Big( \underbrace{L_I(x)\otimes \dots \otimes L_I(x)}_{N} \Big) \,,
\end{equation}
that is, as the \emph{normalized trace} $\wtr_{D_I}$, defined as in~\eqref{eq:normalized trace},
of the $N$-fold tensor product of $L_I(x)$ from~\eqref{eq:matricesx 1}.
The twist $D_I$ in~\eqref{eq:qopsA} acts only on the Fock space and is defined via:
\begin{equation}\label{eq:twist I}
  D_I =
  \prod_{i\in I,\, j\in\bar I}^{i<j} \left(\frac{\tau_j}{\tau_i}\right)^{\oad_{ij}\oa_{ji}} \,
  \prod_{i\in I,\, j\in\bar I}^{i>j} \left(\frac{\tau_i}{\tau_j}\right)^{\oa_{ji}\oad_{ij}} \,,
\end{equation}
which can be further expressed via $\{\CE^I_{ii}\}_{i=1}^n$ of~\eqref{eq:symmetry-conjugate-A} as
(cf.\ formulas~(\ref{eq:diagonal-A},~\ref{eq:Cartan I})):
\begin{equation}\label{eq:twist-via-Cartan I}
  D_I = \prod_{i\in I} \tau_i^{-\st}  \cdot \, \prod_{i=1}^n \tau_i^{\CE_{ii}^I} \,.
\end{equation}
We note that the action of this twist on the Fock module is uniquely determined (up to a scalar function)
by the same condition as in~\eqref{eq:DiD agrees}:
\begin{equation}\label{eq:twistconj}
  DL_I(x)D^{-1} = D_I^{-1} L_I(x)D_I \,,
\end{equation}
with $D=\diag(\tau_1,\ldots,\tau_n)$ of~\eqref{eq:D-diag}. This ensures the commutativity of $Q_I(x)$ and the
transfer matrix $T_{(1,0,\ldots,0)}(y)$ of the defining fundamental representation in the auxiliary space,
cf.~Remark~\ref{rem:commutativity}.

\medskip

\begin{Rem}\label{rem:generalize partonic}
For $a=1$ and $I=\{i\}$, the Lax matrix~\eqref{eq:matricesx 1} coincides with $L_i(x)$ of~\eqref{eq:partonic-A} and
the twist~\eqref{eq:twist I} coincides with~\eqref{eq:twist-bflms}, hence, the above $Q$-operator~\eqref{eq:qopsA}
recovers $Q_i(x)$ of~\eqref{eq:singleindexQ}.
\end{Rem}

\medskip
\noindent
Building the monodromy matrices (by considering the $N$-fold tensor product on the matrix space)
from~\eqref{eq:factor-A}, taking the normalized trace, and evoking the relation~\eqref{eq:twist-via-Cartan I},
the factorisation formula~\eqref{eq:factor-A} implies the following factorisation formula for
the transfer matrices $T^+_{I,\st}(x)$ of~\eqref{eq:Tpgln}:
\begin{equation}\label{eq:T-via-QQ A-type}
  T_{I,\st}^+(x)=\ch_{I,\st}^+ \cdot\, Q_I(x+\st) Q_{ \bar I}(x-a)
\end{equation}
with
\begin{equation}\label{eq:ch-I}
  \ch^+_{I,\st} = \tr\, \prod_{i=1}^n \tau_i^{\CE^{I}_{ii}} =
  \prod_{i\in I} \tau_{i}^{\st} \prod_{i\in I}^{j\in \bar{I}}\, \frac{(-1)^{\delta_{i>j}}\tau_i}{\tau_i-\tau_j} \,,
\end{equation}
cf.\ the factorisation formula~\eqref{eq:details-fact}, the character formula~\eqref{eq:char-A},
and the details of Remark~\ref{rem:details for A-factor}.

\medskip
\begin{Rem}
Let us stress right away that the transfer matrices constructed from $\CL'_I(x)$ and $\CL_I(x)$
via~\eqref{eq:Tpgln} \underline{do coincide}, as the sign change and the relabelling of oscillators
(see Remark~\ref{rem:comparing two ph}) do not affect the trace.
\end{Rem}

\medskip

\begin{Rem}\label{rem:SDcomRect}
An essential step used in our derivation of~\eqref{eq:T-via-QQ A-type} is the following commutativity:
\begin{equation}\label{eq:comSDrect}
  [\bfS_I, D_{I}D_{\bar{I}}] = 0 \,,
\end{equation}
cf.~\eqref{eq:comSD}. Clearly, it suffices to verify~\eqref{eq:comSDrect} for $I=\{1,\ldots,a\}$.
To this end, we note that:
\begin{equation}
  D_{\{1,\ldots,a\}} D_{\{a+1,\ldots,n\}} \, =
  \prod_{1\leq i\leq a}^{a<j\leq n} \left(\frac{\tau_j}{\tau_i}\right)^{\oad_{ij}\oa_{ji} + \oa_{ij}\oad_{ji}} = \
  \prod_{1\leq i\leq a} \tau_i^{\bN_i} \prod_{a<j\leq n} \tau_j^{\bN_j}
\end{equation}
with
\begin{equation}
  \bN_i= -\sum_{a<j\leq n} \left( \oad_{ij}\oa_{ji} + \oa_{ij}\oad_{ji} \right) \,,\qquad
  \bN_j=  \sum_{1\leq i\leq a} \left( \oad_{ij}\oa_{ji} + \oa_{ij}\oad_{ji} \right) \,,
\end{equation}
while the similarity transformation $\bfS_I$ is given by~\eqref{eq:A-similarity}:
\begin{equation}
  \bfS_{\{1,\ldots,a\}} = \prod_{1\leq i\leq a}^{a<j\leq n} \exp\left[ \oad_{ij}\oa_{ij} \right] \,.
\end{equation}
Thus, the desired commutativity~\eqref{eq:comSDrect} follows from the obvious equalities
\begin{equation}
  [\bN_i,\oad_{k \ell}\oa_{\ell k}] = 0 \,, \qquad
  [\bN_j,\oad_{k \ell}\oa_{\ell k}] = 0 \,,
\end{equation}
for any $1\leq i\leq a<j\leq n$ and $1\leq k\leq a<\ell\leq n$, cf.~(\ref{eq:N-commutativity},~\ref{eq:N-action}).
\end{Rem}

\medskip
\noindent
Combining the factorisation formula~\eqref{eq:T-via-QQ A-type} with Theorem~\ref{thm:Main-A},
we get (see~\eqref{eq:sigma-length} and~\eqref{eq:ch-I}):

\medskip

\begin{Prop}\label{prop:Main-A-recasted}
For any $1\leq a<n$ and $\st\in \BN$, we have:
\begin{equation}\label{eq:transfer-AQQ}
  T_{a,\st}(x)\, = \sum_{I\in \CS_a} (-1)^{l(I)} \, \ch_{I,\st}^+ \cdot\, Q_I(x+\st) Q_{\bar I}(x-a) \,.
\end{equation}
\end{Prop}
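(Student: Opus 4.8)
The plan is straightforward: the identity~\eqref{eq:transfer-AQQ} is the immediate combination of Theorem~\ref{thm:Main-A}, which expresses $T_{a,\st}(x)=\sum_{I\in\CS_a}(-1)^{l(I)}T^+_{I,\st}(x)$ for $\st\in\BN$, with the factorisation formula~\eqref{eq:T-via-QQ A-type}, $T^+_{I,\st}(x)=\ch^+_{I,\st}\cdot Q_I(x+\st)\,Q_{\bar I}(x-a)$, which holds for every $I\in\CS_a$ and in fact for all $\st\in\BC$. Substituting the latter into the former term by term, and noting that the exponent $l(I)=l(\sigma_I)$ of~\eqref{eq:sigma-length} is precisely the one occurring in Theorem~\ref{thm:Main-A}, reproduces~\eqref{eq:transfer-AQQ} verbatim. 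So the only real content to be presented is the derivation of~\eqref{eq:T-via-QQ A-type} itself, which I would organise in three steps.

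First I would verify that all the matrices $L_I(x)$ of~\eqref{eq:matricesx 1} (and their companions $L_{\bar I}(x)$) are genuine Lax matrices, i.e.\ solve the RTT relation~\eqref{eq:RTT} with the $R$-matrix~\eqref{eq:R-matrix-A}. For $I=\{1,\ldots,a\}$ this is clear from the block factorisation~\eqref{eq:matricesx 2}; for general $I$ one may either invoke the $GL_n$-invariance~\eqref{eq:R-inv} to conjugate by the permutation matrix $B_I$ and apply the particle-hole automorphism~\eqref{eq:I-ph}, or more robustly exhibit $L_{\{1,\ldots,a\}}(x)$ and $L_{\{a+1,\ldots,n\}}(x)$ as the \emph{renormalized limits}~\eqref{eq:A renormalized limit} of the non-degenerate Lax matrix $\CL_a(x)$ of~\eqref{eq:LaxRec}, a procedure that manifestly preserves~\eqref{eq:RTT}. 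Second I would prove the matrix identity~\eqref{eq:facA} by feeding the choice~\eqref{eq:A-choice} into the elementary block computation~\eqref{eq:linear factorization} and observing that the accompanying change of oscillators~\eqref{eq:tra12} is implemented by the similarity~\eqref{eq:gauge} with $\bfS$ as in~\eqref{eq:A-similarity}; conjugating by $B_I$ and performing the two particle-hole transformations then upgrades this to~\eqref{eq:factor-A} for arbitrary $I\in\CS_a$. Third, taking $N$-fold tensor products on the quantum space, forming the monodromies, and applying the normalized trace over the relevant Fock space converts the left-hand side of~\eqref{eq:factor-A} into $Q_I(x+\st)\,Q_{\bar I}(x-a)$ (definition~\eqref{eq:qopsA}) and the right-hand side into $\ch^+_{I,\st}\cdot T^+_{I,\st}(x)$; here one uses that $\Gop_I$ of~\eqref{eq:G-redundancy I} contributes only through its diagonal (its off-diagonal entries being creation/annihilation operators invisible to the trace), the identification~\eqref{eq:twist-via-Cartan I} of $D_I$ with $\prod_{i\in I}\tau_i^{-\st}\prod_{i=1}^n\tau_i^{\CE^I_{ii}}$, the evaluation~\eqref{eq:ch-I} of the scalar $\ch^+_{I,\st}$, and the fact that the transfer matrices built from $\CL'_I(x)$ and from $\CL_I(x)$ coincide since the sign change and index relabelling do not affect the trace.

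The one genuinely delicate point — and the step I expect to be the main obstacle — is the commutativity $[\bfS_I,\,D_I D_{\bar I}]=0$ of~\eqref{eq:comSDrect}, which is what licenses moving the twist past the similarity transformation inside the trace; this is the rectangular analogue of~\eqref{eq:comSD}, whose proof was omitted in~\cite{Bazhanov:2010jq}. I would argue exactly as sketched in Remark~\ref{rem:SDcomRect}: reduce to $I=\{1,\ldots,a\}$, write $D_{\{1,\ldots,a\}}D_{\{a+1,\ldots,n\}}=\prod_{i\in I}\tau_i^{\bN_i}\prod_{j\in\bar I}\tau_j^{\bN_j}$ with the number-type operators $\bN_i,\bN_j$, note that $\bfS_{\{1,\ldots,a\}}$ is a product of commuting exponentials of $\oad_{ij}\oa_{ij}$ with $i\in I,\,j\in\bar I$, and check $[\bN_i,\oad_{k\ell}\oa_{\ell k}]=0$ for all admissible indices by the usual bookkeeping of creation minus annihilation operators. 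With this commutativity in hand the chain of equalities closes, yielding~\eqref{eq:T-via-QQ A-type} and hence, via Theorem~\ref{thm:Main-A}, the claimed formula~\eqref{eq:transfer-AQQ}.
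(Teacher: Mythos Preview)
Your proposal is correct and mirrors the paper exactly: the Proposition is introduced with the single line ``Combining the factorisation formula~\eqref{eq:T-via-QQ A-type} with Theorem~\ref{thm:Main-A}'', and your outline of how~\eqref{eq:T-via-QQ A-type} is derived (the renormalized limits~\eqref{eq:A renormalized limit}, the block factorisation~\eqref{eq:linear factorization}, the similarity~\eqref{eq:A-similarity}, and crucially the commutativity~\eqref{eq:comSDrect}) tracks Subsection~\ref{ssec two-term A-factorization} step for step. One cosmetic slip: after the normalized trace the right-hand side of~\eqref{eq:factor-A} becomes $(\ch^+_{I,\st})^{-1}\,T^+_{I,\st}(x)$ rather than $\ch^+_{I,\st}\,T^+_{I,\st}(x)$, but this does not affect the resulting identity.
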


\medskip

\begin{Rem}
(a) Such formula first appeared in~\cite[(5.12)]{Bazhanov:2001xm} for the $n=3$ trigonometric case,
while the general rational case goes back to~\cite{Tsuboi:2009ud,Tsuboi:2011iz}. However, our derivation
of~\eqref{eq:transfer-AQQ} from~\eqref{eq:transfer-A} has a benefit of not using the determinant
formula~\eqref{eq:Tdet} that is absent in other types.

\medskip
\noindent
(b) Let us note that~\eqref{eq:transfer-AQQ} is also a consequence of the determinant formula~\eqref{eq:Tdet}
and its analogue expressing any $Q$-operator $Q_I(x)$ of~\eqref{eq:qopsA} in terms of the single-indexed
$Q$-operators~\eqref{eq:singleindexQ}:
\begin{equation}\label{eq:QI}
  Q_I(x)
% &=\prod_{1\leq i<j\leq n}\frac{1}{\tau_j^{-1}-\tau_i^{-1}}
% \det \left\Vert \tau_i ^{\lambda'_j}Q_i(x+\lambda_j') \right\Vert_{1\leq i,j\leq n}\\
  =\frac{\det \left\Vert \tau_{i_k}^{-\ell+1} Q_{i_k}(x-\ell+1) \right\Vert_{1\leq k,\ell\leq a}}
        {\det \left\Vert \tau_{i_k}^{-\ell+1} \right\Vert_{1\leq k,\ell\leq a}} \,,
\end{equation}
where $I=\{i_1,\ldots,i_a\}$ (the right-hand side of~\eqref{eq:QI} is clearly independent of the ordering of $i_c$'s).
To establish this formula, one has to consider a family of $\gl_n$-type Lax matrices
$\{\mathbb{L}_I(x)\}_{I\subseteq \{1,\ldots,n\}}$, see~\cite[(2.20)]{Bazhanov:2010jq},
generalizing $L_I(x)$ of \eqref{eq:matricesx 1} by letting their matrix coefficients to take values
in the bigger algebra $\CA\otimes U(\gl_{|I|})[x]$. Explicitly, we set
$\mathbb{L}_{\{1,\ldots,a\}}(x)=L_{\{1,\ldots,a\}}(x)+\sum_{i,j=1}^a e_{ij}E_{ji}$
with $\{E_{ji}\}_{i,j=1}^a$ being the generators of $\gl_a$, while all other $\mathbb{L}_I(x)$ are again obtained
through the similarity and particle-hole transformations precisely as in~\eqref{eq:matricesx 1}, thus resulting in:
\begin{equation}\label{eq:generalized-L}
  \mathbb{L}_I(x)=L_I(x)+\sum_{i,j\in I} E_{\sigma_I^{-1}(j),\sigma_I^{-1}(i)}e_{ij} \,.
\end{equation}
Generalizing $Q_I(x)$ of~\eqref{eq:qopsA}, one defines the $X$-operators
$\{X^+_I(x,\lambda)\}_{I\in \CS_a}^{\lambda\in \BC^{a}}\subset \End(\BC^n)^{\otimes N}$ via:
\begin{equation}\label{eq:X-operators}
  X^+_I(x,\lambda) = \tr_{M^\vee_\lambda} \left\{\prod_{i\in I}\tau_i^{E_{ii}}\,\wtr_{D_I}\,
  \Big( \underbrace{\mathbb{L}_I(x)\otimes \dots \otimes \mathbb{L}_I(x)}_{N} \Big)\right\} \,,
\end{equation}
cf.~\cite[(4.13)]{Bazhanov:2010jq}. Thus, $X^+_I(x,\lambda)$ is the \emph{normalized trace} $\wtr_{D_I}$
of~\eqref{eq:normalized trace} in the Fock module $\Fock$ of $\CA$
followed by the standard trace in the dual Verma module $M^\vee_\lambda$ of $\gl_a$
of the $N$-fold tensor product of $\mathbb{L}_I(x)$ with the twist $D_I$ defined in~\eqref{eq:twist I}.
Likewise, for a dominant integral weight $\lambda$ of $\gl_a$, one defines $X_{I}(x,\lambda)$ with the outer trace
taken over the finite-dimensional $\gl_a$-submodule $L_\lambda$ of $M^\vee_\lambda$.
The latter construction allows to recover back the $Q$-operators via:
\begin{equation}\label{eq:Q-via-X}
  Q_I(x)=X_I\Big(x,(\underbrace{0,\ldots,0}_{a})\Big) \,.
\end{equation}
Then, evoking the BGG resolution of the finite-dimensional $\gl_a$-module $L_\lambda$, we obtain
the following counterpart of Theorem~\ref{thm:baby-bgg} (cf.~\cite[(4.19)]{Bazhanov:2010jq}):
\begin{equation}\label{eq:X-transfer}
  X_I(x,\lambda)=\sum_{\sigma\in S_a} (-1)^{l(\sigma)} X^+_I(x,\sigma \, \cdot \lambda) \,.
\end{equation}
On the other hand, arguing precisely as in our proof of~\eqref{eq:details-fact}, we obtain
the following counterpart of the latter (cf.~\cite[(5.7)]{Bazhanov:2010jq}):
\begin{equation}\label{eq:X-factorisation}
  X^+_I(x,\lambda)=
  \prod_{1\leq k<\ell\leq a}\frac{1}{\tau_{i_\ell}^{-1}-\tau_{i_k}^{-1}} \,
  \prod_{1\leq k\leq a} \tau_{i_k}^{\lambda_k-k+1}
  Q_{i_1}(x+\lambda_1) Q_{i_2}(x+\lambda_2-1) \cdots Q_{i_{a}}(x+\lambda_a-a+1) \,,
\end{equation}
where $I=\{i_1,\ldots,i_a\}\in \CS_a$ and $\lambda=(\lambda_1,\ldots,\lambda_a)\in \BC^a$.
Combining~\eqref{eq:X-factorisation} with~\eqref{eq:X-transfer} and evoking the Vandermonde determinant,
we obtain the following analogue of Theorem~\ref{thm:baby-det}:
\begin{equation}\label{eq:Xdet}
  X_I(x,\lambda) =
  \frac{\det \left\Vert \tau_{i_k}^{\lambda_\ell-\ell+1}
        Q_{i_k}(x+\lambda_\ell-\ell+1) \right\Vert_{1\leq k,\ell\leq a}}
       {\det \left\Vert \tau_{i_k}^{-\ell+1} \right\Vert_{1\leq k,\ell\leq a}} \,.
\end{equation}
Specializing this formula at $\lambda=(0,\ldots,0)$ and evoking~\eqref{eq:Q-via-X}, we recover the desired
formula~\eqref{eq:QI}.

\medskip
\noindent
(c) Conversely, plugging the formula~\eqref{eq:QI} into~\eqref{eq:transfer-AQQ}, we recover
(by expanding the corresponding $n\times n$ determinant with respect to the first $a$ columns)
the determinant formula~\eqref{eq:Tdet} in the particular case of $\lambda=\st\omega_a$, the multiples
of fundamental weights.
\end{Rem}

\medskip
\noindent
For completeness of our exposition, let us conclude with the $QQ$-relations in the present conventions:

\medskip

\begin{Lem}\label{lem:qq}
For any two disjoint subsets $I$ and $\{i,j\}$ of $\{1,\ldots,n\}$, we have:
\begin{equation*}
  Q_{I\sqcup i\sqcup j}(x+\tfrac{1}{2}) Q_I(x-\tfrac{1}{2}) =
  \frac{\tau_j}{\tau_j-\tau_i} Q_{I\sqcup i}(x-\tfrac{1}{2}) Q_{I\sqcup j}(x+\tfrac{1}{2}) -
  \frac{\tau_i}{\tau_j-\tau_i} Q_{I\sqcup j}(x-\tfrac{1}{2}) Q_{I\sqcup i}(x+\tfrac{1}{2}) \,.
\end{equation*}
\end{Lem}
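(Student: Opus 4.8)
The plan is to establish the $QQ$-relation (Lemma~\ref{lem:qq}) by reducing it to a purely linear-algebraic identity at the level of the degenerate Lax matrices $L_I(x)$ of~\eqref{eq:matricesx 1}, in the spirit of the Desnanot-Jacobi-Dodgson-Sylvester theorem as alluded to after~\eqref{eq:A-Hirota}. First I would observe that, since both sides involve only the subsets $I\sqcup i$, $I\sqcup j$, $I\sqcup i\sqcup j$, $I$, and since all the relevant oscillators and twists for indices outside $I\cup\{i,j\}$ act trivially in the required matrix coefficients, it suffices to prove the identity in the ``reduced'' situation where $\{1,\ldots,n\}$ is replaced by $I\cup\{i,j\}$; the remaining tensor factors just come along for the ride (this is exactly the role of the inert set $I$). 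Concretely, I would exploit the explicit triangular factorisation of $L_I(x)$ recorded right after~\eqref{eq:matricesx 1}, namely $L_I(x) = (\text{unipotent upper})\cdot(\text{diagonal with }x\text{'s on }I)\cdot(\text{unipotent lower})$, and the analogous factorisation of $L_{\bar I}$, to write products such as $L_{I\sqcup i}(x-\tfrac12)L_{I\sqcup j}(x+\tfrac12)$ in a normal-ordered form.

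The key computational step is a $2\times 2$ Plücker-type identity. Consider the two distinguished ``columns'' $i$ and $j$: passing from $I$ to $I\sqcup i$, $I\sqcup j$, $I\sqcup i\sqcup j$ amounts to toggling whether the $i$-th and $j$-th slots carry the spectral parameter $x$ or the constant $1$ in the middle diagonal factor. I would set up a $2\times 2$ matrix $\Lambda(x)$ (with entries built from the oscillators $\oad_{\bullet,\bullet},\oa_{\bullet,\bullet}$ indexed by $i,j$ against $\bar I\setminus\{i,j\}$ and the ambient commutators~\eqref{eq:ambient osc}) such that the four $Q$-operators correspond to its principal minors: $Q_{I}\leftrightarrow\det$, $Q_{I\sqcup i}, Q_{I\sqcup j}\leftrightarrow$ the two diagonal entries, $Q_{I\sqcup i\sqcup j}\leftrightarrow 1$. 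The shifts $x\pm\tfrac12$ arise precisely as the ``defect'' between the spectral parameters carried by $i$ versus $j$, exactly as the half-integer shifts appear in~\eqref{eq:A-Hirota}. Then Lemma~\ref{lem:qq} becomes the operator Desnanot-Jacobi identity $1\cdot\det\Lambda = \Lambda_{ii}\Lambda_{jj} - \Lambda_{ij}\Lambda_{ji}$ after taking the appropriate normalized traces $\wtr_{D_\bullet}$ over the Fock space and using that the twists $D_{I\sqcup i}, D_{I\sqcup j}$ differ only by the factor $(\tau_j/\tau_i)^{\pm(\cdots)}$ which produces the scalar coefficients $\tfrac{\tau_j}{\tau_j-\tau_i}$ and $-\tfrac{\tau_i}{\tau_j-\tau_i}$; the commutativity $[Q_I(x),Q_J(y)]=0$ from~\eqref{eq:qq-com 1},~\eqref{eq:qq-com 2} is what lets us order the factors on the right-hand side as stated.

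Alternatively — and this may be the cleaner route to write up — I would derive Lemma~\ref{lem:qq} from the factorisation formula~\eqref{eq:X-factorisation} for the $X$-operators together with the Hirota relation~\eqref{eq:A-Hirota} already quoted in the Introduction. Indeed, specializing~\eqref{eq:X-factorisation} at suitable rank-one weights expresses $X^+_{I\sqcup i\sqcup j}$ and the rest as explicit products $Q_{i_1}(x+\cdots)\cdots Q_{i_a}(x+\cdots)$, and substituting these into the known $QQ$-system~\eqref{eq:A-Hirota} (with the scalar $\Delta_{j,i}=\tfrac{\tau_j-\tau_i}{\sqrt{\tau_i\tau_j}}$) and clearing the overall $\prod\tau$-factors yields precisely the asserted identity; one checks that the half-integer shifts and the $\tau_i,\tau_j$ prefactors match after accounting for the normalization~\eqref{eq:Q-comparison} and the relation $\ell_k=\lambda_k-k+1$. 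The main obstacle in either approach is bookkeeping: tracking the $\tau$-prefactors and the argument shifts through the particle-hole transformations~\eqref{eq:I-ph} and the normalized traces without sign or shift errors, and verifying that the ambient commutators~\eqref{eq:ambient osc} are the only ones contributing so that the operator Desnanot-Jacobi identity genuinely holds (i.e.\ the relevant $2\times 2$ block has commuting enough entries for the classical minor identity to survive in the noncommutative setting). I expect the second route to minimize this risk, since~\eqref{eq:A-Hirota} already encapsulates the noncommutative Desnanot-Jacobi content, leaving only a scalar-matching computation.
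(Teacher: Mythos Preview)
Both of your proposed routes miss the clean argument the paper actually gives. The proof in the paper is a two-line application of the \emph{classical} Desnanot--Jacobi identity to the $(a+2)\times(a+2)$ matrix
\[
  M=\Big(\tau_{i_k}^{-\ell+1}\, Q_{i_k}\big(x-\ell+\tfrac{3}{2}\big)\Big)_{0\leq k\leq a+1,\ 1\leq \ell\leq a+2}\,,
\]
where $I=\{i_1,\ldots,i_a\}$ and one sets $i_0:=j$, $i_{a+1}:=i$. The entries of $M$ are the \emph{single-index} $Q$-operators $Q_{i_k}(\cdot)$, which pairwise commute by Remark~\ref{rem:commutativity}(c), so the classical identity applies verbatim with no noncommutative issues. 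By the determinant formula~\eqref{eq:QI} --- established immediately before the Lemma --- each relevant minor of $M$ equals $Q_J(\cdot)$ times a Vandermonde determinant in the $\tau_{i_k}$, for the appropriate $J\in\{I,\,I\sqcup i,\,I\sqcup j,\,I\sqcup i\sqcup j\}$ and the appropriate shift of $x$; the Vandermonde ratios then produce precisely the scalar prefactors $\tfrac{\tau_j}{\tau_j-\tau_i}$ and $-\tfrac{\tau_i}{\tau_j-\tau_i}$.

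Your first route is set up at the wrong level. You propose a $2\times 2$ matrix $\Lambda$ with oscillator-valued entries built from Lax data and then worry about whether ``the classical minor identity survives in the noncommutative setting''. That difficulty is an artefact of the setup: the relevant matrix is $(|I|+2)\times(|I|+2)$ with entries the \emph{commuting} scalar-type operators $Q_{i_k}(\cdot)$, not a $2\times 2$ block at the Lax or oscillator level. Once~\eqref{eq:QI} is available, there is nothing noncommutative left to check. Your second route is effectively circular: the relation~\eqref{eq:A-Hirota} quoted in the Introduction \emph{is} Lemma~\ref{lem:qq} in the normalization of~\cite{Bazhanov:2010jq}, as the paper makes explicit in~\eqref{eq:qq-bflms}. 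Taking~\eqref{eq:A-Hirota} as input reduces the Lemma to the renormalization comparison~\eqref{eq:qopsAtobflms}, but does not provide an independent argument.
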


\medskip

\begin{proof}
Let $I=\{i_1,\ldots,i_a\}$ and set $i_0=j,\, i_{a+1}=i$.
Then, the $QQ$-relation stated above follows immediately from the
Desnanot-Jacobi-Dodgson-Sylvester theorem applied to the $(a+2)\times(a+2)$ matrix
$M=\left(\tau_{i_k}^{-\ell+1} Q_{i_k}(x-\ell+\tfrac{3}{2})\right)_{0\leq k\leq a+1}^{1\leq \ell\leq a+2}$
with $Q_i(x)$ defined in~\eqref{eq:singleindexQ}.
\end{proof}

\medskip

\begin{Rem}
Generalizing our earlier Remarks~\ref{rem:our-vs-bflms first} and~\ref{rem:generalize partonic}, we note that the
$Q$-operators $\mathsf{Q}_I(x)$ of~\cite[(4.13,~4.20)]{Bazhanov:2010jq} are related to ours from~\eqref{eq:qopsA} via:
\begin{equation}\label{eq:qopsAtobflms}
  \mathsf{Q}_I(x) = \prod_{i\in I} \tau_i^x \cdot \, Q_I\left(x-\sfrac{n-|I|}{2}\right) \,,
\end{equation}
where the twist parameters and the oscillators are identified via~\eqref{eq:bflms-vs-ous twists}
and~\eqref{eq:v-vs-a osc}, respectively.
Here, the shift of the spectral parameter $x$ by $\sfrac{n-|I|}{2}$ arises when identifying
our Lax matrices~\eqref{eq:matricesx 2} with those of~\cite[(2.20)]{Bazhanov:2010jq}, while the additional
factor $\prod_{i\in I} \tau_i^x$ is due to the conventions of~\cite{Bazhanov:2010jq}.
Thus, our $QQ$-relations of Lemma~\ref{lem:qq} are equivalent to the $QQ$-relations of~\cite[(5.12)]{Bazhanov:2010jq}
(though the latter need to be corrected by a sign as already seen from~\cite[(5.9)]{Bazhanov:2010jq}),
see~\eqref{eq:A-Hirota}:
\begin{equation}\label{eq:qq-bflms}
  \frac{\tau_j-\tau_i}{\sqrt{\tau_i\tau_j}} \cdot \mathsf{Q}_{I\sqcup i\sqcup j}(x) \mathsf{Q}_I(x) =
  \mathsf{Q}_{I\sqcup i}(x-\tfrac{1}{2}) \mathsf{Q}_{I\sqcup j}(x+\tfrac{1}{2}) -
  \mathsf{Q}_{I\sqcup j}(x-\tfrac{1}{2}) \mathsf{Q}_{I\sqcup i}(x+\tfrac{1}{2}) \,.
\end{equation}
\end{Rem}

\medskip

\begin{Rem}
Let us also note that the commutativity of the single-index $Q$-operators $\{Q_i(x)\}_{i=1}^n$,
see Remark~\ref{rem:commutativity}(c), is essential both to the derivation of the determinant
formulas~(\ref{eq:Tdet},~\ref{eq:QI}) as well as to the above proof of Lemma~\ref{lem:qq}.
Furthermore, combining Remark~\ref{rem:commutativity} with the determinant expression~\eqref{eq:QI},
we conclude that all the $Q$-operators $\{Q_I(x)|I\subseteq \{1,\ldots,n\}\}$ commute among themselves as well
as with the transfer matrices $T^+_{I,\st}(x)$ and $T_{a,\st}(x)$ of Subsection~\ref{ssec A-transfer rect}.
\end{Rem}

\medskip

   %%%%%%%%%%%%%%%%%%%%%%%%%%%%%%%%%%%%%%%%%%%%%%%%%%%%%%%%%%%%%%%%%%%%%%%%%%%%%%%%%%%%%%%%%%%%%
   %%%%%%%%%%%%%%%%%%%%%%%%%%%%%%%%%%%%%%%% SUBSECTION %%%%%%%%%%%%%%%%%%%%%%%%%%%%%%%%%%%%%%%%%
   %%%%%%%%%%%%%%%%%%%%%%%%%%%%%%%%%%%%%%%%%%%%%%%%%%%%%%%%%%%%%%%%%%%%%%%%%%%%%%%%%%%%%%%%%%%%%

\subsection{Two-term factorisation in C-type}\label{sec:factorisation in C-type}
$\ $

Inspired by~\eqref{eq:oscLaxC}, let us consider the particular example of the general
factorisation~\eqref{eq:linear factorization} applied in the case when $n=2r$, $a=r$, and
the $r\times r$ matrices $\am_1,\ap_1,\am_2,\ap_2$ are explicitly given by:
\begin{equation}\label{eq:specific C-blocks}
\begin{split}
  &
  \am_1=
  \left(\begin{array}{cccc}
    \oa_{r',1} & \cdots & \oa_{r',r-1} & \oa_{r',r} \\
    \vdots & \iddots & \oa_{(r-1)',r-1} & \oa_{r',r-1} \\
    \oa_{2',1} & \oa_{2',2} & \iddots & \vdots \\
    \oa_{1',1} & \oa_{2',1} & \cdots & \oa_{r',1}
  \end{array}\right) \,, \quad
  \ap_1=
  \left(\begin{array}{cccc}
    \oad_{1,r'} & \cdots & \oad_{1,2'} & 2\oad_{1,1'} \\
    \vdots & \iddots & 2\oad_{2,2'} & \oad_{1,2'} \\
    \oad_{r-1,r'} & 2\oad_{r-1,(r-1)'} & \iddots & \vdots \\
    2\oad_{r,r'} & \oad_{r-1,r'} & \cdots & \oad_{1,r'}
  \end{array}\right) \,, \\
  &
  \am_2=
  \left(\begin{array}{cccc}
    \oa_{1,r'} & \cdots & \oa_{r-1,r'} & 2\oa_{r,r'} \\
    \vdots & \iddots & 2\oa_{r-1,(r-1)'} & \oa_{r-1,r'} \\
    \oa_{1,2'} & 2\oa_{2,2'} & \iddots & \vdots \\
    2\oa_{1,1'} & \oa_{1,2'} & \cdots & \oa_{1,r'}
  \end{array}\right) \,, \quad
  \ap_2=
  \left(\begin{array}{cccc}
    \oad_{r',1} & \cdots & \oad_{2',1} & \oad_{1',1} \\
    \vdots & \iddots & \oad_{2',2} & \oad_{2',1} \\
    \oad_{r',r-1} & \oad_{(r-1)',r-1} & \iddots & \vdots \\
    \oad_{r',r} & \oad_{r',r-1} & \cdots & \oad_{r',1}
  \end{array}\right) \,,
\end{split}
\end{equation}
cf.~\eqref{eq:ApAmC}, where the only nontrivial commutators of the above entries are:
\begin{equation}\label{eq:ambient osc C}
  \big[\oa_{j,i},\oad_{i,j}\big]=1\,,\qquad 1\leq i,j\leq 2r \,.
\end{equation}

\medskip
\noindent
In this setup, both matrices $L_{r}(x)$ and $\bL_{r}(y)$ of~\eqref{eq:two matrices} are actually $C_r$-type Lax matrices.
In fact
\begin{equation}\label{eq:laxlin}
  L_{(\footnotesize{\underbrace{+,\ldots,+}_{r}})}(x) = L_{r}(x) =
    \left(\begin{BMAT}[5pt]{c:c}{c:c}
      x\ID_r-\ap_1\am_1 & \ap_1 \\
      -\am_1 & \ID_r
    \end{BMAT}\right)
\end{equation}
appeared in our recent work~\cite[(3.50)]{Frassek:2021ogy}. On the other hand, the Lax matrix $\bL_{r}(y)$ can be obtained
from~\eqref{eq:laxlin} via the Weyl group action followed by a particle-hole transformation:
\begin{equation}\label{eq:opposite deg-Lax-C}
  L_{(\footnotesize{\underbrace{-,\ldots,-}_{r}})}(y)=\bL_{r}(y)=\left. S L_{(+,\ldots,+)}(y)S^{-1} \right|_{P.H.}=
    \left.\left(\begin{BMAT}[5pt]{c:c}{c:c}
      \ID_r & \idb_r\am_1\idb_r \\
      -\idb_r \ap_1 \idb_r & y\ID_r-\idb_r \ap_1\am_1\idb_r
   \end{BMAT}\right)\right|_{P.H.}
\end{equation}
with the $2r\times 2r$ similarity matrix $S$ given by:
\begin{equation}\label{eq:Spm}
   S=
   \left(\begin{BMAT}[5pt]{c:c}{c:c}
     0 & \idb_r \\
     -\idb_r & 0
   \end{BMAT}\right)=B_{(-1,\ldots,-1)} \,,
\end{equation}
cf.\ notation~\eqref{eq:C-type weyl elt}, and the \emph{total particle-hole} transformation (denoted \emph{P.H.}) given by:
\begin{equation}\label{eq:total-ph}
  \oad_{i,j'}\mapsto -\oa_{i,j'} \,, \quad \oa_{j',i}\mapsto \oad_{j',i}
  \, \quad \text{for\ all} \quad 1\leq i\leq j\leq r \,.
\end{equation}

\medskip
\noindent
Let us stress right away that both $L_{(+,\ldots,+)}(x)$ and $L_{(-,\ldots,-)}(x)$ can be obtained
from the Lax matrix $\CL(x)$ of~\eqref{eq:oscLaxC} via the \emph{renormalized limit} procedures
(preserving the property of being Lax):
\begin{equation}\label{eq:C renormalized limit}
\begin{split}
  & L_{(+,\ldots,+)}(x) = \lim_{\st\to \infty}\
    \Big\{\CL(x-\st)\cdot \mathrm{diag}
    \Big(\underbrace{1,\ldots,1}_{r};\underbrace{-\sfrac{1}{2\st},\ldots,-\sfrac{1}{2\st}}_{r}\Big)\Big\} \,,\\
  & L_{(-,\ldots,-)}(x) = \lim_{\st\to \infty}\ \Big\{\mathrm{diag}
    \Big(\underbrace{\sfrac{1}{2\st},\ldots,\sfrac{1}{2\st}}_{r};\underbrace{1,\ldots,1}_{r}\Big)\cdot
    \CL(x+\st+r+1)\Big\} \Big|_{\ap\mapsto -\ap_2 \,,\, \am\mapsto -\am_2} \,.
\end{split}
\end{equation}

\medskip

\begin{Rem}\label{rem:trace-indep}
We note that the commutation relations \eqref{eq:ambient osc C} are invariant under the transformation
$\oa_{j,i}\mapsto -\oa_{i,j},\ \oad_{i,j}\mapsto -\oad_{j,i}$, and furthermore such transformations
do not affect the trace.
\end{Rem}

\medskip
\noindent
Then, the transformation~\eqref{eq:tra12} is again induced by the similarity transformation~\eqref{eq:gauge} with
\begin{equation}\label{eq:C-similarity}
  \bfS=\exp\left[\sum_{1\leq i\leq j\leq r} \left(1+\delta_i^j\right) \oad_{ij'}\oa_{ij'}\right] \,,
\end{equation}
where we note that all the summands in~\eqref{eq:C-similarity} pairwise commute.

\medskip
\noindent
Combining the factorisation formula~\eqref{eq:linear factorization} with the similarity transformation~\eqref{eq:gauge},
we obtain:
\begin{equation}\label{eq:factor-C}
  L_{(+,\ldots,+)}(x+\st) L_{(-,\ldots,-)}(x-\st-r-1) = \, \bfS \CL (x) \Gop \bfS^{-1} \,,
\end{equation}
where
\begin{equation}
  \Gop  =
  \left(\begin{BMAT}[5pt]{c:c}{c:c}
    \ID_r & \ap_2 \\
    0 & \ID_{r}
  \end{BMAT}\right) \,,
\end{equation}
$\bfS$ is given by~\eqref{eq:C-similarity}, and $\CL(x)$ is precisely the $C_r$-type Lax matrix of~\eqref{eq:oscLaxC}.

\medskip
\noindent
Following~\eqref{eq:qopsA}, let us now define the $Q$-operators
$Q_{(+,\ldots,+)}(x), Q_{(-,\ldots,-)}(x)\in \End(\BC^{2r})^{\otimes N}$ via:
\begin{equation}\label{eq:QC+}
  Q_{(+,\ldots,+)}(x)=\wtr_{D_{(+,\ldots,+)}}
  \Big(\underbrace{L_{(+,\ldots,+)}(x) \otimes \cdots \otimes L_{(+,\ldots,+)}(x)}_{N}\Big)
\end{equation}
and
\begin{equation}\label{eq:QC-}
  Q_{(-,\ldots,-)}(x)=\wtr_{D_{(-,\ldots,-)}}
  \Big(\underbrace{L_{(-,\ldots,-)}(x) \otimes \cdots \otimes L_{(-,\ldots,-)}(x)}_{N}\Big) \,,
\end{equation}
cf.~\eqref{eq:normalized trace}, with the twists $D_{(+,\ldots,+)}$ and $D_{(-,\ldots,-)}$ defined in analogy
with~(\ref{eq:twist I})--(\ref{eq:twistconj}) via:
\begin{equation}\label{eq:two C twists}
  D_{(+,\ldots,+)}\, = \prod_{1\leq i\leq j\leq r}\left(\tau_i\tau_{j}\right)^{-\oad_{ij'}\oa_{j'i}} \,, \qquad
  D_{(-,\ldots,-)}\, = \prod_{1\leq i\leq j\leq r}\left(\tau_i\tau_{j}\right)^{-\oad_{j'i}\oa_{ij'}} \,.
\end{equation}
We note that the twist $D_{(+,\ldots,+)}$ can be further expressed via $\{\CF_{ii}\}_{i=1}^r$
of~(\ref{eq:F-generators C-type},~\ref{eq:C-diag}) as:
\begin{equation}\label{eq:twist-via-Cartan C}
  D_{(+,\ldots,+)}=\prod_{i=1}^r \tau_i^{\CF_{ii}-\st} \,.
\end{equation}
Let us stress right away that the actions of the twists $D_{(+,\ldots,+)}$ and $D_{(-,\ldots,-)}$
on the corresponding Fock modules are uniquely determined (up to scalar functions) by the following conditions:
\begin{equation}\label{eq:twistconj CD}
\begin{split}
  & D L_{(+,\ldots,+)}(x) D^{-1} = D^{-1}_{(+,\ldots,+)}L_{(+,\ldots,+)}(x)D_{(+,\ldots,+)}  \,, \\
  & D L_{(-,\ldots,-)}(x) D^{-1} = D^{-1}_{(-,\ldots,-)}L_{(-,\ldots,-)}(x)D_{(-,\ldots,-)} \,,
\end{split}
\end{equation}
with
\begin{equation}\label{eq:const twist CD}
  D=\diag\Big(\tau_1,\ldots,\tau_r,\tau^{-1}_r,\ldots,\tau^{-1}_1\Big) \,,
\end{equation}
cf.~\eqref{eq:twistconj}. The relations~\eqref{eq:twistconj CD} ensure the commutativity of
$Q_{(+,\ldots,+)}(x)$ and $Q_{(-,\ldots,-)}(x)$ defined via~(\ref{eq:QC+},~\ref{eq:QC-}) with the transfer matrix
$T_{(1,0,\ldots,0)}(y)$ of the defining fundamental representation in the auxiliary space, see Remark~\ref{rem:commutativity}(b).

\medskip
\noindent
Building the monodromy matrices (by considering the $N$-fold tensor product on the matrix space)
from~\eqref{eq:factor-C}, taking the normalized trace, and evoking the relation~\eqref{eq:twist-via-Cartan C},
the factorisation formula~\eqref{eq:factor-C} implies the following factorisation for
the transfer matrices $T^+_{(+1,\ldots,+1),\st}(x)$ of~\eqref{eq:C-inf-transfer}:
\begin{equation}\label{eq:T-via-QQ C-type basic}
  T^+_{(+1,\ldots,+1),\st}(x) = \ch_{(+1,\ldots,+1),\st}^+ \cdot\,
  Q_{(+,\ldots,+)}(x+\st) Q_{(-,\ldots,-)}(x-\st-r-1)
\end{equation}
with
\begin{equation}\label{eq:ch-C basic}
  \ch_{(+1,\ldots,+1),\st}^+ = \tr \prod_{i=1}^r \tau_i^{\CF_{ii}} = \,
  \prod_{i=1}^r \tau_i^{\st}  \prod_{1\leq i\leq j\leq r} \frac{1}{1-\tau^{-1}_i\tau^{-1}_{j}} \,.
\end{equation}

\medskip
\noindent
In analogy with Subsection~\ref{ssec two-term A-factorization}, the factorisation formula \eqref{eq:linear factorization}
can be conjugated by $B_{\vec\mu}$ of \eqref{eq:C-type weyl elt} to provide an analogue of \eqref{eq:factor-A} with
the index $I\in\CS_a$ being replaced by $\vec\mu\in \{\pm 1\}^r$.
To keep our presentation short, we shall generate the remaining $Q$-operators directly from
$Q_{(+,\ldots,+)}(x)$ of~\eqref{eq:QC+} using the action of the Weyl group via:
\begin{equation}\label{eq:Qmu}
  Q_{\vec\mu}(x)=
  \left(B_{\vec\mu} \otimes \cdots \otimes B_{\vec\mu}\right) Q_{(+,\ldots,+)}(x)
  \left(B_{\vec\mu} \otimes \cdots \otimes B_{\vec\mu}\right)^{-1}
  \Big|_{\left\{\tau_i\mapsto \tau_i^{-1}\,|\,\mu_i=-1\right\}} \,.
\end{equation}

\medskip
\noindent
To this end, we should stress right away that the action of the Weyl group on the $Q$-operator
$Q_{(-,\ldots,-)}(x)$ of~\eqref{eq:QC-} generates the same $Q$-operators $Q_{\vec\mu}(x)$.
More precisely, we have:
\begin{equation}\label{eq:Qbmu}
  Q_{\vec{\bar\mu}}(x)=
  \left(B_{\vec\mu} \otimes \cdots \otimes B_{\vec\mu}\right) Q_{(-,\ldots,-)}(x)
  \left(B_{\vec\mu} \otimes \cdots \otimes B_{\vec\mu}\right)^{-1}
  \Big|_{\left\{\tau_i\mapsto \tau_i^{-1} \,|\, \mu_i=-1\right\}} \,,
\end{equation}
where we use the notation of~\eqref{eq:opposite-mu}:
\begin{equation}\label{eq:opposite-mu revisited}
  \vec{\bar\mu} = -\vec\mu \,.
\end{equation}

\medskip
\noindent
The above claim, that is the operators~\eqref{eq:Qmu} satisfy~\eqref{eq:Qbmu}, follows from the equality:
\begin{equation}\label{eq:relLbL}
  B_{\vec\mu} {L_{(-,\ldots,-)}(x)}\Big|_{\overline{P.H.}} B_{\vec\mu}^{-1} =
  B_{\vec{\bar\mu}} L_{(+,\ldots,+)}(x)  B_{\vec{\bar\mu}}^{-1}
  \Big|_{\substack{\oa_{j'i}\mapsto \mu_i\mu_j\oa_{j'i} \\ \oad_{ij'}\mapsto \mu_i\mu_j\oad_{ij'}}}
\end{equation}
that relates the Lax matrices from our definition of $Q_{(\pm,\ldots,\pm)}(x)$.
Here, $\overline{P.H.}$ stands for undoing the total particle-hole \eqref{eq:total-ph}.
As mentioned in Remark~\ref{rem:trace-indep}, since only powers of $\oad\oa$ contribute to the trace,
the resulting $Q$-operators are invariant under the transformations
$\oa\rightarrow-\oa, \oad\rightarrow-\oad$, and thus~\eqref{eq:relLbL} indeed implies~\eqref{eq:Qbmu}.
According to~\eqref{eq:opposite deg-Lax-C}, the equality \eqref{eq:relLbL} is equivalent to:
\begin{equation}\label{eq:C-comp tech}
  \Big(B_{\vec\mu}B_{(-1,\ldots,-1)}\Big) L_{(+,\ldots,+)}(x) \Big(B_{\vec\mu}B_{(-1,\ldots,-1)}\Big)^{-1} =
  B_{\vec{\bar\mu}} L_{(+,\ldots,+)}(x) B_{\vec{\bar\mu}}^{-1}
  \Big|_{\substack{\oa_{j'i}\mapsto \mu_i\mu_j\oa_{j'i} \\ \oad_{ij'}\mapsto \mu_i\mu_j\oad_{ij'}}} \,.
\end{equation}
To prove the latter (and thus establish~\eqref{eq:Qbmu}), we note that the endomorphisms~\eqref{eq:C-type weyl elt} satisfy:
\begin{equation}\label{eq:techn C}
  B_{(-1,\ldots,-1)}^{-1}B_{\vec\mu}^{-1}B_{\vec{\bar\mu}}\ =
  \sum_{i=1}^{r} \mu_i \left(e_{ii}+e_{i'i'}\right) \,,
\end{equation}
cf.~Remark~\ref{rem:B-action C-type}. Thus, conjugating $L_{(+,\ldots,+)}(x)$ by
$B_{(-1,\ldots,-1)}^{-1}B_{\vec\mu}^{-1}B_{\vec{\bar\mu}}$ (which is diagonal in the standard basis) and
further applying the above change of oscillators leaves $L_{(+,\ldots,+)}(x)$ invariant:
\begin{equation}
  L_{(+,\ldots,+)}(x)=
  \left.\left(\sum_{i=1}^{r} \mu_i\left(e_{ii}+e_{i'i'}\right)\right)
  L_{(+,\ldots,+)}(x) \left(\sum_{i=1}^{r} \mu_i\left(e_{ii}+e_{i'i'}\right)\right)^{-1}
  \right|_{\substack{\oa_{j'i}\mapsto \mu_i\mu_j\oa_{j'i} \\ \oad_{ij'}\mapsto \mu_i\mu_j\oad_{ij'}}} \,.
\end{equation}

\medskip
\noindent
Evoking the action of $B_{\vec\mu}$ on the Lax matrices $\CL_{\vec\mu}(x)$ and the behaviour of the twists,
as discussed in Subsection~\ref{ssec C all osc}, we see that conjugating~\eqref{eq:factor-C} with $B_{\vec\mu}$
and subsequently interchanging the twists, we obtain the following generalization of the
factorisation~\eqref{eq:T-via-QQ C-type basic} for any $\vec\mu\in \{\pm 1\}^r$:
\begin{equation}\label{eq:T-via-QQ C-type}
  T^+_{\vec\mu,\st}(x) = \ch_{\vec\mu,\st}^+ \cdot\, Q_{\vec\mu}(x+\st) Q_{\vec{\bar\mu}}(x-\st-r-1)
\end{equation}
with
\begin{equation}\label{eq:ch-C}
  \ch_{\vec\mu,\st}^+ = \tr \prod_{i=1}^r \tau_i^{\CF^{\vec\mu}_{ii}} = \,
  \frac{\prod_{i=1}^{r} \tau_i^{\mu_i\left(\st+(r-i+1)\delta_{\mu_i}^{-}+\sum_{k=1}^i \delta_{\mu_k}^{-}\right)}}
       {\prod_{1\leq i\leq j\leq r}\left(1-\tau_i^{-1}\tau_j^{-\mu_i \mu_j}\right)} \,,
\end{equation}
cf.~(\ref{eq:replC},~\ref{eq:char-C},~\ref{eq:ch-plus}).

\medskip
\noindent
Combining the factorisation formula~\eqref{eq:T-via-QQ C-type} with Theorem~\ref{thm:Main-C}, we get
(cf.~\eqref{eq:sigma-length C} and~\eqref{eq:ch-C}):

\medskip

\begin{Prop}\label{prop:main-C factorized}
For any $\st\in \BN$, we have:
\begin{equation}\label{eq:transfer-CQQ}
  T_{r,\st}(x)\, = \sum_{\vec\mu\in \{\pm 1\}^r} (-1)^{\mathsf{l}(\vec{\mu})}\, \ch_{\vec\mu,\st}^+ \cdot\,
  Q_{\vec\mu}(x+\st)Q_{\vec{\bar\mu}}(x-\st-r-1) \,.
\end{equation}
\end{Prop}

\medskip

   %%%%%%%%%%%%%%%%%%%%%%%%%%%%%%%%%%%%%%%%%%%%%%%%%%%%%%%%%%%%%%%%%%%%%%%%%%%%%%%%%%%%%%%%%%%%%
   %%%%%%%%%%%%%%%%%%%%%%%%%%%%%%%%%%%%%%%% SUBSECTION %%%%%%%%%%%%%%%%%%%%%%%%%%%%%%%%%%%%%%%%%
   %%%%%%%%%%%%%%%%%%%%%%%%%%%%%%%%%%%%%%%%%%%%%%%%%%%%%%%%%%%%%%%%%%%%%%%%%%%%%%%%%%%%%%%%%%%%%

\subsection{Two-term factorisation in D-type}
$\ $

Let us now discuss the straightforward $D$-type version of the results from the previous subsection
(going back to~\cite[\S5.1]{f}). Consider the particular example of the general
factorisation~\eqref{eq:linear factorization} applied in the case when $n=2r$, $a=r$,
and the $r\times r$ matrices $\am_1,\ap_1,\am_2,\ap_2$ are explicitly given by:
\begin{equation}\label{eq:specific D-blocks}
\begin{split}
  \am_1=
  \left(\begin{array}{cccc}
    \oa_{r',1} & \cdots & \oa_{r',r-1} & 0 \\
    \vdots & \iddots & 0 & -\oa_{r',r-1} \\
    \oa_{2',1} & 0 & \iddots & \vdots \\
    0 & -\oa_{2',1} & \cdots & -\oa_{r',1}
  \end{array}\right) \,, \quad
  \ap_1=
  \left(\begin{array}{cccc}
    \oad_{1,r'} & \cdots & \oad_{1,2'} & 0 \\
    \vdots & \iddots & 0 & -\oad_{1,2'} \\
    \oad_{r-1,r'} & 0 & \iddots & \vdots \\
    0 & -\oad_{r-1,r'} & \cdots & -\oad_{1,r'}
  \end{array}\right) \,, \\
  \am_2=
  \left(\begin{array}{cccc}
    \oa_{1,r'} & \cdots & \oa_{r-1,r'} & 0 \\
    \vdots & \iddots & 0 & -\oa_{r-1,r'} \\
    \oa_{1,2'} & 0 & \iddots & \vdots \\
    0 & -\oa_{1,2'} & \cdots & -\oa_{1,r'}
  \end{array}\right) \,, \quad
  \ap_2=
  \left(\begin{array}{cccc}
    \oad_{r',1} & \cdots & \oad_{2',1} & 0 \\
    \vdots & \iddots & 0 & -\oad_{2',1} \\
    \oad_{r',r-1} & 0 & \iddots & \vdots \\
    0 & -\oad_{r',r-1} & \cdots & -\oad_{r',1}
  \end{array}\right) \,,
\end{split}
\end{equation}
cf.~\eqref{eq:ApAmD}, where the only nontrivial commutators of the above entries are given by~\eqref{eq:ambient osc C}.

\medskip
\noindent
In this setup, both matrices $L_{r}(x)$ and $\bL_{r}(y)$ of~\eqref{eq:two matrices} are actually
$D_r$-type Lax matrices. In fact, the Lax matrix $L_{(+,\ldots,+)}(x)=L_{r}(x)$, see~\eqref{eq:laxlin},
appeared first in~\cite[\S4.1]{f}, cf.~\cite[(2.231)]{Frassek:2021ogy}. On the other hand, the Lax matrix
$L_{(-,\ldots,-)}(y)=\bL_{r}(y)$ can be obtained from~\eqref{eq:laxlin} via the Weyl group
action~\eqref{eq:Spm} followed by the total particle-hole transformation~\eqref{eq:total-ph},
exactly as in~\eqref{eq:opposite deg-Lax-C}.

\medskip
\noindent
Similarly to~\eqref{eq:C renormalized limit}, we note that both $L_{(+,\ldots,+)}(x)$ and $L_{(-,\ldots,-)}(x)$
can be obtained from the Lax matrix $\CL(x)$ of~\eqref{eq:linear-Lax-D} via the \emph{renormalized limit} procedures
(preserving the property of being Lax):
\begin{equation}\label{eq:D renormalized limit}
\begin{split}
  & L_{(+,\ldots,+)}(x) = \lim_{\st\to \infty}\
    \Big\{\CL(x-\st)\cdot \mathrm{diag}
    \Big(\underbrace{1,\ldots,1}_{r};\underbrace{-\sfrac{1}{2\st},\ldots,-\sfrac{1}{2\st}}_{r}\Big)\Big\} \,, \\
  & L_{(-,\ldots,-)}(x) = \lim_{\st\to \infty}\ \Big\{\mathrm{diag}
    \Big(\underbrace{\sfrac{1}{2\st},\ldots,\sfrac{1}{2\st}}_{r};\underbrace{1,\ldots,1}_{r}\Big)\cdot
    \CL(x+\st+r-1)\Big\} \Big|_{\ap\mapsto -\ap_2 \,,\, \am\mapsto -\am_2} \,.
\end{split}
\end{equation}

\medskip
\noindent
Then, the transformation~\eqref{eq:tra12} is again induced by the similarity transformation~\eqref{eq:gauge} with
\begin{equation}\label{eq:D-similarity}
  \bfS=\exp\left[\sum_{1\leq i< j\leq r} \oad_{ij'}\oa_{ij'}\right] \,,
\end{equation}
where we note that all the summands in~\eqref{eq:D-similarity} pairwise commute.

\medskip
\noindent
Combining the factorisation formula~\eqref{eq:linear factorization} with the similarity transformation~\eqref{eq:gauge},
we obtain:
\begin{equation}\label{eq:factor-D}
  L_{(+,\ldots,+)}(x+\st) L_{(-,\ldots,-)}(x-\st-r+1) = \, \bfS \CL (x) \Gop \bfS^{-1} \,,
\end{equation}
where
\begin{equation}
  \Gop =
  \left(\begin{BMAT}[5pt]{c:c}{c:c}
    \ID_r & \ap_2 \\
    0 & \ID_{r}
  \end{BMAT}\right) \,,
\end{equation}
$\bfS$ is given by~\eqref{eq:D-similarity}, and $\CL(x)$ is precisely the $D_r$-type Lax matrix of~\eqref{eq:linear-Lax-D}.

\medskip
\noindent
Following~(\ref{eq:qopsA},~\ref{eq:QC+},~\ref{eq:QC-}), we define the $Q$-operators
$Q_{(+,\ldots,+)}(x), Q_{(-,\ldots,-)}(x)\in \End(\BC^{2r})^{\otimes N}$~via:
\begin{equation}\label{eq:QD+}
  Q_{(+,\ldots,+)}(x)=\wtr_{D_{(+,\ldots,+)}}
  \Big(\underbrace{L_{(+,\ldots,+)}(x) \otimes \cdots \otimes L_{(+,\ldots,+)}(x)}_{N}\Big)
\end{equation}
and
\begin{equation}\label{eq:QD-}
  Q_{(-,\ldots,-)}(x)=\wtr_{D_{(-,\ldots,-)}}
  \Big(\underbrace{L_{(-,\ldots,-)}(x) \otimes \cdots \otimes L_{(-,\ldots,-)}(x)}_{N}\Big) \,,
\end{equation}
cf.~\eqref{eq:normalized trace}, with the twists $D_{(+,\ldots,+)}$ and $D_{(-,\ldots,-)}$
defined in analogy with~(\ref{eq:twist I})--(\ref{eq:twistconj}) via:
\begin{equation}
  D_{(+,\ldots,+)}\, = \prod_{1\leq i< j\leq r}\left(\tau_i\tau_{j}\right)^{-\oad_{ij'}\oa_{j'i}} \,, \qquad
  D_{(-,\ldots,-)}\, = \prod_{1\leq i<j\leq r}\left(\tau_i\tau_{j}\right)^{-\oad_{j'i}\oa_{ij'}} \,,
\end{equation}
cf.~\eqref{eq:two C twists}. Similarly to $C$-type considered in the previous subsection, we note that
the actions of these twists on the Fock modules are uniquely determined (up to scalar functions) by the
condition~\eqref{eq:twistconj CD}. Crucially, the twist $D_{(+,\ldots,+)}$ can be further expressed
via $\{\CF_{ii}\}_{i=1}^r$ of~(\ref{eq:F-generators D-type},~\ref{eq:D-diagonal})~as:
\begin{equation}\label{eq:twist-via-Cartan D}
  D_{(+,\ldots,+)}=\prod_{i=1}^r \tau_i^{\CF_{ii}-\st} \,.
\end{equation}

\medskip
\noindent
Building the monodromy matrices (by considering the $N$-fold tensor product on the matrix space)
from~\eqref{eq:factor-D}, taking the normalized trace, and evoking the relation~\eqref{eq:twist-via-Cartan D},
the factorisation formula~\eqref{eq:factor-D} implies the following factorisation for
the transfer matrices $T^+_{(+1,\ldots,+1),\st}(x)$ of~\eqref{eq:Dspin-inf-transfer}:
\begin{equation}\label{eq:T-via-QQ D-type basic}
  T^+_{(+1,\ldots,+1),\st}(x) = \ch_{(+1,\ldots,+1),\st}^+ \cdot\,
  Q_{(+,\ldots,+)}(x+\st) Q_{(-,\ldots,-)}(x-\st-r+1)
\end{equation}
with
\begin{equation}\label{eq:ch-D basic}
  \ch_{(+1,\ldots,+1),\st}^+ = \tr \prod_{i=1}^r \tau_i^{\CF_{ii}} = \,
  \prod_{i=1}^r \tau_i^{\st} \prod_{1\leq i<j\leq r} \frac{1}{1-\tau^{-1}_i\tau^{-1}_{j}} \,.
\end{equation}

\medskip
\noindent
Similarly to our treatment of $C$-type in the previous subsection, we shall generate
the remaining $Q$-operators directly from $Q_{(+,\ldots,+)}(x)$ of~\eqref{eq:QD+} via:
\begin{equation}\label{eq:QmuDspinor}
  Q_{\vec\mu}(x)=
  \left(B_{\vec\mu} \otimes \cdots \otimes B_{\vec\mu}\right) Q_{(+,\ldots,+)}(x)
  \left(B_{\vec\mu} \otimes \cdots \otimes B_{\vec\mu}\right)^{-1}
  \Big|_{\Big\{\tau_i\mapsto \tau_i^{-1} \,| \,\mu_i=-1 \Big\}} \,.
\end{equation}
As in type $C$, let us note the following key compatibility of this construction:
\begin{equation}\label{eq:QbmuDspinor}
  Q_{\vec{\bar\mu}}(x)=
  \left(B_{\vec\mu} \otimes \cdots \otimes B_{\vec\mu}\right) Q_{(-,\ldots,-)}(x)
  \left(B_{\vec\mu} \otimes \cdots \otimes B_{\vec\mu}\right)^{-1}
  \Big|_{\Big\{\tau_i\mapsto \tau_i^{-1} \,|\, \mu_i=-1 \Big\}} \,.
\end{equation}
The latter is a consequence of the following analogue of~\eqref{eq:relLbL}:
\begin{equation}\label{eq:relLbLD}
  B_{\vec\mu} L_{(-,\ldots,-)}(x)\Big|_{\overline{P.H.}} B_{\vec\mu}^{-1} =
  B_{\vec{\bar\mu}} L_{(+,\ldots,+)}(x)  B_{\vec{\bar\mu}}^{-1}
\end{equation}
(where $\overline{P.H.}$ stands for undoing the total particle-hole \eqref{eq:total-ph}),
which follows from the natural compatibility among the endomorphisms~\eqref{eq:fullBD}, cf.~\eqref{eq:techn C}:
\begin{equation}\label{eq:compatDspin}
  B_{(-1,\ldots,-1)}^{-1}B_{\vec\mu}^{-1}B_{\vec{\bar\mu}}=\id_{2r} \,.
\end{equation}

\medskip
\noindent
Thus, similarly to $C$-type, the factorisation~\eqref{eq:T-via-QQ D-type basic} admits the generalization
for any $\vec\mu\in \{\pm 1\}^r$:
\begin{equation}\label{eq:T-via-QQ D-type}
  T^+_{\vec\mu,\st}(x) = \ch_{\vec\mu,\st}^+ \cdot\, Q_{\vec\mu}(x+\st) Q_{\vec{\bar\mu}}(x-\st-r+1)
\end{equation}
with
\begin{equation}\label{eq:ch-D}
  \ch_{\vec\mu,\st}^+ = \tr \prod_{i=1}^r \tau_i^{\CF^{\vec\mu}_{ii}} = \,
  \frac{\prod_{i=1}^{r} \tau_i^{\mu_i\left(\st+(r-i-1)\delta_{\mu_i}^{-}+\sum_{k=1}^i \delta_{\mu_k}^{-}\right)}}
       {\prod_{1\leq i<j\leq r}\left(1-\tau_i^{-1}\tau_j^{-\mu_i \mu_j}\right)} \,,
\end{equation}
cf.~\eqref{eq:replD} and~\eqref{eq:char-Dspin}.

\medskip
\noindent
Combining the factorisation formula~\eqref{eq:T-via-QQ D-type} with Theorem~\ref{thm:Main-D}, we get
(cf.~(\ref{eq:even-odd},~\ref{eq:sigma-length D}) and~\eqref{eq:ch-D})):

\medskip

\begin{Prop}\label{prop:main-D factorized}
For any $\st\in \frac{1}{2}\BN$, we have:
\begin{equation}\label{eq:transfer-DQQ}
  T^\pm_{\st}(x)\, = \sum_{\vec\mu\in \{\pm 1\}^r_{\pm}} (-1)^{\mathsf{l}(\vec{\mu})}\, \ch_{\vec\mu,\st}^+ \cdot\,
  Q_{\vec\mu}(x+\st)Q_{\vec{\bar\mu}}(x-\st-r+1) \,.
\end{equation}
\end{Prop}

$\ $

   %%%%%%%%%%%%%%%%%%%%%%%%%%%%%%%%%%%%%%%%%%%%%%%%%%%%%%%%%%%%%%%%%%%%%%%%%%%%%%%%%%%%%%%%%%%%%
   %%%%%%%%%%%%%%%%%%%%%%%%%%%%%%%%%%%%%%% SECTION 9 %%%%%%%%%%%%%%%%%%%%%%%%%%%%%%%%%%%%%%%%%%
   %%%%%%%%%%%%%%%%%%%%%%%%%%%%%%%%%%%%%%%%%%%%%%%%%%%%%%%%%%%%%%%%%%%%%%%%%%%%%%%%%%%%%%%%%%%%%

\section{Factorisation for quadratic BD-types}\label{sec:facquad}

In this section, we factorise the infinite-dimensional quadratic $BD$-type transfer matrices~\eqref{eq:BD-inf-transfer}
into the products of two Baxter $Q$-operators arising from degenerate Lax matrices, alike~Section~\ref{sec linear factorizations}.

\medskip
\noindent
Consider the following two $\NK\times \NK$ matrices written in the block form as:
\begin{equation}\label{eq:quadlaxl 1}
{\small
  L_1(x)=
  \left(\begin{BMAT}[5pt]{c|c|c}{c|c|c}
    x^2+x\left(2-\sfrac{\NK}{2}-\wp_1\wm_1\right)+\sfrac{1}{4}\wp_1\idb\wp_1^T\wm_1^T\idb\wm_1 &
      x\wp_1-\sfrac{1}{2}\wp_1\idb\wp_1^T\wm_1^T\idb & -\sfrac{1}{2}\wp_1\idb\wp_1^T \\\
    -x\wm_1+\sfrac{1}{2}\idb\wp_1^T\wm_1^T\idb\wm_1 & x\id-\idb\wp_1^T\wm_1^T\idb & -\idb\wp_1^T \\
    -\sfrac{1}{2}\wm_1^T\idb\wm_1 & \wm_1^T\idb & 1 \\
  \end{BMAT}\right) }
\end{equation}
and
\begin{equation}\label{eq:quadlax2}
{\small
  L_{\NK}(y)=
  \left(\begin{BMAT}[5pt]{c|c|c}{c|c|c}
    1 & \wp_2 & -\sfrac{1}{2}\wp_2\idb\wp_2^T \\
    \wm_2 & y\id+\wm_2\wp_2 &
    -y\idb\wp_2^T-\sfrac{1}{2}\wm_2\wp_2\idb\wp_2^T \\
    -\sfrac{1}{2}\wm_2^T\idb\wm_2 &
     - y\wm_2^T\idb-\sfrac{1}{2}\wm_2^T\idb\wm_2\wp_2 &
       y^2+y\left(2-\sfrac{\NK}{2}+\wm_2^T\wp_2^T\right)+\sfrac{1}{4}\wm_2^T\idb\wm_2\wp_2\idb\wp_2^T  \\
  \end{BMAT} \right)}
\end{equation}
with $\id=\id_{\NK-2}$, $\idb=\idb_{\NK-2}$, while the length $\NK-2$ rows $\wp_1,\wp_2$ and columns $\wm_1,\wm_2$
are given by:
\begin{equation}
\begin{split}
  & \wm_1=(\oa_{2,1},\ldots,\oa_{\NK-1,1})^T \,, \qquad \wp_1=(\oad_{1,2},\ldots,\oad_{1,\NK-1}) \,,\\
  & \wm_2=(\oa_{1,2},\ldots,\oa_{1,\NK-1})^T \,, \qquad \wp_2=(\oad_{2,1},\ldots,\oad_{\NK-1,1}) \,,
\end{split}
\end{equation}
cf.~(\ref{eq:D-osc},~\ref{eq:B-osc}), where the only nontrivial commutators of the above entries are
given by~\eqref{eq:ambient osc C}.

\medskip
\noindent
Then, their product can be factorised as follows:
\begin{equation}\label{eq:BDgenfac}
  L_1\left(x-1+\sfrac{t}{2}+\sfrac{\NK}{4}\right)
  L_\NK\left(x-\sfrac{t}{2}-\sfrac{\NK}{4}\right) = \CL_{1}'(x) \Gop' \,,
\end{equation}
where
\begin{equation}\label{eq:gop'}
  \Gop'=
  \left(\begin{BMAT}[5pt]{c|c|c}{c|c|c}
    1 & \wp_{2}' & -\sfrac{1}{2}\wp_{2}'\idb(\wp_{2}')^T \\\
    0 & \id&-\idb(\wp'_{2})^T \\
    0 & 0 & 1 \\
  \end{BMAT} \right)
\end{equation}
and $\CL_{1}'(x)$ is the $\sso_\NK$-type Lax matrix obtained from $\CL_1(x)$ of~\eqref{eq:LaxBDmod}
by replacing $\wp\mapsto \wp'_1, \wm\mapsto \wm'_1$, with the following transformation in place:
\begin{equation}\label{eq:tra-quadr}
\begin{split}
  \wm_{1}' & =\wm_{1}-\wm_{2} \,, \qquad  \wm_{2}'=\wm_{2} \,, \\
  \wp_{2}' & =\wp_{2}+\wp_{1}\,, \qquad \wp_{1}'=\wp_{1} \,,
\end{split}
\end{equation}
cf.~\eqref{eq:tra12}. We note that the right-hand side of~\eqref{eq:BDgenfac} is independent of $\wm'_2$.

\medskip
\noindent
Actually, both $L_{1}(x)$, $L_{\NK}(y)$ of~(\ref{eq:quadlaxl 1},~\ref{eq:quadlax2})
are $\sso_{\NK}$-type Lax matrices. In fact, the Lax matrix $L_1(x)$ appeared first in~\cite[\S4.2]{f},
cf.~\cite[(2.237) and \S4.3]{Frassek:2021ogy}. On the other hand, the Lax matrix $L_{\NK}(y)$ can be obtained
from~\eqref{eq:quadlaxl 1} via the Weyl group action followed by a particle-hole transformation:
\begin{equation}
   L_{\NK}(x) = \hat{B}_{1'} L_{1}(x) \hat{B}_{1'}^{-1}\Big|_{P.H.}
\end{equation}
with the similarity matrix $\hat{B}_{1'}=\idb_{\NK}$, cf.~\eqref{eq:hat-B}, and
the following particle-hole transformation~\emph{P.H.}:
\begin{equation}
   \wp_1\mapsto -\wm_2^T \,, \qquad  \wm_1\mapsto \wp_2^T \,.
\end{equation}

\medskip
\noindent
We also note that the transformation~\eqref{eq:tra-quadr} is in fact induced by the similarity transformation:
\begin{equation}\label{eq:gauge quadratic}
\begin{split}
  \wm'_1=\bfS \wm_{1} \bfS^{-1} \,, \qquad  \wm'_2=\bfS \wm_{2} \bfS^{-1} \,, \\
  \wp'_2=\bfS \wp_{2} \bfS^{-1} \,, \qquad  \wp'_1=\bfS \wp_{1} \bfS^{-1}  \,,
\end{split}
\end{equation}
with
\begin{equation}\label{eq:BD-similarity}
  \bfS=\exp\left[\, \sum_{\ell=2}^{\NK-1} \oad_{1 \ell}\oa_{1 \ell}\, \right] \,,
\end{equation}
where all the summands in the right-hand side of~\eqref{eq:BD-similarity} pairwise commute,
cf.~(\ref{eq:gauge},~\ref{eq:A-similarity}).

\medskip
\noindent
Thus, combining the factorisation formula~\eqref{eq:BDgenfac} with  the similarity transformation~\eqref{eq:gauge quadratic},
we get:
\begin{equation}\label{eq:factor-BD}
  L_1\left(x-1+\sfrac{t}{2}+\sfrac{\NK}{4}\right)L_{\NK}\left(x-\sfrac{t}{2}-\sfrac{\NK}{4}\right) =
  \bfS \CL_{1}(x) \Gop \bfS^{-1} \,,
\end{equation}
where $\CL_{1}(x)$ is precisely the $\sso_\NK$-type Lax matrix of~\eqref{eq:LaxBDmod}, the matrix $\Gop$ is obtained
from~\eqref{eq:gop'} by replacing $\wp'_2,\wm'_2$ with $\wp_2,\wm_2$, and $\bfS$ is given by~\eqref{eq:BD-similarity}.

\medskip

\begin{Rem}
In type $D$, the factorisation formula~\eqref{eq:factor-BD} first appeared in~\cite[(5.30)]{f}.
\end{Rem}

\medskip
\noindent
Let us stress right away that both $L_{1}(x)$ and $L_{\NK}(x)$ can be obtained from the Lax matrix $\CL_1(x)$
of~\eqref{eq:LaxBDmod} via the \emph{renormalized limit} procedures
(which clearly preserve the property of being Lax):
\begin{equation}\label{eq:BD renormalized limit}
\begin{split}
  & L_{1}(x) = \lim_{\st\to \infty}\
    \Big\{\CL_1\left(x+1-\sfrac{\st}{2}-\sfrac{\NK}{4}\right)\cdot \mathrm{diag}
    \Big(1;\underbrace{-\sfrac{1}{\st},\ldots,-\sfrac{1}{\st}}_{\NK-2};\sfrac{1}{\st^2}\Big)\Big\}
    \Big|_{\wp\mapsto \wp_1 \,,\, \wm\mapsto \wm_1} \,,\\
  & L_{\NK}(x) = \lim_{\st\to \infty}\
    \Big\{\mathrm{diag}
    \Big(\sfrac{1}{\st^2};\underbrace{\sfrac{1}{\st},\ldots,\sfrac{1}{\st}}_{\NK-2};1\Big)\cdot
    \CL_1\left(x+\sfrac{\st}{2}+\sfrac{\NK}{4}\right)\Big\} \Big|_{\wp\mapsto -\wp_2 \,,\, \wm\mapsto -\wm_2}  \,.
\end{split}
\end{equation}

\medskip
\noindent
Following~(\ref{eq:qopsA},~\ref{eq:QC+},~\ref{eq:QC-},~\ref{eq:QD+},~\ref{eq:QD-}),
we define the $Q$-operators $Q_1(x), Q_{\NK}(x)\in \End(\BC^{\NK})^{\otimes N}$ via:
\begin{equation}
  Q_{1}(x)=\wtr_{D_{1}} \Big( \underbrace{L_{1}(x) \otimes \cdots \otimes L_{1}(x)}_{N} \Big)
\end{equation}
and
\begin{equation}\label{eq:QNK}
  Q_{\NK}(x)=\wtr_{D_{\NK}} \Big( \underbrace{L_{\NK}(x) \otimes \cdots \otimes L_{\NK}(x)}_{N} \Big) \,,
\end{equation}
cf.~\eqref{eq:normalized trace}, with the twists $D_{1}$ and $D_{\NK}$
defined in analogy with~(\ref{eq:twist I})--(\ref{eq:twistconj}) via:
\begin{equation}
  D_{1}=\tau_1^{-\sum_{\ell=2}^{\NK-1} \oad_{1 \ell}\oa_{\ell 1}}
    \prod_{i=2}^r\tau_i^{\oad_{1i}\oa_{i1}-\oad_{1i'}\oa_{i'1}} \,, \qquad
  D_{\NK}=\tau_1^{-\sum_{\ell=2}^{\NK-1} \oad_{\ell 1}\oa_{1 \ell}}
    \prod_{i=2}^r\tau_i^{\oad_{i1}\oa_{1i}-\oad_{i'1}\oa_{1i'}} \,.
\end{equation}
We note that the twist $D_{1}$ can be further expressed via $\{\CF_{ii}\}_{i=1}^r$ of~(\ref{eq:F-for-BD},~\ref{eq:cartanD2}) as:
\begin{equation}\label{eq:twist-via-Cartan BD}
  D_1=\tau_1^{-\st} \prod_{i=1}^r \tau_i^{\CF_{ii}} \,.
\end{equation}
Similarly to~\eqref{eq:twistconj} and~\eqref{eq:twistconj CD}, we should stress right away that the actions of the twists
$D_1$ and $D_{\NK}$ on the corresponding Fock modules are uniquely determined (up to scalars) by the conditions:
\begin{equation}\label{eq:twistconj BD}
  D L_{1}(x) D^{-1} = D^{-1}_{1}L_{1}(x)D_{1}  \,, \qquad
  D L_{\NK}(x) D^{-1} = D^{-1}_{\NK}L_{\NK}(x)D_{\NK}  \,,
\end{equation}
with $D$ given by:
\begin{equation}\label{eq:const twist BD}
\begin{split}
  & D_r\mathrm{-type}\colon \quad D=\diag\Big(\tau_1,\ldots,\tau_r,\tau^{-1}_r,\ldots,\tau^{-1}_1\Big) \,, \\
  & B_r\mathrm{-type}\colon \quad D=\diag\Big(\tau_1,\ldots,\tau_r,1,\tau^{-1}_r,\ldots,\tau^{-1}_1\Big) \,.
\end{split}
\end{equation}

\medskip
\noindent
Building the monodromy matrices (by considering the $N$-fold tensor product on the matrix space)
from~\eqref{eq:factor-BD}, taking the normalized trace, and evoking the relation~\eqref{eq:twist-via-Cartan BD},
the factorisation formula~\eqref{eq:factor-BD} implies the following factorisation formula for
the transfer matrices $T^+_{1,\st}(x)$ of~\eqref{eq:BD-inf-transfer}:
\begin{equation}\label{eq:T-via-QQ BD-type basic}
  T^+_{1,\st}(x) = \ch_{1,\st}^+ \cdot\,
  Q_1\left(x-1+\sfrac{t}{2}+\sfrac{\NK}{4}\right) {Q}_\NK\left(x-\sfrac{t}{2}-\sfrac{\NK}{4}\right)
\end{equation}
with the character $\ch^+_{1,\st}= \tr \prod_{i=1}^r \tau_i^{\CF_{ii}}$ given explicitly by:
\begin{equation}\label{eq:ch-BD basic}
\begin{split}
  & D_r\mathrm{-type}\colon \quad
    \ch_{1,\st}^+ = \, \tau_1^{\st}  \prod_{1<\ell\leq r}
    \frac{1}{\left(1-\frac{1}{\tau_1\tau_{\ell}}\right)\left(1-\frac{\tau_\ell}{\tau_1}\right)} \,,\\
  & B_r\mathrm{-type}\colon \quad
    \ch_{1,\st}^+ = \, \frac{\tau_1^{\st}}{\left(1-\frac{1}{\tau_1}\right)} \prod_{1<\ell\leq r}
    \frac{1}{\left(1-\frac{1}{\tau_1\tau_{\ell}}\right)\left(1-\frac{\tau_\ell}{\tau_1}\right)} \,.
\end{split}
\end{equation}

\medskip
\noindent
Following Section~\ref{sec linear factorizations}, we define $2r$ $Q$-operators using the action
of the operators $\hat B_k$ in~\eqref{eq:hat-B} as:
\begin{equation}
  Q_{k}(x)=
  \begin{cases}
    \left.\left(\hat{B}_k\otimes \cdots \otimes \hat{B}_k\right) Q_{1}(x)
    \left(\hat{B}^{-1}_k\otimes \cdots \otimes \hat{B}^{-1}_k\right) \right|_{\tau_{1}\leftrightarrow \tau_{k}}
       & \text{for } 1\leq k\leq r \\[0.3cm]
    \left.\left(\hat{B}_k\otimes \cdots \otimes \hat{B}_k\right) Q_{1}(x)|_{\tau_i\mapsto \tau_i^{-1}}
    \left(\hat{B}^{-1}_k\otimes \cdots \otimes \hat{B}^{-1}_k\right)\right|_{\tau_{1}\leftrightarrow \tau_{k}}
    & \text{for } r'\leq  k\leq 1'
  \end{cases} \,.
\end{equation}
The compatibility condition for the $Q_{\NK}(x)$ defined as in~\eqref{eq:QNK} and the above
$Q$-operators follows by employing the natural relation satisfied by the endomorphisms of~\eqref{eq:hat-B}
(cf.~\eqref{eq:compatDspin}):
\begin{equation}
  \hat{B}_k=\hat{B}_{k'} \hat{B}_{1'} \,, \qquad k\in \{1,\ldots,r\} \cup \{r',\ldots,1'\} \,.
\end{equation}

\medskip
\noindent
Acting with the endomorphisms $\hat{B}_k$'s on~\eqref{eq:factor-BD} and subsequently interchanging the twists,  we get:
\begin{equation}\label{eq:T-via-QQ BD-type}
\begin{split}
  & T^+_{k,\st}(x) = \ch_{k,\st}^+ \cdot\,
    Q_{k}\left(x-1+\sfrac{t}{2}+\sfrac{\NK}{4}\right) Q_{k'}\left(x-\sfrac{t}{2}-\sfrac{\NK}{4}\right) \,, \\
  & T^+_{k',\st}(x) = \ch_{k',\st}^+ \cdot\,
    Q_{k'}\left(x-1+\sfrac{t}{2}+\sfrac{\NK}{4}\right) Q_{k}\left(x-\sfrac{t}{2}-\sfrac{\NK}{4}\right)\,,
\end{split}
\end{equation}
for any $1\leq k\leq r$. Here, the characters read
\begin{equation*}
  \ch^+_{k,\st} = \tr \prod_{i=1}^r \tau_i^{\CF^{k}_{ii}} \,, \qquad
  \ch^+_{k',\st} = \tr \prod_{i=1}^r \tau_i^{\CF^{k'}_{ii}} \,,
\end{equation*}
see~(\ref{eq:F-for-BD general},~\ref{eq:F-for-BD action}), and are explicitly given by:
\begin{equation}\label{eq:ch-BD}
\begin{split}
  D_r\mathrm{-type}\colon \quad
  & \ch_{k,\st}^+ = \, \frac{\tau_1^{-1}\cdots \tau_{k-1}^{-1}\tau_k^{\st+k-1}}
         {\prod_{1\leq \ell<k}\left(1-\frac{\tau_k}{\tau_\ell}\right)
          \prod_{k< \ell \leq r}\left(1-\frac{\tau_\ell}{\tau_k}\right)
          \prod_{\ell\ne k}\left(1-\frac{1}{\tau_k\tau_\ell}\right)} \,,\\
  & \ch_{k',\st}^+ = \, \frac{\tau_1^{-1}\cdots \tau_{k-1}^{-1}\tau_k^{k+1-2r-\st}}
         {\prod_{1\leq \ell<k}\left(1-\frac{\tau_k}{\tau_\ell}\right)
          \prod_{k< \ell \leq r}\left(1-\frac{\tau_\ell}{\tau_k}\right)
          \prod_{\ell\ne k}\left(1-\frac{1}{\tau_k\tau_\ell}\right)} \,,\\
  B_r\mathrm{-type}\colon \quad
  & \ch_{k,\st}^+ = \, \frac{\tau_1^{-1}\cdots \tau_{k-1}^{-1}\tau_k^{\st+k-1}}
         {\left(1-\frac{1}{\tau_k}\right)
          \prod_{1\leq \ell<k}\left(1-\frac{\tau_k}{\tau_\ell}\right)
          \prod_{k< \ell \leq r}\left(1-\frac{\tau_\ell}{\tau_k}\right)
          \prod_{\ell\ne k}\left(1-\frac{1}{\tau_k\tau_\ell}\right)} \,,\\
  & \ch_{k',\st}^+ = \, \frac{\tau_1^{-1}\cdots \tau_{k-1}^{-1}\tau_k^{k-2r-\st}}
         {\left(1-\frac{1}{\tau_k}\right)
          \prod_{1\leq \ell<k}\left(1-\frac{\tau_k}{\tau_\ell}\right)
          \prod_{k< \ell \leq r}\left(1-\frac{\tau_\ell}{\tau_k}\right)
          \prod_{\ell\ne k}\left(1-\frac{1}{\tau_k\tau_\ell}\right)} \,,
\end{split}
\end{equation}
cf.~(\ref{eq:F-for-BD action},~\ref{eq:replBD}) and~(\ref{eq:char-D-first},~\ref{eq:char-B-first}).

\medskip
\noindent
Combining the factorisation formula~\eqref{eq:T-via-QQ BD-type} with Theorem~\ref{thm:Main-BD},
we arrive at the following result:

\medskip

\begin{Prop}\label{prop:main-BD factorized}
(a) In type $D_r$, for $\st\in \BN$ we have:
\begin{equation}\label{eq:transfer-DfirstQQ}
\begin{split}
  T_{1,\st}(x) =
  & \sum_{k=1}^r (-1)^{k-1} \ch_{k,\st}^+ \cdot\,
    Q_{k}\left(x-1+\sfrac{t}{2}+\sfrac{\NK}{4}\right) Q_{k'}\left(x-\sfrac{t}{2}-\sfrac{\NK}{4}\right) + \\
  & \sum_{k=1}^r (-1)^{k-1} \ch_{k',\st}^+ \cdot\,
    Q_{k'}\left(x-1+\sfrac{t}{2}+\sfrac{\NK}{4}\right) Q_{k}\left(x-\sfrac{t}{2}-\sfrac{\NK}{4}\right) \,.
\end{split}
\end{equation}

\medskip
\noindent
(b) In type $B_r$, for $\st\in \BN$ we have:
\begin{equation}\label{eq:transfer-BfirstQQ}
\begin{split}
  T_{1,\st}(x) =
  & \sum_{k=1}^r (-1)^{k-1} \ch_{k,\st}^+ \cdot\,
    Q_{k}\left(x-1+\sfrac{t}{2}+\sfrac{\NK}{4}\right) Q_{k'}\left(x-\sfrac{t}{2}-\sfrac{\NK}{4}\right) + \\
  & \sum_{k=1}^r (-1)^{k} \ch_{k',\st}^+ \cdot\,
    Q_{k'}\left(x-1+\sfrac{t}{2}+\sfrac{\NK}{4}\right) Q_{k}\left(x-\sfrac{t}{2}-\sfrac{\NK}{4}\right) \,.
\end{split}
\end{equation}
\end{Prop}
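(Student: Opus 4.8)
The plan is to derive Proposition~\ref{prop:main-BD factorized} by substituting the two-term factorisations~\eqref{eq:T-via-QQ BD-type} of the infinite-dimensional transfer matrices $T^+_{k,\st}(x)$ and $T^+_{k',\st}(x)$ directly into the BGG-type relations of Theorem~\ref{thm:Main-BD}, exactly in the spirit in which Proposition~\ref{prop:main-C factorized} was obtained from Theorem~\ref{thm:Main-C} together with~\eqref{eq:T-via-QQ C-type}, and Proposition~\ref{prop:main-D factorized} from Theorem~\ref{thm:Main-D} together with~\eqref{eq:T-via-QQ D-type}. Concretely, for $\st\in\BN$ Theorem~\ref{thm:Main-BD}(a) writes $T_{1,\st}(x)$ in type $D_r$ as $\sum_{k=1}^r(-1)^{k-1}T^+_{k,\st}(x)+\sum_{k=1}^r(-1)^{k-1}T^+_{k',\st}(x)$, while~\eqref{eq:T-via-QQ BD-type} gives $T^+_{k,\st}(x)=\ch^+_{k,\st}\cdot Q_k(x-1+\tfrac t2+\tfrac\NK4)Q_{k'}(x-\tfrac t2-\tfrac\NK4)$ and $T^+_{k',\st}(x)=\ch^+_{k',\st}\cdot Q_{k'}(x-1+\tfrac t2+\tfrac\NK4)Q_k(x-\tfrac t2-\tfrac\NK4)$, with the explicit scalar characters recorded in~\eqref{eq:ch-BD}; plugging one into the other yields~\eqref{eq:transfer-DfirstQQ}. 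The type $B_r$ case~\eqref{eq:transfer-BfirstQQ} is handled identically, using Theorem~\ref{thm:Main-BD}(b) in place of part~(a).

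The only point that genuinely requires checking is the bookkeeping of signs, i.e.\ that the coefficients of Theorem~\ref{thm:Main-BD} are transported verbatim. These coefficients are $(-1)^{l(w_k)}$, and from~\eqref{eq:k-length BD} one has $l(w_k)=k-1$ for $1\le k\le r$, so the ``$Q_kQ_{k'}$'' terms carry $(-1)^{k-1}$ in both types; for the primed indices $l(w_{k'})=k'-2=(\NK+1-k)-2=\NK-1-k$, and since $(-1)^{\NK-1-k}$ equals $(-1)^{k-1}$ when $\NK=2r$ and $(-1)^{k}$ when $\NK=2r+1$, the ``$Q_{k'}Q_k$'' terms carry $(-1)^{k-1}$ in type $D_r$ and $(-1)^{k}$ in type $B_r$, which is precisely the signature appearing in Theorem~\ref{thm:Main-BD}(a),(b). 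Since the factorisation~\eqref{eq:T-via-QQ BD-type} introduces no further sign, and since the substitution is performed at a fixed value of the spectral parameter $x$ — so that the shifts $x-1+\tfrac t2+\tfrac\NK4$ and $x-\tfrac t2-\tfrac\NK4$ are merely carried along — the coefficients indeed transport unchanged. (As a sanity check one may also note that the $k$-th summand of~\eqref{eq:ch-BD} matches the $k$-th summand of the character formula~\eqref{eq:char-D-first}, resp.~\eqref{eq:char-B-first}, in the $N=0$ limit, which is consistent with the length $N=0$ specialisation of the claimed identity.)

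I do not anticipate a real obstacle: the statement is a formal consequence of two inputs already established, and the mildly delicate pieces are exactly the sign and spectral-shift bookkeeping described above, together with noting that the resulting identity holds for $\st\in\BN$ — precisely the range of validity of Theorem~\ref{thm:Main-BD} — whereas the factorisation~\eqref{eq:T-via-QQ BD-type} itself is available for all $\st\in\BC$. One could alternatively run the whole argument at the level of the analytically continued transfer matrices, but since Theorem~\ref{thm:Main-BD} is an honest identity only on $\BN$, it is cleanest to combine the two inputs there and invoke the analytic-continuation remarks only afterwards when discussing the $\st$-symmetry in Proposition~\ref{prop:t-symmetry BD}.
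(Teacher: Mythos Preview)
Your proposal is correct and matches the paper's own argument, which simply states that the result follows by combining the factorisation~\eqref{eq:T-via-QQ BD-type} with Theorem~\ref{thm:Main-BD}. Your sign bookkeeping via~\eqref{eq:k-length BD} is a welcome elaboration of what the paper leaves implicit.
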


$\ $

\noindent
In the case of $B$-type, these results are in agreement with the functional relations found by Tsuboi
in~\cite{Tsuboi:2011iz,Tsuboi:2021xzl}. In the case of $D$-type, they appeared in~\cite{ffk}; see also
\cite{esv} where similar relations were derived from the ODE/IM correspondence \cite{Dorey:2007zx}.

$\ $

   %%%%%%%%%%%%%%%%%%%%%%%%%%%%%%%%%%%%%%%%%%%%%%%%%%%%%%%%%%%%%%%%%%%%%%%%%%%%%%%%%%%%%%%%%%%%%
   %%%%%%%%%%%%%%%%%%%%%%%%%%%%%%%%%%%%%%%% SECTION 10 %%%%%%%%%%%%%%%%%%%%%%%%%%%%%%%%%%%%%%%%%%
   %%%%%%%%%%%%%%%%%%%%%%%%%%%%%%%%%%%%%%%%%%%%%%%%%%%%%%%%%%%%%%%%%%%%%%%%%%%%%%%%%%%%%%%%%%%%%

\section{Further generalizations}\label{sec:further}

One may wonder to which extent our key resolution~\eqref{eq:conjectured resolution 1} may be further
utilized to study spin chains.

   %%%%%%%%%%%%%%%%%%%%%%%%%%%%%%%%%%%%%%%%%%%%%%%%%%%%%%%%%%%%%%%%%%%%%%%%%%%%%%%%%%%%%%%%%%%%%%%
   %%%%%%%%%%%%%%%%%%%%%%%%%%%%%%%%%%%%%%%%%%%%%%%%%%%%%%%%%%%%%%%%%%%%%%%%%%%%%%%%%%%%%%%%%%%%%%%
   %%%%%%%%%%%%%%%%%%%%%%%%%%%%%%%%%%%%%%%%%%%%%%%%%%%%%%%%%%%%%%%%%%%%%%%%%%%%%%%%%%%%%%%%%%%%%%%

\medskip
\noindent
\underline{\emph{Exceptional types}}

\noindent
While the present paper covers all examples of $\fg$-resolutions~\eqref{eq:conjectured resolution 1}
with $\lambda=\st\omega_i$ which can be further viewed as resolutions of $Y(\fg)$-modules for $\fg$ of classical types,
there are also three more examples for the case of exceptional $\fg$ as follows from \cite{cgy}
(corresponding to the vertices $i$ of the Dynkin diagram of $\fg$ with the label $1$,
see our Subsection~\ref{ssec BCD-overview}):
\begin{enumerate}

\item [$\bullet$]
$E_6$-type: vertices $i=1,5$

\item [$\bullet$]
$E_7$-type: vertices $i=1$

\end{enumerate}

\medskip
\noindent
\textbf{Type $E_6$:} According to~\eqref{eq:t-via-qq honest}, the transfer matrices $T_{1,\st}(x)$ and $T_{5,\st}(x)$
of the corresponding finite-dimensional representations of the highest weights $\st\omega_1$ and $\st\omega_5$,
respectively, may be written as alternating sums of $27$ transfer matrices associated to infinite-dimensional
representations $M'_{\bullet}$. Their highest weights are of the form $\st\omega+w_\omega(\rho)-\rho$, where $\omega$
runs through the set of weights of $L_{\omega_1}$ or $L_{\omega_5}$, respectively, and $w_\omega\in W_{E_6}$ are
the shortest elements in the Weyl group of $E_6$ such that $w_\omega(\omega_1)=\omega$ or $w_\omega(\omega_5)=\omega$,
respectively (note that the actions of $W_{E_6}$ on the sets of weights of $L_{\omega_1}$ and $L_{\omega_5}$
are transitive, as both representations are minuscule, and furthermore the stabilizers of each weight are
isomorphic to $W_{D_5}=(\BZ/2\BZ)^4\rtimes S_5$, the Weyl group of $D_5$).

\medskip
\noindent
\textbf{Type $E_7$:} According to~\eqref{eq:t-via-qq honest}, the transfer matrices $T_{1,\st}(x)$ of the corresponding
finite-dimensional representations of the highest weight $\st\omega_1$ may be written as alternating sums of $56$ transfer
matrices associated to infinite-dimensional representations $M'_{\bullet}$. The highest weights of these modules are of the form
$\st\omega+w_\omega(\rho)-\rho$, where $\omega$ runs through the set of weights of $L_{\omega_1}$ and $w_\omega\in W_{E_7}$
are the shortest elements in the Weyl group of $E_7$ such that $w_\omega(\omega_1)=\omega$ (note that $W_{E_7}$ acts
transitively on the set of the weights of $L_{\omega_1}$, with a stabilizer of each weight  isomorphic to $W_{E_6}$).

\medskip
\noindent
It is an interesting problem to realize the aforementioned $Y(\fg)$-representations $M'_{\bullet}$ through
the corresponding explicit Lax matrices at generic values of $\st\in \BC$. To this end, one should construct:
\begin{enumerate}

\item [$\bullet$]
  two polynomial oscillator-type Lax matrices $\CL_{1}(x),\CL_5(x)$ of size $27\times 27$ in type $E_6$

\item [$\bullet$]
  one polynomial oscillator-type Lax matrix of size $56\times 56$ in type $E_7$

\end{enumerate}
Not only those Lax matrices are presently unknown (see~\cite[\S7.7]{cgy} for their semiclassical limit),
but also explicit formulas for the corresponding $R$-matrices seem to be missing in the literature.

   %%%%%%%%%%%%%%%%%%%%%%%%%%%%%%%%%%%%%%%%%%%%%%%%%%%%%%%%%%%%%%%%%%%%%%%%%%%%%%%%%%%%%%%%%%%%%%%
   %%%%%%%%%%%%%%%%%%%%%%%%%%%%%%%%%%%%%%%%%%%%%%%%%%%%%%%%%%%%%%%%%%%%%%%%%%%%%%%%%%%%%%%%%%%%%%%
   %%%%%%%%%%%%%%%%%%%%%%%%%%%%%%%%%%%%%%%%%%%%%%%%%%%%%%%%%%%%%%%%%%%%%%%%%%%%%%%%%%%%%%%%%%%%%%%

\medskip
\noindent
\underline{\emph{More general weights}}

\noindent
While in the present paper we only treated the examples with $\lambda$ being a multiple of a fundamental weight
from~\eqref{eq:KR-classification}, thus being exactly in the framework of the weights considered in~\cite{cgy},
we should stress that there do exist other cases when the $\fg$-module
resolution~\eqref{eq:conjectured resolution 1} does become a resolution of $Y(\fg)$-modules.
In particular, the oscillator-type Lax matrices of~\cite[(5.24)]{f} give rise to the explicit action of $Y(\sso_{2r})$
on the $\sso_{2r}$-modules $L_{\st_1\omega_1+\st_2\omega_r}$ and $L_{\st_1\omega_1+\st_2\omega_{r-1}}$
for any $\st_1,\st_2\in \BN$.

   %%%%%%%%%%%%%%%%%%%%%%%%%%%%%%%%%%%%%%%%%%%%%%%%%%%%%%%%%%%%%%%%%%%%%%%%%%%%%%%%%%%%%%%%%%%%%%%
   %%%%%%%%%%%%%%%%%%%%%%%%%%%%%%%%%%%%%%%%%%%%%%%%%%%%%%%%%%%%%%%%%%%%%%%%%%%%%%%%%%%%%%%%%%%%%%%
   %%%%%%%%%%%%%%%%%%%%%%%%%%%%%%%%%%%%%%%%%%%%%%%%%%%%%%%%%%%%%%%%%%%%%%%%%%%%%%%%%%%%%%%%%%%%%%%

\medskip
\noindent
\underline{\emph{Trigonometric version}}

\noindent
The most interesting continuation of the current work, which shall be discussed elsewhere,
is the generalization of the present results to the trigonometric spin chains. To this end, let us recall
that the representation theory of the finite quantum group $U_q(\fg)$, over the field $\BC(q)$, is
equivalent to that of the underlying Lie algebra $\fg$. In particular, the description of the weights of
singular vectors in the Verma modules over $U_q(\fg)$ is precisely the same as for the Verma modules over $\fg$.
Thus, the modules $M'_{w\,\cdot \lambda}$ of~\eqref{eq:hard modules} and our key
resolutions~\eqref{eq:conjectured resolution 1} admit $q$-analogues. It would be interesting to construct those
in a self-contained way. For $(\fg,i)$ as in~\eqref{eq:KR-classification}, we therefore get resolutions
over the quantum loop algebra $U_q(L\fg)$, giving rise to the trigonometric version
of~\eqref{eq:t-via-qq honest} that provides a BGG-type formula for the transfer matrices of
finite-dimensional $U_q(L\fg)$-representations $L_{\st\omega_i}$
(the~length $N=0$ case of which recovers an interesting identity on the corresponding $q$-characters).

   %%%%%%%%%%%%%%%%%%%%%%%%%%%%%%%%%%%%%%%%%%%%%%%%%%%%%%%%%%%%%%%%%%%%%%%%%%%%%%%%%%%%%%%%%%%%%%%
   %%%%%%%%%%%%%%%%%%%%%%%%%%%%%%%%%%%%%%%%%%%%%%%%%%%%%%%%%%%%%%%%%%%%%%%%%%%%%%%%%%%%%%%%%%%%%%%
   %%%%%%%%%%%%%%%%%%%%%%%%%%%%%%%%%%%%%%%%%%%%%%%%%%%%%%%%%%%%%%%%%%%%%%%%%%%%%%%%%%%%%%%%%%%%%%%

\medskip
\noindent
\underline{\emph{Supersymmetric version}}

\noindent
In view of the increasing interest in the integrable structures of supersymmetric gauge theories
as well as recent studies of quantum affine superalgebras and Yangians associated to Lie superalgebras,
it is desirable to generalize the results of the present paper to the rational (as well as trigonometric)
spin chains of super-type. We plan to return to this question in the follow-up work.

$\ $

   %%%%%%%%%%%%%%%%%%%%%%%%%%%%%%%%%%%%%%%%%%%%%%%%%%%%%%%%%%%%%%%%%%%%%%%%%%%%%%%%%%%%%%%%%%%%%
   %%%%%%%%%%%%%%%%%%%%%%%%%%%%%%%%% Conflicts of Interests %%%%%%%%%%%%%%%%%%%%%%%%%%%%%%%%%%%%
   %%%%%%%%%%%%%%%%%%%%%%%%%%%%%%%%%%%%%%%%%%%%%%%%%%%%%%%%%%%%%%%%%%%%%%%%%%%%%%%%%%%%%%%%%%%%%

%\section{Conflicts of interests}
%
%\emph{The authors state that there is no conflict of interest.}
%
%$\ $

   %%%%%%%%%%%%%%%%%%%%%%%%%%%%%%%%%%%%%%%%%%%%%%%%%%%%%%%%%%%%%%%%%%%%%%%%%%%%%%%%%%%%%%%%%%%%%
   %%%%%%%%%%%%%%%%%%%%%%%%%%%%%%%%%%%%%% Bibliography %%%%%%%%%%%%%%%%%%%%%%%%%%%%%%%%%%%%%%%%%
   %%%%%%%%%%%%%%%%%%%%%%%%%%%%%%%%%%%%%%%%%%%%%%%%%%%%%%%%%%%%%%%%%%%%%%%%%%%%%%%%%%%%%%%%%%%%%

\end{document}